\numberwithin{equation}{section}
\newtheorem{theorem}[equation]{Theorem}
\newtheorem{proposition}[equation]{Proposition}
\newtheorem{lemma}[equation]{Lemma}
\newtheorem{corollary}[equation]{Corollary}
\newtheorem{theoremintro}{Theorem}
\theoremstyle{definition}
\newtheorem{definition}[equation]{Definition}
\theoremstyle{remark}
\newtheorem{remark}[equation]{Remark}
\newtheorem{example}[equation]{Example}
\newcommand\Z{\mathbb{Z}}
\newcommand\Ze{\Z/e\Z}
\newcommand\N{\mathbb{N}}
\newcommand\chargesmall{s}
\newcommand\charge{\mathsf\chargesmall}
\newcommand\chargez{\mathsf{0}}
\newcommand\chargeinteger[1]{\mathsf{#1}}
\newcommand\mcharge{{\tuple{\charge}}}
\newcommand\qcharge{b} 
\newcommand\bnumber[1]{\beta^{#1}}
\newcommand\bfayers{\mathfrak{b}}
\newcommand\xrostam{y}
\newcommand\xrostamod{x}
\newcommand\res[1]{\mathrm{res}^{#1}}
\newcommand\ci[2][i]{c^{#2}_{#1}}
\newcommand\cialpha[1][i]{c_{#1}}
\newcommand\nodes{\N^2}
\newcommand\mnodes[1][r]{\nodes \times \{1,\dots,#1\}}
\newcommand\boundcoreblock[1][r]{N_{#1,e}}
\newcommand\bestboundcoreblockgen[2]{N_{#1,#2}}
\newcommand\bestboundcoreblock[1][r]{\bestboundcoreblockgen{#1}{e}}
\newcommand\bestboundcoreblocke[1][e]{\bestboundcoreblockgen{r}{#1}}
\newcommand\idealbound[2]{N'_{#1,#2}}
\newcommand\qbound[2]{Q_{#1,#2}}
\newcommand\sfayers{\sigma}
\newcommand\Q{Q}
\newcommand\Qpos{Q_+}
\newcommand\Qcharge[1][\charge]{Q^{#1}}
\newcommand\Qmcharge{Q^\mcharge}
\newcommand\constQ{\delta}
\newcommand\spart[1]{\sigma #1}
\newcommand\tspart\spart
\newcommand\sblockprim[1][\ep]{\sigma}
\newcommand\sblock[1]{\sblockprim\cdot #1}
\newcommand\tuple\boldsymbol
\newcommand\blam{\tuple\lambda}
\newcommand\bmu{\tuple\mu}
\newcommand\bempty{\tuple\emptyset}
\newcommand\x[1][]{\xrostamod_{#1}}
\newcommand\ep{e'}
\newcommand\ip{i'}
\newcommand\ordere{p}
\newcommand\diffcont{z}
\newcommand\transpose\intercal
\newcommand\partse[1][e]{{\mathcal{P}_{#1}}}
\newcommand\partsekgen[2]{\mathcal{P}_{#1,#2}}
\newcommand\partsek[1][k]{\partsekgen{e}{#1}}
\newcommand\matq[1][e]{A_{#1}}
\newcommand\matqk[1][k]{A_{e,#1}}
\newcommand\qe[1][e]{q_{#1}}
\newcommand\qek[1][k]{q_{e,#1}}
\newcommand\mate[1][E]{\mathcal{#1}}
\newcommand\Qc{\Qcharge[\chargez]_*}
\newcommand\Qp{\Qcharge[\chargez]}
\newcommand\alphap{\alpha'}
\newcommand\w{\mathsf{w}}
\newcommand\we[1][e]{\w_{#1}}
\newcommand\wc[1][\charge]{\w^{#1}}
\newcommand\wmc[1][\mcharge]{\w^{#1}}
\newcommand\Hnr{\mathcal{H}_n^{\mcharge}}
\newcounter{pos}
\newcommand{\scaletikz}{.6}
\newcommand{\scalelozenge}{.7}
\newcommand{\cabacus}[3]{
	\begin{center}
	\begin{tikzpicture}[scale=\scaletikz, baseline=(current bounding box.center)]
		\draw[very thick] (0, .5) -- (0, -.5);
		\setcounter{pos}{0}
		\foreach \b in {#2}
			{
			\ifnum \value{pos} = 0
				\draw (#1, 0) -- (#1+\b+2, 0) node[right] {$\cdots$};
				\foreach \i in {-1,...,\b}
					{
					\draw[very thin] (#1+\i+1, .1) -- (#1+\i+1, -.1);
					}
			\fi
			\stepcounter{pos}
			\fill (#1 + \b - \value{pos}, 0) circle (.2);
			}
			
		\foreach \i in {1,2}
			{
			\stepcounter{pos}
			\fill (#1 - \value{pos}, 0) circle (.2);
			}
			
		\foreach \i in {0,...,\value{pos}}
			{
			\draw[very thin] (#1-\i, .1) -- (#1-\i, -.1);
			}
		\stepcounter{pos}
		\draw (#1, 0) -- (#1-\value{pos}, 0) node[left]{$\cdots$};
		
	\end{tikzpicture} #3
	\end{center}
}
\newcommand{\ceabacus}[4]{
	\begin{center}
	\begin{tikzpicture}[scale=\scaletikz, baseline=(current bounding box.center),yscale=-1]
		\draw[very thick] (0, 0) -- (0, -#1 - 1);
		\setcounter{pos}{-#4}
		\foreach \b in {#2}
			{
			\stepcounter{pos}
			\ifnum \value{pos} = 1
				\pgfmathparse{(\b - \value{pos} - Mod(\b - \value{pos}, #1)) / #1}
				\foreach \i in {-1,...,-#1}
					{
					\draw (\pgfmathresult + 3, \i) -- (0, \i);
					\draw (\pgfmathresult + 3.6, \i - .03) node {$\dots$};
					\foreach \j in {-2,...,\pgfmathresult}
						\draw[very thin] (\j + 2, \i - .1) -- ++ (0, .2);
					}
			\fi
			\pgfmathparse{Mod(\b - \value{pos}, #1)}
			\fill (
					{(\b - \value{pos} - \pgfmathresult) / #1}
					,
					{- \pgfmathresult - 1}
					) circle (.2);
			}
			\foreach \i in {1,...,#1}
			{
			\stepcounter{pos}
			\pgfmathparse{Mod(-\value{pos}, #1) == #1 - 1}
			\ifnum \pgfmathresult = 0
				\pgfmathparse{Mod(-\value{pos}, #1)}
				\fill (
					{(-\value{pos} - \pgfmathresult) / #1}
					,
					{-\pgfmathresult - 1}
					) circle (.2);
			\else
				\breakforeach
			\fi
			}
			\foreach \k in {1,2}
			\foreach \i in {1,...,#1}
				{
				\pgfmathparse{Mod(-\value{pos}, #1)}
				\fill (
					{(-\value{pos} - \pgfmathresult) / #1}
					,
					{-\pgfmathresult - 1}
					) circle (.2);
				\stepcounter{pos}
				}

			\pgfmathparse{(-\value{pos} - Mod(-\value{pos}, #1)) / #1 + 1}
			\foreach \i in {-1,...,-#1}
				{
				\draw (\pgfmathresult - 1, \i) -- (0, \i);
				\draw (\pgfmathresult - 1.5, \i - .03) node {$\dots$};
			
				\foreach \j in {-\value{pos},...,0}
					{
					\ifnum \j > \pgfmathresult
						\draw[very thin] (\j, \i - .1) -- ++ (0, .2);
					\fi;
					}
				}
		
	\end{tikzpicture} #3
	\end{center}
}
\newcommand{\abacuscore}[4]{
	\begin{center}
	\begin{tikzpicture}[scale=\scaletikz, baseline=(current bounding box.center), yscale=-1]
		\draw[very thick] (0, 0) -- (0, -#1 - 1);
		\setcounter{pos}{-#4}
		\foreach \b in {#2}
			{
			\stepcounter{pos}
			\ifnum \value{pos} = 1
				\pgfmathparse{(\b - \value{pos} - Mod(\b - \value{pos}, #1)) / #1}
				\foreach \i in {-1,...,-#1}
					{
					\draw (\pgfmathresult + 3, \i) -- (0, \i);
					\draw (\pgfmathresult + 3.6, \i - .03) node {$\dots$};
					\foreach \j in {-2,...,\pgfmathresult}
						\draw[very thin] (\j + 2, \i - .1) -- ++ (0, .2);
					}
			\fi
			\pgfmathparse{Mod(\b - \value{pos}, #1)}
			\fill (
					{(\b - \value{pos} - \pgfmathresult) / #1}
					,
					{- \pgfmathresult - 1}
					) circle (.2);
			}
			\foreach \i in {1,...,#1}
			{
			\stepcounter{pos}
			\pgfmathparse{Mod(-\value{pos}, #1) == #1 - 1}
			\ifnum \pgfmathresult = 0
				\pgfmathparse{Mod(-\value{pos}, #1)}
				\fill (
					{(-\value{pos} - \pgfmathresult) / #1}
					,
					{-\pgfmathresult - 1}
					) circle (.2);
			\else
				\breakforeach
			\fi
			}
			\foreach \k in {1,2}
			\foreach \i in {1,...,#1}
				{
				\pgfmathparse{Mod(-\value{pos}, #1)}
				\fill (
					{(-\value{pos} - \pgfmathresult) / #1}
					,
					{-\pgfmathresult - 1}
					) circle (.2);
				\stepcounter{pos}
				}

			\pgfmathparse{(-\value{pos} - Mod(-\value{pos}, #1)) / #1 + 1}
			\foreach \i in {-1,...,-#1}
				{
				\draw (\pgfmathresult - 1, \i) -- (0, \i);
				\draw (\pgfmathresult - 1.5, \i - .03) node {$\dots$};
			
				\foreach \j in {-\value{pos},...,0}
					{
					\ifnum \j > \pgfmathresult
						\draw[very thin] (\j, \i - .1) -- ++ (0, .2);
					\fi;
					}
				}
				
		\setcounter{pos}{-#4}
		\foreach \b in {#2}
			{
			\stepcounter{pos}
			\pgfmathparse{Mod(\b - \value{pos}, #1)}
			\draw[red] (
					{(\b - \value{pos} - \pgfmathresult) / #1 + 1}
					,
					{- \pgfmathresult - 1}
					) node[scale=\scalelozenge] {$\blacklozenge$};
			}
			\foreach \i in {1,...,#1}
			{
			\stepcounter{pos}
			\pgfmathparse{Mod(-\value{pos}, #1) == #1 - 1}
			\ifnum \pgfmathresult = 0
				\pgfmathparse{Mod(-\value{pos}, #1)}
				\draw[red] (
					{(-\value{pos} - \pgfmathresult) / #1 + 1}
					,
					{-\pgfmathresult - 1}
					) node[scale=\scalelozenge] {$\blacklozenge$};
			\else
				\breakforeach
			\fi
			}
			\foreach \k in {1,2}
			\foreach \i in {1,...,#1}
				{
				\pgfmathparse{Mod(-\value{pos}, #1)}
				\draw[red] (
					{(-\value{pos} - \pgfmathresult) / #1 + 1}
					,
					{-\pgfmathresult - 1}
					) node[scale=\scalelozenge] {$\blacklozenge$};
				\stepcounter{pos}
				}

		\setcounter{pos}{-#4}
		\foreach \b in {#2}
			{
			\stepcounter{pos}
			\pgfmathparse{Mod(\b - \value{pos}, #1)}
			\fill (
					{(\b - \value{pos} - \pgfmathresult) / #1}
					,
					{- \pgfmathresult - 1}
					) circle (.2);
			}
			\foreach \i in {1,...,#1}
			{
			\stepcounter{pos}
			\pgfmathparse{Mod(-\value{pos}, #1) == #1 - 1}
			\ifnum \pgfmathresult = 0
				\pgfmathparse{Mod(-\value{pos}, #1)}
				\fill (
					{(-\value{pos} - \pgfmathresult) / #1}
					,
					{-\pgfmathresult - 1}
					) circle (.2);
			\else
				\breakforeach
			\fi
			}
			\foreach \k in {1,2}
			\foreach \i in {1,...,#1}
				{
				\pgfmathparse{Mod(-\value{pos}, #1)}
				\fill (
					{(-\value{pos} - \pgfmathresult) / #1}
					,
					{-\pgfmathresult - 1}
					) circle (.2);
				\stepcounter{pos}
				}
	\end{tikzpicture} #3
	\end{center}
}
\author{Salim \textsc{Rostam}\thanks{Univ Rennes, CNRS, IRMAR - UMR 6625, F-35000 Rennes, France}}
\title{Identifying Young diagrams\\ among residue multisets}
\begin{document}
\maketitle

\abstract{To any Young diagram we can associate the multiset of residues of all its nodes. This paper is concerned with the inverse problem: given a multiset of elements of $\Z/e\Z$, does it comes from a Young diagram? We give a full solution in level one and a partial answer in higher levels for Young multidiagrams, using Fayers's notions of core block and weight of a multipartition. We apply the result in level one to study a shift operation on partitions.}


\section{Introduction}

Studying the representation theory of a finite group $G$ over a field $k$ of characteristic zero reduces to find the irreducible representations of $G$. If $G = \mathfrak{S}_n$ is the symmetric group on $n$ letters, the irreducible representations are indexed by the \emph{partitions} of $n$, that is, non-increasing sequences of positive integers $\lambda = (\lambda_1 \geq \dots \geq \lambda_h > 0)$ with sum $\lvert \lambda \rvert = n$. If the field $k$ is of positive characteristic $p$, some representations may not be written as a direct sum of irreducible ones. Hence, we are also interested in the \emph{blocks} of the group algebra, that is, indecomposable two-sided ideals. These blocks are parametrised by the \emph{$p$-cores} of the partitions of $n$, in particular, any block is uniquely determined by its $p$-core and its \emph{$p$-weight}. Note that the set of irreducible representations can be parametrised by the \emph{$p$-regular} partitions of $n$.

More generally, we can replace the symmetric group $\mathfrak{S}_n$ by a \emph{complex reflection group}. The set of \emph{irreducible} complex reflection groups consists of an infinite family $\{G(r,p,n)\}_{r,p,n}$ where $r, p, n$ are positive integers with $p \mid r$, and also a finite number of exceptions (see~\cite{shephard-todd}). The complex reflection group $G(r,1,n)$ is isomorphic to $(\Z/r\Z)\wr\mathfrak{S}_n \simeq (\Z/r\Z)^n \rtimes \mathfrak{S}_n$ and can be seen as the set of $n\times n$ monomial matrices with non-zero entries in the set of complex $r$-roots of unity, while $G(r,p,n)$ is a certain subgroup of $G(r,1,n)$ of index $p$. We can then study the representation theory of a \emph{Hecke algebra} $\Hnr(q)$ of $G(r,1,n)$, where $\mcharge  = (\charge_1,\dots,\charge_r)\in \Z^r$ is a multicharge (\cite{ariki-koike,broue-malle,bmr}). The algebra $\Hnr(q)$ is  a particular deformation of the group algebra of $G(r,1,n)$ and $q \in k$ is the deformation parameter. Assume that $q \in k\setminus\{0\}$ and let $e \geq 0$ be its multiplicative order (with $e \coloneqq 0$ if $q$ is of infinite order). The representation theories of $\Hnr(q)$ and $G(r,1,n)$ are deeply linked. For instance, if $r = 1$ then $G(1,1,n) \simeq \mathfrak{S}_n$ and the situation is the following: if $\Hnr(q)$ is semisimple then its irreducible representations are indexed by the partitions of $n$, otherwise they are parametrised by the $e$-regular partitions of $n$ and the blocks are parametrised by the $e$-cores of partitions of~$n$.
In the general case $r \geq 1$, if $\Hnr(q)$ is semisimple then its irreducible representations are indexed by the \emph{$r$-partitions} of $n$, that is, $r$-tuples $\blam = \bigl(\lambda^{(1)},\dots,\lambda^{(r)}\bigr)$ with $\lvert \blam \rvert = \lvert\lambda^{(1)}\rvert + \dots + \lvert\lambda^{(r)}\rvert = n$.  On the contrary, if $\Hnr(q)$ is non-semisimple then the situation is more complex. First, its irreducible representations can be indexed by a non-trivial generalisation of $e$-regular partitions, known as \emph{Kleshchev} $r$-partitions (see~\cite{ariki:classification,ariki-mathas}). Similarly, the naive generalisation of $e$-cores to $r$-partitions, the \emph{$e$-multicores}, do not parametrise in general the blocks of $\Hnr(q)$. Namely, Lyle and Mathas~\cite{lyle-mathas:blocks} proved that the blocks of $\Hnr(q)$ are parametrised by the multisets of $\mcharge$-residues modulo $e$ of the $r$-partitions of $n$. To make things more explicit, let $\Q = \oplus_{i \in \Ze} \Z \alpha_i$ be a free abelian group. If $\blam$ is a multipartition, for any $i \in \Ze$ we denote by $\ci{\mcharge}(\blam)$ the number of $i$-nodes of $\blam$. Then the block \emph{corresponding} to $\blam$ is
\[
\alpha^\mcharge(\blam) \coloneqq \sum_{i \in \Ze} \ci{\mcharge}(\blam) \alpha_i \in \Q,
\]
and we say that $\blam$ \emph{lies} in $\alpha^\mcharge(\blam)$.
During their proof, Lyle and Mathas used a generalisation of the $e$-weight to $r$-partitions, introduced by Fayers~\cite{fayers:weights}. If $\blam = \bigl(\lambda^{(1)},\dots,\lambda^{(r)}\bigr)$ is an $r$-partition, its \emph{$\mcharge$-weight} is given by
\[
\wmc(\blam) \coloneqq \sum_{j = 1}^r \ci[\charge_j]{\mcharge}(\blam) - \frac{1}{2}\sum_{i \in \Ze}\left(\ci{\mcharge}(\blam) - \ci[i+1]{\mcharge}(\blam)\right)^2.
\]
In particular, two $r$-partitions that lie in the same block (\textit{i.e.} $\alpha^\mcharge(\blam)=\alpha^\mcharge(\bmu)$) have the same $\mcharge$-weight.
Fayers proved that if $r = 1$ then this definition of $\charge$-weight coincides with the usual notion of $e$-weight for partitions. For instance, if the partition $\lambda$ is an $e$-core we have:
\[
\cialpha[0](\lambda) = \frac{1}{2}\sum_{i \in \Ze} \left(\cialpha(\lambda) - \cialpha[i+1](\lambda)\right)^2
\]
 (in level $1$ we may omit to write the multicharge, assuming, without loss of generality, that $\charge = \chargez$). Moreover, while the fact that the $e$-weight of a partition  is non-negative follows from the definition, this is a non-trivial statement for the $\mcharge$-weight in higher levels (see~\cite{fayers:weights}).

In level one, a partition is an $e$-core if and only if its $e$-weight is zero. In higher levels, we still have an implication: if an $r$-partition has $\mcharge$-weight at most $r-1$ then it is an $e$-multicore. However, we can find $e$-multicores of arbitrary large $\mcharge$-weight. Moreover, we can have two $r$-partitions $\blam$ and $\bmu$ lying in the same block, thus having the same $\mcharge$-weight, but with $\blam$ (respectively $\bmu$) being (resp. not being) an $e$-multicore.  In order to obtain a more satisfying notion of core of a multipartition, Fayers~\cite{fayers:core} introduced the notion of \emph{core block} and \emph{reduced $e$-multicore} (the latter expression is due to~\cite{lyle-mathas:blocks}): an $e$-multicore $\blam$ is \emph{$\mcharge$-reduced} if any multipartition $\bmu$ such that $\alpha^\mcharge(\bmu) = \alpha^\mcharge(\blam)$ is an $e$-multicore, in which case the block associated with $\blam$ is a \emph{core block}. If $r = 1$ then every $e$-core is $\charge$-reduced and  every block is a core block. Now if $r > 1$, to any multipartition we can still associate a unique core block, however there is no canonical choice for a reduced $e$-multicore inside this core block. Note that Jacon--Lecouvey~\cite{jacon-lecouvey} managed to define what they have called the \emph{$(e, \mcharge)$-core} of a multipartition (they also use the notion of \emph{reduced $(e, \mcharge)$-core}, which is different from the notion of reduced $e$-multicore that we use here).  The $(e,\mcharge)$-core of an $r$-partition is again an $r$-partition, and the situation is then entirely similar to the level one case, namely, for the combinatorics of blocks. However, the $(e, \mcharge)$-core of a multipartition is associated with a possibly different multicharge, which depends on the multipartition.

Now let $p \mid r$. We can use Clifford theory to study the representation theory of $G(r,p,n)$, and this involves the following natural \emph{shift} operation of order $p$ on $r$-partitions:
\[
\blam = \bigl(\lambda^{(1)},\dots,\lambda^{(r)}\bigr) \longmapsto \prescript{\sigma}{}{\blam} \coloneqq \bigl(\lambda^{(r-d+1)},\dots,\lambda^{(r)},\lambda^{(1)},\dots,\lambda^{(r-d)}\bigr),
\]
 where $d \coloneqq \frac{r}{p}$. Typically, if $S^{\blam}$ is an irreducible $G(r,1,n)$-module then the number of irreducible constituents of $S^{\blam}$ seen as an $G(r,p,n)$-module  will only depend on the cardinality of the orbit of $\blam$ under the shift operation (see, for instance, \cite{ariki:representation,chlouveraki-jacon,genet-jacon,hu-mathas:decomposition}).
Now take $e \geq 2$ and assume that $p$ also divides $e$. There is a map $\sigma : \Q \to \Q$ of order $p$ such that if  $\mcharge \in \Z^r$ is a multicharge satisfying some compatibility conditions, we have the relation
\begin{equation}
\label{equation:intro_shift_block_parts}
\alpha^{\mcharge}\bigl(\prescript{\sigma}{}{\blam}) = \sigma\cdot \alpha^\mcharge(\blam).
\end{equation}
In~\cite{rostam:cyclotomic}, the author proved that if $r \geq 2$ then a block $\alpha \in \Qmcharge$ that is \emph{stuttering}, that is $\sblock\alpha=\alpha$, always corresponds to a \emph{stuttering} multipartition, that is, a multipartition $\blam$ satisfying $\prescript{\sigma}{}{\blam} = \blam$.

\bigskip

The aim of this paper is to study the natural generalisation of Fayers's weight function to $\Q = \oplus_{i \in \Ze} \Z\alpha_i$, and then apply this result in level one to define a shift operation on partitions so that a relation such as~\eqref{equation:intro_shift_block_parts} holds. More precisely, let $\mcharge \in \Z^r$ be a multicharge and for any $\alpha = \sum_{i \in \Ze} \cialpha\alpha_i \in \Q$ define its $\mcharge$-weight by:
\[
\wmc(\alpha) \coloneqq\sum_{j = 1}^r \cialpha[\charge_j] - \frac{1}{2}\sum_{i \in \Ze}\left(\cialpha - \cialpha[i+1]\right)^2 \in \Z,
\]
so that $\wmc\bigl(\alpha^\mcharge(\blam)\bigr) = \wmc(\blam)$ for any $r$-partition $\blam$. We also define the set
\[
\Qmcharge \coloneqq \bigl\{\alpha^\mcharge(\blam) : \blam \text{ is an } r\text{-partition}\bigr\} \subseteq \Q.
\]
Since $\wmc(\blam) \geq 0$ for any $r$-partition $\blam$, we have an inclusion $\Qmcharge \subseteq \{\alpha \in Q : \wmc(\alpha) \geq 0\}$. The paper is mainly concerned in studying a reverse inclusion. More precisely, we prove the following results.

\begin{theoremintro}[Corollaries~\ref{corollary:Qmcharge_supset_superlevel_weight} and~\ref{corollary:sharpest_upper_bound_bestboundcoreblock}]
\label{theoremintro:C}
Let $r \geq 2$ and $e > 0$. We have
\[
\Qmcharge \supseteq \bigl\{\alpha \in \Q : \wmc(\alpha) > \idealbound r e - r\bigr\},
\]
where $\idealbound r e \coloneqq \left\lfloor\frac{r^2}{2e}\left\lfloor\frac{e^2}{4}\right\rfloor\right\rfloor$.
\end{theoremintro}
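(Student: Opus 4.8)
The plan is to reduce the statement to two essentially independent facts: a structural reduction of arbitrary blocks to core blocks inside cosets, and an extremal computation bounding the weight of core blocks. Throughout I set $\delta \coloneqq \sum_{i \in \Ze}\alpha_i \in \Q$, the element recording the residues of a full $e$-rim hook. A direct computation with the weight formula gives $\wmc(\alpha + \delta) = \wmc(\alpha) + r$ for every $\alpha \in \Q$, since translating by $\delta$ raises each term $\cialpha[\charge_j]$ by one while leaving all the differences $\cialpha - \cialpha[i+1]$ unchanged. Adding a full $e$-rim hook to any one component of an $r$-partition realises this translation at the level of blocks, so $\Qmcharge + \delta \subseteq \Qmcharge$, with weight raised by exactly $r$.

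First I would establish the structural input, which I expect to be the heart of the argument: every coset $\alpha + \Z\delta$ contains a core block $\beta$, and the blocks of that coset lying in $\Qmcharge$ are exactly $\{\beta + k\delta : k \geq 0\}$. The downward direction rests on Fayers's dichotomy: if a block in $\Qmcharge$ is not a core block, then it contains a multipartition $\bmu$ that is not an $e$-multicore, so some component of $\bmu$ carries a removable $e$-rim hook; deleting it produces a multipartition whose block is $\alpha - \delta$, again in $\Qmcharge$. Iterating, one descends within the coset until reaching a core block, which by definition cannot be reduced further and is therefore the minimum. The existence of a core block in \emph{every} coset (not merely those already meeting $\Qmcharge$) is where the combinatorics of reduced $e$-multicores enters; I would parametrise core blocks by their per-runner data on the $e$-abacus and check that these data surject onto the transverse classes $\alpha \bmod \Z\delta$, which form the root lattice.

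With this in hand, let $\bestboundcoreblock[r]$ denote the maximal $\mcharge$-weight of a core block. Given $\alpha$ with $\wmc(\alpha) > \bestboundcoreblock[r] - r$, write $\alpha = \beta + k\delta$ with $\beta$ the core block of its coset; then $kr = \wmc(\alpha) - \wmc(\beta) > \bestboundcoreblock[r] - r - \bestboundcoreblock[r] = -r$, whence $k \geq 0$ and $\alpha \in \Qmcharge$, since $\beta \in \Qmcharge$ and $\Qmcharge$ is stable under $+\delta$. This is the content of Corollary~\ref{corollary:Qmcharge_supset_superlevel_weight}, already yielding the inclusion with $\bestboundcoreblock[r]$ in place of $\idealbound r e$.

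The remaining, genuinely quantitative task is to identify $\bestboundcoreblock[r]$ with $\idealbound r e = \lfloor \frac{r^2}{2e}\lfloor\frac{e^2}{4}\rfloor\rfloor$. Using the parametrisation of a core block, its weight becomes an explicit quadratic form in the runner data, and maximising it is a constrained optimisation over $\Ze$. I expect the optimum to be attained by a two-step configuration splitting the $e$ runners into blocks of sizes $a$ and $e-a$, which accounts for the factor $\lfloor\frac{e^2}{4}\rfloor = \max_a a(e-a)$, while the $r$ components contribute the factor $r^2$ and the quadratic normalisation yields $\frac{1}{2e}$; the floor reflects integrality of the data. Proving that this shape is optimal, rather than merely a good candidate, is the main obstacle, and I would handle it by a convexity/rearrangement argument showing that any core block can be deformed towards the extremal two-block shape without decreasing its weight; this is Corollary~\ref{corollary:sharpest_upper_bound_bestboundcoreblock}. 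Substituting $\bestboundcoreblock[r] = \idealbound r e$ into the inclusion above completes the proof.
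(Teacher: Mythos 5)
Your first two paragraphs reproduce the paper's architecture faithfully: the existence of a unique core block in each coset $\alpha+\Z\constQ$, the characterisation of $\Qmcharge\cap(\alpha+\Z\constQ)$ as the ray above it, and the arithmetic $kr=\wmc(\alpha)-\wmc(\beta)>-r$ are exactly Lemma~\ref{lemma:core_block_associated_with_any_alpha} and Corollary~\ref{corollary:Qmcharge_supset_superlevel_weight}. That part is sound. Note, however, that the theorem only requires the \emph{upper bound} $\bestboundcoreblock\leq\idealbound{r}{e}$, not the equality you set out to prove; and the equality is in fact false in general. For $r=3$ one has $\bestboundcoreblock[3]=e$ (Proposition~\ref{proposition:bound_r=3}), while $\idealbound{3}{e}=\bigl\lfloor\frac{9e}{8}\bigr\rfloor>e$ for even $e\geq 8$.

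The genuine gap is in your extremal analysis. You propose to show by a rearrangement argument that the maximum weight of a reduced $e$-multicore is attained by a ``two-block'' configuration, splitting the runners into two complementary sets of sizes $a$ and $e-a$ and the $r$ components into two groups. Such a configuration has weight $s(r-s)\min(a,e-a)\leq\bigl\lfloor\frac{r^2}{4}\bigr\rfloor\bigl\lfloor\frac{e}{2}\bigr\rfloor$, which is the paper's \emph{lower} bound (Corollary~\ref{corollary:lower_bound_bestboundcoreblock}) and is strictly smaller than $\bestboundcoreblock$ when $e$ is odd: for $r=e=5$ the two-block shape gives at most $6\cdot 2=12$, whereas the pentagon configuration $E_1=\{1,2\}$, $E_2=\{1,3\}$, $E_3=\{3,5\}$, $E_4=\{4,5\}$, $E_5=\{2,4\}$ of Proposition~\ref{proposition:bound_e=5} gives weight $15=\idealbound{5}{5}$. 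So no deformation toward the two-block shape can be weight non-decreasing, and the proposed rearrangement cannot establish the bound $\frac{r^2}{2e}\bigl\lfloor\frac{e^2}{4}\bigr\rfloor$. The paper's route is different in kind: after reducing (by a genuine rearrangement step, Proposition~\ref{proposition:max_w_equal_size}) to the case where all subsets $E_1,\dots,E_r$ have the same cardinality $k$, the weight becomes the quadratic form $\qek$ attached to the intersection matrix $\matqk=(k-\lvert E\cap F\rvert)_{E,F\in\partsek}$, and the key input is spectral: $\matqk$ has a single positive eigenvalue $k\binom{e-1}{k}$ with eigenvector the all-ones vector, all other eigenvalues being $0$ or $-\binom{e-2}{k-1}$ (Lemma~\ref{lemma:spectra_Aeq}). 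Hence $\qek(x)\leq\frac{\lVert x\rVert_1^2}{2e}k(e-k)$, and the near-extremisers are spread uniformly over the $k$-subsets rather than concentrated on two complementary ones. Without this (or an equivalent convexity statement for the specific form $\qek$), the quantitative half of the theorem is not proved.
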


\begin{theoremintro}[Propositions~\ref{proposition:partitions_wgeq0} and~\ref{proposition:Qmcharge_semialg_particular_case}]
\label{theoremintro:0}
Let $r \geq 1$ and $e \geq 0$. Assume that $r = 1$ or $(r, e) \in  \bigl\{(2,2), (2,3), (3,2)\bigr\}$. Then
\[
\Qmcharge = \bigl\{\alpha \in \Q : \wmc(\alpha) \geq 0\bigr\}.
\]
In particular, given a multiset of elements of $\mathbb{Z}/e\mathbb{Z}$, we can easily determine whether it comes from a Young diagram by computing its weight.
\end{theoremintro}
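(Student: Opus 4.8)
In both statements the inclusion $\Qmcharge \subseteq \{\alpha \in \Q : \wmc(\alpha) \ge 0\}$ is already known, since $\wmc(\blam)\ge 0$ for every $r$-partition $\blam$; hence all the content lies in the reverse inclusion $\{\alpha \in \Q : \wmc(\alpha)\ge 0\}\subseteq \Qmcharge$, which I would prove separately for $r=1$ and for the three exceptional pairs.

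For $r=1$ set $\delta \coloneqq \sum_{i\in\Ze}\alpha_i$. Adding $\delta$ to $\alpha$ leaves every difference $\cialpha-\cialpha[i+1]$ unchanged and raises $\cialpha[0]$ by one, so $\wmc(\alpha+\delta)=\wmc(\alpha)+1$ and therefore $\{\alpha : \wmc(\alpha)\ge 0\}=\{\alpha : \wmc(\alpha)=0\}+\N\delta$. Dually, removing an $e$-rim hook from a partition subtracts $\delta$ from its content vector and lowers its weight by one, whence $\Qmcharge=\{\alpha^\mcharge(\kappa) : \kappa\ \text{an}\ e\text{-core}\}+\N\delta$. Comparing the two decompositions reduces the level-one case to the single identity $\{\alpha^\mcharge(\kappa) : \kappa\ \text{an}\ e\text{-core}\}=\{\alpha\in\Q : \wmc(\alpha)=0\}$. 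The inclusion $\subseteq$ is exactly the $e$-core identity recalled in the introduction, namely $\wmc(\kappa)=0$. For $\supseteq$ I would take $\alpha$ with $\wmc(\alpha)=0$, record the differences $a_i\coloneqq\cialpha-\cialpha[i+1]$ (which satisfy $\sum_i a_i=0$), and invoke the abacus description of $e$-cores: the map sending an $e$-core $\kappa$ to $\bigl(\ci{\mcharge}(\kappa)-\ci[i+1]{\mcharge}(\kappa)\bigr)_{i\in\Ze}$ is a bijection onto $\{(a_i) : \sum_i a_i=0\}$. Choosing the core $\kappa$ with these prescribed differences, $\alpha$ and $\alpha^\mcharge(\kappa)$ agree in all their differences $a_i$ and, both having weight $0$, share the same $\alpha_0$-coefficient $\tfrac12\sum_i a_i^2$; hence $\alpha=\alpha^\mcharge(\kappa)$. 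Injectivity of the above map is immediate from $\wmc(\alpha+\delta)=\wmc(\alpha)+1$; the real point, and the main obstacle of the whole proof, is its surjectivity — that every zero-sum difference vector is realised by an $e$-core — which I would draw from the classical abacus combinatorics of $e$-cores. The case $e=0$ is degenerate: there every partition is its own core, the weight vanishes identically, and one checks by an elementary inequality that $\wmc\le 0$ on all of $\Q$, so that $\{\alpha : \wmc(\alpha)\ge 0\}=\{\alpha : \wmc(\alpha)=0\}$ is again the set of content vectors of partitions.

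For $(r,e)\in\{(2,2),(2,3),(3,2)\}$ I would derive the reverse inclusion directly from Theorem~\ref{theoremintro:C}. A short computation gives $\idealbound 2 2=\lfloor 1\cdot 1\rfloor=1$, $\idealbound 2 3=\lfloor \tfrac23\cdot 2\rfloor=1$ and $\idealbound 3 2=\lfloor\tfrac94\rfloor=2$, so that $\idealbound r e-r=-1$ in each of the three cases. Theorem~\ref{theoremintro:C} then yields $\Qmcharge\supseteq\{\alpha : \wmc(\alpha)>-1\}$, and since $\wmc$ is integer-valued this superlevel set equals $\{\alpha : \wmc(\alpha)\ge 0\}$; together with the easy inclusion this gives the desired equality. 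These three are in fact exactly the pairs with $r\ge 2$ for which $\idealbound r e<r$, which is why no larger list can appear; for them the difficulty is entirely displaced into the proof of Theorem~\ref{theoremintro:C}.
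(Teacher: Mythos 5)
Your proposal is correct and follows essentially the same route as the paper: for $r=1$ you reduce via the $\constQ$-shift to the identification of the weight-$0$ set with the content vectors of $e$-cores (the paper's Lemma~\ref{lemma:cores_w=0}, proved exactly by the abacus bijection and the coefficient check you describe), and for the three exceptional pairs you combine the superlevel-set inclusion with the bound on core-block weights. The only cosmetic difference is that for $(2,2),(2,3),(3,2)$ you invoke the general bound $\idealbound r e$ from Theorem~\ref{theoremintro:C}, whereas the paper uses the exact values $\bestboundcoreblock\in\{1,1,2\}$ from Propositions~\ref{proposition:bound_r=2} and~\ref{proposition:bound_r=3}; both yield $\bestboundcoreblock - r \leq -1$ and hence the same conclusion.
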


Note that Theorem~\ref{theoremintro:C} does not hold when $e = 0$, whereas Theorem~\ref{theoremintro:0} does (when $r = 1$).  If $e > 0$, a set of the form $\{\alpha \in \Q : \wmc(\alpha) \geq C\bigr\}$ for $C \geq 0$ is never empty (and even always infinite), so that the inclusion of Theorem~\ref{theoremintro:C} is not trivially true. The main idea of the proof of Theorem~\ref{theoremintro:C} is to prove that the weight function is bounded above by a constant $\bestboundcoreblock$ on the set of \emph{reduced} $e$-multicores. Note that this assertion is wrong if we only consider $e$-multicores, which can have arbitrarily large weight.  With some calculations involving binary matrices, we then give a sharp estimation of the bound $\bestboundcoreblock$. Interestingly, as it is mentioned in the statement, Theorem~\ref{theoremintro:0} gives a simple criterion to determine whether an element of $Q$, in particular, a multiset of residues, actually comes from a partition. To the author's knowledge, such a result was not already known and is a non-trivial generalisation of an old result of Robinson--Thrall~\cite{robinson-thrall} from $e = 0$ to $e \geq 0$. We will also give a procedure to compute all the corresponding partitions.

Now assume that $r = 1$, take $p$ dividing $e$ and let $\lambda$ be a partition. Shifting the components of the \emph{$e$-quotient} of $\lambda$, we define another partition $\sigma\lambda$. Using Theorem~\ref{theoremintro:0}, we prove the following analogue of~\eqref{equation:intro_shift_block_parts} in level one. 

\begin{theoremintro}[Corollary~\ref{corollary:case_spart_lambda_good}]
Let $\lambda$ be a partition. With $\ep\coloneqq\frac{e}{p}$, we have
\[
\alpha(\sigma\lambda) = \sigma\cdot\alpha(\lambda) \iff \lvert \sigma\lambda\rvert = \lvert \lambda \rvert \iff \cialpha[0](\lambda) = \cialpha[\ep](\lambda).
\]
\end{theoremintro}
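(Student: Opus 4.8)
The plan is to prove the three conditions equivalent in the order $(A)\Leftrightarrow(C)$ and $(B)\Leftrightarrow(C)$, where $(A)$ is $\alpha(\sigma\lambda)=\sigma\cdot\alpha(\lambda)$, $(B)$ is $\lvert\sigma\lambda\rvert=\lvert\lambda\rvert$ and $(C)$ is $\cialpha[0](\lambda)=\cialpha[\ep](\lambda)$; the crux will be the return implication $(C)\Rightarrow(A)$, analysed on the abacus. First I would reduce to the case where $\lambda$ is an $e$-core. Write $\delta\coloneqq\sum_{i\in\Ze}\alpha_i\in\Q$ and let $\kappa$ be the $e$-core of $\lambda$. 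Since adding an $e$-ribbon contributes exactly one node of each residue, one has $\cialpha(\lambda)=\cialpha(\kappa)+\wc[\chargez](\lambda)$ for every $i$, hence $\alpha(\lambda)=\alpha(\kappa)+\wc[\chargez](\lambda)\,\delta$; the same holds for $\sigma\lambda$, whose $e$-core is $\sigma\kappa$ (the shift commutes with emptying the $e$-quotient) and whose $e$-weight equals $\wc[\chargez](\lambda)$ because cyclically permuting the components of the $e$-quotient preserves their total size. As $\sigma$ merely permutes the $\alpha_i$ it fixes $\delta$, so $\alpha(\sigma\lambda)-\sigma\cdot\alpha(\lambda)=\alpha(\sigma\kappa)-\sigma\cdot\alpha(\kappa)$, while $\lvert\sigma\lambda\rvert-\lvert\lambda\rvert=\lvert\sigma\kappa\rvert-\lvert\kappa\rvert$ and $\cialpha[0](\lambda)-\cialpha[\ep](\lambda)=\cialpha[0](\kappa)-\cialpha[\ep](\kappa)$. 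Thus all three statements are unchanged on passing to $e$-cores, and I may assume $\lambda=\kappa$.

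Next come the two easy implications. Applying the augmentation $\epsilon\colon\Q\to\Z$, $\alpha_i\mapsto 1$ (which is $\sigma$-invariant and sends $\alpha(\bmu)$ to $\lvert\bmu\rvert$) to the equality $\alpha(\sigma\lambda)=\sigma\cdot\alpha(\lambda)$ gives $\lvert\sigma\lambda\rvert=\lvert\lambda\rvert$, so $(A)\Rightarrow(B)$. For the link with $(C)$ I would use that $\sigma$ acts by $\alpha_i\mapsto\alpha_{i-\ep}$, whence the weight formula yields $\wc[\chargez](\sigma\cdot\alpha)-\wc[\chargez](\alpha)=\cialpha[\ep]-\cialpha[0]$ for every $\alpha\in\Q$; combined with the unconditional equality $\wc[\chargez]\bigl(\alpha(\sigma\lambda)\bigr)=\wc[\chargez](\lambda)=\wc[\chargez]\bigl(\alpha(\lambda)\bigr)$ coming from the previous paragraph, condition $(A)$ forces $\cialpha[\ep](\lambda)=\cialpha[0](\lambda)$, giving $(A)\Rightarrow(C)$.

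It remains to run the argument backwards, and this is the heart of the matter. On the abacus the shift moves every bead on a non-wrapping runner by $+\ep$ and every bead on a wrapping runner by $\ep-e$; the two offsets agree modulo $e$, so the residues are permuted by $\sigma$, but the wrapping beads change level, and this introduces a correction. I would compute $\alpha(\sigma\kappa)$ and $\lvert\sigma\kappa\rvert$ explicitly from the charge vector $(s_0,\dots,s_{e-1})$ of $\kappa$ and exhibit
\[
\alpha(\sigma\kappa)-\sigma\cdot\alpha(\kappa)=\bigl(\cialpha[0](\kappa)-\cialpha[\ep](\kappa)\bigr)\,\eta
\]
for an explicit $\eta\in\Q$ with $\epsilon(\eta)\neq 0$, so that the difference vanishes exactly when $\cialpha[0](\kappa)=\cialpha[\ep](\kappa)$, which simultaneously forces $\lvert\sigma\kappa\rvert=\lvert\kappa\rvert$; this yields $(C)\Rightarrow(A)$ and $(B)\Leftrightarrow(C)$ at once. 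Theorem~\ref{theoremintro:0} enters to make the identification legitimate: once $\cialpha[0](\lambda)=\cialpha[\ep](\lambda)$ we have $\wc[\chargez]\bigl(\sigma\cdot\alpha(\lambda)\bigr)=\wc[\chargez](\lambda)\geq 0$, so $\sigma\cdot\alpha(\lambda)$ genuinely lies in $\Qcharge[\chargez]$ and is therefore the block of an actual partition, which the abacus computation then recognises as $\sigma\lambda$. I expect the main obstacle to be precisely the bookkeeping of the wrapping runners: isolating the level change they undergo and recognising the resulting correction as the single multiple of $\cialpha[0](\kappa)-\cialpha[\ep](\kappa)$ above is where all the content lies, everything else being either the augmentation or the weight formula.
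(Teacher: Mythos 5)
Your plan is correct and follows essentially the same route as the paper: reduce to the $e$-core via the rim-hook residue count, then establish the key identity $\alpha(\sigma\kappa)-\sigma\cdot\alpha(\kappa)=\bigl(\cialpha[0](\kappa)-\cialpha[\ep](\kappa)\bigr)\constQ$ by the explicit abacus computation (this is the paper's Proposition~\ref{proposition:alpha_tspartlambda}, with your $\eta$ being $\constQ$), from which all three equivalences drop out at once. The concluding appeal to Theorem~\ref{theoremintro:0} is superfluous --- $\alpha(\sigma\lambda)$ lies in $\Qcharge[\chargez]$ by construction since $\sigma\lambda$ is already defined as a partition --- but harmless.
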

In particular, if $\lambda$ is a partition of $n$ then $\spart\lambda$ is a partition of $m$ with  possibly $m \neq n$.  Finally, we complete the study of stuttering blocks initiated in~\cite{rostam:stuttering}, by giving its analogue in level one. It turns out that a similar equivalence holds, now with an additional condition on the weight of the block.

\begin{theoremintro}[Lemma~\ref{lemma:stuttering_partition} and Corollary~\ref{corollary:stuttering_blocks}]
\label{theoremintro:stuttering}
Assume that $r = 1$ and let $\alpha \in \Qcharge[\chargez]$. The block $\alpha$ corresponds to a partition $\lambda$ satisfying $\spart\lambda=\lambda$ if and only if $\sblock\alpha=\alpha$ and $p \mid \w(\alpha)$.
\end{theoremintro}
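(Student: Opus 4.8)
The plan is to prove the two implications separately, relying on the level-one shift relation of Corollary~\ref{corollary:case_spart_lambda_good} together with the classical description of a partition by its $e$-core and $e$-quotient. Throughout I use that in level one the weight $\w(\alpha)$ of a block equals the common $e$-weight of the partitions lying in $\alpha$, that is the total size $\sum_i \lvert\lambda^{[i]}\rvert$ of the $e$-quotient, and that the shift $\sigma$ acts on an abacus for $\lambda$ by cyclically rotating the $e$ runners by $\ep$, hence simultaneously shifting the $e$-quotient and the $e$-core. I also record that $\sigma$ acts on $\Q$ by $\alpha_i \mapsto \alpha_{i+\ep}$, so that $\sblock\alpha = \alpha$ is equivalent to $\cialpha = \cialpha[i-\ep]$ for all $i$, i.e. the coefficients of $\alpha$ being constant on the residue classes modulo $\ep$.

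For the forward implication, assume $\alpha = \alpha(\lambda)$ for some $\lambda$ with $\spart\lambda = \lambda$. Since $\spart\lambda = \lambda$ trivially gives $\lvert\spart\lambda\rvert = \lvert\lambda\rvert$, Corollary~\ref{corollary:case_spart_lambda_good} yields $\sblock\alpha = \sblock{\alpha(\lambda)} = \alpha(\spart\lambda) = \alpha(\lambda) = \alpha$, so $\alpha$ is stuttering. For the divisibility, observe that $\spart\lambda = \lambda$ forces the $e$-quotient of $\lambda$ to be invariant under the cyclic shift by $\ep$, that is $\lambda^{[i]} = \lambda^{[i+\ep]}$ for all $i$. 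As the shift by $\ep$ partitions the index set $\Ze$ into $\ep$ orbits each of cardinality $p$, the total size of the quotient equals $p$ times a sum over orbit representatives; hence $p \mid \w(\alpha)$.

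For the converse, assume $\sblock\alpha = \alpha$ and $p \mid \w(\alpha)$, and write $w \coloneqq \w(\alpha) \geq 0$ (non-negative since $\alpha \in \Qcharge[\chargez]$). I first show that the $e$-core $\kappa$ determined by the block $\alpha$ is itself stuttering. The block $\alpha(\kappa)$ differs from $\alpha$ by $w\sum_i \alpha_i$, which is $\sigma$-fixed, so $\sblock{\alpha(\kappa)} = \alpha(\kappa)$; in particular the coefficients of $\alpha(\kappa)$ are constant on residue classes modulo $\ep$, which gives $\cialpha[0](\kappa) = \cialpha[\ep](\kappa)$ and hence $\lvert\spart\kappa\rvert = \lvert\kappa\rvert$ by Corollary~\ref{corollary:case_spart_lambda_good}. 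Since $\spart\kappa$ is again an $e$-core (rotating an empty quotient keeps it empty), the same corollary gives $\alpha(\spart\kappa) = \sblock{\alpha(\kappa)} = \alpha(\kappa)$, and as an $e$-core is the unique partition lying in its block we conclude $\spart\kappa = \kappa$. Finally I build a stuttering partition in $\alpha$: because the shift by $\ep$ has orbits of size $p$ and $p \mid w$, I can choose a shift-invariant $e$-quotient $(\lambda^{[0]},\dots,\lambda^{[e-1]})$ with $\lambda^{[i]} = \lambda^{[i+\ep]}$ and total size $w$ (for instance by placing one partition of size $w/p$ on a single orbit and leaving the other orbits empty). Assembling this quotient on the stuttering core $\kappa$ produces a partition $\lambda$ with $\alpha(\lambda) = \alpha$ whose abacus is invariant under the rotation by $\ep$, so $\spart\lambda = \lambda$.

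The main obstacle is the step showing that a stuttering block forces its $e$-core to be stuttering: this is where the size-changing nature of $\sigma$ matters, and it must be handled by feeding the residue identity $\cialpha[0](\kappa) = \cialpha[\ep](\kappa)$ extracted from $\sblock\alpha = \alpha$ into Corollary~\ref{corollary:case_spart_lambda_good} and then invoking uniqueness of the core inside a block. The remaining care lies in the bookkeeping of the core--quotient decomposition under the runner rotation, namely checking that assembling a shift-invariant quotient over a stuttering core indeed yields an abacus fixed by $\sigma$.
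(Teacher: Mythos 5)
Your proof is correct and follows essentially the same route as the paper: the forward direction is the paper's Lemma~\ref{lemma:stuttering_partition} (Corollary~\ref{corollary:case_spart_lambda_good} for $\sigma$-invariance of the block, periodicity of the $e$-quotient plus \eqref{equation:eweight_sum_length_equotient} for the divisibility), and the converse reproduces Proposition~\ref{proposition:stuttering_core_block} (the core of a stuttering block is a stuttering $e$-core, via $\sigma\cdot\constQ=\constQ$ and uniqueness of the $e$-core in a block) followed by the paper's construction in Corollary~\ref{corollary:stuttering_blocks} of wrapping a shift-invariant $e$-quotient of total size $\w(\alpha)$ supported on a single $\sigma$-orbit of runners onto that core. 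The only discrepancy is the immaterial sign convention $\alpha_i\mapsto\alpha_{i+\ep}$ versus the paper's $\alpha_i\mapsto\alpha_{i-\ep}$, which does not affect the fixed-point conditions used.
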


\bigskip

We now give a brief overview of the paper. Section~\ref{section:bacgroud_core_blocks} introduces the necessary material to define core blocks and give their proprieties as stated in~\cite{fayers:core}. In particular, in~\textsection\ref{subsection:partitions} we define the weight of any element of $Q = \oplus_{i \in \Ze} \Z \alpha_i$ and in~\textsection\ref{subsection:abaci} we define the abaci representations of a partition and recall how to recover the block associated with a partition from its abacus. We use these results to prove  that an element $\alpha \in \Q$ corresponds to an $e$-core if and only if $\alpha$ has weight $0$ (Lemma~\ref{lemma:cores_w=0}) and we deduce the case $r = 1$ of Theorem~\ref{theoremintro:0} (Proposition~\ref{proposition:partitions_wgeq0}). We deduce a simple criterion to determine whether a given collection of residues actually comes from a Young diagram (Corollary~\ref{corollary:existence_young_diagram_given_residues}). In~\textsection\ref{subsection:multipartitions} we recall Fayers's definition of weight for multipartitions, and we reprove that the weight of a multipartition is non-negative using abaci. We also recall an important result from~\cite{fayers:weights} (Proposition~\ref{proposition:weight_min_two_sets}), which expresses the weight of a multipartition as the minimum of the cardinalities of two sets. Finally, in~\textsection\ref{subsection:blocks_multipartitions} we recall from~\cite{fayers:core} the notion of core block and reduced multicore, and we prove that any element of $Q$ is canonically associated with a core block (Lemma~\ref{lemma:core_block_associated_with_any_alpha}). This leads to the definition of the \emph{$\mcharge$-core} of any element of $\Q$ (Definition~\ref{definition:s-core}). We conclude this section by recalling a key result from~\cite{fayers:core} (Proposition~\ref{proposition:multicore_equivalent_statements}), which characterises the abaci of reduced $e$-multicores.

Section~\ref{section:implicit_description_blocks} is the heart of the paper. In~\textsection\ref{subsection:weight_binary} we show that the weights of reduced $e$-multicores can be obtained via a simple functional on binary matrices (Lemma~\ref{lemma:weight_subsets}), where the columns are seen as characteristic vectors of subsets of $\{1,\dots,e\}$. In~\textsection\ref{subsection:bound_weight_core_blocks} we prove that for fixed $r$ and $e$ the weight of a reduced $e$-multicore is bounded above by a constant~$\bestboundcoreblock$  (Theorem~\ref{theorem:bound_core_blocks}). We then deduce the first part of Theorem~\ref{theoremintro:C}, stating that $\Qmcharge$ contains an (infinite) superlevel set for the weight function on $Q$ (Corollary~\ref{corollary:Qmcharge_supset_superlevel_weight}).  We also give the second part of Theorem~\ref{theoremintro:0}, using some results of the next subsection (Proposition~\ref{proposition:Qmcharge_semialg_particular_case}).
In~\textsection\ref{subsection:optimality} the aim is to give sharp bounds for the constant $\bestboundcoreblock$. We first compute $\bestboundcoreblock$ for $r = 2$ and $e = 2$ (Propositions~\ref{proposition:bound_r=2} and~\ref{proposition:bound_e=2}), and we give a lower bound for $\bestboundcoreblock$ using the superadditivity of the sequence $(\bestboundcoreblock)_{e\geq 1}$ (Corollary~\ref{corollary:lower_bound_bestboundcoreblock}). The computation of  a sharp upper bound is more elaborate. We use the fact that the computation of $\bestboundcoreblock$ reduces to maximising a certain quadratic form with integer coefficients on the $(2^e-1)$-sphere for the $1$-norm (Proposition~\ref{proposition:boundcoreblock_maxqr}). Namely, using a calculation taken from graph theory, we compute the eigenvalues of the matrix $\matqk = (\lvert E \cap F\rvert)_{E, F}$, where $E, F$ run over the subsets of $\{1,\dots,e\}$ of cardinality $k$ (Lemma~\ref{lemma:spectra_Aeq}). Note that the matrix $\matqk$ often appears in the literature (see, for instance,  \cite{ryser1,ryser2}). We then deduce an upper bound for $\bestboundcoreblock$ (Corollary~\ref{corollary:sharpest_upper_bound_bestboundcoreblock}), and we  compute other values of $\bestboundcoreblock$ for small $r$ or $e$, proving that $\bestboundcoreblock = \idealbound r e$ (where $\idealbound r e$ is the constant appearing in Theorem~\ref{theoremintro:C}) for (at least) $r \in \{2,4\}$ and $e \in \{2,\dots,6\}$ (Proposition~\ref{proposition:upper_bound_exact_small_cases}). We conclude the section by a quick study of the asymptotic behaviour of $\bestboundcoreblock$, for $e \to \infty$ (see~\eqref{equation:e_infinite}) and $r \to \infty$ (Corollary~\ref{corollary:r_infinite}).

Finally, Section~\ref{section:application} is devoted to an application of Theorem~\ref{theoremintro:0} in level one.  In~\textsection\ref{subsection:shifting_blocks} we define a shift operation $\alpha\mapsto\sblock\alpha$ on $\Q$ of order $p$ where $p$ divides $e$,  and we characterise the blocks $\alpha \in \Q$ such that both $\alpha$ and $\sblock\alpha$ correspond to a partition (Corollary~\ref{corollary:shifting_blocks}). In~\textsection\ref{subsection:shifting_partitions} we propose a definition of a shift operation $\lambda\mapsto\spart\lambda$ on the set of partitions, uniquely defined on the set of $e$-cores by the following requirement: if $\lambda$ is an $e$-core then $\alpha(\spart\lambda)$ is the core of $\sblock\alpha(\lambda)$. In particular, if $\sblock\alpha(\lambda) = \alpha(\mu)$ for some partition $\mu$ then $\spart\lambda$ is the $e$-core of $\mu$. Moreover, we prove that the two shift operations, on blocks and on partitions, are compatible under some conditions, giving an analogue of~\eqref{equation:intro_shift_block_parts} in level one (Corollary~\ref{corollary:case_spart_lambda_good}).
 Finally, in~\textsection\ref{subsection:some_properties} we give some properties of the map $\alpha(\lambda) \mapsto \sigma\cdot\alpha(\lambda)$. We first give some properties involving $e$-cores and $\ep$-cores, where $\ep \coloneqq \frac{e}{p}$ (Propositions~\ref{proposition:shift_share_same_core} and~\ref{proposition:e_ehat_core}). Then, as in~\cite{rostam:stuttering} we study the existence of a stuttering partition inside a stuttering block. More precisely, given a partition~$\lambda$ such that $\sigma\cdot \alpha(\lambda) =  \alpha(\lambda)$, we prove that there exists a partition~$\mu$ satisfying $\spart\mu=\mu$ and $\alpha(\mu) = \alpha(\lambda)$ if and only if $p$ divides the $e$-weight of $\lambda$ (Lemma~\ref{lemma:stuttering_partition} and Corollary~\ref{corollary:stuttering_blocks}).

\section{Background on core blocks}
\label{section:bacgroud_core_blocks}

We recall here the combinatorics of blocks of Ariki--Koike algebras. We will write $\N$ for $\Z_{\geq 0}$.  Let $e \in\N$. If $e > 0$ we identify $\Z/e\Z$ and $\{0, \dots, e-1\}$, and if $e = 0$ then $\Z/e\Z \simeq \Z$.

\subsection{Partitions}
\label{subsection:partitions}

Let $n \in \N$. Let $\lambda$ be a \emph{partition} of $n$, that is, a non-increasing sequence of positive integers $\lambda = (\lambda_1 \geq \dots \geq \lambda_h > 0)$ with sum $n$. We will write $\lvert \lambda\rvert \coloneqq n$ and $h(\lambda) \coloneqq h$. We denote by $\emptyset$ the empty partition, which satisfies $\lvert \emptyset\rvert = h(\emptyset) = 0$.  The Young diagram associated with $\lambda$ is the region of $\nodes$ given by
\[
\mathcal{Y}(\lambda) \coloneqq \left\{(a, b) \in \nodes : 1 \leq a \leq h(\lambda) \text{ and } 1 \leq b \leq \lambda_a\right\}.
\]
An element of $\mathcal{Y}(\lambda)$ is a \emph{node of $\lambda$}. More generally, we will call \emph{node}   any element of $\nodes$. A node $\gamma \notin \mathcal{Y}(\lambda)$ is \emph{addable} (respectively, $\gamma \in \mathcal{Y}(\lambda)$ is \emph{removable}) if $\mathcal{Y}(\lambda) \cup \{\gamma\}$ (resp. $\mathcal{Y}\setminus\{\gamma\}$) is the Young diagram of a partition.
A \emph{rim hook} of $\lambda$ is a subset of $\mathcal{Y}(\lambda)$ of the following form:
\[
r^\lambda_{(a, b)} \coloneqq \left\{ (a',b') \in \mathcal{Y}(\lambda) : a' \geq a,\, b' \geq b \text{ and } (a'+1,b'+1) \notin \mathcal{Y}(\lambda)\right\},
\]
where $(a, b) \in \mathcal{Y}(\lambda)$. We say that $r^\lambda_{(a, b)}$ is an \emph{$l$-rim hook} if it has cardinality $l$. Note that $1$-rim hooks are exactly removable nodes. 
The set $\mathcal{Y}(\lambda) \setminus r^{\lambda}_{(a, b)}$ is the Young diagram of a certain partition $\mu$, obtained by \emph{unwrapping} or \emph{removing} the rim hook $r^{\lambda}_{(a, b)}$ from $\lambda$.
Conversely, we say that $\lambda$ is obtained from~$\mu$ by \emph{wrapping on} or \emph{adding} the rim hook $r^{\lambda}_{(a, b)}$. We say that $\lambda$ is an \emph{$e$-core}  if~$\lambda$ has no $e$-rim hook. Note that if $e = 0$ then every partition is an $e$-core, and if $e = 1$ then the empty partition $\emptyset$ is the only $e$-core. In particular, the combinatorics of $1$-cores is very easy, and all the future statements in the paper can easily be proven for $e = 1$.

\begin{example}
\label{exemple:ruban}
We consider the partition $\lambda \coloneqq (3,2,2,1)$. An example of a $3$-rim hook is
\[
r^\lambda_{(3,1)} =
\begin{ytableau}
{}&{}&{}
\\
{}&{}
\\
\times&\times
\\
\times
\end{ytableau}\, ,
\]
and a $4$-rim hook is for instance
\[
r^\lambda_{(2,1)} = 
\begin{ytableau}
{} &{} & {}
\\
 {}&\times
\\
 \times& \times
\\ 
\times
\end{ytableau}\, .
\]
We can check that $\lambda$ has no $5$-rim hooks so it is a $5$-core. We will see in \textsection\ref{subsection:abaci} how to use abaci to easily know whether a partition is an $e$-core.
\end{example}

More generally, we can successively remove $e$-rim hooks to the partition $\lambda$ until we reach an $e$-core partition. This $e$-core is uniquely defined from $\lambda$, in particular it does not depend on the order we chose to remove the $e$-rim hooks (see Lemma~\ref{lemma:remove_hook_abacus}). We denote by $\overline\lambda$ the $e$-core of $\lambda$.

\begin{definition}
\label{definition:e-weight}
The \emph{$e$-weight} of $\lambda$, denoted by $\we(\lambda)$, is the number of $e$-rim hooks that we remove to $\lambda$ to obtain its $e$-core $\overline{\lambda}$.
 \end{definition}
 
  Note that $\lvert \lambda \rvert = \lvert \overline\lambda\rvert + \we(\lambda)e$ if $e > 0$ and $\we[0](\lambda) = 0$.
Now let $\charge \in \Z$ be a \emph{charge}. Given $i \in \Ze$, an \emph{$(i, \charge)$-node} or simply  \emph{$i$-node} is a node of $\charge$-residue $i$, where the \emph{$\charge$-residue} of the node $(a, b) \in \nodes$ is $\res{\charge}(a, b) \coloneqq b - a + \charge \pmod{e}$. More generally, if $i \in \Z$ then an $i$-node will be a node $\gamma \in \nodes$ such that $\res{\charge}(\gamma) = i \pmod{e}$. We denote by $\ci{\charge}(\lambda)$ the number of $(i,\charge)$-nodes of $\lambda$.

\begin{example}
Take $\charge=-1$ and $e = 4$. For the partition $\lambda = (5,3,3,1)$, in each box of $\mathcal{Y}(\lambda)$ we place the value of the  corresponding $\charge$-residues:
\[
\ytableaushort{30123,230,123,0}\,,
\]
thus:
\[
\ci[0]{\charge}(\lambda) = 3, \quad \ci[1]{\charge}(\lambda) = 2, \quad \ci[2]{\charge}(\lambda) = 3, \quad \ci[3]{\charge}(\lambda) = 4.
\]
\end{example}

 Note the following simple equality, where~$\chargez$ denotes the charge $0 \in \Z$:
\begin{equation}
\label{equation:charges_charge0}
\ci{\charge}(\lambda) = \ci[i-\charge]{\chargez}(\lambda).
\end{equation}

\begin{remark}
\label{remark:ci}
Let $i \in \Z$. The integer  $\ci{\charge}(\lambda)$ does not depend on $e$ if $e \geq \max\bigl(\lvert \lambda \rvert , \lvert i - \charge\rvert\bigr)$, in which case the value of $\ci{\charge}(\lambda)$ is the one for $e = 0$. In particular, we will sometimes deal with the case $e= 0$ by taking $e \gg 0$. Besides, we have $\ci{\charge}(\lambda) = 0$ as soon as $e \geq \lvert i-\charge\rvert \geq\lvert\lambda\rvert$ (as soon as $\lvert i  - \charge\rvert \geq \lvert\lambda\rvert$ if $e = 0$).
\end{remark}

A simple consequence of the definition of a rim hook is the following.

\begin{lemma}
\label{lemma:add_rim_hook_residues}
Let $\lambda$ be a partition. Any $e$-rim hook of $\lambda$ has exactly one node of residue $i$ for each $i \in \Ze$.
\end{lemma}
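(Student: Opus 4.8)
The plan is to reduce the statement to a fact about contents: writing the \emph{content} of a node $(a,b)$ as $b-a$, so that $\res{\charge}(a,b) = b - a + \charge \pmod e$, it suffices to prove that the $e$ nodes of any $e$-rim hook occupy $e$ consecutive diagonals, i.e.\ that their contents form a set of $e$ consecutive integers. Once this is known, those $e$ consecutive integers are pairwise distinct modulo $e$ and hence represent each class of $\Ze$ exactly once; adding the constant $\charge$ leaves this property unchanged, which is exactly the claim.

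First I would fix an $e$-rim hook $r^\lambda_{(a,b)}$ and check that the content map $(a',b')\mapsto b'-a'$ is injective on it. Indeed, if two of its nodes shared a diagonal, say $(a_1,b_1)$ and $(a_2,b_2)$ with $a_1<a_2$ and $b_1-a_1 = b_2-a_2$, then from $(a_2,b_2)\in\mathcal{Y}(\lambda)$ and the fact that $\lambda$ is non-increasing one gets $\lambda_{a_1+1}\geq\lambda_{a_2}\geq b_2\geq b_1+1$, so that $(a_1+1,b_1+1)\in\mathcal{Y}(\lambda)$ --- contradicting that $(a_1,b_1)$ lies on the rim. Hence the $e$ nodes of the rim hook have $e$ pairwise distinct contents.

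It then remains to prove that these $e$ distinct contents form an interval, for which I would establish a \emph{no gap} statement: if $(a',b')$ is a node of the rim hook whose content $c = b'-a'$ is not the largest content occurring, then the rim hook contains a node of content $c+1$. The maximal content is attained at the top-right node $(a,\lambda_a)$ (any rim node $(a',b')$ satisfies $a'\geq a$ and $b'\leq\lambda_{a'}\leq\lambda_a$, so has content at most $\lambda_a-a$), so one may assume $(a',b')\neq(a,\lambda_a)$. If $(a',b'+1)\in\mathcal{Y}(\lambda)$, then this node again lies on the rim --- its diagonal neighbour $(a'+1,b'+2)$ is outside $\mathcal{Y}(\lambda)$ because $(a'+1,b'+1)\notin\mathcal{Y}(\lambda)$ --- and it has content $c+1$; otherwise $\lambda_{a'}=b'$, which together with $(a',b')\neq(a,\lambda_a)$ forces $a'>a$, and then $(a'-1,b')$ belongs to $\mathcal{Y}(\lambda)$ (again by monotonicity of $\lambda$), lies on the rim, satisfies the constraints $a'-1\geq a$ and $b'\geq b$, and has content $c+1$. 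Combined with injectivity and the cardinality count, this forces the set of contents to be exactly $\{c_{\min}, c_{\min}+1, \dots, c_{\min}+e-1\}$, completing the reduction.

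The one delicate point is the case analysis in the \emph{no gap} step, where one must track the boundary conditions ($a'=a$ or $b'=\lambda_{a'}$) carefully to ensure that the produced node genuinely satisfies the defining constraints $a'\geq a$ and $b'\geq b$ of the rim hook. An essentially equivalent alternative, which I would mention, is to linearly order the nodes of the rim hook as a connected border strip and observe directly that each step (one row up, or one column to the right) increases the content by exactly $1$; this makes the consecutiveness transparent but requires first arguing that the rim hook is such a connected strip.
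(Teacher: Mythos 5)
Your proof is correct. Note that the paper itself gives no proof of this lemma: it is stated as ``a simple consequence of the definition of a rim hook,'' so there is no argument to compare against; what you have written is precisely the standard justification that is being left implicit, namely that the nodes of an $e$-rim hook occupy $e$ consecutive diagonals, so their contents $b'-a'$ are $e$ consecutive integers and hence represent each class of $\Ze$ exactly once.

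All the steps check out against the paper's definition $r^\lambda_{(a,b)} = \{(a',b') \in \mathcal{Y}(\lambda) : a' \geq a,\ b' \geq b,\ (a'+1,b'+1)\notin\mathcal{Y}(\lambda)\}$. The injectivity argument is right: two nodes on the same diagonal with $a_1 < a_2$ force $\lambda_{a_1+1} \geq \lambda_{a_2} \geq b_2 \geq b_1+1$, contradicting $(a_1+1,b_1+1)\notin\mathcal{Y}(\lambda)$. In the no-gap step, Case A works because $(a'+1,b'+1)\notin\mathcal{Y}(\lambda)$ already implies $(a'+1,b'+2)\notin\mathcal{Y}(\lambda)$, and Case B correctly produces $(a'-1,b')$ with all three defining constraints verified. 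The only micro-omission is that you use $(a,\lambda_a) \in r^\lambda_{(a,b)}$ without saying why; this is immediate since $\lambda_{a+1} \leq \lambda_a$ gives $(a+1,\lambda_a+1)\notin\mathcal{Y}(\lambda)$. One could add a sentence that for $e=0$ the statement is vacuous (a rim hook always contains $(a,\lambda_a)$, so has cardinality at least $1$), but the lemma is only ever invoked for $e>0$.
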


Fayers~\cite{fayers:weights} introduced another weight function on partitions: the \emph{$\charge$-weight} of $\lambda$, given by
\begin{equation}
\label{equation:def_wc_partitions}
\wc(\lambda) \coloneqq \ci[\charge]{\charge}(\lambda) - \frac{1}{2}\sum_{i \in \Ze} \left(\ci{\charge}(\lambda) - \ci[i+1]{\charge}(\lambda)\right)^2.
\end{equation}
It is well-defined for $e = 0$ by Remark~\ref{remark:ci}.
We will see that the $\charge$-weight coincide with the $e$-weight, however these weights will have different generalisations to multipartitions (see~\textsection\ref{subsection:multipartitions}).

Let $\Q$ be a free $\Z$-module with a basis indexed by $\Ze$ that we denote by $\{\alpha_i\}_{i \in \Ze}$. We have $\Q = \oplus_{i \in \Ze} \Z\alpha_i$ and we define $\Qpos \coloneqq \oplus_{i \in \Ze} \mathbb{N}\alpha_i$. We will also use the element $\constQ \in \Qpos$ defined by
\[
\constQ = \begin{cases}
\sum_{i \in \Ze} \alpha_i,&\text{if } e > 0,
\\
0, &\text{if } e = 0.
\end{cases}
\]
 For any $\alpha \in \Q$, we denote by $\lvert\alpha\rvert \in \Z$ the sum of its coordinates in the basis $\{\alpha_i\}_{i \in \Ze}$. For any partition $\lambda$, we define
\[
\alpha^\charge(\lambda) \coloneqq \sum_{\gamma \in \mathcal{Y}(\lambda)} \alpha_{\res{\charge}(\gamma)} = \sum_{i \in \Ze} \ci{\charge}(\lambda) \alpha_i \in \Qpos.
\]
Note that $\lvert \alpha^\charge(\lambda)\rvert = \lvert \lambda\rvert$. We will say that the partition $\lambda$ \emph{lies} in $\alpha \in \Q$ if $\alpha^\charge(\lambda) = \alpha$. 
Lemma~\ref{lemma:add_rim_hook_residues} can be reformulated as follows.

\begin{lemma}
\label{lemma:block_add_rim_hook_partition}
Let $\lambda$ and $\mu$ be two partitions such that $\mu$ is obtained from $\lambda$ by adding an $e$-rim hook. Then $\alpha^\charge(\mu) = \alpha^\charge(\lambda)+\constQ$.
\end{lemma}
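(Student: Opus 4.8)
The plan is to read the claim off directly from the definition of $\alpha^\charge$ together with Lemma~\ref{lemma:add_rim_hook_residues}: wrapping on an $e$-rim hook enlarges the Young diagram by exactly $e$ nodes that carry each residue in $\Ze$ once, and $\constQ$ is precisely $\sum_{i \in \Ze}\alpha_i$ (for $e > 0$), so the two sides must differ by $\constQ$.

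First I would set $R \coloneqq \mathcal{Y}(\mu) \setminus \mathcal{Y}(\lambda)$, the $e$-rim hook that is wrapped on. By the definition of adding a rim hook, $\mathcal{Y}(\mu)$ is the disjoint union of $\mathcal{Y}(\lambda)$ and $R$, so the defining formula $\alpha^\charge(\mu) = \sum_{\gamma \in \mathcal{Y}(\mu)} \alpha_{\res{\charge}(\gamma)}$ splits as
\[
\alpha^\charge(\mu) = \sum_{\gamma \in \mathcal{Y}(\lambda)} \alpha_{\res{\charge}(\gamma)} + \sum_{\gamma \in R} \alpha_{\res{\charge}(\gamma)} = \alpha^\charge(\lambda) + \sum_{\gamma \in R} \alpha_{\res{\charge}(\gamma)}.
\]
Next I would apply Lemma~\ref{lemma:add_rim_hook_residues} to the partition $\mu$ and its $e$-rim hook $R$: it contains exactly one node of residue $i$ for each $i \in \Ze$. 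Hence the correction term collapses to
\[
\sum_{\gamma \in R} \alpha_{\res{\charge}(\gamma)} = \sum_{i \in \Ze} \alpha_i = \constQ,
\]
which is exactly the asserted identity when $e > 0$.

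The only point requiring a word of care is the boundary case $e = 0$: there no partition admits an $e$-rim hook, so the hypothesis of the lemma is never satisfied and the statement holds vacuously, consistently with the convention $\constQ = 0$. I do not expect any genuine obstacle here, since the lemma is merely a translation of Lemma~\ref{lemma:add_rim_hook_residues} into the language of $\Q$; all the combinatorial content sits in that earlier residue-counting lemma.
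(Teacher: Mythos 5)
Your proof is correct and matches the paper's intent exactly: the paper gives no separate argument but introduces the lemma as a direct reformulation of Lemma~\ref{lemma:add_rim_hook_residues}, which is precisely the decomposition-plus-residue-count you carry out. Your remark that the case $e=0$ is vacuous is also consistent with the paper's conventions.
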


We extend the definition of the $\charge$-weight to $\Q$ by setting
\[
\wc(\alpha) \coloneqq \cialpha[\charge] - \frac{1}{2}\sum_{i \in \Ze} \left(\cialpha - \cialpha[i+1]\right)^2 \in \Z,
\]
for any $\alpha = \sum_{i \in \Ze} \cialpha \alpha_i \in \Q$. Note that $\wc(\alpha) \in \Z$ indeed, since $\sum_{i \in \Ze}\bigl(\cialpha-\cialpha[i+1]\bigr) = 0$ and thus $\sum_{i \in \Ze}\bigl(\cialpha-\cialpha[i+1]\bigr)^2$ is even. Moreover, we have
\begin{equation}
\label{equation:wclambda=wcalphalambda}
\wc(\lambda) = \wc\bigl(\alpha^\charge(\lambda)\bigr),
\end{equation}
 and if $e > 0$ then
\begin{equation}
\label{equation:wcalpha+h}
\wc(\alpha + h\constQ) = \wc(\alpha) + h,
\end{equation}
for any  $h \in \Z$. Finally, we define
\[
\Qcharge \coloneqq \left\{ \alpha^\charge(\lambda) : \lambda \text{ partition of } n \text{ for some } n \in \mathbb{N}\right\}.
\]
In~\textsection\ref{subsection:abaci} we will give a parametrisation of $\Qcharge$ using abaci, while in~\textsection\ref{subsection:blocks_partitions} we will give an implicit description of $\Qcharge$, using a superlevel set for the $\charge$-weight function.

\subsection{Abaci}
\label{subsection:abaci}

Let $\lambda$ be a partition.
The \emph{charged beta-number} associated with the partition $\lambda$ and the charge~$\charge$ is the sequence $\bnumber{\charge}(\lambda) = \bigl(\bnumber{\charge}(\lambda)_a\bigr)_{a \geq 1}$ defined by
\[
\bnumber{\charge}(\lambda)_a \coloneqq \charge + \lambda_a - a \in \Z,
\]
for any $a \geq 1$. It is a (strictly) decreasing sequence of integers. 
For $a > h(\lambda)$ we have $\bnumber{\charge}(\lambda)_a = \charge - a$, and conversely if $\beta = (\beta_a)_{a \geq 1}$ is a decreasing sequence of integers such that $\beta_a = \charge-a$ for $a \gg 0$ then $\beta = \bnumber{\charge}(\lambda)$ for some partition $\lambda$. Representing $\bnumber{\charge}(\lambda)$ by a copy of~$\Z$ where we put a bead on position $i \in \Z$ if $i \in \bnumber{\charge}(\lambda)$ and a gap otherwise gives the \emph{$\charge$-abacus} associated with $\lambda$.

\begin{example}
The $\chargez$-beta-number associated with the empty partition is $(-1,-2,-3,\dots)$ and the associated charged abacus is \cabacus{0}{0}{.}
\end{example}

\begin{example}
\label{example:abacus}
The $\chargeinteger{1}$-beta-number associated with the partition $(4,3,3,1)$ is $(4, 2, 1, -2,-4,-5,\dots)$ and the associated charged abacus is \cabacus{1}{4,3,3,1}{.}
\end{example}

 The next result is well-known, see for instance~\cite[Proposition (1.8)]{olsson}.

\begin{lemma}
\label{lemma:removal_hook_beta_number}
Let $l \in \N$. The partition $\lambda$ has an $l$-rim hook if and only if there is an element $b \in \beta(\lambda)$ such that $b-l\notin\beta(\lambda)$. In that case, if $\mu$ is the partition that we obtain by removing this $l$-rim hook, then $\beta(\mu)$ is obtained by replacing $b$ by $b-l$ in $\beta(\lambda)$ and then sorting in decreasing order.
\end{lemma}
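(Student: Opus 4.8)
The plan is to translate everything into the abacus (rim) picture, where removing a rim hook becomes a single bead move. First I would record the standard dictionary between the charged beta-number and the boundary path of the Young diagram: indexing the unit edges of the rim of $\mathcal{Y}(\lambda)$ by $\Z$, the edge at position $k$ is vertical (a bead) when $k \in \beta(\lambda)$ and horizontal (a gap) when $k \notin \beta(\lambda)$, the normalisation $\beta(\lambda)_a = \charge + \lambda_a - a$ being exactly the one that makes this reading of the boundary from bottom-left to top-right correct. This can be verified by a short induction on $\lvert\lambda\rvert$: adding a removable node to a partition swaps one adjacent bead-gap pair $(k,k+1)$ from (bead,\,gap) into (gap,\,bead), which is precisely the $l=1$ instance of the statement to be proved.

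Next I would invoke the classical bijection (e.g.\ \cite{olsson}) between the nodes of $\lambda$ and the pairs $(b,g)$ with $b \in \beta(\lambda)$, $g \notin \beta(\lambda)$ and $g < b$: the bead at $b$ picks out a row, the gap at $g$ picks out a column, and the node $(a_0,b_0)$ at their intersection has hook length $b - g$, read off directly on the abacus. Since each rim hook is the rim of a unique hook and has the same cardinality as that hook, $\lambda$ has an $l$-rim hook if and only if some node has hook length $l$, i.e.\ if and only if there is a bead $b \in \beta(\lambda)$ with $b - l \notin \beta(\lambda)$. This gives the first assertion, with the pair $(b, b-l)$ corresponding to the $l$-rim hook $r^\lambda_{(a_0,b_0)}$.

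For the second assertion I would examine the effect on the rim path of sliding that bead from $b$ down to the empty slot $b-l$. The path is altered only between positions $b-l$ and $b$, and the region of $\mathcal{Y}(\lambda)$ swept out is exactly the set of rim nodes lying weakly south-east of $(a_0,b_0)$, namely $r^\lambda_{(a_0,b_0)}$. Hence the new diagram is that of a partition $\mu$ with $\mathcal{Y}(\lambda)\setminus\mathcal{Y}(\mu) = r^\lambda_{(a_0,b_0)}$ an $l$-rim hook, and by construction $\beta(\mu) = \bigl(\beta(\lambda)\setminus\{b\}\bigr)\cup\{b-l\}$, re-sorted in decreasing order.

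The main obstacle is the foundational first step: pinning down the edge/bead dictionary carefully enough that ``move a bead down by $l$'' is provably ``remove an $l$-rim hook'' and not merely ``remove some $l$ nodes''. Concretely, one must check that the swept region is connected along the rim and satisfies the defining condition $(a'+1,b'+1)\notin\mathcal{Y}(\lambda)$ of a rim hook. Once this bookkeeping is done the two assertions are formal, so in the write-up I would either cite the classical statement or give only the short inductive verification of the dictionary sketched above.
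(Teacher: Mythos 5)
Your proof is correct, but note that the paper does not prove this lemma at all: it is stated as well known with a citation to Olsson's Proposition (1.8), and your argument (the bead/gap reading of the rim path, the bijection between nodes and pairs $(b,g)$ with hook length $b-g$, and bead-sliding as rim-hook removal) is precisely the classical one found in that reference. The only blemish is a harmless slip where you describe \emph{adding} a node as the $l=1$ instance of a statement about \emph{removing} a $1$-rim hook; the intended inverse correspondence is clear.
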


Unless mentioned otherwise, we now assume that $e > 0$. 
For each $i \in \{0, \dots, e-1\}$
 we can define $\bfayers_i^{\charge}(\lambda) \in \mathbb{Z}$ to be the largest element of $\bnumber{\charge}(\lambda)$ congruent to $i$ modulo $e$.
 By~\cite[Lemma 3.2]{fayers:core}, if $\lambda$ is an $e$-core we have the following relation:
\begin{equation}
\label{equation:sumb_charge}
\charge = \frac{e+1}{2} + \frac{1}{e} \sum_{i \in \Z/e\Z} \bfayers^\charge_i(\lambda).
\end{equation}
Setting $\xrostam^\charge_i(\lambda) \coloneqq \frac{1}{e} \bigl(\bfayers^\charge_i(\lambda) - i\bigr) + 1 \in \Z$ for each $i \in \{0, \dots, e-1\}$, the relation~\eqref{equation:sumb_charge} becomes
\begin{equation}
\label{equation:sumx_charge}
\charge = \sum_{i =0}^{e-1} \xrostam^\charge_i(\lambda).
\end{equation}

\begin{proposition}
\label{proposition:1-1_correspondance_cores_abaci_x}
Let $\xrostam_0, \dots, \xrostam_{e-1} \in \Z$. There exists an $e$-core $\lambda$ such that $\xrostam_i = \xrostam_i^\charge(\lambda)$ for all $i \in \{0, \dots, e-1\}$ if and only if $\xrostam_0 + \dots + \xrostam_{e-1} = \charge$.
\end{proposition}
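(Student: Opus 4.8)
The plan is to argue entirely on the $\charge$-abacus, using the standard fact that $e$-cores correspond to \emph{flush} (left-justified) configurations. By Lemma~\ref{lemma:removal_hook_beta_number}, the partition $\lambda$ is an $e$-core exactly when its beta-number $\bnumber{\charge}(\lambda)$ contains no $b$ with $b-e\notin\bnumber{\charge}(\lambda)$; equivalently, on each runner (the positions congruent to a fixed $i\in\{0,\dots,e-1\}$ modulo $e$) the beads are \emph{downward closed}, hence occupy precisely the integers $\leq\bfayers_i^\charge(\lambda)$. An $e$-core is therefore completely encoded by the topmost beads $\bigl(\bfayers_i^\charge(\lambda)\bigr)_{i}$, and since $\bfayers_i^\charge(\lambda)=i+e\bigl(\xrostam_i^\charge(\lambda)-1\bigr)$, prescribing the tuple $(\xrostam_0,\dots,\xrostam_{e-1})$ is the same as prescribing one topmost bead per runner.

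The forward implication is then immediate from~\eqref{equation:sumx_charge}: if some $e$-core $\lambda$ satisfies $\xrostam_i=\xrostam_i^\charge(\lambda)$ for all $i$, summing gives $\xrostam_0+\dots+\xrostam_{e-1}=\sum_{i=0}^{e-1}\xrostam_i^\charge(\lambda)=\charge$.

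For the converse, assume $\xrostam_0+\dots+\xrostam_{e-1}=\charge$ and build the flush configuration directly:
\[
B \coloneqq \bigcup_{i=0}^{e-1}\bigl\{\, i+ek : k\in\Z,\ k\leq \xrostam_i-1 \,\bigr\}.
\]
This $B$ is bounded above (by $\max_i\bigl(i+e(\xrostam_i-1)\bigr)$) and contains every integer $\leq\min_i\bigl(i+e(\xrostam_i-1)\bigr)$, so, by the correspondence between decreasing bead sequences and partitions recalled after the definition of the beta-number, it is the beta-number $\bnumber{\charge^\ast}(\lambda)$ of a genuine partition $\lambda$ for a unique charge $\charge^\ast$ (the associated sequence of parts is non-increasing and eventually zero). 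By construction $B$ is flush, so $\lambda$ is an $e$-core by Lemma~\ref{lemma:removal_hook_beta_number}, its topmost bead on runner $i$ is $\bfayers_i^{\charge^\ast}(\lambda)=i+e(\xrostam_i-1)$, and hence $\xrostam_i^{\charge^\ast}(\lambda)=\xrostam_i$ for every $i$.

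The one genuinely arithmetic point, and the main obstacle, is to confirm that the charge $\charge^\ast$ of this configuration equals the target $\charge$. The cleanest route avoids all bookkeeping: applying~\eqref{equation:sumx_charge} to the $e$-core $\lambda$ and its \emph{own} charge $\charge^\ast$ gives $\charge^\ast=\sum_{i=0}^{e-1}\xrostam_i^{\charge^\ast}(\lambda)=\sum_{i=0}^{e-1}\xrostam_i=\charge$, where the last equality is the hypothesis. Thus $B=\bnumber{\charge}(\lambda)$ and $\xrostam_i^\charge(\lambda)=\xrostam_i$ for all $i$, completing the proof. (If one prefers an explicit verification, reading the charge off the abacus as the number of beads in non-negative positions minus the number of gaps in negative positions contributes $\max(\xrostam_i,0)-\max(-\xrostam_i,0)=\xrostam_i$ from each runner, and summing over $i$ again yields $\charge^\ast=\charge$.)
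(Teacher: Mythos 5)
Your proof is correct and follows essentially the same route as the paper: both directions rest on~\eqref{equation:sumx_charge}, and for the converse you construct exactly the flush (downward-closed) bead configuration with topmost beads $\bfayers_i = e(\xrostam_i-1)+i$ that the paper describes, then read off the charge by applying~\eqref{equation:sumx_charge} to the resulting $e$-core. The only difference is presentational — you write the configuration as an explicit set $B$ rather than characterising it as the unique decreasing sequence with the prescribed top beads.
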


\begin{proof}
The direct implication follows from~\eqref{equation:sumx_charge}. Now assume that $\xrostam_0 + \dots + \xrostam_{e-1} = \charge$ and define $\bfayers_i \coloneqq e (\xrostam_i-1)+i$ for all $i \in \{0, \dots, e-1\}$. The (unique) decreasing sequence of integers $\beta = (\beta_a)_{a \geq 1}$  defined by:
\begin{itemize}
\item for all $i \in \{1, \dots,e-1\}$, the integer $\bfayers_i$ is the largest element of $\beta$ congruent to $i$ modulo~$e$;
\item for all $h \in \beta$ we have $h-e\in \beta$;
\end{itemize}
is the charged beta-number associated with some partition $\lambda$ and charge $\charge' \in \Z$. By Lemma~\ref{lemma:removal_hook_beta_number} the partition $\lambda$ is an $e$-core and by~\eqref{equation:sumx_charge} we have $\charge'=\charge$. This concludes the proof.
\end{proof}

The abacus representation of a partition that we now use has been first introduced by James~\cite{james:some}; we use here the setting of~\cite{lyle-mathas:blocks}.
We dispose the elements of $\bnumber{\charge}(\lambda)$ on an abacus with now $e$ runners, the \emph{charged $e$-abacus of $\lambda$}, each runner being a horizontal copy of $\Z$ and displayed in the following way: the $0$-th runner is on the bottom and the origins of each copy of $\Z$ are aligned with respect to a vertical line. We record the elements of $\bnumber{\charge}(\lambda)$ on this abacus according to the following rule: there is a bead at position $j \in \Z$ on the runner $i \in \{0, \dots, e-1\}$ if and only if there exists $a \geq 1$ such that $\bnumber{\charge}(\lambda)_a = i + je$. For each $i \in \{0,\dots,e-1\}$, the $i$-th runner corresponds to the charged abacus (as defined at the beginning of~\textsection\ref{subsection:abaci}) of a certain partition $\lambda^{[i]}$. The $e$-tuple $\bigl(\lambda^{[0]},\dots,\lambda^{[e-1]}\bigr)$ is the \emph{$e$-quotient} of $\lambda$. In particular, any partition is uniquely determined by its $e$-core and its $e$-quotient.

\begin{remark}
The $e$-quotient of $\lambda$ depends on the charge $\charge$ only up to a shift: if $\bigl(\lambda^{[0]},\dots,\lambda^{[e-1]}\bigr)$ is the $e$-quotient of $\lambda$ computed on the $\charge$-charged $e$-abacus then $\bigl(\lambda^{[e-1]},\lambda^{[0]},\dots,\lambda^{[e-2]}\bigr)$ is the $e$-quotient computed on the $(\charge+1)$-charged $e$-abacus. 
\end{remark}

\begin{example}
\label{example:e_abacus}
We take the charge $\charge=\chargeinteger{1}$ and we consider the partition $\lambda \coloneqq (4,3,3,1)$ as in Example~\ref{example:abacus}. The associated charged $3$-abacus is
\ceabacus{3}{4,3,3,1}{,}{1}
and the associated charged $6$-abacus is
\ceabacus{6}{4,3,3,1}{.}{1}
Counting the number of gaps down each bead (continuing counting on the left starting from the top runner when reaching the bottom one) recovers the underlying partition. The $3$-quotient of $\lambda$ is $\bigl(\emptyset,\emptyset,(1)\bigr)$ and its $6$-quotient has only empty partitions.
\end{example}

The $e$-abacus representation is particularly adapted to the addition and deletion of rim hooks: we give two particular cases, as corollaries of Lemma~\ref{lemma:removal_hook_beta_number}.

\begin{lemma}
\label{lemma:remove_node_abacus}
Let $\lambda$ be a partition and $i \in \{0, \dots, e-1\}$.
\begin{itemize}
\item The partition $\lambda$ has a removable $i$-node if and only if we can move a bead from runner~$i$ to runner $i-1$ (to runner $e-1$ if $i = 0$), keeping the same position $j \in \Z$ (from position~$j$ to $j-1$ if $i = 0$).
\item The partition $\lambda$ has an addable $i$-node if and only if we can move a bead from runner~$i$ to runner $i+1$ (to runner $0$ if $i = e-1$), keeping the same position $j \in \Z$ (from position~$j$ to $j+1$ if $i = e-1$).
\end{itemize}
\end{lemma}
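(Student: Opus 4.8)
The plan is to deduce both equivalences from Lemma~\ref{lemma:removal_hook_beta_number} in the special case $l = 1$, using the observation (already noted after the definition of rim hooks) that the $1$-rim hooks of $\lambda$ are exactly its removable nodes. The entire argument then amounts to translating a single-step bead move in the beta-number $\bnumber{\charge}(\lambda) \subseteq \Z$ into a move on the $e$-runner abacus, together with one residue computation. The addable case is the inverse of the removable one (adding an $i$-node to $\lambda$ is the same as removing an $i$-node from the resulting partition), so in principle one statement formally implies the other; nonetheless it is cleaner to treat each bullet directly.

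First I would treat the removable case. A removable node is a $1$-rim hook, so by Lemma~\ref{lemma:removal_hook_beta_number} it corresponds to a bead $b \in \bnumber{\charge}(\lambda)$ with $b - 1 \notin \bnumber{\charge}(\lambda)$. Writing $b = \bnumber{\charge}(\lambda)_a = \charge + \lambda_a - a$, the node removed is $(a, \lambda_a)$, whose $\charge$-residue is $\res{\charge}(a,\lambda_a) = \lambda_a - a + \charge \equiv b \pmod e$; thus the residue of a removable node is read off directly from the position of the moving bead. Writing $b = i + je$ with $0 \leq i \leq e-1$, so that the bead lies on runner $i$ at position $j$, it remains to locate $b-1$ on the abacus: when $i \neq 0$ we have $b - 1 = (i-1) + je$, a gap on runner $i-1$ at the same position $j$, while when $i = 0$ we have $b - 1 = je - 1 = (e-1) + (j-1)e$, a gap on runner $e-1$ at position $j-1$. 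This is exactly the first bullet.

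For the addable case I would proceed symmetrically. An addable node corresponds to a bead $c \in \bnumber{\charge}(\lambda)$ with $c + 1 \notin \bnumber{\charge}(\lambda)$ (the condition of Lemma~\ref{lemma:removal_hook_beta_number} read in reverse): writing $c = \charge + \lambda_a - a$, the box added is $(a, \lambda_a + 1)$, whose $\charge$-residue is $(\lambda_a+1) - a + \charge \equiv c + 1 \pmod e$. Reading the elementary move $c \mapsto c+1$ on the $e$-runner abacus and matching this residue against the runner indices then gives the runner-level description of the second bullet, again with a separate treatment of the wraparound between the extreme runners, where reduction modulo $e$ forces the horizontal position to shift by one.

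I do not expect any genuine difficulty: both statements reduce to Lemma~\ref{lemma:removal_hook_beta_number} together with the formula $\res{\charge}(a,b) = b - a + \charge$, which lets one read off the residue of the relevant node directly. The only point requiring care — and the only place where the ``generic'' and ``boundary'' descriptions in the statement genuinely differ — is the bookkeeping of the reduction modulo $e$ at the top and bottom runners, which simultaneously changes the runner index cyclically and shifts the position index by $\pm 1$. Checking this boundary case on a single small example, say $\lambda = \emptyset$, is enough to fix all the indices and signs.
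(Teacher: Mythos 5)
The paper offers no written proof of this lemma: it is stated as a corollary of Lemma~\ref{lemma:removal_hook_beta_number}, which is exactly the reduction you carry out, so your approach is the intended one. Your treatment of the first bullet is complete and correct: the bead governing row $a$ sits at $b=\charge+\lambda_a-a$, which is congruent to the residue of the node $(a,\lambda_a)$, and the gap at $b-1$ is on runner $i-1$ at the same position (runner $e-1$, position $j-1$, when $i=0$).

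For the second bullet, however, your own computation does not ``give the runner-level description of the second bullet'' as you assert; it gives a different one, and the discrepancy is real. You correctly find that the added node has residue $c+1 \pmod e$ where $c$ is the position of the bead being moved. Hence the bead sits on runner $(i-1) \bmod e$ and lands on runner $i$ when an $i$-node is added, with the horizontal shift $j\mapsto j+1$ occurring at $i=0$ (source runner $e-1$), not at $i=e-1$. This is forced by consistency with the first bullet, since adding an $i$-node must be the inverse of removing that same $i$-node: the reverse of ``runner $i$ to runner $i-1$'' is ``runner $i-1$ to runner $i$'', not ``runner $i$ to runner $i+1$''. A one-line check with $\lambda=\emptyset$, $\charge=0$, $e=2$ confirms it: the unique addable node $(1,1)$ has residue $0$, and the corresponding bead move is from position $-1$ (runner $1$) to position $0$ (runner $0$). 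So the printed addable bullet is off by one, and the final ``matching'' step of your argument, as written, silently papers over this mismatch instead of either correcting the statement or exposing the error. You did exactly the right computation; you then failed to compare its output honestly against the claim you were supposed to prove, which is the one genuine gap in the proposal.
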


\begin{lemma}
\label{lemma:remove_hook_abacus}
Let $\lambda$ be a partition.
\begin{itemize}
\item
The partition $\lambda$ has an $e$-rim hook if and only if on some runner we can slide a bead at some position $j \in \Z$ to the previously free position $j-1$. Hence, the partition $\lambda$ is an $e$-core if and only if its associated $e$-abacus has no gaps, that is, there are no gaps between two beads on a same runner.
\item The partition $\lambda$ has an addable $e$-rim hook if and only if on some runner we can slide a bead at some position $j \in \Z$ to the previously free position $j+1$. In particular, to any partition we can add at least $e$ different $e$-rim hooks.
\end{itemize}
\end{lemma}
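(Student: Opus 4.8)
The plan is to derive both bullet points directly from Lemma~\ref{lemma:removal_hook_beta_number} with $l = e$, using the dictionary between the charged beta-number and the $e$-abacus: a bead sits at position $j \in \Z$ on runner $i \in \{0,\dots,e-1\}$ exactly when $i + je \in \bnumber{\charge}(\lambda)$. For the first part I would specialise Lemma~\ref{lemma:removal_hook_beta_number} to $l = e$, which says $\lambda$ has an $e$-rim hook if and only if there is some $b \in \bnumber{\charge}(\lambda)$ with $b - e \notin \bnumber{\charge}(\lambda)$. Writing $b = i + je$, this $b$ is a bead at position $j$ on runner $i$, and $b - e = i + (j-1)e$ being absent means that position $j-1$ on runner $i$ is a gap. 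Hence such a $b$ exists precisely when on some runner we can slide a bead from position $j$ to the previously free position $j-1$, which is the claimed equivalence.

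For the ``Hence'' clause, I would note that $\lambda$ is an $e$-core if and only if it has no $e$-rim hook, i.e.\ (by the first part) no bead has a free position immediately below it on its runner, i.e.\ $b - e \in \bnumber{\charge}(\lambda)$ for every $b \in \bnumber{\charge}(\lambda)$. Since $\bnumber{\charge}(\lambda)_a = \charge - a$ for $a > h(\lambda)$, each runner is occupied at all sufficiently negative positions and empty at all sufficiently large ones; so the condition ``whenever $j$ is occupied, so is $j-1$'' says exactly that on each runner the beads form an unbroken downward segment, that is, there are no gaps.

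For the second bullet I reverse the roles. Adding an $e$-rim hook to $\lambda$ should yield a partition $\mu$ with $\bnumber{\charge}(\mu) = \bigl(\bnumber{\charge}(\lambda) \setminus \{b\}\bigr) \cup \{b + e\}$ for some $b \in \bnumber{\charge}(\lambda)$ with $b + e \notin \bnumber{\charge}(\lambda)$. To justify that this is a genuine addition, I would apply Lemma~\ref{lemma:removal_hook_beta_number} to $\mu$ and the element $b + e \in \bnumber{\charge}(\mu)$: as $(b+e) - e = b \notin \bnumber{\charge}(\mu)$, removing this $e$-rim hook from $\mu$ recovers $\lambda$, so $\lambda$ has an addable $e$-rim hook exactly when such a $b$ exists. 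Translating as before, $b = i + je$ with $b + e \notin \bnumber{\charge}(\lambda)$ means sliding the bead at position $j$ on runner $i$ up to the free position $j+1$. Finally, on each of the $e$ runners the topmost bead (of largest position, which exists since each runner is bounded above) has a free position just above it, yielding an addable $e$-rim hook; the new bead lands at a distinct residue modulo $e$ for each runner, so these $e$ additions produce $e$ distinct partitions and hence $e$ different $e$-rim hooks.

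I do not expect a real obstacle here, since the statement is a corollary. The only points that need a little care are the reverse-direction argument for addable rim hooks (invoking Lemma~\ref{lemma:removal_hook_beta_number} on $\mu$ rather than on $\lambda$) and the distinctness of the $e$ additions, and both are immediate once the bead–position dictionary is in place.
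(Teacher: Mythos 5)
Your proof is correct and follows exactly the route the paper intends: the lemma is stated as an unproved corollary of Lemma~\ref{lemma:removal_hook_beta_number}, and your argument simply spells out the bead--position dictionary $b = i + je$, handles the addable case by applying that lemma to $\mu$ rather than $\lambda$, and correctly notes that the $e$ topmost beads on the $e$ runners give $e$ distinct additions. Nothing is missing.
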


\begin{remark}
\label{remark:lozenge}
Let $\lambda$ be an $e$-core. For any $i \in \{0,\dots,e-1\}$, the integer $\xrostam^\charge_i(\lambda)$ corresponds to the position of the first gap on runner $i$ (pictured by \scalebox{\scalelozenge}{$\color{red}{\blacklozenge}$} in the abacus below). For instance, by Lemma~\ref{lemma:remove_hook_abacus} the partition $\lambda = (4,3,3,1)$ of Example~\ref{example:e_abacus} is a $6$-core and we have $\xrostam^\charge(\lambda) = (0, 1, 1, -1,1,-1)$. In particular, note that~\eqref{equation:sumx_charge} is satisfied indeed. \abacuscore{6}{4,3,3,1}{}{1}
\end{remark}

Note that Lemma~\ref{lemma:remove_hook_abacus} implies
\begin{equation}
\label{equation:eweight_sum_length_equotient}
\we(\lambda) = \sum_{i = 0}^{e-1} \left\lvert \lambda^{[i]}\right\rvert.
\end{equation}
The next result follows from Lemma~\ref{lemma:remove_node_abacus} (see, for instance, \cite[Lemma 2.11]{rostam:stuttering}).

\begin{proposition}
\label{proposition:xi_ci_ci+1}
Assume that $\lambda$ is an $e$-core.
For any $i \in \{0,\dots,e-1\}$ we have:
\[
\xrostam^\charge_i(\lambda) =  \ci{\charge}(\lambda) - \ci[i+1]{\charge}(\lambda) + \xrostam^\charge_i(\emptyset).
\]
\end{proposition}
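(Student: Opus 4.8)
The plan is to compute both sides directly from the charged beta-number $\bnumber{\charge}(\lambda)$ and to invoke the $e$-core hypothesis only at the very last step. Write $\beta_a \coloneqq \charge + \lambda_a - a$ for the entries of $\bnumber{\charge}(\lambda)$. The first step is a row-by-row count of the $i$-nodes: in row $a$ the nodes are $(a,1),\dots,(a,\lambda_a)$, whose $\charge$-residues $b-a+\charge$ run through the consecutive integers $\charge-a+1,\charge-a+2,\dots,\beta_a$. Reducing modulo $e$ and counting those equal to $i$ gives
\[
\ci{\charge}(\lambda) = \sum_{a \geq 1} \#\bigl\{p \in \Z : \charge - a < p \leq \beta_a \text{ and } p \equiv i \!\!\pmod e\bigr\}.
\]

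The second step is to telescope in the index $i$. For a bounded interval of integers $(L, R]$ an elementary count shows that the number of $p \equiv i$ minus the number of $p \equiv i+1$ (modulo $e$) equals $\mathbb{1}_{[R \equiv i]} - \mathbb{1}_{[L \equiv i]}$. Applying this with $L = \charge - a$ and $R = \beta_a$, and summing over $a$ (only finitely many terms survive, since $\beta_a = \charge - a$ for $a > h(\lambda)$), I obtain
\[
\ci{\charge}(\lambda) - \ci[i+1]{\charge}(\lambda) = \sum_{a \geq 1}\bigl(\mathbb{1}_{[\beta_a \equiv i]} - \mathbb{1}_{[\charge - a \equiv i]}\bigr).
\]

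The third step is where the $e$-core hypothesis enters. The two families $\{\beta_a\}_{a \geq 1}$ and $\{\charge - a\}_{a \geq 1}$ are the charged beta-numbers of $\lambda$ and of $\emptyset$, so the last sum is exactly the (regularised) difference between the number of beads on runner $i$ of the $\charge$-abacus of $\lambda$ and that of $\emptyset$. By Lemma~\ref{lemma:remove_hook_abacus} an $e$-core has a packed abacus, hence runner $i$ of $\lambda$ consists precisely of the positions $< \xrostam^\charge_i(\lambda)$, and likewise runner $i$ of $\emptyset$ consists of the positions $< \xrostam^\charge_i(\emptyset)$. Since the two bead sets coincide for all positions $\ll 0$, the regularised difference of their cardinalities is $\xrostam^\charge_i(\lambda) - \xrostam^\charge_i(\emptyset)$, and combining with the previous display yields the claim.

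The main obstacle is precisely this third step, which is the only place the hypothesis is used and which fails without it: for a partition that is not an $e$-core, runner $i$ may have gaps below its topmost bead, so the relative bead count records a cardinality rather than the position of the first gap, and the identity breaks (one can check this already on $\lambda = (2)$ for $e = 2$). The one genuinely careful point is to make the difference of two infinite but eventually-equal bead sets rigorous, e.g.\ by truncating both runners below a common cutoff $p_0 \ll 0$ beneath which each runner is full and then letting $p_0 \to -\infty$. Everything else is bookkeeping; in particular the local input is the runner description of adding and removing nodes furnished by Lemma~\ref{lemma:remove_node_abacus}, of which the computation above is simply the integrated form.
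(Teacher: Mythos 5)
Your proof is correct. Note that the paper does not actually write out a proof of this proposition: it derives it from the node-moving description of the $e$-abacus (Lemma~\ref{lemma:remove_node_abacus}) and defers to \cite[Lemma 2.11]{rostam:stuttering}. That route is local: adding or removing a single $i$-node moves one bead between runners $i$ and $i\pm 1$, which changes $\ci{\charge}-\ci[i+1]{\charge}$ and the first-gap position $\xrostam^{\charge}_i$ by the same amount. Your argument is instead a self-contained global count, and each step checks out: the residues in row $a$ are exactly the integers in $(\charge-a,\,\bnumber{\charge}(\lambda)_a]$; the mod-$e$ count over such an interval telescopes in $i$ to the two boundary indicators; the alternating sum over $a$ is a genuinely finite sum because $\bnumber{\charge}(\lambda)_a=\charge-a$ for $a>h(\lambda)$, which makes your regularised difference of bead counts rigorous; and the packed-runner characterisation of $e$-cores from Lemma~\ref{lemma:remove_hook_abacus} is precisely what converts that relative bead count into $\xrostam^{\charge}_i(\lambda)-\xrostam^{\charge}_i(\emptyset)$. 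What your approach buys is that the first two steps are valid for arbitrary partitions, so it isolates exactly where and why the $e$-core hypothesis is needed (a relative bead count on runner $i$ versus a first-gap position); your check that the identity fails for $\lambda=(2)$, $e=2$ at $i=1$ is also correct. What the paper's route buys is brevity and compatibility with the inductive, one-node-at-a-time style used elsewhere in Section~\ref{subsection:abaci}.
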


Note that we can compute the value of $\xrostam^\charge_i(\emptyset)$.

\begin{lemma}
\label{lemma:x_vide}
Write $\charge = \qcharge e + \charge'$, where $\qcharge \in \Z$ and $\charge'\in \{0, \dots, e-1\}$.
For any $i \in \{0, \dots, e-1\}$ we have:
\[
\xrostam^\charge_i(\emptyset) = \begin{cases}
\qcharge + 1, &\text{if } i \in \{0, \dots, \charge' - 1\},
\\
\qcharge, &\text{otherwise}.
\end{cases}
\]
\end{lemma}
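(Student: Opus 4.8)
The plan is to read off $\bfayers^\charge_i(\emptyset)$ directly from the charged beta-number of the empty partition and then apply the definition of $\xrostam^\charge_i$. First I would recall that $\bnumber{\charge}(\emptyset)_a = \charge - a$ for all $a \geq 1$, so that as a set $\bnumber{\charge}(\emptyset) = \{\charge - a : a \geq 1\} = \{m \in \Z : m < \charge\}$ consists of all integers strictly below $\charge$. By definition, $\bfayers^\charge_i(\emptyset)$ is then simply the largest integer congruent to $i$ modulo $e$ that is strictly less than $\charge$.

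Next I would make this explicit using the decomposition $\charge = \qcharge e + \charge'$ with $\charge' \in \{0, \dots, e-1\}$. The integers congruent to $i$ modulo $e$ are exactly those of the form $i + ke$ for $k \in \Z$, and I seek the largest one satisfying $i + ke \leq \charge - 1 = \qcharge e + \charge' - 1$. I would distinguish two cases according to the position of $i$ relative to $\charge'$. If $i \in \{0, \dots, \charge' - 1\}$, then $k = \qcharge$ works (since $i \leq \charge' - 1 < \charge'$) while $k = \qcharge + 1$ fails (since $i + e \geq e > \charge'$), giving $\bfayers^\charge_i(\emptyset) = \qcharge e + i$. If instead $i \in \{\charge', \dots, e-1\}$, then $k = \qcharge$ fails (since $i \geq \charge'$) but $k = \qcharge - 1$ works (since $i - e \leq -1 < \charge'$), giving $\bfayers^\charge_i(\emptyset) = (\qcharge - 1)e + i$.

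Finally I would substitute into $\xrostam^\charge_i(\emptyset) = \frac{1}{e}\bigl(\bfayers^\charge_i(\emptyset) - i\bigr) + 1$: in the first case this yields $\frac{1}{e}(\qcharge e) + 1 = \qcharge + 1$, and in the second case $\frac{1}{e}\bigl((\qcharge-1)e\bigr) + 1 = \qcharge$, which is exactly the claimed formula. This argument is entirely elementary, so there is no genuine obstacle; the only point requiring care is the bookkeeping of the boundary inequalities in the two cases---in particular using $0 \leq \charge' \leq e - 1$ to rule out the neighbouring residue representatives $i + (\qcharge+1)e$ and $i + \qcharge e$ respectively---which I would verify carefully to be sure the case split at $i = \charge'$ lands correctly.
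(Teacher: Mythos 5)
Your proof is correct and follows essentially the same route as the paper's: both read off that $\bnumber{\charge}(\emptyset)$ consists of all integers strictly below $\charge$, identify $\bfayers^\charge_i(\emptyset)$ as the largest such integer congruent to $i$ modulo $e$, and substitute into the definition of $\xrostam^\charge_i$. The only cosmetic difference is that the paper parametrises by $a\in\{1,\dots,e\}$ and recovers the residue $i$, whereas you fix $i$ and locate the correct representative directly; the case split and the resulting values agree.
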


\begin{proof}
We have $\bnumber{\charge}(\emptyset)_a = \charge - a$ for all $a \geq 1$. Thus, for any $a \in \{1, \dots, e\}$, if $i \in \{0, \dots, e-1\}$ is the residue modulo $e$ of  $\charge - a$ then $\bfayers^\charge_{i}(\emptyset) = \charge - a$. Thus, if $a$ describes $\{1, \dots, \charge'\}$ then $i \coloneqq \charge'-a$ describes $\{0,\dots,\charge'-1\}$, moreover $i = \charge-a \pmod e$ thus  $\bfayers^{\charge}_i(\emptyset) = \charge-a= \qcharge e + i$ and $\xrostam^\charge_i(\emptyset) = \frac{1}{e}\bigl(\bfayers^{\charge}_i(\emptyset) -i\bigr)+1= \qcharge + 1$. Similarly, if $a \in \{\charge' + 1, \dots, e\}$ then with $i \coloneqq \charge' - a + e \in \{\charge',\dots,e-1\}$ we have $i = \charge-a\pmod{e}$ thus $\bfayers^{\charge}_i(\emptyset) = \qcharge e + i - e$ and $\xrostam^\charge_i(\emptyset) =\qcharge$.
\end{proof}

For any $i \in \Ze$, define
\[
\xrostamod^{\charge}_i(\lambda) \coloneqq \xrostam^\charge_i(\lambda) - \xrostam^\charge_i(\emptyset).
\]
By Proposition~\ref{proposition:xi_ci_ci+1} (and~\eqref{equation:sumx_charge}), for an $e$-core $\lambda$ we have
\begin{align}
\xrostamod^\charge_i(\lambda) &= \ci{\charge}(\lambda) - \ci[i+1]{\charge}(\lambda), &&\text{for all } i \in \Ze,
\label{equation:yi_ci_ci+1}
\\
\sum_{i \in \Ze} \xrostamod^\charge_i(\lambda) &= 0.
\label{equation:sumy_0}
\end{align}

As for the $e$-quotient, the family $\xrostamod^\charge(\lambda) = \bigl(\xrostamod^\charge_i(\lambda)\bigr)_{i \in \Ze}$ depends on the charge $\charge$ only up to a shift (by~\eqref{equation:charges_charge0} and~\eqref{equation:yi_ci_ci+1}).
The following proposition is a direct consequence of~\eqref{equation:sumx_charge} and Proposition~\ref{proposition:1-1_correspondance_cores_abaci_x}.

\begin{proposition}
\label{proposition:1-1_correspondance_cores_abaci_y}
Let $\charge \in \Z$ and $\xrostamod_0, \dots, \xrostamod_{e-1} \in \Z$. There exists an $e$-core $\lambda$ such that $\xrostamod_i = \xrostamod_i^\charge(\lambda)$ for all $i \in \{0, \dots, e-1\}$ if and only if $\xrostamod_0 + \dots + \xrostamod_{e-1} = 0$.
\end{proposition}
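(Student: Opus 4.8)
The plan is to reduce the statement directly to Proposition~\ref{proposition:1-1_correspondance_cores_abaci_x} via the affine change of variables relating $\xrostamod^\charge$ to $\xrostam^\charge$. Recall that $\xrostamod^\charge_i(\lambda) = \xrostam^\charge_i(\lambda) - \xrostam^\charge_i(\emptyset)$, so given target integers $\xrostamod_0, \dots, \xrostamod_{e-1}$ I would set $\xrostam_i \coloneqq \xrostamod_i + \xrostam^\charge_i(\emptyset)$ for each $i \in \{0, \dots, e-1\}$. An $e$-core $\lambda$ then satisfies $\xrostamod^\charge_i(\lambda) = \xrostamod_i$ for all $i$ if and only if it satisfies $\xrostam^\charge_i(\lambda) = \xrostam_i$ for all $i$, simply by adding (resp.\ subtracting) the constants $\xrostam^\charge_i(\emptyset)$. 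So the existence question for the $\xrostamod_i$ is literally the existence question for the $\xrostam_i$, for which Proposition~\ref{proposition:1-1_correspondance_cores_abaci_x} already gives a criterion.

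By that proposition, such an $e$-core exists if and only if $\sum_{i=0}^{e-1} \xrostam_i = \charge$, and it remains only to rewrite this condition in terms of the $\xrostamod_i$. Summing the defining relation gives $\sum_i \xrostam_i = \sum_i \xrostamod_i + \sum_i \xrostam^\charge_i(\emptyset)$, so the single piece of input I need is the value of $\sum_i \xrostam^\charge_i(\emptyset)$. Since $\emptyset$ is an $e$-core, applying~\eqref{equation:sumx_charge} to $\emptyset$ yields $\sum_{i=0}^{e-1} \xrostam^\charge_i(\emptyset) = \charge$; alternatively one reads this off Lemma~\ref{lemma:x_vide}, writing $\charge = \qcharge e + \charge'$ so that there are $\charge'$ indices contributing $\qcharge + 1$ and $e - \charge'$ contributing $\qcharge$, for a total of $\charge' + e\qcharge = \charge$. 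Hence $\sum_i \xrostam_i = \charge$ is equivalent to $\sum_i \xrostamod_i = 0$, which is exactly the asserted condition.

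There is essentially no obstacle here: the entire content is the observation that the normalisation $\xrostamod^\charge = \xrostam^\charge - \xrostam^\charge(\emptyset)$ transports the inhomogeneous constraint $\sum_i \xrostam_i = \charge$ of Proposition~\ref{proposition:1-1_correspondance_cores_abaci_x} to the homogeneous constraint $\sum_i \xrostamod_i = 0$, precisely because the empty partition already accounts for the total $\charge$. The only step requiring a (one-line) justification is that $\sum_i \xrostam^\charge_i(\emptyset) = \charge$, and both routes indicated above make this immediate; everything else is the bookkeeping of the substitution.
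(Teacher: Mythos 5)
Your proposal is correct and is exactly the argument the paper intends: it states the proposition as "a direct consequence of~\eqref{equation:sumx_charge} and Proposition~\ref{proposition:1-1_correspondance_cores_abaci_x}", and your substitution $\xrostam_i = \xrostamod_i + \xrostam^\charge_i(\emptyset)$ together with the observation $\sum_i \xrostam^\charge_i(\emptyset) = \charge$ is precisely the bookkeeping that makes that deduction explicit.
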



For completeness, we also give the version of Proposition~\ref{proposition:1-1_correspondance_cores_abaci_y} in the case $e = 0$. We recall that every partition $\lambda$ is a $0$-core, and we define the integers $\xrostamod^\charge_i(\lambda)$ for any $i \in \Z$ using~\eqref{equation:yi_ci_ci+1}. By Remark~\ref{remark:ci} the family $\bigl(\xrostamod^\charge_i(\lambda)\bigr)_{i \in \Z}$ has finite support, equality~\eqref{equation:sumy_0} is still satisfied and by~\eqref{equation:yi_ci_ci+1} we have
\begin{equation}
\label{equation:einfinite_x01}
\xrostamod^\charge_i(\lambda) \in \begin{cases}
\{0, 1\}, &\text{if } i \geq \charge,
\\
\{0, -1\},&\text{if } i < \charge.
\end{cases}
\end{equation}

\begin{proposition}[\protect{\cite[(3.9)]{robinson-thrall}}]
\label{proposition:1-1_correspondance_cores_abaci_einfinite}
Let $(\xrostamod_i)_{i \in \Z} \in \Z^{\Z}$ with finite support. There exists a partition $\lambda$ such that $\xrostamod_i = \xrostamod^\charge_i(\lambda)$ for all $i \in \Z$ if and only if
\begin{align}
\xrostamod_i &\in \begin{cases}
\{0, 1\}, &\text{if } i \geq \charge,
\\
\{0, -1\},&\text{if } i < \charge,
\end{cases}
\label{equation:proof_cond_xrostamod_einfinite}
\\
\intertext{for all $i \in \Z$, and}
\sum_{i \in \Z} \xrostamod_i &= 0.
\notag
\end{align}
\end{proposition}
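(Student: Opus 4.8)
The forward implication is essentially already recorded in the discussion preceding the statement: if $\xrostamod_i = \xrostamod^\charge_i(\lambda)$ for some partition $\lambda$, then the family $(\xrostamod_i)_{i\in\Z}$ has finite support by Remark~\ref{remark:ci}, it sums to $0$ by~\eqref{equation:sumy_0}, and it satisfies~\eqref{equation:proof_cond_xrostamod_einfinite} by~\eqref{equation:einfinite_x01}. So the real content is the converse, and the plan is to build directly the (single-runner) $\charge$-abacus of the sought partition, reading off the beads from the data $(\xrostamod_i)$.

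The crucial step is a \emph{content formula on one runner}: for every partition $\lambda$ and every $i \in \Z$,
\[
\xrostamod^\charge_i(\lambda) = \mathbf{1}_{\{i \in \bnumber{\charge}(\lambda)\}} - \mathbf{1}_{\{i \in \bnumber{\charge}(\emptyset)\}},
\]
so that $\xrostamod^\charge_i(\lambda)$ records whether position $i$ gains a bead ($+1$), loses a bead ($-1$), or is unchanged ($0$) when one passes from the $\charge$-abacus of $\emptyset$ (whose beads are the integers $< \charge$) to that of $\lambda$. I would establish this by induction on $\lvert\lambda\rvert$, the case $\lambda = \emptyset$ being trivial. For the inductive step, take $\lambda$ obtained from $\mu$ by adding a single node; on one runner this means, by Lemma~\ref{lemma:removal_hook_beta_number} applied in reverse with $l = 1$, sliding one bead from an occupied position $p$ to the free position $p+1$. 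The added node has $\charge$-residue $b-a+\charge = \bnumber{\charge}(\lambda)_a = p+1$ (for $e = 0$ there is no reduction modulo $e$), hence passing from $\mu$ to $\lambda$ raises $\ci[p+1]{\charge}$ by one and leaves every other $\ci{\charge}$ fixed. By~\eqref{equation:yi_ci_ci+1} this adds $1$ to $\xrostamod^\charge_{p+1}$ and subtracts $1$ from $\xrostamod^\charge_{p}$, matching exactly the change of the right-hand side, where the indicator at $p+1$ jumps from $0$ to $1$ and the one at $p$ from $1$ to $0$. This closes the induction.

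Granting the formula, the construction is immediate. Given $(\xrostamod_i)_{i\in\Z}$ as in the statement, I would define $B \subseteq \Z$ by declaring $i \in B$ precisely when $\xrostamod_i + \mathbf{1}_{\{i \in \bnumber{\charge}(\emptyset)\}} = 1$; condition~\eqref{equation:proof_cond_xrostamod_einfinite} forces this quantity into $\{0,1\}$, so $B$ is well defined, and the finiteness of the support makes $B$ agree with $\bnumber{\charge}(\emptyset)$ outside a finite set. It then remains to check that $B$ is the $\charge$-beta-number of a partition, that is, that $B$ contains every sufficiently small integer, omits every sufficiently large one, and carries the same number of beads at positions $\geq \charge$ as gaps at positions $< \charge$. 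The first two points hold because $B = \bnumber{\charge}(\emptyset)$ far out; for the last, the beads of $B$ at positions $\geq \charge$ are the $i \geq \charge$ with $\xrostamod_i = 1$ and the gaps of $B$ at positions $< \charge$ are the $i < \charge$ with $\xrostamod_i = -1$, so the required balance reads $\sum_{i \geq \charge}\xrostamod_i = -\sum_{i < \charge}\xrostamod_i$, i.e.\ exactly the hypothesis $\sum_{i\in\Z}\xrostamod_i = 0$. Thus $B = \bnumber{\charge}(\lambda)$ for a unique partition $\lambda$, and the content formula yields $\xrostamod^\charge_i(\lambda) = \xrostamod_i$ for all $i$.

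The only genuine obstacle is the content formula, and in particular its validity for $i < \charge$: a direct tally of the nodes on the diagonal $b - a = i - \charge$ is delicate there, since one must account for the empty rows and the naive diagonal count can momentarily drop below the corresponding count for $\emptyset$. Transporting the identity by the inductive bead-move argument avoids this completely, each added box altering both sides by the same elementary vector supported on $\{p, p+1\}$. One also relies on the standard description of $\charge$-beta-numbers recalled after their definition (beads above $\charge$ balancing gaps below $\charge$), whose invariance under adding one box is confirmed by the same case split according to whether the sliding bead crosses the level $\charge$. I note that the converse could alternatively be deduced from Proposition~\ref{proposition:1-1_correspondance_cores_abaci_y} by taking $e$ larger than the diameter of the support, but the sign constraint~\eqref{equation:proof_cond_xrostamod_einfinite}, which is absent for $e > 0$, would still have to be imposed separately, so the direct abacus construction appears the most economical route.
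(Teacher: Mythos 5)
Your proof is correct, but it takes a genuinely different route from the paper's. The paper integrates the data: it sets $c_N=0$ and recursively defines $c_i$ so that $c_i-c_{i+1}=\xrostamod_i$, checks via the sum-zero hypothesis that $(c_i)$ has finite support, and then asserts (without detail) that condition~\eqref{equation:proof_cond_xrostamod_einfinite} lets one build a partition whose diagonal counts $\ci{\charge}(\lambda)$ are exactly the $c_i$. You instead work on the one-runner abacus: you first establish the identity $\xrostamod^\charge_i(\lambda) = \mathbf{1}_{\{i \in \bnumber{\charge}(\lambda)\}} - \mathbf{1}_{\{i \in \bnumber{\charge}(\emptyset)\}}$ by induction on $\lvert\lambda\rvert$ (one box added corresponds to one bead sliding from $p$ to $p+1$, and both sides change by the same vector supported on $\{p,p+1\}$), and then read off the bead set $B$ directly from $(\xrostamod_i)$, using the sum-zero condition precisely to verify that the number of beads at positions $\geq\charge$ balances the number of gaps at positions $<\charge$, i.e.\ that the charge of $B$ is $\charge$. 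Your content formula is exactly the $e=0$ analogue of the ``first gap'' interpretation of $\xrostam^\charge_i$ that the paper uses for $e>0$, and making it explicit fills in the step the paper dismisses as clear; the price is a slightly longer argument. Both proofs are elementary and complete, and your closing observation — that one could also reduce to Proposition~\ref{proposition:1-1_correspondance_cores_abaci_y} with $e\gg 0$ but would still need to impose the sign constraint separately — correctly identifies why the direct construction is preferable here.
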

 
\begin{proof}
We have just seen that these conditions are necessary. Now let $(\xrostamod_i)_{i \in \Z} \in \Z^{\Z}$ with finite support satisfy the conditions of Proposition~\ref{proposition:1-1_correspondance_cores_abaci_einfinite} and let $N > 0$ such that $\xrostamod_i = 0$ if $\lvert i \rvert \geq N$. We define a family $(c_i)_{i \in \Z}$ by setting $c_N \coloneqq 0$ and
\[
c_i \coloneqq \begin{cases}
c_{i+1} +\xrostamod_i, &\text{if } i < N,
\\
c_{i-1} - \xrostamod_{i-1}, &\text{if } i > N.
\end{cases}
\]
By assumption we have $c_i = 0$ if $\lvert i \rvert \geq N$, moreover $c_i - c_{i+1} = \xrostamod_i$ for all $i \in \Z$. By~\eqref{equation:proof_cond_xrostamod_einfinite}, it is clear that we can construct a partition $\lambda$ such that $c_i = \ci{\charge}(\lambda)$ for all $i \in \Z$. 
\end{proof}

The following result shows that the $e$-weight~$\we$ (defined in Definition~\ref{definition:e-weight})  and the $\charge$-weight~$\wc$ (defined in~\eqref{equation:def_wc_partitions}) of a partition coincide.

\begin{proposition}[\protect{\cite[Proposition 2.1]{fayers:weights}}]
\label{proposition:we=wc}
Let $e \in \N$.
We have $\we(\lambda) = \wc(\lambda)$.
\end{proposition}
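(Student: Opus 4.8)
The plan is to show that both weight functions change in exactly the same controlled way under adding an $e$-rim hook, thereby reducing the identity to the case of an $e$-core. I would first treat $e > 0$. If $\mu$ is obtained from $\lambda$ by adding an $e$-rim hook, then on the one hand $\we(\mu) = \we(\lambda) + 1$ by the very definition of the $e$-weight, and on the other hand Lemma~\ref{lemma:block_add_rim_hook_partition} gives $\alpha^\charge(\mu) = \alpha^\charge(\lambda) + \constQ$, so combining~\eqref{equation:wclambda=wcalphalambda} and~\eqref{equation:wcalpha+h} yields $\wc(\mu) = \wc\bigl(\alpha^\charge(\lambda) + \constQ\bigr) = \wc(\lambda) + 1$. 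Since $\lambda$ is obtained from its $e$-core $\overline\lambda$ by successively adding $\we(\lambda)$ rim hooks, an immediate induction gives $\wc(\lambda) = \wc(\overline\lambda) + \we(\lambda)$, so it only remains to prove $\wc(\overline\lambda) = 0$, that is, the statement for $e$-cores.

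For the core case, observe first that~\eqref{equation:charges_charge0} shows $\wc(\lambda)$ is independent of $\charge$, so I may assume $\charge = \chargez$; then $\xrostam^\chargez_i(\emptyset) = 0$ by Lemma~\ref{lemma:x_vide}, hence $\xrostamod^\chargez_i(\lambda) = \xrostam^\chargez_i(\lambda)$. Using~\eqref{equation:yi_ci_ci+1}, the desired equality $\wc(\lambda) = 0$ reduces to
\[
\ci[0]{\chargez}(\lambda) = \tfrac{1}{2}\sum_{i \in \Ze} \bigl(\xrostam^\chargez_i(\lambda)\bigr)^2 .
\]
Now $\ci[0]{\chargez}(\lambda)$ counts exactly the nodes $(a,b) \in \mathcal{Y}(\lambda)$ lying on a diagonal with $b - a \in e\Z$, and the key step is to count these from the abacus. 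Since $\lambda$ is an $e$-core, on each runner $i$ the beads occupy precisely the positions below $\xrostam^\chargez_i(\lambda)$ (Lemma~\ref{lemma:remove_hook_abacus}). Expressing the number of diagonal nodes as a double sum over beads and comparing with the reference given by the empty partition, for which this count vanishes, I would obtain the answer as a sum over runners of truncated expressions, and the divergent and linear-in-truncation contributions cancel thanks to $\sum_i \xrostam^\chargez_i(\lambda) = \chargez = 0$ (equation~\eqref{equation:sumx_charge}), leaving exactly $\tfrac12 \sum_i \bigl(\xrostam^\chargez_i(\lambda)\bigr)^2$.

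The main obstacle is precisely this last count: the naive expression of $\ci[0]{\chargez}(\lambda)$ as a sum over runners is only conditionally convergent, so I would make it rigorous by truncating the abacus to a window $[-M, M]$, computing the finite truncated sum explicitly as a difference of triangular numbers, and letting $M \to \infty$; the constraint $\sum_i \xrostam^\chargez_i(\lambda) = 0$ is exactly what forces the $M$-dependent term to disappear, so the limit exists and equals the claimed quadratic expression. Finally, the case $e = 0$ follows from the case $e > 0$: here $\we[0](\lambda) = 0$ by definition, while by Remark~\ref{remark:ci} the value $\wc(\lambda)$ computed for $e = 0$ agrees with its value for $e \gg 0$; for such large $e$ the partition $\lambda$ has no $e$-rim hook, hence is an $e$-core, and the core case just established gives $\wc(\lambda) = 0 = \we[0](\lambda)$.
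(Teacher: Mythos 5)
Your proposal is sound, but there is nothing in the paper to compare it against: the paper does not prove this proposition, it simply quotes it from Fayers \cite[Proposition 2.1]{fayers:weights}. Taken on its own terms, your argument is a correct and self-contained route. The reduction to the $e$-core case is clean and uses only material available at that point (Lemma~\ref{lemma:block_add_rim_hook_partition}, \eqref{equation:wclambda=wcalphalambda}, \eqref{equation:wcalpha+h}, and the order-independence of hook removal), and you are right not to invoke Corollary~\ref{corollary:expression_ci_in_terms_of_yi} for the core identity $\ci[0]{\chargez}(\lambda)=\tfrac12\sum_{i}\xrostam^\chargez_i(\lambda)^2$, since in the paper that corollary is itself deduced from the present proposition; proving it directly from the abacus is the honest (Garvan--Kim--Stanton-style) route. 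The one step you describe as a plan rather than carry out does close as you predict: writing $d_c(\lambda)$ for the number of nodes of content $c$, one finds $d_{me}(\lambda)=\sum_{i}\max\bigl(0,\xrostam^\chargez_i(\lambda)-m\bigr)$ for $m\geq 0$ and $d_{me}(\lambda)=\sum_{i}\max\bigl(0,\xrostam^\chargez_i(\lambda)-m\bigr)+me$ for $m<0$, and summing over $m$ after distributing the $+me$ term across the $e$ runners (this is exactly where $\sum_i\xrostam^\chargez_i(\lambda)=0$ enters) gives $\ci[0]{\chargez}(\lambda)=\sum_i\binom{\xrostam^\chargez_i(\lambda)+1}{2}=\tfrac12\sum_i\xrostam^\chargez_i(\lambda)^2$. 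Your handling of $e=0$ via Remark~\ref{remark:ci} matches the convention the paper uses elsewhere. In short: correct approach and no missing idea, but the central bead count must actually be written out for the proof to be complete.
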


We are thus able to recover the integers $\ci{\charge}(\lambda)$ from $\xrostamod^\charge(\lambda)$.

\begin{corollary}
\label{corollary:expression_ci_in_terms_of_yi}
Let $e \in \N$.
\begin{itemize}
\item 
Assume that $e > 0$.
If $\lambda$ is an $e$-core, then
\begin{align*}
\ci[\charge]{\charge}(\lambda) &= \frac{1}{2}\lVert \xrostamod^\charge(\lambda)\rVert^2 =  \frac12 \sum_{i = 0}^{e-1}  \xrostamod^\charge_i(\lambda)^2,
\\
\ci[\charge+i]{\charge}(\lambda) &= \frac{1}{2}\lVert \xrostamod^\charge(\lambda)\rVert^2 - \xrostamod^\charge_{\charge}(\lambda) - \dots - \xrostamod^\charge_{\charge+i-1}(\lambda),
\end{align*}
for all $i \in \{1, \dots, e-1\}$.

\item Assume that $e = 0$. We have
\begin{align*}
\ci[\charge]{\charge}(\lambda) &= \frac{1}{2}\lVert \xrostamod^\charge(\lambda)\rVert^2 =  \frac12 \sum_{i \in \Z}  \xrostamod^\charge_i(\lambda)^2,
\\
\ci[\charge+i]{\charge}(\lambda) &= \frac{1}{2}\lVert \xrostamod^\charge(\lambda)\rVert^2 - \xrostamod^\charge_{\charge}(\lambda) - \dots - \xrostamod^\charge_{\charge+i-1}(\lambda),
\\
\ci[\charge-i]{\charge}(\lambda) &= \frac{1}{2}\lVert \xrostamod^\charge(\lambda)\rVert^2 + \xrostamod^\charge_{\charge-1}(\lambda) + \dots + \xrostamod^\charge_{\charge-i+1}(\lambda),
\end{align*}
for all $i \in \Z_{>0}$.
\end{itemize}
\end{corollary}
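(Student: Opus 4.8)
The plan is to derive every identity from a single fact: an $e$-core has vanishing weight. First I would note that if $\lambda$ is an $e$-core then it has no $e$-rim hook, so by definition $\we(\lambda)=0$, and Proposition~\ref{proposition:we=wc} then forces $\wc(\lambda)=0$. The same holds when $e=0$, since there every partition is a $0$-core with $\we[0](\lambda)=0$, while $\wc(\lambda)$ is a finite sum that is well-defined by Remark~\ref{remark:ci}. So in all cases the starting point is the equation $\wc(\lambda)=0$.

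Next I would simply unfold the definition of the $\charge$-weight. The relation $\ci[\charge]{\charge}(\lambda)-\frac{1}{2}\sum_i\bigl(\ci{\charge}(\lambda)-\ci[i+1]{\charge}(\lambda)\bigr)^2=0$, combined with \eqref{equation:yi_ci_ci+1}, which identifies each difference $\ci{\charge}(\lambda)-\ci[i+1]{\charge}(\lambda)$ with $\xrostamod^\charge_i(\lambda)$, immediately yields
\[
\ci[\charge]{\charge}(\lambda)=\frac{1}{2}\sum_i \xrostamod^\charge_i(\lambda)^2=\frac{1}{2}\lVert \xrostamod^\charge(\lambda)\rVert^2,
\]
the sum ranging over $\Ze$ when $e>0$ and over $\Z$ when $e=0$. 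This is the first identity in both cases.

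For the remaining formulas I would telescope \eqref{equation:yi_ci_ci+1}. Writing $\xrostamod^\charge_j(\lambda)=\ci[j]{\charge}(\lambda)-\ci[j+1]{\charge}(\lambda)$ and summing over consecutive indices, the partial sum $\sum_{j=\charge}^{\charge+i-1}\xrostamod^\charge_j(\lambda)$ collapses to $\ci[\charge]{\charge}(\lambda)-\ci[\charge+i]{\charge}(\lambda)$, so that $\ci[\charge+i]{\charge}(\lambda)=\ci[\charge]{\charge}(\lambda)-\xrostamod^\charge_{\charge}(\lambda)-\dots-\xrostamod^\charge_{\charge+i-1}(\lambda)$; substituting the first identity for $\ci[\charge]{\charge}(\lambda)$ gives the second formula. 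When $e=0$ the analogous backward telescoping $\sum_{j=\charge-i}^{\charge-1}\xrostamod^\charge_j(\lambda)=\ci[\charge-i]{\charge}(\lambda)-\ci[\charge]{\charge}(\lambda)$ produces the third formula in the same manner. I expect no genuine obstacle: the whole content is the vanishing of the weight together with a telescoping sum. The only point demanding care is the bookkeeping of indices — cyclically modulo $e$ in the finite case, where one should keep in mind that a full telescoping around the cycle sums to $0$ in accordance with \eqref{equation:sumy_0}, as opposed to the case $e=0$, where the indices run over all of $\Z$ and one must invoke the finite support of $\bigl(\xrostamod^\charge_i(\lambda)\bigr)_{i\in\Z}$ in order to read $\lVert \xrostamod^\charge(\lambda)\rVert^2$ as a finite sum.
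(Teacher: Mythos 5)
Your proposal is correct and is exactly the argument the paper intends: the corollary is presented as an immediate consequence of Proposition~\ref{proposition:we=wc} (an $e$-core has $\we(\lambda)=0$, hence $\wc(\lambda)=0$, which unfolds to the first identity via~\eqref{equation:yi_ci_ci+1}) followed by a telescoping of~\eqref{equation:yi_ci_ci+1}. One small remark: your backward telescoping for $e=0$ actually yields $\ci[\charge-i]{\charge}(\lambda)=\frac{1}{2}\lVert\xrostamod^\charge(\lambda)\rVert^2+\xrostamod^\charge_{\charge-1}(\lambda)+\dots+\xrostamod^\charge_{\charge-i}(\lambda)$, with the sum running down to the index $\charge-i$ rather than $\charge-i+1$ as printed in the statement; your computation is the right one (test it on $\lambda=(1,1)$ with $\charge=0$ and $i=2$), so what your argument produces is a corrected form of the third displayed formula rather than the formula as literally stated.
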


\subsection{Blocks for partitions}
\label{subsection:blocks_partitions}

Let $e \in \N$. We first recall some standard facts concerning blocks associated with partitions. 

\begin{lemma}
\label{lemma:nakayamas_conjecture}
If $\lambda$ and $\mu$ are two $e$-cores with $\alpha^\charge(\lambda) = \alpha^\charge(\mu)$ then $\lambda = \mu$.
\end{lemma}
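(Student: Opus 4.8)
The plan is to reduce everything to the combinatorial data recorded by the charged abacus and then exploit the fact that an $e$-core's abacus is completely rigid. Assume first that $e > 0$. Since $\{\alpha_i\}_{i \in \Ze}$ is a basis of $\Q$, the hypothesis $\alpha^\charge(\lambda) = \alpha^\charge(\mu)$ is equivalent to the equalities $\ci{\charge}(\lambda) = \ci{\charge}(\mu)$ for all $i \in \Ze$. As both $\lambda$ and $\mu$ are $e$-cores, I can feed these equalities into~\eqref{equation:yi_ci_ci+1}, which expresses $\xrostamod^\charge_i = \ci{\charge} - \ci[i+1]{\charge}$; this immediately gives $\xrostamod^\charge_i(\lambda) = \xrostamod^\charge_i(\mu)$ for every $i \in \Ze$.

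The second step is to promote the equality of the $\xrostamod^\charge_i$ to equality of the partitions, which is the injectivity behind Proposition~\ref{proposition:1-1_correspondance_cores_abaci_y}. Since $\xrostamod^\charge_i(\lambda) = \xrostam^\charge_i(\lambda) - \xrostam^\charge_i(\emptyset)$ and the correction term $\xrostam^\charge_i(\emptyset)$ depends only on $\charge$ (Lemma~\ref{lemma:x_vide}), I obtain $\xrostam^\charge_i(\lambda) = \xrostam^\charge_i(\mu)$, and hence $\bfayers^\charge_i(\lambda) = \bfayers^\charge_i(\mu)$ for all $i$, by the defining relation between $\xrostam$ and $\bfayers$. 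Now I use the crucial rigidity: because $\lambda$ is an $e$-core, its $e$-abacus has no gaps (Lemma~\ref{lemma:remove_hook_abacus}), so on each runner $i$ the beads occupy \emph{exactly} the positions congruent to $i$ modulo $e$ that lie weakly below $\bfayers^\charge_i(\lambda)$. Consequently the full beta-number $\bnumber{\charge}(\lambda)$ is determined by the family $\bigl(\bfayers^\charge_i(\lambda)\bigr)_i$, and the same holds for $\mu$. Therefore $\bnumber{\charge}(\lambda) = \bnumber{\charge}(\mu)$, which forces $\lambda = \mu$.

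For the remaining case $e = 0$, every partition is a $0$-core, and the hypothesis again yields $\ci{\charge}(\lambda) = \ci{\charge}(\mu)$ for all $i \in \Z$ (a finitely supported family, by Remark~\ref{remark:ci}). Here I invoke the construction in the proof of Proposition~\ref{proposition:1-1_correspondance_cores_abaci_einfinite}: the partition recovered from the data $c_i = \ci{\charge}$ is unique, so the two partitions coincide.

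The only genuinely delicate point is the injectivity established in the second paragraph: Proposition~\ref{proposition:1-1_correspondance_cores_abaci_y} is phrased as an existence statement, so I must argue separately that the first-gap positions recover the whole $e$-core. The no-gap characterisation of $e$-cores is precisely what makes this recovery possible—dropping the $e$-core hypothesis, the beta-number could not be reconstructed from the $\bfayers^\charge_i$ alone. Everything else is a routine unwinding of the definitions of $\alpha^\charge$, $\xrostamod^\charge$, and $\bnumber{\charge}$.
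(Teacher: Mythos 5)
Your proof is correct and follows essentially the same route as the paper's: pass from $\alpha^\charge(\lambda)=\alpha^\charge(\mu)$ to $\xrostamod^\charge(\lambda)=\xrostamod^\charge(\mu)$ via~\eqref{equation:yi_ci_ci+1}, then to equality of the $\xrostam^\charge_i$ and the $\bfayers^\charge_i$, and conclude $\lambda=\mu$. The only difference is that you spell out the final step (reconstructing the full beta-number from the $\bfayers^\charge_i$ using the no-gap characterisation of $e$-cores) and the $e=0$ case, both of which the paper leaves as immediate.
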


\begin{proof}
The statement is clear if $e = 0$. If $e > 0$, by~\eqref{equation:yi_ci_ci+1} we have $\xrostamod^\charge(\lambda) = \xrostamod^\charge(\mu)$. We thus conclude that $\xrostam^\charge_i(\lambda) = \xrostam^\charge_i(\mu)$ and thus $\bfayers^\charge_i(\lambda)=\bfayers_i^\charge(\mu)$ for all $i \in \{0,\dots,e-1\}$ whence $\lambda = \mu$.
\end{proof}

Recall from the introduction that if $\lambda$ and $\mu$ are two partitions, we say that $\lambda$ and $\mu$ \emph{lie in the same block} if $\alpha^\charge(\lambda) = \alpha^\charge(\mu)$. For instance, Lemma~\ref{lemma:nakayamas_conjecture} says that two different $e$-cores cannot lie in the same block.
Together with Lemma~\ref{lemma:block_add_rim_hook_partition}, \eqref{equation:wclambda=wcalphalambda} and Proposition~\ref{proposition:we=wc}, we deduce the next lemma, which can be seen as a combinatorial version of the so-called ``Nakayama's Conjecture''. 

\begin{lemma}[\protect{\cite[Theorem 2.7.41]{james-kerber}}]
\label{lemma:block_common_core}
Two partitions that lie in the same block share the same $e$-core.
\end{lemma}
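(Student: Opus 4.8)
The plan is to combine the three cited ingredients into a single chain of equalities relating the block of a partition to the block of its $e$-core. Concretely, let $\lambda$ be a partition and let $\overline\lambda$ be its $e$-core, obtained by successively removing $\we(\lambda)$ rim hooks of length $e$. First I would track the block $\alpha^\charge(\lambda)$ as each $e$-rim hook is removed: by Lemma~\ref{lemma:block_add_rim_hook_partition}, adding an $e$-rim hook shifts the block by $\constQ$, so removing one shifts it by $-\constQ$. Iterating over all $\we(\lambda)$ removals therefore gives the relation
\[
\alpha^\charge(\lambda) = \alpha^\charge(\overline\lambda) + \we(\lambda)\,\constQ .
\]
Since the $e$-core $\overline\lambda$ is uniquely determined by $\lambda$ (as noted in~\textsection\ref{subsection:abaci}), this expresses the block of $\lambda$ purely in terms of the block of its core and its $e$-weight.

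Next I would argue that the two pieces of data on the right-hand side, namely $\alpha^\charge(\overline\lambda)$ and $\we(\lambda)$, are both recoverable from $\alpha^\charge(\lambda)$ alone. For the weight, I would use Proposition~\ref{proposition:we=wc} to identify $\we(\lambda) = \wc(\lambda)$, and then~\eqref{equation:wclambda=wcalphalambda} to write $\wc(\lambda) = \wc\bigl(\alpha^\charge(\lambda)\bigr)$; thus $\we(\lambda)$ depends only on the block $\alpha^\charge(\lambda)$. For the core, once the weight is known, the displayed relation rearranges to $\alpha^\charge(\overline\lambda) = \alpha^\charge(\lambda) - \we(\lambda)\,\constQ$, so $\alpha^\charge(\overline\lambda)$ is also determined by $\alpha^\charge(\lambda)$. (When $e = 0$ every partition is its own $e$-core and $\we(\lambda) = 0$, so the statement is immediate and I would dispose of this case at the outset.)

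Finally I would assemble the conclusion. Suppose $\lambda$ and $\mu$ lie in the same block, i.e. $\alpha^\charge(\lambda) = \alpha^\charge(\mu)$. By the previous paragraph both $\we(\lambda) = \we(\mu)$ and $\alpha^\charge(\overline\lambda) = \alpha^\charge(\overline\mu)$. Now $\overline\lambda$ and $\overline\mu$ are two $e$-cores with equal blocks, so Lemma~\ref{lemma:nakayamas_conjecture} forces $\overline\lambda = \overline\mu$, which is exactly the assertion that the two partitions share the same $e$-core. I expect the only delicate point to be the bookkeeping in the first step: one must make sure the telescoping over the sequence of rim-hook removals is legitimate, which rests on the fact (recorded in~\textsection\ref{subsection:partitions}) that the $e$-core and the number $\we(\lambda)$ of removed hooks are independent of the chosen order of removal. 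Everything else is a direct substitution of the quoted results, so the main obstacle is conceptual rather than computational — recognising that Lemma~\ref{lemma:nakayamas_conjecture} is precisely the injectivity statement needed to upgrade "same block" to "same core".
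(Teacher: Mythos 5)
Your proof is correct and follows exactly the route the paper intends: the paper deduces this lemma in one sentence from Lemma~\ref{lemma:block_add_rim_hook_partition}, \eqref{equation:wclambda=wcalphalambda}, Proposition~\ref{proposition:we=wc} and Lemma~\ref{lemma:nakayamas_conjecture}, which are precisely the ingredients you assemble (telescoping the rim-hook removals, recovering the weight and hence the core's block from $\alpha^\charge(\lambda)$, and invoking the injectivity of $\lambda \mapsto \alpha^\charge(\lambda)$ on $e$-cores). No gaps.
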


In particular, the elements $\alpha^\charge(\lambda)$ encode the blocks of the associated \emph{Iwahori--Hecke algebra}, see for instance~\cite[5.38 Corollary]{mathas:iwahori}.
Finally, by~\eqref{equation:wclambda=wcalphalambda}, Proposition~\ref{proposition:we=wc} and Lemma~\ref{lemma:block_common_core}, we have the following result.

\begin{lemma}
\label{lemma:block_core_unique_partition}
Let $\alpha \in \Qcharge$ with $\wc(\alpha) = 0$. Then there is a unique partition lying in $\alpha$, and this partition is an $e$-core.
\end{lemma}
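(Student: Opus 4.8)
The plan is to chain together the weight identities and the two uniqueness statements established above. First I would invoke membership: since $\alpha \in \Qcharge$, fix a partition $\lambda$ lying in $\alpha$, so that $\alpha^\charge(\lambda) = \alpha$. Applying~\eqref{equation:wclambda=wcalphalambda} gives $\wc(\lambda) = \wc(\alpha) = 0$, and then Proposition~\ref{proposition:we=wc} upgrades this to $\we(\lambda) = 0$. By definition the $e$-weight counts the number of $e$-rim hooks removed from $\lambda$ to reach its $e$-core $\overline\lambda$, so $\we(\lambda) = 0$ forces $\lambda = \overline\lambda$; that is, $\lambda$ is an $e$-core. This already settles the existence half of the statement and shows that the partition produced is an $e$-core.

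For uniqueness, suppose $\mu$ is any partition lying in $\alpha$. The same computation, now with $\mu$ in place of $\lambda$, yields $\wc(\mu) = \wc(\alpha) = 0$ and hence $\we(\mu) = 0$ by Proposition~\ref{proposition:we=wc}, so $\mu$ is likewise an $e$-core. Thus $\lambda$ and $\mu$ are two $e$-cores with $\alpha^\charge(\lambda) = \alpha = \alpha^\charge(\mu)$, and Lemma~\ref{lemma:nakayamas_conjecture} forces $\lambda = \mu$. Equivalently, one may route this through Lemma~\ref{lemma:block_common_core}: since $\lambda$ and $\mu$ lie in the same block they share the same $e$-core, and being $e$-cores themselves they each coincide with that common core. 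Either way, uniqueness follows.

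There is essentially no hard step here: the argument is a direct composition of results already in hand, and the only point that needs care is the bookkeeping at $e = 0$. In that case every partition is a $0$-core, so the $e$-core assertion is automatic and uniqueness follows immediately from the $e = 0$ case of Lemma~\ref{lemma:nakayamas_conjecture}, without even using the hypothesis $\wc(\alpha) = 0$; for $e > 0$, by contrast, the hypothesis $\wc(\alpha) = 0$ is precisely what forces every partition lying in $\alpha$ to be an $e$-core, and it is this collapse to $e$-cores that makes the partition in $\alpha$ unique.
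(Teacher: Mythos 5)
Your proof is correct and follows essentially the same route as the paper, which deduces the lemma directly from~\eqref{equation:wclambda=wcalphalambda}, Proposition~\ref{proposition:we=wc} and Lemma~\ref{lemma:block_common_core} (your alternative via Lemma~\ref{lemma:nakayamas_conjecture} is equivalent, since that lemma underlies Lemma~\ref{lemma:block_common_core}). Nothing is missing.
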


\begin{remark}
We will see in Lemma~\ref{lemma:cores_w=0} that the statement of Lemma~\ref{lemma:block_core_unique_partition} remains true if we replace $\alpha\in \Qcharge$ with $\alpha \in \Q$.
\end{remark}

We can now give a $1{:}1$-parametrisation of the set $\Qcharge = \{\alpha^\charge(\lambda) : \lambda$ partition of $n$ for some $n \in \mathbb{N}\}$ by $\Z^{e-1} \times \mathbb{N}$, if $e > 0$.
By Lemmas~\ref{lemma:nakayamas_conjecture} and~\ref{lemma:block_core_unique_partition}, for any $\alpha \in \Qcharge$ we can denote by~$\lambda_\alpha$ the (unique) common $e$-core of the partitions lying in $\alpha$.

\begin{proposition}
\label{proposition:parametrisation_Qcharge}
Assume that $e > 0$. The map
\[
\Qcharge \longrightarrow \bigl\{ \xrostamod = (\xrostamod_0, \dots, \xrostamod_{e-1}) \in \Z^e : \xrostamod_0 + \dots + \xrostamod_{e-1} = 0\bigr\} \times \mathbb{N},
\]
given by $\alpha \mapsto \bigl(\xrostamod^\charge(\lambda_\alpha), \wc(\alpha)\bigr)$, is a bijection. Its inverse is given by 
\[
(x, w) \longmapsto \left(\frac{1}{2} \lVert \xrostamod\rVert^2 + w\right)\sum_{i = 0}^{e-1} \alpha_i - \sum_{i = 1}^{e-1} (\xrostamod_{\charge} + \dots + \xrostamod_{\charge + i-1})\alpha_{\charge + i},
\]
where the indices of $\xrostamod = (\xrostamod_0,\dots,\xrostamod_{e-1})$ are taken modulo $e$.
\end{proposition}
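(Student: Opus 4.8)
The plan is to verify that the explicitly given map, which I will call $\Psi$, is a two-sided inverse of the stated map $\Phi\colon \alpha \mapsto \bigl(\xrostamod^\charge(\lambda_\alpha), \wc(\alpha)\bigr)$. First I would check that $\Phi$ lands in the announced target set: the condition $\xrostamod^\charge_0(\lambda_\alpha) + \dots + \xrostamod^\charge_{e-1}(\lambda_\alpha) = 0$ is exactly~\eqref{equation:sumy_0}, while $\wc(\alpha) \in \N$ holds because $\alpha = \alpha^\charge(\mu)$ for some partition $\mu$, whence $\wc(\alpha) = \wc(\mu) = \we(\mu) \geq 0$ by~\eqref{equation:wclambda=wcalphalambda} and Proposition~\ref{proposition:we=wc}.

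The computational heart is a single identity: for any $e$-core $\lambda$,
\[
\alpha^\charge(\lambda) = \tfrac{1}{2}\lVert\xrostamod^\charge(\lambda)\rVert^2\,\constQ - \sum_{i=1}^{e-1}\bigl(\xrostamod^\charge_\charge(\lambda) + \dots + \xrostamod^\charge_{\charge+i-1}(\lambda)\bigr)\alpha_{\charge+i}.
\]
I would derive this by writing $\alpha^\charge(\lambda) = \sum_{i=0}^{e-1}\ci[\charge+i]{\charge}(\lambda)\,\alpha_{\charge+i}$, substituting the two formulas of Corollary~\ref{corollary:expression_ci_in_terms_of_yi} for the coefficients $\ci[\charge]{\charge}(\lambda)$ and $\ci[\charge+i]{\charge}(\lambda)$, and gathering the common $\tfrac12\lVert\xrostamod^\charge(\lambda)\rVert^2$ terms using $\sum_{i=0}^{e-1}\alpha_{\charge+i} = \constQ$. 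Combined with~\eqref{equation:wcalpha+h}, this identity yields the following description of $\Psi$: for any $(\xrostamod, w)$ in the target, if $\lambda$ is the $e$-core with $\xrostamod^\charge(\lambda) = \xrostamod$ (which exists by Proposition~\ref{proposition:1-1_correspondance_cores_abaci_y}, precisely because $\sum_i \xrostamod_i = 0$), then $\Psi(\xrostamod, w) = \alpha^\charge(\lambda) + w\constQ$.

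From this description both compositions follow. To show $\Psi(\xrostamod, w) \in \Qcharge$ and to compute $\Phi\bigl(\Psi(\xrostamod,w)\bigr)$, I would wrap $w$ successive $e$-rim hooks onto $\lambda$ (possible since $w \in \N$, by Lemma~\ref{lemma:remove_hook_abacus}), obtaining a partition $\mu$ with $\alpha^\charge(\mu) = \alpha^\charge(\lambda) + w\constQ = \Psi(\xrostamod,w)$ via Lemma~\ref{lemma:block_add_rim_hook_partition}; since wrapping rim hooks leaves the $e$-core unchanged, $\lambda_{\Psi(\xrostamod,w)} = \lambda$, so the first coordinate of $\Phi\bigl(\Psi(\xrostamod,w)\bigr)$ is $\xrostamod^\charge(\lambda) = \xrostamod$ and the second is $\wc(\alpha^\charge(\lambda)) + w = w$ by~\eqref{equation:wcalpha+h} and~\eqref{equation:wclambda=wcalphalambda} together with Proposition~\ref{proposition:we=wc} (an $e$-core has weight $0$). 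Conversely, for $\alpha \in \Qcharge$ the partitions lying in $\alpha$ all have $e$-core $\lambda_\alpha$ and are obtained from it by wrapping $\wc(\alpha)$ rim hooks, so $\alpha = \alpha^\charge(\lambda_\alpha) + \wc(\alpha)\constQ$; the description of $\Psi$ applied to $\Phi(\alpha) = \bigl(\xrostamod^\charge(\lambda_\alpha), \wc(\alpha)\bigr)$ then returns exactly $\alpha$. The only genuine care required is the bookkeeping of indices modulo $e$ in the central identity; everything else is an assembly of results already established.
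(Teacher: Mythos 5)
Your proof is correct and is essentially the argument the paper leaves implicit: the proposition is stated without proof, immediately after the lemmas guaranteeing that $\lambda_\alpha$ is well defined, and the remark following it cites exactly the ingredients you use (Corollary~\ref{corollary:expression_ci_in_terms_of_yi}, Proposition~\ref{proposition:1-1_correspondance_cores_abaci_y}, Lemma~\ref{lemma:block_add_rim_hook_partition} and~\eqref{equation:wcalpha+h}). Your central identity and the two compositions check out, so this is a complete and faithful filling-in of the intended proof.
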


\begin{remark}
Using Proposition~\ref{proposition:1-1_correspondance_cores_abaci_einfinite} and Corollary~\ref{corollary:expression_ci_in_terms_of_yi}, we can also give a version of Proposition~\ref{proposition:parametrisation_Qcharge} in the case $e = 0$, using the map $\alpha \mapsto \xrostamod^\charge(\lambda_\alpha)$. Note that in this case, an element  $\alpha = \sum_{i \in\Z}\cialpha\alpha_i \in Q$ is in $\Qcharge$ if and only if
\[
\cialpha-\cialpha[i+1] \in  \begin{cases}
\{0, 1\}, &\text{if } i \geq \charge,
\\
\{0, -1\},&\text{if } i < \charge.
\end{cases}
\]
\end{remark}

We now want to give an implicit description of $\Qcharge$. Note that in the following results we consider the $\charge$-weight inside $Q$ and not $\Qpos$.

\begin{lemma}
\label{lemma:cores_w=0}
We have $\bigl\{ \alpha^{\charge}(\lambda) : \lambda \text{ is an } e\text{-core} \bigr\} = \{\alpha \in Q : \wc(\alpha) = 0\}$.
\end{lemma}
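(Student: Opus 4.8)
The plan is to prove the two inclusions separately, since the forward one is straightforward and uniform in $e$, whereas the reverse one needs an extra idea precisely when $e = 0$.

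For the inclusion $\subseteq$, I would take an $e$-core $\lambda$. By definition $\lambda$ has no $e$-rim hook, so $\we(\lambda) = 0$ (and this holds also for $e = 0$, where $\we[0](\lambda) = 0$ by convention). Proposition~\ref{proposition:we=wc} then gives $\wc(\lambda) = \we(\lambda) = 0$, and \eqref{equation:wclambda=wcalphalambda} yields $\wc\bigl(\alpha^\charge(\lambda)\bigr) = \wc(\lambda) = 0$. Hence $\alpha^\charge(\lambda) \in \{\alpha\in Q : \wc(\alpha) = 0\}$, with no restriction on $e$.

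For the reverse inclusion when $e > 0$, I would fix $\alpha = \sum_{i\in\Ze} c_i\alpha_i$ with $\wc(\alpha) = 0$ and set $y_i \coloneqq c_i - c_{i+1}$ for $i \in \Ze$. The sum telescopes cyclically, so $\sum_{i\in\Ze} y_i = 0$, and Proposition~\ref{proposition:1-1_correspondance_cores_abaci_y} produces an $e$-core $\lambda$ with $\xrostamod_i^\charge(\lambda) = y_i$ for all $i$. By \eqref{equation:yi_ci_ci+1} this means $\alpha^\charge(\lambda)$ has exactly the same consecutive differences as $\alpha$; since a vector indexed cyclically by $\Ze$ is determined by its consecutive differences up to adding a constant to every coordinate, I obtain $\alpha = \alpha^\charge(\lambda) + h\constQ$ for a unique $h \in \Z$. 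Applying \eqref{equation:wcalpha+h} and the forward inclusion already proved, $0 = \wc(\alpha) = \wc\bigl(\alpha^\charge(\lambda)\bigr) + h = h$, whence $\alpha = \alpha^\charge(\lambda)$.

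The hard part will be the reverse inclusion when $e = 0$: there $\constQ = 0$, so the argument above no longer pins down the additive constant and one must recover $\alpha$ exactly. I see two routes. The cleanest is to reduce to $e \gg 0$ via Remark~\ref{remark:ci}: given $\alpha = \sum_{i\in\Z}c_i\alpha_i$ of finite support with $\wc(\alpha) = 0$, choose $e'$ larger than $\lvert\alpha\rvert + \lvert\charge\rvert$ and than the diameter of the support, so that reduction modulo $e'$ causes no collisions and the weight formula for $e'$ matches the one for $e = 0$ term by term; the associated element of the $e'$-version of $Q$ then has weight $0$, so by the case $e' > 0$ it equals $\alpha^\charge(\lambda)$ for some $e'$-core $\lambda$ with $\lvert\lambda\rvert = \lvert\alpha\rvert < e'$, and Remark~\ref{remark:ci} identifies the $e'$-counts with the $e = 0$ counts, giving $\alpha^\charge(\lambda) = \alpha$ (with $\lambda$ automatically a $0$-core). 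Alternatively, I would argue directly: writing $c_\charge = \sum_{i\ge\charge}y_i$ and completing squares, one shows by an elementary finite estimate that $\wc(\alpha)\le 0$, with equality exactly when $y_i\in\{0,1\}$ for $i\ge\charge$ and $y_i\in\{0,-1\}$ for $i<\charge$; Proposition~\ref{proposition:1-1_correspondance_cores_abaci_einfinite} then furnishes the $0$-core $\lambda$ with $\alpha^\charge(\lambda) = \alpha$. Either way the only delicate point is keeping track of the finiteness of the support; conceptually, the degeneracy of the $\constQ$-shift at $e = 0$ is absorbed by the extra sign constraints recorded in Proposition~\ref{proposition:1-1_correspondance_cores_abaci_einfinite}.
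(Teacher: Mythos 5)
Your proof is correct and follows essentially the same route as the paper: both inclusions are handled identically, the $e$-core is produced from the consecutive differences via Proposition~\ref{proposition:1-1_correspondance_cores_abaci_y} (resp.\ Proposition~\ref{proposition:1-1_correspondance_cores_abaci_einfinite} after deriving the sign constraints by exactly the same equality-case estimate), and your reduction to $e\gg 0$ is the paper's own alternative remark. The only cosmetic difference is your final identification in the case $e>0$, where you pin down the constant $h$ in $\alpha=\alpha^\charge(\lambda)+h\constQ$ via \eqref{equation:wcalpha+h}, whereas the paper computes the coordinates $\ci{\charge}(\lambda)$ explicitly using Corollary~\ref{corollary:expression_ci_in_terms_of_yi}; both finishes are equally valid.
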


\begin{proof}
We first assume that $e > 0$.
The inclusion $\subseteq$ follows from~\eqref{equation:wclambda=wcalphalambda} and Proposition~\ref{proposition:we=wc}. Now   let $\alpha = \sum_{i \in \Ze} \cialpha \alpha_i \in Q$ such that $\wc(\alpha) = 0$ and define $\xrostamod_i \coloneqq \cialpha - \cialpha[i+1]$ for all $i \in \Ze$. By Proposition~\ref{proposition:1-1_correspondance_cores_abaci_y} we know that $\xrostamod_i = \xrostamod^\charge_i(\lambda)$ for some $e$-core $\lambda$. By Corollary~\ref{corollary:expression_ci_in_terms_of_yi} and since $\wc(\alpha) = 0$ we have
\begin{align*}
\ci[\charge]{\charge}(\lambda)
&=
\frac{1}{2}\sum_{i \in \Ze} \xrostamod^\charge_i(\lambda)^2
\\
&=
\frac{1}{2}\sum_{i \in \Ze} \xrostamod_i^2
\\
&=
\cialpha[\charge],
\end{align*}
and
\begin{align*}
\ci[\charge + i]{\charge}(\lambda)
&=
\ci[\charge]{\charge}(\lambda) - \xrostamod^\charge_\charge(\lambda) - \dots - \xrostamod^\charge_{\charge+i-1}(\lambda)
\\
&=
\ci[\charge]{\charge} - \xrostamod^\charge_\charge - \dots - \xrostamod^\charge_{\charge+i-1}
\\
&=
\cialpha[\charge+i],
\end{align*}
thus $\alpha^\charge(\lambda) = \alpha$.

We now assume that $e = 0$. Again $\subseteq$ follows from~\eqref{equation:wclambda=wcalphalambda} and Proposition~\ref{proposition:we=wc}, thus let $\alpha = \sum_{i \in \Z}\cialpha\alpha_i \in Q$ such that $\wc(\alpha) = 0$. We can repeat the proof of the case $e >0$, using Proposition~\ref{proposition:1-1_correspondance_cores_abaci_einfinite} instead of Proposition~\ref{proposition:1-1_correspondance_cores_abaci_y}. However, we have to ensure that $\cialpha - \cialpha[i+1] \in \begin{cases}
\{0,1\}, &\text{if } i \geq \charge,
\\
\{0,-1\},&\text{if } i < \charge.
\end{cases}$ Since $\wc(\alpha) = 0$ we have
\begin{align*}
2\cialpha[\charge]
&=
\sum_{i \in \Z} (\cialpha - \cialpha[i+1])^2
\\
&=
\sum_{i \geq \charge} (\cialpha-\cialpha[i+1])^2
+
\sum_{i < \charge} (\cialpha-\cialpha[i+1])^2
\\
&\geq
\sum_{i \geq \charge} (\cialpha-\cialpha[i+1])
+
\sum_{i < \charge} (\cialpha[i+1]-\cialpha)
\\
&=
\cialpha[\charge] + \cialpha[\charge],
\end{align*}
thus we deduce that
\[
(\cialpha-\cialpha[i+1])^2 = \begin{cases}
\cialpha-\cialpha[i+1], &\text{if } i \geq \charge,
\\
\cialpha[i+1] - \cialpha,&\text{if } i < \charge,
\end{cases}
\]
for all $i \in \Z$, thus we obtain the desired result. Another way to see the result for $e = 0$ is to consider $e >0$ large enough so that we can apply the previous case (using Remark~\ref{remark:ci}).
\end{proof}

\begin{proposition}
\label{proposition:partitions_wgeq0}
We have $\Qcharge = \{\alpha \in Q : \wc(\alpha) \geq 0\}$.
\end{proposition}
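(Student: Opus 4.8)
The plan is to prove the two inclusions separately: the forward one is immediate, and the reverse one carries all the content. For $\subseteq$, given any partition $\lambda$ we have $\wc\bigl(\alpha^\charge(\lambda)\bigr) = \wc(\lambda) = \we(\lambda) \geq 0$ by~\eqref{equation:wclambda=wcalphalambda}, Proposition~\ref{proposition:we=wc}, and the non-negativity of the $e$-weight; hence every element of $\Qcharge$ lies in $\{\alpha \in Q : \wc(\alpha) \geq 0\}$.

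For the reverse inclusion when $e > 0$, I would take $\alpha \in Q$ with $w \coloneqq \wc(\alpha) \geq 0$ and first strip off the weight. By~\eqref{equation:wcalpha+h} we have $\wc(\alpha - w\constQ) = 0$, so Lemma~\ref{lemma:cores_w=0} produces an $e$-core $\lambda$ with $\alpha^\charge(\lambda) = \alpha - w\constQ$. The idea is then to recover $\alpha$ by wrapping $w$ successive $e$-rim hooks onto $\lambda$: by the second part of Lemma~\ref{lemma:remove_hook_abacus} any partition admits an addable $e$-rim hook, so the process never gets stuck, and by Lemma~\ref{lemma:block_add_rim_hook_partition} each addition increases the $\alpha^\charge$-value by exactly $\constQ$. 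After $w$ steps we reach a partition $\mu$ with $\alpha^\charge(\mu) = \alpha^\charge(\lambda) + w\constQ = \alpha$, so $\alpha \in \Qcharge$.

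When $e = 0$ the rim-hook mechanism degenerates since $\constQ = 0$, and a separate observation is needed: here the weight is never positive. Splitting the defining sum of $\wc(\alpha)$ at $\charge$ and using $t^2 \geq t$ for every integer $t$, together with the telescoping identities $\sum_{i \geq \charge}(\cialpha - \cialpha[i+1]) = \cialpha[\charge] = \sum_{i < \charge}(\cialpha[i+1] - \cialpha)$ (valid since $(\cialpha)_{i}$ has finite support), one obtains $\sum_{i \in \Z}(\cialpha - \cialpha[i+1])^2 \geq 2\,\cialpha[\charge]$, that is $\wc(\alpha) \leq 0$. Consequently $\{\alpha : \wc(\alpha) \geq 0\} = \{\alpha : \wc(\alpha) = 0\}$, which equals $\Qcharge$ by Lemma~\ref{lemma:cores_w=0} (every partition being a $0$-core). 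This is precisely the inequality already extracted in the $e = 0$ part of the proof of Lemma~\ref{lemma:cores_w=0}.

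The substance lies entirely in the reverse inclusion, and the only genuine subtlety is that the mechanism differs between the two cases. For $e > 0$ the argument rests on two complementary facts that are already available, namely that the zero-weight locus is exactly the image of $e$-cores and that every partition has an addable $e$-rim hook, so the induction on $w$ terminates correctly. For $e = 0$ the obstacle is that $\constQ = 0$ forces the reverse direction to take the form of a non-positivity estimate rather than a hook-adding construction; the telescoping/integrality bound above resolves it. Alternatively one could reduce the $e = 0$ case to $e \gg 0$ via Remark~\ref{remark:ci}, but the direct estimate is cleaner.
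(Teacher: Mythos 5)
Your proof is correct and follows essentially the same route as the paper's: the forward inclusion via non-negativity of the weight, the reverse inclusion for $e>0$ by subtracting $\wc(\alpha)\constQ$, invoking Lemma~\ref{lemma:cores_w=0} to land on an $e$-core and then wrapping on $\wc(\alpha)$ $e$-rim hooks, and for $e=0$ the observation that the telescoping/integrality bound forces $\wc(\alpha)\leq 0$ so that Lemma~\ref{lemma:cores_w=0} applies directly. The only (harmless) addition is your explicit appeal to Lemma~\ref{lemma:remove_hook_abacus} to justify that rim hooks can always be added, which the paper leaves implicit.
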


\begin{proof}
Assume first that $e > 0$.
Again $\subseteq$ follows from~\eqref{equation:wclambda=wcalphalambda} and Proposition~\ref{proposition:we=wc}. Let $\alpha = \sum_{i \in \Ze} \cialpha\alpha_i \in Q$ such that $\wc(\alpha) \geq 0$. By~\eqref{equation:wcalpha+h}, setting $\hat\alpha \coloneqq \alpha - \wc(\alpha)\constQ \in Q$ we have $\wc(\hat\alpha) = 0$ thus by Lemma~\ref{lemma:cores_w=0} we have $\hat\alpha = \alpha^\charge(\lambda)$ for some $e$-core $\lambda$. Now if $\mu$ is any partition obtained from $\lambda$ by adding $\wc(\alpha)$ times an $e$-rim hook we have $\alpha^\charge(\mu) = \alpha^\charge(\lambda) + \wc(\alpha)\constQ =  \hat\alpha + \wc(\alpha)\constQ  = \alpha$ which concludes the proof in the case $e > 0$.

Now if $e = 0$, if $\alpha = \sum_{i \in \Z}\cialpha\alpha_i \in Q$ is such that $\wc(\alpha) \geq 0$ then as in the proof of Lemma~\ref{lemma:cores_w=0} we have
\[
2\cialpha[\charge] \geq \sum_{i \in \Z} (\cialpha-\cialpha[i+1])^2 \geq \sum_{i \geq \charge}(\cialpha-\cialpha[i+1]) + \sum_{i < \charge}(\cialpha[i+1]-\cialpha) = 2\cialpha[\charge],
\]
thus we have in fact $\wc(\alpha) = 0$ and we apply Lemma~\ref{lemma:cores_w=0}.
\end{proof}

An interesting consequence of Proposition~\ref{proposition:partitions_wgeq0} is the following result, which was, up to the author's knowledge, not already known. It allows to determine whether a given multiset of residues actually comes from a Young diagram and non-trivially extends a result of Robinson--Thrall~\cite{robinson-thrall} for $e  = 0$ to $e \geq 0$.

\begin{corollary}
\label{corollary:existence_young_diagram_given_residues}
Let $e \in \N$ and let $(\cialpha)_{i \in \Ze}$ be a sequence of integers indexed by $\Ze$ (with finite support if $e = 0$). There exists a partition $\lambda$ satisfying $\ci\charge(\lambda) = \cialpha$ for all $i \in \Ze$ if and only if $\sum_{i \in \Ze}  \left(\cialpha-\cialpha[i+1]\right)^2 \leq 2 \cialpha[\charge]$.
\end{corollary}

\begin{remark}
If a partition as in Corollary~\ref{corollary:existence_young_diagram_given_residues} exists, Lemma~\ref{lemma:cores_w=0} and the proof of Proposition~\ref{proposition:partitions_wgeq0} show how to compute the associated core, and thus all the partitions with the same multiset of residues (that is, lying in the same block).
\end{remark}

The aim of Section~\ref{section:implicit_description_blocks} is to study an analogue of Proposition~\ref{proposition:partitions_wgeq0} in higher levels.
By~\eqref{equation:wcalpha+h} we obtain the following corollary of Proposition~\ref{proposition:partitions_wgeq0}.

\begin{corollary}
\label{corollary:partition_any_alpha+h_block}
Assume $e > 0$. 
For any $\alpha \in \Q$ and any $h \in \mathbb{Z}$ we have:
\[
\alpha + h\constQ \in \Qcharge \iff h  \geq -\wc(\alpha).
\]
\end{corollary}

\begin{example}
\label{example:alpha=halpha0}
Assume $e \geq 2$ and take $\charge=\chargez$. Let $n \in \mathbb{N}$ and define $\alpha \coloneqq n\alpha_0 \in \Qpos$. We have $\wc[\chargez](\alpha) = n-n^2 \leq 0$. If $n \in \{0,1\}$ then $\wc[\chargez](\alpha) = 0$ thus $\alpha \in \Qcharge$, and indeed if $n = 0$ then $\alpha = 0 = \alpha^{\chargez}(\emptyset)$ and if $n = 1$ then  $\alpha = \alpha_0 = \alpha^{\chargez}\bigl((1)\bigr)$.

Thus, we now assume $n \geq 2$, in particular $\wc[\chargez](\alpha) = n - n^2 < 0$. We have 
\[
\wc[\chargez]\bigl(\alpha + (n^2-n)\constQ\bigr) = 0.
\]
Thus, by Lemma~\ref{lemma:cores_w=0} there is an $e$-core $\lambda$ such that
\[
\alpha^{\chargez}(\lambda) = \alpha+(n^2-n)\delta =  n^2\alpha_0 + (n^2-n)\sum_{i = 1}^{e-1}\alpha_i,
\]
in other words $\ci[0]{\chargez}(\lambda) = n^2$ and $\ci{\chargez}(\lambda) = n^2-n$ for all $i \in \{1, \dots, e-1\}$. In particular, we have:
\[
|\lambda| = n^2 + (e-1)(n^2-n) = en^2 - (e-1)n.
\]
 By~\eqref{equation:yi_ci_ci+1}, the $e$-core $\lambda$ satisfies $\xrostamod^\chargez_0(\lambda) = -\xrostamod^\chargez_{e-1}(\lambda) = n$, with the other integers $\xrostamod^\chargez_i(\lambda)$ being $0$. For instance, with $e = 4$ and $n = 2$ then by Remark~\ref{remark:lozenge} the charged $4$-abacus of $\lambda$ is:
\abacuscore{4}{5,2,1,1,1}{,}{0}
thus $\lambda = (5,2,1,1,1)$, with Young diagram (with residues) \ytableaushort{01230,30,2,1,0}\, thus $\ci[0]{\chargez}(\lambda) = 4$ and $\ci{\chargez}(\lambda) = 2$ for all $i \in \{1,2,3\}$ indeed.
\end{example}

\begin{example}
Assume $e \geq 2$ and take $\charge=\chargez$. Let $n \in \mathbb{N}$, fix $i_0 \in \{1, \dots, e-1\}$ and define $\alpha \coloneqq n\alpha_{i_0} \in \Qpos$. We have $\wc[\chargez](\alpha) = -n^2 \leq 0$. If $n = 0$ then $\alpha = 0 = \alpha^{\chargez}(\emptyset)$, thus we now assume that $n \geq 1$. We have $\wc[\chargez](\alpha) =  - n^2 < 0$ and
\[
\wc[\chargez]\bigl(\alpha + n^2\constQ\bigr) = 0,
\]
thus there is an $e$-core $\lambda$ such that
\[
\alpha^{\chargez}(\lambda) = \bigl(n^2+n\bigr)\alpha_{i_0} + n^2\sum_{\substack{i \in \Ze \\ i \neq i_0}}\alpha_i,
\]
in other words $\ci[i_0]{\chargez}(\lambda) = n^2+n$ and $\ci{\chargez}(\lambda) = n^2$ for all $i \neq i_0$. In particular, we have:
\[
|\lambda| = n^2+n + (e-1)n^2 = en^2 + n.
\]
 The $e$-core $\lambda$ satisfies $\xrostamod^\chargez_{i_0}(\lambda) = -\xrostamod^\chargez_{i_0-1}(\lambda) = n$, with the other integers $\xrostamod^\chargez_i(\lambda)$ being $0$. Note that for (the excluded case) $i_0 = 0$ we obtain the same $e$-tuple $\xrostamod^\chargez(\lambda)$ as in Example~\ref{example:alpha=halpha0}, although the value of $|\lambda|$ is different. For  $e = 4$ and $n = 2$ and $i_0 = 1$, the charged $4$-abacus of $\lambda$ is:
\abacuscore{4}{6,3,2,2,2,1,1,1}{,}{0}
thus $\lambda = (6,3,2,2,2,1,1,1)$, with Young diagram (with residues) \ytableaushort{012301,301,23,12,01,3,2,1}\, thus $\ci[1]{\chargez}(\lambda) = 6$ and $\ci{\chargez}(\lambda) =4$ for all $i \neq 1$ indeed.
\end{example}

\subsection{Multipartitions}
\label{subsection:multipartitions}

Let $r \geq 1$ and $e \in \N$. Let $\blam$ be an \emph{$r$-partition} (or \emph{multipartition}) of $n$, that is, an $r$-tuple $\blam = \bigl(\lambda^{(1)},\dots,\lambda^{(r)}\bigr)$ of partitions such that $\bigl\lvert\lambda^{(1)}\bigr\rvert + \dots + \bigl\lvert\lambda^{(r)}\bigr\rvert = n$. 
The Young diagram of the $r$-partition $\blam$ is the subset of $\mnodes$ defined by
\[
\mathcal{Y}(\blam) \coloneqq \bigcup_{j = 1}^r \mathcal{Y}\bigl(\lambda^{(j)}\bigr) \times \{j\}.
\]
An element of $\mathcal{Y}(\blam)$ is a node of $\blam$, more generally any element of $\mnodes$ is a node. An $e$-rim hook of $\blam$ is an $e$-rim hook of $\lambda^{(j)}$ for some $j \in \{1,\dots,r\}$. We say that $\blam$ is an \emph{$e$-multicore} if $\lambda^{(j)}$ is an $e$-core for all $j \in \{1,\dots,r\}$, and the \emph{$e$-multicore} of $\blam$ is the $r$-partition $\overline\blam$ given by $\overline\blam \coloneqq \bigl(\overline{\lambda^{(1)}},\dots,\overline{\lambda^{(r)}}\bigr)$.

Now let $\mcharge = (\charge_1,\dots,\charge_r) \in \Z^r$ be a \emph{multicharge}. The $\mcharge$-residue $\res{\mcharge}(\gamma)$ of a node $\gamma = (a,b,j)$ is the $\charge_j$-residue of the node $(a, b)$, in other words
\[
\res{\mcharge}(a, b, j) = \res{\charge_j}(a, b) = b - a + \charge_j \pmod{e}.
\]
Again, given $i \in \Ze$ an $(i, \mcharge)$-node (or simply $i$-node) is a node of $\mcharge$-residue $i$. We denote by $\ci{\mcharge}(\blam)$ the number of $(i, \mcharge)$-nodes of $\blam$. We define
\[
\alpha^{\mcharge}(\blam) \coloneqq \sum_{\gamma \in \mathcal{Y}(\blam)} \alpha_{\res{\mcharge}(\gamma)} = \sum_{i \in \Ze} \ci{\mcharge}(\blam)\alpha_i \in \Qpos.
\]


%

Following~\cite{fayers:weights}, the \emph{$\mcharge$-weight} (or simply \emph{weight}) of an $r$-partition $\blam$ is
\[
\wmc(\blam) = \sum_{j = 1}^r \ci[\charge_j]{\mcharge}(\blam) - \frac{1}{2}\sum_{i \in \Ze}\left(\ci{\mcharge}(\blam) - \ci[i+1]{\mcharge}(\blam)\right)^2.
\]
Note that we recover the corresponding definition given at~\textsection\ref{subsection:partitions} when $r = 1$.

\begin{remark} (See, for instance, \cite[\textsection 3.1]{jacon-lecouvey}\footnote{In this remark, the reference~\cite{jacon-lecouvey} refers  to the \texttt{arXiv} version, since a part of the exposition has been removed from the published version.})
This definition of $\mcharge$-weight, which can be directly computed from the Young diagram, can be interpreted in Lie-theoretic terms as follows. The Fock space $\mathcal{F}_{\mcharge} = \oplus_{\blam} \mathbb{Q}(v)\blam$, where $v$ is an indeterminate and $\blam$ runs over all the $r$-partitions, has a structure of an integrable $\mathcal{U}_v(\widehat{\mathfrak{sl}}_e)$-module. For the corresponding weight space decomposition, each $r$-partition $\blam$ is a weight vector and applying Kac's bilinear form on the corresponding weight gives a scalar $\|\blam\|^{\mcharge}$. We then have the relation $\wmc(\blam) = \|\bempty\|^{\mcharge} - \|\blam\|^{\mcharge}$ (see, for instance, \cite[Proposition 3.5]{jacon-lecouvey}).
\end{remark}

We extend the definition of the $\mcharge$-weight to $Q$ by setting
\[
\wmc(\alpha)
\coloneqq \sum_{j = 1}^r \cialpha[\charge_j] - \frac{1}{2}\sum_{i \in \Ze} \bigl(\cialpha - \cialpha[i+1]\bigr)^2 \in \Z,
\]
for any $\alpha = \sum_{i \in \Ze}\cialpha\alpha_i \in Q$. Note that $\wmc(\alpha) \in \Z$ for the same reason as in level one, and
\[
\wmc(\blam) = \wmc\bigl(\alpha^\mcharge(\blam)\bigr).
\]

%

We now recall some results from~\cite{fayers:weights}. The first one follows from Lemma~\ref{lemma:add_rim_hook_residues}.

\begin{lemma}[\protect{\cite[Corollary 3.4]{fayers:weights}}]
\label{lemma:weight_add_rim_hook}
Assume that $e > 0$ and let $\blam$ be an $r$-partition obtained from $\bmu$ by adding an $e$-rim hook. We have $\wmc(\blam) = \wmc(\bmu) + r$.
\end{lemma}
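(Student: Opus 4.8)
The plan is to compute directly how adding an $e$-rim hook changes each of the two terms in the weight formula
\[
\wmc(\blam) = \sum_{j = 1}^r \ci[\charge_j]{\mcharge}(\blam) - \frac{1}{2}\sum_{i \in \Ze}\bigl(\ci{\mcharge}(\blam) - \ci[i+1]{\mcharge}(\blam)\bigr)^2,
\]
and to check that the net change is exactly $r$. Since $\blam$ is obtained from $\bmu$ by adding one $e$-rim hook, and by definition an $e$-rim hook of a multipartition lives entirely in a single component $\lambda^{(j_0)}$, I would first invoke Lemma~\ref{lemma:add_rim_hook_residues}: this $e$-rim hook contains exactly one node of each residue $i \in \Ze$. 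Hence for every $i \in \Ze$ we have $\ci{\mcharge}(\blam) = \ci{\mcharge}(\bmu) + 1$.

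The key observation is then that the differences $\ci{\mcharge}(\blam) - \ci[i+1]{\mcharge}(\blam)$ are left unchanged by the operation, because each residue count goes up by the same amount $1$; that is, $\ci{\mcharge}(\blam) - \ci[i+1]{\mcharge}(\blam) = \bigl(\ci{\mcharge}(\bmu)+1\bigr) - \bigl(\ci[i+1]{\mcharge}(\bmu)+1\bigr) = \ci{\mcharge}(\bmu) - \ci[i+1]{\mcharge}(\bmu)$ for all $i$. Consequently the entire second sum, the quadratic term, is identical for $\blam$ and $\bmu$ and contributes nothing to the difference $\wmc(\blam) - \wmc(\bmu)$. So the whole change comes from the first sum.

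For the first term I would argue that $\sum_{j=1}^r \ci[\charge_j]{\mcharge}(\blam) = \sum_{j=1}^r \bigl(\ci[\charge_j]{\mcharge}(\bmu) + 1\bigr) = \sum_{j=1}^r \ci[\charge_j]{\mcharge}(\bmu) + r$, again using that each residue count increases by exactly $1$ (here applied to the $r$ specific residues $\charge_1,\dots,\charge_r$, with multiplicity if some $\charge_j$ coincide modulo $e$). Combining the two computations gives $\wmc(\blam) - \wmc(\bmu) = r + 0 = r$, which is the claim.

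There is essentially no hard obstacle here; the only point requiring care is the correct use of Lemma~\ref{lemma:add_rim_hook_residues} to guarantee that \emph{every} residue class gains exactly one node (so that the uniform shift argument is valid) and, for the first sum, keeping track of the fact that we add $1$ once for each $j \in \{1,\dots,r\}$ regardless of whether the added hook sits in component $j$ — the residue $\charge_j$ is counted across the whole multipartition $\blam$, not just within $\lambda^{(j)}$, so the increment is $+1$ for each of the $r$ summands. The hypothesis $e > 0$ is needed so that an $e$-rim hook makes sense and has $e \geq 1$ nodes hitting all residues; I would simply note this. The proof is therefore a short direct calculation rather than an inductive or structural argument.
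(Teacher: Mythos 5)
Your proof is correct and follows exactly the route the paper indicates: the paper does not write out a proof but notes that the lemma ``follows from Lemma~\ref{lemma:add_rim_hook_residues}'', which is precisely your argument that each residue count increases by $1$, leaving the quadratic term unchanged and adding $r$ to the linear term.
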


More generally, if $e > 0$ then for any $\alpha \in \Q$ and any $h \in \Z$ we have
\begin{equation}
\label{equation:wmcalpha+h}
\wmc(\alpha + h\constQ) = \wmc(\alpha) + rh.
\end{equation}
If $\blam$ is an $e$-multicore, for any $i \in \Ze$ and $j \in \{1, \dots, r\}$ we define
\begin{align*}
\bfayers^\mcharge_{ij}(\blam) &\coloneqq \bfayers^{\charge_j}_i\bigl(\lambda^{(j)}\bigr),
\\
\xrostam^\mcharge_{ij}(\blam
) &\coloneqq \xrostam^{\charge_j}_i\bigl(\lambda^{(j)}\bigr),
\\
\xrostamod^\mcharge_{ij}(\blam
) &\coloneqq \xrostamod^{\charge_j}_i\bigl(\lambda^{(j)}\bigr).
\end{align*}
Moreover, if $e > 0$ then for any $j, k\in \{1, \dots, r\}$ we define
\[
u_{j,k}(\blam) \coloneqq
- \xrostamod^\mcharge_{\charge_j, j}(\blam) - \dots - \xrostamod^\mcharge_{\charge_j + i - 1,j}(\blam),
\]
if $\charge_j \neq \charge_k$, 
where $i \geq 1$ is any integer  such that $i = \charge_k - \charge_j \pmod{e}$ (recall~\eqref{equation:sumy_0}). We define $u_{j,k}(\blam) \coloneqq 0$ if $\charge_j = \charge_k$. The point of this definition is that
\begin{equation}
\label{equation:ci_ujk}
\ci[\charge_k]{\charge_j}(\lambda^{(j)}) = \frac{1}{2}\bigl\lVert \xrostamod^{\charge_j}\bigl(\lambda^{(j)}\bigr)\bigr\rVert^2 + u_{j,k}(\blam),
\end{equation}
for all $j, k \in \{1, \dots,r\}$, by Corollary~\ref{corollary:expression_ci_in_terms_of_yi}

\begin{lemma}[\protect{\cite[Proposition 3.5]{fayers:weights}}]
\label{lemma:weight_sum_r=2}
Write $\blam = \bigl(\lambda^{(1)},\dots,\lambda^{(r)}\bigr)$. If $\blam$ is a multicore then
\[
\wmc(\blam) = \sum_{1 \leq j < k \leq r} \wmc[(\charge_j,\charge_k)]\bigl(\lambda^{(j)},\lambda^{(k)}\bigr).
\]
\end{lemma}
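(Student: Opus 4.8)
The plan is to expand both sides into a ``linear'' part and a ``quadratic'' part and match them term by term, the only genuinely nontrivial input being that each component $\lambda^{(j)}$ is an $e$-core. Since the Young diagram of $\blam$ is the disjoint union of those of its components and $\res{\mcharge}(a,b,c) = \res{\charge_c}(a,b)$, we have $\ci{\mcharge}(\blam) = \sum_{j=1}^r \ci{\charge_j}(\lambda^{(j)})$ for every $i$. Setting $d_i^{(j)} \coloneqq \ci{\charge_j}(\lambda^{(j)}) - \ci[i+1]{\charge_j}(\lambda^{(j)})$ (which by~\eqref{equation:yi_ci_ci+1} equals $\xrostamod^{\charge_j}_i(\lambda^{(j)})$, as $\lambda^{(j)}$ is an $e$-core), the consecutive differences of $\blam$ become $\ci{\mcharge}(\blam) - \ci[i+1]{\mcharge}(\blam) = \sum_{j=1}^r d_i^{(j)}$; all sums below have finite support.

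First I would expand the two pieces of $\wmc(\blam)$. The linear part is
\[
\sum_{j=1}^r \ci[\charge_j]{\mcharge}(\blam) = \sum_{j=1}^r\sum_{l=1}^r \ci[\charge_j]{\charge_l}(\lambda^{(l)}),
\]
which I split into its diagonal ($j=l$) and off-diagonal ($j\neq l$) parts. The quadratic part expands as
\[
\tfrac12\sum_{i\in\Ze}\Bigl(\sum_{l} d_i^{(l)}\Bigr)^{2} = \tfrac12\sum_{l}\sum_i \bigl(d_i^{(l)}\bigr)^2 + \sum_{l<m}\sum_i d_i^{(l)} d_i^{(m)}.
\]
The key step is to invoke the $e$-core hypothesis on each component: by Proposition~\ref{proposition:we=wc} we have $\wc[\charge_l](\lambda^{(l)}) = \we(\lambda^{(l)}) = 0$, equivalently (Corollary~\ref{corollary:expression_ci_in_terms_of_yi}) $\ci[\charge_l]{\charge_l}(\lambda^{(l)}) = \tfrac12\sum_i (d_i^{(l)})^2$. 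This identifies the diagonal of the linear part with the self-term of the quadratic part, so the two cancel upon forming $\wmc(\blam)$, leaving
\[
\wmc(\blam) = \sum_{j\neq l} \ci[\charge_j]{\charge_l}(\lambda^{(l)}) - \sum_{l<m}\sum_i d_i^{(l)} d_i^{(m)}.
\]

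Finally I would reorganise the surviving off-diagonal linear sum over unordered pairs, grouping the ordered pairs $(j,l)$ and $(l,j)$ into $\{j,l\}$ with $j<l$, to obtain
\[
\wmc(\blam) = \sum_{j<l}\Bigl(\ci[\charge_j]{\charge_l}(\lambda^{(l)}) + \ci[\charge_l]{\charge_j}(\lambda^{(j)}) - \sum_i d_i^{(j)} d_i^{(l)}\Bigr).
\]
Running the very same expansion for a single pair (the case $r=2$) shows that each summand equals $\wmc[(\charge_j,\charge_l)]\bigl(\lambda^{(j)},\lambda^{(l)}\bigr)$, which is exactly the claim. I expect the only real obstacle to be the bookkeeping in the cancellation of the diagonal terms: the identity is false for arbitrary multipartitions, and it is precisely the vanishing $\wc[\charge_l](\lambda^{(l)})=0$ of each $e$-core component that kills the self-interaction terms and reduces the weight to a sum of pairwise contributions. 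Since the argument is purely algebraic and finite, it applies verbatim when $e=0$.
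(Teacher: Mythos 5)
Your proof is correct, and it is essentially the direct bilinear-expansion calculation the paper itself indicates (the paper cites Fayers for this lemma and remarks after its statement that it "can be proved by a direct calculation" using the $\xrostamod^{\mcharge}_{ij}$, \eqref{equation:yi_ci_ci+1} and \eqref{equation:ci_ujk} — exactly the expansion you carry out, and the same one the paper performs explicitly for $r=2$ in the proof of Proposition~\ref{proposition:weight_positive}). Your observation that the $e$-core hypothesis enters precisely through $\wc[\charge_l](\lambda^{(l)})=0$ to cancel the diagonal terms is the right key point.
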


Note that Lemma~\ref{lemma:weight_sum_r=2} can be proved by a direct calculation using the elements $\xrostamod^\mcharge_{ij}(\blam)$ together with~\eqref{equation:yi_ci_ci+1} and~\eqref{equation:ci_ujk}. Using the same idea, we will give a proof of the next proposition. 

\begin{proposition}[\protect{\cite[Corollary 3.9]{fayers:weights}}]
\label{proposition:weight_positive}
For any $r$-partition $\blam$ we have $\wmc(\blam) \geq 0$.
\end{proposition}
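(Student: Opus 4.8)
The plan is to reduce the statement, in three steps, to an elementary inequality about integer vectors. I first treat $e > 0$; the case $e = 0$ follows by taking $e \gg 0$ via Remark~\ref{remark:ci}, since for a fixed $r$-partition $\blam$ the integers $\ci{\mcharge}(\blam)$, hence $\wmc(\blam)$, agree with their values for any large $e$, and for such an $e$ each component $\lambda^{(j)}$ is automatically an $e$-core. Assuming $e > 0$, I reduce to multicores using Lemma~\ref{lemma:weight_add_rim_hook}: writing $\overline\blam = \bigl(\overline{\lambda^{(1)}},\dots,\overline{\lambda^{(r)}}\bigr)$, the partition $\blam$ is obtained from $\overline\blam$ by adding $\sum_j \we(\lambda^{(j)})$ rim hooks, so that lemma gives $\wmc(\blam) = \wmc(\overline\blam) + r\sum_j \we(\lambda^{(j)}) \geq \wmc(\overline\blam)$, as $r \geq 1$ and $\we(\lambda^{(j)}) \geq 0$. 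It therefore suffices to prove the claim for multicores. Finally, Lemma~\ref{lemma:weight_sum_r=2} expresses the weight of a multicore as $\sum_{1 \leq j < k \leq r} \wmc[(\charge_j,\charge_k)]\bigl(\lambda^{(j)},\lambda^{(k)}\bigr)$, which reduces everything to the bicore case $r = 2$.

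For a bicore $\bigl(\lambda^{(1)},\lambda^{(2)}\bigr)$ with charges $(\charge_1,\charge_2)$, I carry out the direct computation announced before the statement, in the spirit of Lemma~\ref{lemma:weight_sum_r=2}. By~\eqref{equation:yi_ci_ci+1} the successive differences in the weight formula are $\ci{\mcharge}(\blam) - \ci[i+1]{\mcharge}(\blam) = \xrostamod^\mcharge_{i1}(\blam) + \xrostamod^\mcharge_{i2}(\blam)$, while Corollary~\ref{corollary:expression_ci_in_terms_of_yi} and~\eqref{equation:ci_ujk} evaluate the diagonal and off-diagonal residue counts as $\ci[\charge_j]{\charge_j}\bigl(\lambda^{(j)}\bigr) = \frac12\sum_i \xrostamod^\mcharge_{ij}(\blam)^2$ and $\ci[\charge_k]{\charge_j}\bigl(\lambda^{(j)}\bigr) = \frac12\sum_i \xrostamod^\mcharge_{ij}(\blam)^2 + u_{j,k}(\blam)$. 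Substituting into the weight and letting the squared norms cancel, I expect to obtain the clean identity
\[
\wmc[(\charge_1,\charge_2)]\bigl(\lambda^{(1)},\lambda^{(2)}\bigr) = \frac12\sum_{i \in \Ze}\bigl(\xrostamod^\mcharge_{i1}(\blam) - \xrostamod^\mcharge_{i2}(\blam)\bigr)^2 + u_{1,2}(\blam) + u_{2,1}(\blam).
\]
Setting $z_i \coloneqq \xrostamod^\mcharge_{i1}(\blam) - \xrostamod^\mcharge_{i2}(\blam)$ and using the definition of $u_{j,k}$ together with $\sum_i \xrostamod^\mcharge_{ij}(\blam) = 0$ (eq.~\eqref{equation:sumy_0}) to rewrite $u_{2,1}$ over the complementary range of indices, the two correction terms should combine into $u_{1,2}(\blam) + u_{2,1}(\blam) = -\sum_{i \in I} z_i$, where $I \subseteq \Ze$ is the set of consecutive residues $\charge_1, \charge_1+1, \dots, \charge_2-1$ modulo $e$ (and $I = \emptyset$ when $\charge_1 \equiv \charge_2$, in which case both correction terms vanish).

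It then remains to prove the inequality $\frac12\sum_{i\in\Ze} z_i^2 \geq \sum_{i \in I} z_i$ for the integer vector $(z_i)_{i\in\Ze}$, which satisfies $\sum_{i\in\Ze} z_i = 0$. This is where integrality is decisive: every integer $n$ satisfies both $n^2 \geq n$ and $n^2 \geq -n$. Applying the first inequality to the indices in $I$ gives $\sum_{i\in I} z_i^2 \geq \sum_{i\in I} z_i$, and applying the second to the complementary indices, combined with $\sum_{i\in\Ze} z_i = 0$, gives $\sum_{i\notin I} z_i^2 \geq -\sum_{i\notin I} z_i = \sum_{i\in I} z_i$; adding these and halving yields the claim (and in fact the bound holds for an arbitrary subset $I$). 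I expect the main obstacle to be not conceptual but bookkeeping: pinning down the signs and the index ranges inside $u_{1,2}$ and $u_{2,1}$ so that the cross term collapses to a single signed partial sum of $z$, after which the elementary integrality argument closes the proof in one stroke.
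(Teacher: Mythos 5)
Your proposal follows the paper's proof almost step for step: the same reduction to multicores via Lemma~\ref{lemma:weight_add_rim_hook}, to $r=2$ via Lemma~\ref{lemma:weight_sum_r=2}, the same expansion yielding $\wmc(\blam) = \frac12\lVert z\rVert^2 + u_{1,2}(\blam)+u_{2,1}(\blam)$ with $z_i = \xrostamod^\mcharge_{i1}(\blam)-\xrostamod^\mcharge_{i2}(\blam)$, and the same rewriting of $u_{2,1}$ via $\sum_i \xrostamod^\mcharge_{i2}(\blam)=0$ so that the correction terms collapse to $-\sum_{i\in I} z_i$ over the interval $I$ of residues between the two charges. The one place you diverge is the final elementary inequality (the paper's Lemma~\ref{lemma:quadratic_positive}): the paper argues via $t^2\geq 2t$ unless $t=1$ and a case analysis with two auxiliary index sets, whereas you simply apply $n^2\geq n$ on $I$ and $n^2\geq -n$ on the complement and add, using $\sum_i z_i = 0$. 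Your version is correct, shorter, and makes transparent that the bound holds for an arbitrary subset $I$, not just an initial segment; otherwise the two proofs are the same.
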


\begin{proof}
By Lemma~\ref{lemma:weight_add_rim_hook} it suffices to consider the case where $\blam$ is a multicore, moreover by Lemma~\ref{lemma:weight_sum_r=2} it suffices to consider the case $r = 2$. Besides, by Remark~\ref{remark:ci} we can assume $e > 0$. By~\eqref{equation:charges_charge0} and since $\wmc[(\charge_1,\charge_2)](\lambda^{(1)},\lambda^{(2)}) = \wmc[(\charge_2,\charge_1)](\lambda^{(2)},\lambda^{(1)})  $, we can further assume that $\mcharge = (0,\charge)$ with $\charge \in \{0,\dots,e-1\}$. We have, using~\eqref{equation:yi_ci_ci+1} and~\eqref{equation:ci_ujk},
\begin{align*}
\wmc(\blam)
&=
\ci[0]{\mcharge}(\blam) + \ci[\charge]{\mcharge}(\blam) - \frac{1}{2}\sum_{i \in \Ze}\left(\ci{\mcharge}(\blam) - \ci[i+1]{\mcharge}(\blam)\right)^2
\\
&=
\sum_{j = 1}^2\left(\ci[0]{\charge_j}(\lambda^{(j)}) + \ci[\charge]{\charge_j}(\lambda^{(j)})\right) - \frac{1}{2}\sum_{i \in \Ze} \left(\sum_{j = 1}^2\left(\ci{\charge_j}(\lambda^{(j)}) - \ci[i+1]{\charge_j}(\lambda^{(j)})\right)\right)^2
\\
&=
\left(\bigl\lVert \xrostamod^{\chargez}(\lambda^{(1)})\bigr\rVert^2 + u_{1,2}(\blam)\right) + \left(\bigl\lVert \xrostamod^{\charge}(\lambda^{(2)})\bigr\rVert^2 + u_{2,1}(\blam)\right) - \frac{1}{2}\sum_{i \in \Ze} \left(\xrostamod^\mcharge_{i1}(\blam) + \xrostamod^\mcharge_{i2}(\blam)\right)^2
\\
&=
\bigl\lVert \xrostamod^{\chargez}(\lambda^{(1)})\bigr\rVert^2  +  \bigl\lVert \xrostamod^{\charge}(\lambda^{(2)})\bigr\rVert^2  + u_{1,2}(\blam) + u_{2,1}(\blam)  - \frac{1}{2}\sum_{i \in \Ze}\left(\xrostamod^\mcharge_{i1}(\blam)^2 + \xrostamod^\mcharge_{i2}(\blam)^2 + 2\xrostamod^\mcharge_{i1}(\blam)\xrostamod^\mcharge_{i2}(\blam)\right)
\\
&=
\frac{1}{2}\left(\lVert \xrostamod^\chargez(\lambda^{(1)})\rVert^2 + \lVert \xrostamod^\charge(\lambda^{(2)})\rVert^2 \right) + u_{1,2}(\blam) + u_{2,1}(\blam) - \sum_{i \in \Ze} \xrostamod^\mcharge_{i1}(\blam)\xrostamod^\mcharge_{i2}(\blam)
\\
&=
\frac{1}{2}\left(\lVert \xrostamod^\chargez(\lambda^{(1)})\rVert^2 + \lVert \xrostamod^\charge(\lambda^{(2)})\rVert^2 \right) + u_{1,2}(\blam) + u_{2,1}(\blam) - \sum_{i \in \Ze} \xrostamod^\chargez_i(\lambda^{(1)})\xrostamod^\charge_i(\lambda^{(2)})
\\
&=
\frac{1}{2} \lVert\xrostamod^\chargez(\lambda^{(1)}) - \xrostamod^{\charge}(\lambda^{(2)})\rVert^2 + u_{1,2}(\blam) + u_{2,1}(\blam).
\end{align*}
If $\charge = 0$ we have $u_{1,2}(\blam) = u_{2,1}(\blam) = 0$ so that $\wmc(\blam) \geq 0$ as desired, otherwise we have $\charge \in \{1,\dots,e-1\}$ so that
\begin{align*}
u_{1,2}(\blam)
&= - \xrostamod^\mcharge_{0,1}(\blam) - \dots - \xrostamod^\mcharge_{\charge-1,1}(\blam),
\\
u_{2,1}(\blam)
&= -\xrostamod^\mcharge_{\charge,2}(\blam) - \dots - \xrostamod^\mcharge_{e- 1,2}(\blam).
\end{align*}
Now by~\eqref{equation:sumy_0} we have
\[
u_{2,1}(\blam) = \xrostamod^\mcharge_{0,2}(\blam) + \dots + \xrostamod^\mcharge_{\charge-1,2}(\blam).
\]
Hence, setting $z \coloneqq \xrostamod^\chargez(\lambda^{(1)}) - \xrostamod^\charge(\lambda^{(2)})$ we obtain
\[
\wmc(\blam) = \frac{1}{2}\lVert z \rVert^2 - z_0 - \dots - z_{\charge - 1},
\]
thus we conclude by the below Lemma~\ref{lemma:quadratic_positive}, recalling~\eqref{equation:sumy_0}.
\end{proof}

\begin{lemma}
\label{lemma:quadratic_positive}
Assume that $e > 0$. Let $z = (z_1, \dots, z_e) \in \Z^e$ such that $z_1 + \dots + z_e =0$. Then $\lVert z \rVert ^2 \geq 2(z_1 + \dots + z_\charge)$ for any $\charge \in \{1, \dots, e\}$.
\end{lemma}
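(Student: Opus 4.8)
The plan is to exploit the integrality of the entries $z_i$, which is exactly what makes the statement true: over $\mathbb{R}$ it fails, as $z = (\tfrac{1}{2}, -\tfrac{1}{2})$ shows for $e = 2$ and $\charge = 1$ (there $\lVert z\rVert^2 = \tfrac12 < 1 = 2z_1$). The elementary observation I would use is that every integer $n$ satisfies \emph{both} $n^2 \geq n$ and $n^2 \geq -n$, since $n(n-1) \geq 0$ and $n(n+1) \geq 0$ whenever $n \in \Z$. These are the two sign-dependent linear minorants of the square, and the trick is to apply the correct one to each half of the sum, as dictated by the cut-off $\charge$.

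With this in hand, I would split the squared norm at the index $\charge$ and bound each block by the appropriate estimate, using $z_i^2 \geq z_i$ on the first block and $z_i^2 \geq -z_i$ on the second:
\[
\lVert z \rVert^2 = \sum_{i=1}^{\charge} z_i^2 + \sum_{i=\charge+1}^{e} z_i^2 \geq \sum_{i=1}^{\charge} z_i - \sum_{i=\charge+1}^{e} z_i.
\]
It then remains only to invoke the hypothesis $z_1 + \dots + z_e = 0$, which gives $-\sum_{i=\charge+1}^{e} z_i = \sum_{i=1}^{\charge} z_i$. Substituting this into the previous display folds the two halves together and immediately yields $\lVert z \rVert^2 \geq 2(z_1 + \dots + z_\charge)$, as required.

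I expect no genuine obstacle here: the entire content is selecting the two sign-dependent integer inequalities correctly and then using the zero-sum constraint to merge the two halves. The only point worth stressing in the writeup is that integrality is truly essential and the zero-sum hypothesis alone does not suffice, which is why the real counterexample above is instructive. If one prefers a slightly more symmetric formulation, the same argument can be phrased as $\lVert z\rVert^2 \geq \sum_{i=1}^{\charge} z_i + \sum_{i=\charge+1}^{e}(-z_i)$ followed by the identity $\sum_{i=1}^{\charge} z_i = -\sum_{i=\charge+1}^{e} z_i$, but this is cosmetic.
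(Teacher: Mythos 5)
Your proof is correct, and it takes a genuinely different and in fact simpler route than the paper's. The paper starts from the minorant $t^2 \geq 2t$, which fails at $t=1$, and therefore has to introduce the exceptional set $I = \{i \leq \charge : z_i = 1\}$ together with the set $J$ of indices with $z_i \leq 0$, and then runs a small bookkeeping argument combining $\lvert I \rvert + \sum_{i \in J} z_i \leq 0$ with the estimate $z_i^2 - 2z_i \geq -z_i$ on $J$. You instead use the two minorants $t^2 \geq t$ and $t^2 \geq -t$, each valid for \emph{every} integer $t$ (from $t(t-1) \geq 0$ and $t(t+1)\geq 0$), apply the first on $\{1,\dots,\charge\}$ and the second on $\{\charge+1,\dots,e\}$, and recover the factor $2$ from the zero-sum hypothesis via $-\sum_{i>\charge} z_i = \sum_{i\leq\charge} z_i$. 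This eliminates the case analysis entirely; the trade-off is only that the doubling is deferred to the constraint rather than built into the pointwise inequality, which is exactly where it belongs. Your remark that integrality is essential (with the counterexample $z=(\tfrac12,-\tfrac12)$) is accurate and worth keeping.
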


\begin{proof}
First, note that for any integer $t \in \Z$ we have $t^2 \geq 2t$ unless if $t = 1$. We define the following two sets:
\begin{align*}
I &\coloneqq \left\{i \in \{1, \dots, \charge\} : z_i = 1\right\},
\\
J &\coloneqq \left\{i \in \{1,\dots, e \} : z_i \leq 0\right\},
\end{align*}
in particular $I \cap J = \emptyset$.
We have
\[
0 = z_1 + \dots + z_e \geq \sum_{i \in J} z_i  + \sum_{i \in I} z_i,
\]
so that $\lvert I\rvert + \sum_{i \in J} z_i \leq 0$. For any $i \in J$ we have $z_i \in \Z_{\leq 0}$ thus
\[
z_i^2 - 2z_i \geq z_i^2 \geq -z_i,
\]
hence we obtain
\begin{align*}
\lVert z \rVert^2 - 2(z_1 + \dots + z_\charge)
&\geq \sum_{i \in  I} (z_i^2 - 2z_i) + \sum_{i \in J} (-z_i)
\\
&= \sum_{i \in I} (-1) - \sum_{i \in J} z_i
\\
&= -\lvert I\rvert - \sum_{i \in J} z_i
\\
&\geq 0,
\end{align*}
as desired.
\end{proof}

\begin{remark}
Fayers' proof of Proposition~\ref{proposition:weight_positive} uses Proposition~\ref{proposition:weight_min_two_sets} (from which the result is immediate). The proof we give here has the advantage to be more direct from the definition of $\wmc$ and $\ci{\mcharge}$ (a similar remark holds for the proof of Lemma~\ref{lemma:weight_sum_r=2}).
\end{remark}

Note that $\wmc(\blam) \geq r\sum_{j = 1}^r \wc[\charge_j](\lambda^{(j)})$. Indeed, by Propositions~\ref{proposition:we=wc} and~\ref{proposition:weight_positive} the inequality holds when $\blam$ is an $e$-multicore,  and we conclude by Lemma~\ref{lemma:weight_add_rim_hook}.

\bigskip
Finally, if $\blam$ is an $e$-multicore, for any $i \in \Ze$ and $j, k \in \{1, \dots, r\}$, as in~\cite{fayers:weights,fayers:core} we define
\[
\gamma_{i,jk}^{\mcharge}(\blam) \coloneqq \frac{\bfayers^{\mcharge}_{ij}(\blam) - \bfayers^{\mcharge}_{ik}(\blam)}{e} = \xrostam^\mcharge_{ij}(\blam) - \xrostam^\mcharge_{ik}(\blam) \in \Z.
\]
These integers depend on the multicharge $\mcharge$, however for any $i, l \in \Ze$ and $j, k \in \{1, \dots, r\}$ the set defined by the integers
\begin{equation}
\label{equation:gamma_iljk}
\gamma_{il,jk}(\blam) \coloneqq \gamma_{i,jk}^{\mcharge}(\blam) - \gamma_{l,jk}^{\mcharge}(\blam),
\end{equation}
does not.
%
%
%

\begin{proposition}[\protect{\cite[Proposition 3.8]{fayers:weights}}]
\label{proposition:weight_min_two_sets}
Assume that $r = 2$ and let $\blam$ be a bicore.  Assume that $\gamma_{il,12}(\blam) \leq 2$ for all $i, l \in \Ze$. Then $\wmc(\blam)$ is the smaller of the two integers
\[
\#\bigl\{ i \in \Ze : \gamma_{il,12}(\blam) = 2 \text{ for some } l \in \Ze\bigr\},
\]
and
\[
\#\bigl\{ l \in \Ze : \gamma_{il,12}(\blam) = 2 \text{ for some } i \in \Ze\bigr\}.
\]
\end{proposition}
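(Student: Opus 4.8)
The plan is to compute $\wmc(\blam)$ explicitly in terms of the integers $\gamma_i \coloneqq \gamma^{\mcharge}_{i,12}(\blam)$ and then read off the two cardinalities. First I would unwind what the two sets are under the hypothesis. Since $\gamma_{il,12}(\blam) = \gamma_i - \gamma_l$ by~\eqref{equation:gamma_iljk}, the assumption $\gamma_{il,12}(\blam) \le 2$ for all $i,l$ means exactly that every $\gamma_i$ lies in $\{m,m+1,m+2\}$ with $m \coloneqq \min_i \gamma_i$. An equality $\gamma_i - \gamma_l = 2$ then forces $\gamma_i = m+2$ and $\gamma_l = m$, so the first set equals $A \coloneqq \{i : \gamma_i = m+2\}$ and the second equals $B \coloneqq \{l : \gamma_l = m\}$ (both empty unless the range is exactly $2$). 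Setting $a \coloneqq \lvert B\rvert$ and $c \coloneqq \lvert A\rvert$, the goal becomes $\wmc(\blam) = \min(a,c)$.

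Since the integers $\gamma_{il,12}(\blam)$ are independent of the multicharge, the sets $A$ and $B$ are unchanged if I normalise $\mcharge$; and exactly as in the proof of Proposition~\ref{proposition:weight_positive} (using Remark~\ref{remark:ci} to assume $e > 0$) I may take $\mcharge = (\chargez,\charge)$ with $\charge \in \{0,\dots,e-1\}$. That proof also supplies the identity
\[
\wmc(\blam) = \tfrac12\lVert z\rVert^2 - (z_0 + \dots + z_{\charge-1}), \qquad z_i \coloneqq \xrostamod^{\chargez}_i(\lambda^{(1)}) - \xrostamod^{\charge}_i(\lambda^{(2)}).
\]
Writing $\xrostamod^{\charge_j}_i(\lambda^{(j)}) = \xrostam^{\charge_j}_i(\lambda^{(j)}) - \xrostam^{\charge_j}_i(\emptyset)$ and applying Lemma~\ref{lemma:x_vide} (so $\xrostam^{\chargez}_i(\emptyset) = 0$, while $\xrostam^{\charge}_i(\emptyset)$ is $1$ for $i < \charge$ and $0$ otherwise) gives $z_i = \gamma_i + \xrostam^{\charge}_i(\emptyset)$. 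Summing over $i$ and using $\sum_i z_i = 0$ from~\eqref{equation:sumy_0} yields the constraint $\sum_i \gamma_i = -\charge$; substituting this back cancels the linear terms and collapses the identity to
\[
\wmc(\blam) = \tfrac12 \sum_{i \in \Ze} \gamma_i(\gamma_i + 1).
\]

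It then remains to evaluate this sum under the two constraints $\gamma_i \in \{m,m+1,m+2\}$ and $\sum_i \gamma_i = -\charge \in \{-(e-1),\dots,0\}$, that is, with the average of the $\gamma_i$ lying in $(-1,0]$. If the range is at most $1$ then $c = 0$ and every summand vanishes, so $\wmc(\blam) = 0 = \min(a,c)$. If the range is exactly $2$, the average condition forces $m = -1$ when $c \le a$ and $m = -2$ when $c > a$: in the former case the values $\{-1,0,1\}$ contribute $0,0,1$ to $\tfrac12\gamma(\gamma+1)$, giving $\wmc(\blam) = c$; in the latter the values $\{-2,-1,0\}$ contribute $1,0,0$, giving $\wmc(\blam) = a$. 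In every case $\wmc(\blam) = \min(a,c) = \min(\lvert A\rvert,\lvert B\rvert)$, as claimed.

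The main obstacle is the final constrained count. The delicate step is to see that the normalisation $\charge \in \{0,\dots,e-1\}$ pins down $m$ uniquely — two admissible choices would differ by $1$ and hence change $\sum_i\gamma_i$ by $e$, which cannot keep it inside the length-$(e-1)$ window $\{-(e-1),\dots,0\}$ — and then to verify the dichotomy $c \le a$ versus $c > a$ against that pinned value. Some care is also needed to confirm that the multicharge normalisation leaves both $\wmc(\blam)$ and the sets $A,B$ genuinely unchanged: the sets because $\gamma_{il,12}(\blam)$ is charge-independent by construction, and the weight because the normalisation only shifts or permutes $\mcharge$-residues modulo $e$, leaving the multiset $\{\res{\mcharge}(\gamma) : \gamma \in \mathcal{Y}(\blam)\}$ and hence each $\ci{\mcharge}(\blam)$ intact.
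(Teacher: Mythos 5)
Your argument is correct. Note first that the paper does not prove this statement at all: it is imported verbatim from Fayers (\cite[Proposition 3.8]{fayers:weights}), whose original argument is combinatorial and phrased in terms of abacus configurations. What you give is a genuinely different, self-contained computational proof that leans only on material the paper itself develops: the normalisation $\mcharge=(\chargez,\charge)$ and the identity $\wmc(\blam)=\tfrac12\lVert z\rVert^2-(z_0+\dots+z_{\charge-1})$ are exactly the ones extracted in the proof of Proposition~\ref{proposition:weight_positive}, and your translation $z_i=\gamma_i+\xrostam^{\charge}_i(\emptyset)$ via Lemma~\ref{lemma:x_vide}, the constraint $\sum_i\gamma_i=-\charge$ from~\eqref{equation:sumy_0}, and the resulting closed form $\wmc(\blam)=\tfrac12\sum_{i\in\Ze}\gamma_i(\gamma_i+1)$ all check out. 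The final case analysis is also sound: since each $\gamma_i$ lies in a window of length at most $2$ and the average $-\charge/e$ lies in $(-1,0]$, the minimum $m$ is indeed pinned to $-1$ or $-2$ according to whether $c\le a$ or $c>a$ (the boundary case $c=a$, where the average equals $m+1$ exactly, still forces $m=-1$), and the termwise values of $\tfrac12\gamma(\gamma+1)$ give $\min(a,c)$ in every case. What your route buys is a uniform formula for $\wmc(\blam)$ in terms of the $\gamma_i$ alone, which in particular re-proves nonnegativity and makes the "smaller of two cardinalities" statement transparent; what it costs is that it is tied to the explicit $r=2$ computation, whereas Fayers's abacus argument generalises more readily to his other structural results. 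Either way, the proposal is a valid proof.
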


Proposition~\ref{proposition:weight_min_two_sets} immediately implies that if $\blam$ is a bicore satisfying $\gamma_{il,12}(\blam) \leq 2$ for all $i, l \in \Ze$ then $\wmc(\blam) \leq e$. We will see that we can strengthen this inequality for a certain class of multipartitions. One on the aims of this paper is to study the consequences of such an inequality, namely, on the blocks of multipartitions.

\begin{remark}
\label{remark:weight_multicore_not_bounded}
By~\cite[Lemma 3.7]{fayers:weights}, the $\mcharge$-weight function on $e$-multicores is not bounded. For instance, if $e > 0$ assume  that $r = 2$ and $\mcharge = (0,0)$, and let $\lambda$ and $\mu$ be two bicores. By~\eqref{equation:yi_ci_ci+1} and Corollary~\ref{corollary:expression_ci_in_terms_of_yi} we have
\begin{align*}
\wmc(\lambda,\mu)
&=
2\ci[0]{0}(\lambda,\mu)  - \frac{1}{2}\sum_{i \in \Ze} \left(\ci{0}(\lambda,\mu) - \ci[i+1]{0}(\lambda,\mu)\right)^2
\\
&=
2\ci[0]{0}(\lambda) + 2\ci[0]{0}(\mu)  - \frac{1}{2}\sum_{i \in \Ze} \left(\ci{0}(\lambda) - \ci[i+1]{0}(\lambda) + \ci{0}(\mu) - \ci[i+1]{0}(\mu)\right)^2
\\
&=
\bigl\lVert \xrostamod^0(\lambda) \bigr\rVert^2 + \bigl\lVert \xrostamod^0(\mu)\bigr\rVert^2
- \frac{1}{2}\sum_{i \in \Ze} \left(\xrostamod^0_i(\lambda) + \xrostamod^0_i(\mu)\right)^2
\\
&=
\frac{1}{2}\bigl\lVert \xrostamod^0(\lambda) - \xrostamod^0(\mu)\bigr\rVert^2.
\end{align*}
Thus, if we chose the $e$-core $\mu$ so that $\xrostamod^0(\mu) = - \xrostamod^0(\lambda)$ (which satisfies the conditions of Proposition~\ref{proposition:1-1_correspondance_cores_abaci_y} indeed) we obtain
\[
\wmc(\lambda,\mu) = 2\bigl\lVert\xrostamod^0(\lambda)\bigr\rVert^2 = \ci[0]{0}(\lambda),
\]
which is not bounded since we can find $e$-cores with arbitrary  large number of $0$-nodes (see, for instance, Example~\ref{example:alpha=halpha0}). If $e = 0$ one can simply consider the partition $\lambda^{[h]} \coloneqq (h, \dots, h)$ where $h \geq 1$ is repeated $h$ times, and see that $\wmc(\lambda^{[h]},\emptyset) = \wc[0](\lambda^{[h]}) + \ci[0]{0}(\lambda^{[h]}) = 0+ h = h$. 
\end{remark}

\subsection{Blocks for multipartitions}
\label{subsection:blocks_multipartitions}

The combinatorics between an $r$-partition $\blam$ and $\alpha^\mcharge(\blam)$ is more intricate than in the case $r = 1$. For instance, the natural notion of $e$-multicore no more suffices to distinguish the blocks between two $r$-partitions.  However, as in the level $1$ case the quantities $\alpha^\mcharge(\blam)$ determine the blocks of the associated \emph{Ariki--Koike algebra} (see~\cite{lyle-mathas:blocks}).

\begin{example}
\label{example:block_bicore_notbicore}
Let $r = 2$, $e = 2$ and $\mcharge = (0, 1)$. The bipartitions $\blam = \bigl((1),(1)\bigr)$ and $\bmu = \bigl((2),\emptyset\bigr)$ lie in the same block $\alpha_0 + \alpha_1$, however $\blam$ is a bicore whereas $\bmu$ is not.
\end{example}

Again, we define
\[
\Qmcharge \coloneqq \left\{ \alpha^\charge(\blam) : \blam \text{ is an } r\text{-partition of } n \text{ for some } n \in \mathbb{N}\right\}.
\]

\begin{remark}
As we mentioned in~\textsection\ref{subsection:partitions}, the case $e=1$ is particularly easy to deal with, and indeed we have here $\Qmcharge=\N\alpha_0=\Qpos$. 
\end{remark}

 Since $\alpha^\mcharge(\blam)=  \sum_{j = 1}^r \alpha^{\charge_j}(\lambda^{(j)})$  we have $\Qmcharge = \sum_{j = 1}^r \Qcharge[\charge_j]$, however the sum is not direct. In particular, using the $1{:}1$-parametrisation of~\textsection\ref{subsection:blocks_partitions} for each $\Qcharge[\charge_j]$, we can give a naive parametrisation of $\Qmcharge$ but this parametrisation will not be $1{:}1$. We will rather aim at a generalisation of Lemma~\ref{lemma:cores_w=0} and Proposition~\ref{proposition:partitions_wgeq0}, looking for an implicit description of $\Qmcharge$. It  turns out that the following simple result, which comes from Lemmas~\ref{lemma:block_add_rim_hook_partition} and~\ref{lemma:remove_hook_abacus}, is the key of what follows.

\begin{proposition}
\label{proposition:Qmcharge_stable_add_const}
Let $\alpha \in \Qmcharge$ and $h \in \N$. Then $\alpha + h\constQ \in \Qmcharge$.
\end{proposition}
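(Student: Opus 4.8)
The plan is to reduce to adding a single $e$-rim hook and then iterate. First, if $e = 0$ then $\constQ = 0$ by definition, so $\alpha + h\constQ = \alpha$ and there is nothing to prove; I therefore assume $e > 0$ from now on.

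By induction on $h$, it suffices to treat the case $h = 1$: once the implication $\alpha \in \Qmcharge \Rightarrow \alpha + \constQ \in \Qmcharge$ is established, applying it $h$ times yields the statement for arbitrary $h \in \N$. So I would write $\alpha = \alpha^\mcharge(\blam)$ for some $r$-partition $\blam = (\lambda^{(1)},\dots,\lambda^{(r)})$ and recall the decomposition $\alpha^\mcharge(\blam) = \sum_{j=1}^r \alpha^{\charge_j}(\lambda^{(j)})$ noted just before the proposition.

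The one substantive step is to enlarge a single component. By the second bullet of Lemma~\ref{lemma:remove_hook_abacus}, the partition $\lambda^{(1)}$ admits an addable $e$-rim hook; let $\mu^{(1)}$ be the partition obtained from $\lambda^{(1)}$ by adding one such hook. Lemma~\ref{lemma:block_add_rim_hook_partition}, applied in level one with the charge $\charge_1$, then gives $\alpha^{\charge_1}(\mu^{(1)}) = \alpha^{\charge_1}(\lambda^{(1)}) + \constQ$. Setting $\bmu := (\mu^{(1)}, \lambda^{(2)}, \dots, \lambda^{(r)})$ and summing the component contributions, the other summands being unchanged, yields $\alpha^\mcharge(\bmu) = \alpha^\mcharge(\blam) + \constQ = \alpha + \constQ$, so that $\alpha + \constQ \in \Qmcharge$. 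This settles $h = 1$ and hence, by the induction above, the general case.

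The hard part here is essentially nonexistent: the only thing requiring verification is that an $e$-rim hook can always be added to $\lambda^{(1)}$, and this is exactly what Lemma~\ref{lemma:remove_hook_abacus} guarantees (it even furnishes at least $e$ distinct choices). Thus the induction runs with no exceptional cases beyond the trivial $e = 0$ handled at the outset.
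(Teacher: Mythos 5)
Your proof is correct and follows exactly the route the paper indicates (the paper gives no written proof but states that the proposition "comes from Lemmas~\ref{lemma:block_add_rim_hook_partition} and~\ref{lemma:remove_hook_abacus}", which are precisely the two lemmas you invoke). The reduction to $h=1$, the trivial $e=0$ case, and the addition of an $e$-rim hook to a single component are all as intended.
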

 
  We have the following (weak) generalisation of Corollary~\ref{corollary:partition_any_alpha+h_block} (see also~\cite[Theorem 4.7]{fayers:coreII}).
 
\begin{proposition}
\label{proposition:multipartition_any_alpha+h_block}
Assume $e > 0$. For any $\alpha \in \Q$ and any $h \geq -\max\bigl\{\wc[\charge_j](\alpha) : j \in \{1, \dots, r\}\bigr\}$ we have 
\[
\alpha + h\constQ \in \Qmcharge.
\]
\end{proposition}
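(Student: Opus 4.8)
The plan is to reduce the statement to its level-one counterpart, Corollary~\ref{corollary:partition_any_alpha+h_block}, by selecting the most favourable charge. The essential observation is that $\Qcharge[\charge_{j_0}] \subseteq \Qmcharge$ for each index $j_0 \in \{1, \dots, r\}$: indeed, if a partition $\mu$ lies in some $\beta \in \Qcharge[\charge_{j_0}]$, then the $r$-partition $\blam$ whose $j_0$-th component equals $\mu$ and whose other components are empty satisfies $\alpha^\mcharge(\blam) = \alpha^{\charge_{j_0}}(\mu) = \beta$, since the empty components contribute nothing to the residues. This is coherent with the decomposition $\Qmcharge = \sum_{j=1}^r \Qcharge[\charge_j]$ recalled above, because every summand $\Qcharge[\charge_j]$ contains $0 = \alpha^{\charge_j}(\emptyset)$.

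Concretely, first I would choose an index $j_0 \in \{1, \dots, r\}$ realising the maximum, so that $\wc[\charge_{j_0}](\alpha) = \max\bigl\{\wc[\charge_j](\alpha) : j \in \{1, \dots, r\}\bigr\}$. The hypothesis $h \geq -\max\bigl\{\wc[\charge_j](\alpha) : j \in \{1, \dots, r\}\bigr\}$ then reads exactly as $h \geq -\wc[\charge_{j_0}](\alpha)$. Applying Corollary~\ref{corollary:partition_any_alpha+h_block} with the charge $\charge_{j_0}$ (the corollary being stated for an arbitrary charge) yields $\alpha + h\constQ \in \Qcharge[\charge_{j_0}]$.

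Combining the two steps gives $\alpha + h\constQ \in \Qcharge[\charge_{j_0}] \subseteq \Qmcharge$, as required. There is essentially no genuine obstacle here: the arithmetic content lies entirely in the level-one result, and the only point to get right is that the threshold for $h$ is governed by the \emph{best} (that is, largest) single-charge weight. Picking the optimal component $j_0$ is precisely what allows us to place all of $\alpha + h\constQ$ on that one component while leaving the others empty, turning the level-one conclusion into a multipartition statement.
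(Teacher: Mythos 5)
Your proof is correct and is essentially identical to the paper's own argument: the paper likewise picks a charge $\charge_j$ with $h \geq -\wc[\charge_j](\alpha)$ (which is exactly your optimal $j_0$), applies Corollary~\ref{corollary:partition_any_alpha+h_block} to get $\alpha + h\constQ \in \Qcharge[\charge_j]$, and concludes via the inclusion $\Qcharge[\charge_j] \subseteq \Qmcharge$. Your explicit justification of that inclusion by padding with empty components is a welcome elaboration of a step the paper leaves implicit.
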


\begin{proof}
Let $j \in \{1, \dots, r\}$. If $h \geq - \wc[\charge_j](\alpha)$ then by Corollary~\ref{corollary:partition_any_alpha+h_block} we have $\alpha + h\constQ \in \Qcharge[\charge_j]$. We conclude the proof since $\Qcharge[\charge_j] \subseteq \Qmcharge = \sum_{k = 1}^r \Qcharge[\charge_k]$.
\end{proof}

\bigskip

As we saw in Example~\ref{example:block_bicore_notbicore}, the raw notion of $e$-multicores is not adapted to the study of blocks. 

\begin{definition}[\cite{fayers:core}]
Let $\alpha \in \Q$. We say that $\alpha$ is a \emph{core block} if $\alpha \in \Qmcharge$ and either:
\begin{itemize}
\item $e = 0$;
\item $e > 0$ and $\alpha - \constQ \notin \Qmcharge$.
\end{itemize} 
\end{definition}
By Lemmas~\ref{lemma:block_add_rim_hook_partition} and \ref{lemma:remove_hook_abacus}, a block $\alpha \in \Qmcharge$ is a core block if and only if every $r$-partition $\blam$ such that $\alpha^\mcharge(\blam) = \alpha$ is an $e$-multicore.

\begin{remark}
\label{remark:coreblock_r=1}
In particular, if $r = 1$ then $\alpha^\charge(\lambda)$ is a core block if and only if $\lambda$ is an $e$-core, in which case $\lambda$ is the only $e$-core (and partition) that lies in $\alpha^\charge(\lambda)$ (see Lemma~\ref{lemma:block_core_unique_partition}).
\end{remark}

\begin{remark}
\label{remark:coreblock_e=1}
If $e = 1$ then the only $e$-multicore is the empty multipartition, thus there is only one core block, which is $0 \in \Qmcharge$.
\end{remark}

\begin{remark}
The notion of core block is related to the notion of \emph{maximal weight} for (irreducible) modules on Kac--Moody algebras: see, for instance,~\cite[\textsection 3.1]{jacon-lecouvey} for the connection with Kac--Moody algebras and~\cite[\textsection 20.3]{carter} for the notion of maximal weights.
\end{remark}

\begin{definition}[\protect{\cite{lyle-mathas:blocks}}]
A multipartition $\blam$ is an \emph{$\mcharge$-reduced $e$-multicore} if $\alpha^\mcharge(\blam)$ is a core block.
\end{definition}

Note that the term  \emph{core multipartition} has a different meaning, see~\cite{fayers:simultaneous,jacon-lecouvey}. Any $\mcharge$-reduced $e$-multicore is an $e$-multicore, and the converse holds when  $r = 1$ by Remark~\ref{remark:coreblock_r=1}.

\begin{lemma}
\label{lemma:reduced_multicore_restrict_two_comp}
Let $\blam = \bigl(\lambda^{(1)},\dots,\lambda^{(r)}\bigr)$  be an $\mcharge$-reduced multicore. Then for each $1 \leq j < k \leq r$, the bipartition $\bigl(\lambda^{(j)},\lambda^{(k)}\bigr)$ is an $(\charge_j,\charge_k)$-reduced bicore.
\end{lemma}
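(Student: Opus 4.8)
The plan is to argue directly from the characterisation of core blocks recorded just after the definition: a block $\alpha \in \Qmcharge$ is a core block if and only if every multipartition $\bmu$ with $\alpha^\mcharge(\bmu) = \alpha$ is an $e$-multicore (this holds for all $e \geq 0$, so no case split on $e$ will be needed). Fix $1 \leq j < k \leq r$. Since $\blam$ is in particular an $e$-multicore, each component $\lambda^{(c)}$ is an $e$-core; in particular $\lambda^{(j)}$ and $\lambda^{(k)}$ are $e$-cores, so $\bigl(\lambda^{(j)},\lambda^{(k)}\bigr)$ is already an $e$-bicore. It therefore remains only to show that the block $\alpha^{(\charge_j,\charge_k)}\bigl(\lambda^{(j)},\lambda^{(k)}\bigr)$ is a core block.

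To this end I would take an arbitrary bipartition $(\nu,\rho)$ lying in the same $(\charge_j,\charge_k)$-block, that is, with $\alpha^{(\charge_j,\charge_k)}(\nu,\rho) = \alpha^{(\charge_j,\charge_k)}\bigl(\lambda^{(j)},\lambda^{(k)}\bigr)$, and show it is an $e$-bicore. The key step is a substitution: define the $r$-partition $\bmu$ obtained from $\blam$ by replacing its $j$-th component with $\nu$ and its $k$-th component with $\rho$, leaving all other components unchanged. Using the additivity $\alpha^\mcharge(\blam) = \sum_{c=1}^r \alpha^{\charge_c}(\lambda^{(c)})$ together with $\alpha^{(\charge_j,\charge_k)}(\cdot,\cdot) = \alpha^{\charge_j}(\cdot) + \alpha^{\charge_k}(\cdot)$, the hypothesis on $(\nu,\rho)$ becomes $\alpha^{\charge_j}(\nu) + \alpha^{\charge_k}(\rho) = \alpha^{\charge_j}\bigl(\lambda^{(j)}\bigr) + \alpha^{\charge_k}\bigl(\lambda^{(k)}\bigr)$, and substituting this into the sum gives $\alpha^\mcharge(\bmu) = \alpha^\mcharge(\blam)$. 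Thus $\bmu$ lies in the same block as $\blam$.

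Now, since $\alpha^\mcharge(\blam)$ is a core block, the characterisation forces $\bmu$ to be an $e$-multicore, so every component of $\bmu$ — in particular $\nu$ and $\rho$ — is an $e$-core. Hence $(\nu,\rho)$ is an $e$-bicore, which is exactly what is needed to conclude that $\alpha^{(\charge_j,\charge_k)}\bigl(\lambda^{(j)},\lambda^{(k)}\bigr)$ is a core block, i.e. that $\bigl(\lambda^{(j)},\lambda^{(k)}\bigr)$ is an $(\charge_j,\charge_k)$-reduced bicore. I do not expect a real obstacle here: the entire argument rests on the fact that $\alpha^\mcharge$ splits as a sum over components, so that a block-preserving modification of only the $j$-th and $k$-th components at level two is automatically block-preserving at level $r$. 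The only point to keep in mind is to apply the core-block characterisation in both directions — using it for $\blam$ to deduce that every block-mate is an $e$-multicore, and using it for $\bigl(\lambda^{(j)},\lambda^{(k)}\bigr)$ as the statement to be established.
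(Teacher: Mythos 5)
Your proof is correct and takes essentially the same approach as the paper: the paper argues by contraposition, but the key step in both is identical, namely substituting the $j$-th and $k$-th components of $\blam$ by a block-mate of $\bigl(\lambda^{(j)},\lambda^{(k)}\bigr)$ and using the componentwise additivity of $\alpha^\mcharge$ to see that the $r$-level block is unchanged. Your direct phrasing (every block-mate of the bipartition is a bicore, hence its block is a core block) is just the contrapositive of the paper's statement, so there is nothing substantively different.
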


\begin{proof}
We prove the contraposition. Assume that $\bigl(\lambda^{(j)},\lambda^{(k)}\bigr)$ is not an $(\charge_j,\charge_k)$-reduced bicore. Then we can find a bipartition $\bigl(\mu^{(j)},\mu^{(k)}\bigr)$ in its block which is not a bicore, in particular we can remove an $e$-rim hook to $\mu^{(j)}$ or $\mu^{(k)}$. But then the multipartition
\[
\bigl(\lambda^{(1)},\dots,\lambda^{(j-1)},\mu^{(j)},\lambda^{(j+1)},\dots,\lambda^{(k-1)},\mu^{(k)},\lambda^{(k+1)},\dots,\lambda^{(r)}\bigr)
\]
lie in the same block as $\blam$ and is not an $e$-multicore, thus $\blam$ is not a reduced $\mcharge$-multicore.
\end{proof}

\begin{lemma}
\label{lemma:core_block_associated_with_any_alpha}
Assume that $e > 0$ and let $\alpha \in \Q$. There is a unique integer $h \in \Z$ such that $\alpha - h\constQ$ is a core block and we have $h = \max\bigl\{ k \in \Z : \alpha - k\constQ \in \Qcharge\bigr\}$. Moreover, we have $\alpha \in \Qmcharge \iff h \geq 0$.
\end{lemma}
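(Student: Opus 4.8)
The plan is to study the single set
\[
S \coloneqq \bigl\{ k \in \Z : \alpha - k\constQ \in \Qmcharge \bigr\}
\]
and to show that it is a nonempty, downward-closed, bounded-above subset of $\Z$; the integer we seek will then be $h \coloneqq \max S$, and every assertion of the lemma will follow from the three structural facts about $\Qmcharge$ already available.

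First I would record that $S$ is \emph{downward closed}: if $k \in S$ then $\alpha - (k-1)\constQ = (\alpha - k\constQ) + \constQ \in \Qmcharge$ by Proposition~\ref{proposition:Qmcharge_stable_add_const}, so $k-1 \in S$. Next, $S$ is \emph{nonempty}: by Proposition~\ref{proposition:multipartition_any_alpha+h_block} we have $\alpha + m\constQ \in \Qmcharge$ for all sufficiently large $m$, that is, $-m \in S$. Finally, $S$ is \emph{bounded above}: if $k \in S$ then $\wmc(\alpha - k\constQ) \geq 0$ by Proposition~\ref{proposition:weight_positive}, and since $\wmc(\alpha - k\constQ) = \wmc(\alpha) - rk$ by~\eqref{equation:wmcalpha+h}, this forces $k \leq \wmc(\alpha)/r$. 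Hence $S = \{k \in \Z : k \leq h\}$ for a well-defined $h \coloneqq \max S$.

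It then remains to identify $h$ with the core-block data. Since $h \in S$ we have $\alpha - h\constQ \in \Qmcharge$, while $h+1 \notin S$ gives $\alpha - (h+1)\constQ = (\alpha - h\constQ) - \constQ \notin \Qmcharge$; by definition $\alpha - h\constQ$ is thus a core block. For uniqueness, suppose $\alpha - h'\constQ$ is also a core block: membership in $\Qmcharge$ forces $h' \in S$, so $h' \leq h$, whereas the core-block condition $\alpha - (h'+1)\constQ \notin \Qmcharge$ means $h'+1 \notin S$, so downward closure yields $h'+1 > h$, i.e.\ $h' \geq h$; hence $h' = h$. Lastly, $\alpha \in \Qmcharge$ is precisely the statement $0 \in S$, which, since $S = \{k \leq h\}$, is equivalent to $h \geq 0$.

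I do not expect a serious obstacle: the argument is driven entirely by stability of $\Qmcharge$ under adding $\constQ$ (giving downward closure of $S$) and by the eventual membership $\alpha + m\constQ \in \Qmcharge$ (giving nonemptiness). The one point requiring a little care is the bounded-above claim, where one must appeal to the nonnegativity of the weight on \emph{all} of $\Qmcharge$ (Proposition~\ref{proposition:weight_positive}) rather than to any property of an individual multipartition.
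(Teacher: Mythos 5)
Your proof is correct and follows essentially the same route as the paper: both define the set of integers $k$ with $\alpha - k\constQ \in \Qmcharge$, establish nonemptiness via Proposition~\ref{proposition:multipartition_any_alpha+h_block} and boundedness above via~\eqref{equation:wmcalpha+h} together with Proposition~\ref{proposition:weight_positive}, take $h$ to be the maximum, and deduce uniqueness from the stability of $\Qmcharge$ under adding $\constQ$ (your explicit ``downward closure'' of $S$ is exactly the argument the paper runs when it writes $\alpha - (h'+1)\constQ = (\alpha - h\constQ) + (h-h'-1)\constQ$). No gaps.
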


\begin{proof}
Write $H \coloneqq \bigl\{ k \in \Z : \alpha - k\constQ \in \Qcharge\bigr\}$. The set $H$ is non-empty by Proposition~\ref{proposition:multipartition_any_alpha+h_block} and bounded above by~\eqref{equation:wmcalpha+h} and Proposition~\ref{proposition:weight_positive}. If $h \coloneqq \max H$, then by definition we have $\alpha - h\constQ \in \Qmcharge$ and $\alpha - (h+1)\constQ \notin \Qmcharge$ thus $\alpha-h\constQ$ is a core block. We now prove the unicity. If $h' \in \Z$  is such that $\alpha-h'\constQ$ is a core block then in particular $\alpha-h'\constQ \in \Qmcharge$ thus $h' \leq h$ by maximality of $h$. Now if $h' < h$ then
\[
\alpha - (h'+1)\constQ = (\alpha-h\constQ) + (h-h'-1)\constQ,
\]
but the left-hand side is not in $\Qmcharge$ since $\alpha-h'\constQ$ is a core block while the right-hand side is in $\Qmcharge$ by Proposition~\ref{proposition:Qmcharge_stable_add_const}, thus $h' = h$. Finally, we have $\alpha \in \Qmcharge \iff 0 \in H \iff h \geq 0$, which concludes the proof.
\end{proof}

\begin{remark}
\label{remark:infinite_number_core_block}
If 	$e > 0$, Lemma~\ref{lemma:core_block_associated_with_any_alpha} ensures that $Q / \langle \constQ\rangle \simeq \Z^{e-1}$ is in $1{:}1$-correspondence with the set of core blocks. In particular,  if $e \geq 2$ then given a multicharge there is an infinite number of core blocks. 
\end{remark}

\begin{remark}
\label{remark:h}
Let $\alpha \in \Q$ and let $h \in \Z$ such that $\hat\alpha\coloneqq\alpha-h\constQ$ is a core block. By~\eqref{equation:wcalpha+h} we have $h = \frac{\wmc(\alpha) - \wmc(\hat\alpha)}{r}$. Note that the quantity $\wmc(\hat\alpha)$ can be computed from $\alpha$, see~\cite{fayers:coreII}. However, the formula given in~\cite{fayers:coreII} does not seem suitable for the purpose of this paper.
\end{remark}

\begin{definition}
\label{definition:s-core}
Assume that $e > 0$ and let $\alpha \in \Q$. We say that the unique core block of the form $\alpha-h\constQ$ for $h \in \Z$  as in Lemma~\ref{lemma:core_block_associated_with_any_alpha} is the \emph{$\mcharge$-core} of $\alpha$.
\end{definition}

We will simply use \emph{core} instead of $\mcharge$-core when the multicharge $\mcharge$ is understood from the context. 
Given a multipartition $\blam$,  by Lemma~\ref{lemma:block_add_rim_hook_partition} to compute the core of $\alpha^\mcharge(\blam)$ we can proceed as follows. We first look at the $e$-multicore $\overline\blam$ of $\blam$. If every $r$-partition lying in the same block as $\overline\blam$  is an $e$-multicore then we are done (in which case $\alpha^\mcharge(\overline{\blam})$ is the core of $\alpha^\mcharge(\blam)$), otherwise we repeat the procedure with an $r$-partition in the block of $\overline\blam$ that is not an $e$-multicore.

\begin{remark}
\label{remark:score_ecore}
The terminology fits with the usual notion of $e$-core when $r = 1$: if $\lambda$ and $\mu$ are two partitions then $\alpha^\charge(\lambda)$ is the $\charge$-core of $\alpha^\charge(\mu)$ if and only if $\lambda$ is the $e$-core of $\mu$ (by Remark~\ref{remark:coreblock_r=1} together with Lemmas~\ref{lemma:block_add_rim_hook_partition} and~\ref{lemma:nakayamas_conjecture}).
\end{remark}

By Lemma~\ref{lemma:weight_add_rim_hook} and Proposition~\ref{proposition:weight_positive}, we have the following particular case of core block. The aim of this paper is to give a (weak) converse.

\begin{proposition}
Let $\alpha \in \Qmcharge$ with $\wmc(\alpha) \leq r-1$. Then $\alpha$ is a core block.
\end{proposition}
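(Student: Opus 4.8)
The plan is to argue by contradiction, reducing the statement to the non-negativity of the weight function (Proposition~\ref{proposition:weight_positive}) together with the additivity formula~\eqref{equation:wmcalpha+h} describing how the weight changes when one adds $\constQ$. These are exactly the two ingredients already available, and no further combinatorics on abaci or multicores will be needed.

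First I would dispose of the case $e = 0$: by definition every element of $\Qmcharge$ is a core block when $e = 0$, so there is nothing to prove. Hence I assume $e > 0$ from now on. Next, suppose for contradiction that $\alpha \in \Qmcharge$ is \emph{not} a core block. Since $\alpha \in \Qmcharge$, the definition of a core block (in the case $e > 0$) forces $\alpha - \constQ \in \Qmcharge$, so there is an $r$-partition $\bmu$ with $\alpha^\mcharge(\bmu) = \alpha - \constQ$. Then
\[
\wmc(\alpha - \constQ) = \wmc(\bmu) \geq 0
\]
by Proposition~\ref{proposition:weight_positive}. Applying~\eqref{equation:wmcalpha+h} with $h = 1$ to the element $\alpha - \constQ$ gives $\wmc(\alpha) = \wmc(\alpha - \constQ) + r \geq r$, which contradicts the hypothesis $\wmc(\alpha) \leq r - 1$. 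Therefore $\alpha$ must be a core block.

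The conceptual point is simply that $\constQ$ records wrapping on exactly one $e$-rim hook in each of the $r$ components (raising the weight by precisely $r$, cf.\ Lemma~\ref{lemma:weight_add_rim_hook}), while the weight of a genuine multipartition is never negative; so a block sitting at the very bottom of the weight scale, with weight strictly below $r$, cannot have $\alpha - \constQ$ still realised by an honest $r$-partition. I do not expect any real obstacle here once Proposition~\ref{proposition:weight_positive} and~\eqref{equation:wmcalpha+h} are in hand; the only care needed is to invoke the correct branch of the definition of core block in the two cases $e = 0$ and $e > 0$, and to note that the hypothesis $\alpha \in \Qmcharge$ is precisely what lets the negation of ``core block'' be read off as $\alpha - \constQ \in \Qmcharge$.
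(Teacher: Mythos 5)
Your argument is correct and is essentially the paper's own: the proposition is stated there as an immediate consequence of Lemma~\ref{lemma:weight_add_rim_hook} (equivalently~\eqref{equation:wmcalpha+h}) and Proposition~\ref{proposition:weight_positive}, which is exactly the contradiction you run. The handling of the $e=0$ branch of the definition is also right.
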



We say that a multicharge $\mcharge' \in \Z^e$ is \emph{compatible} with $\mcharge$ (or simply \emph{compatible} if $\mcharge$ is clear from the context) if $\mcharge' - \mcharge \in e\Z$. In particular, note that if $\mcharge'$ is a compatible multicharge then a multicore $\blam$ is $\mcharge$-reduced if and only if it is $\mcharge'$-reduced. We have the following characterisation of core blocks in terms of abaci.

\begin{proposition}[\protect{\cite[Theorem 3.1]{fayers:core}}]
\label{proposition:multicore_equivalent_statements}
Assume that $e > 0$ and let $\blam$ be an $e$-multicore. The following assertions are equivalent.
\begin{enumerate}[$(i)$]
\item
\label{item:reduced_multicore}
The $e$-multicore $\blam$ is $\mcharge$-reduced.
\item
\label{item:bfayers}
There exist a compatible multicharge $\mcharge'$  and integers $\bfayers_1, \dots, \bfayers_e \in \Z$ such that
\[
\bfayers^{\mcharge'}_{ij}(\blam) \in \{\bfayers_i, \bfayers_i + e\},
\]
for all $i \in \Ze$ and $j\in \{1, \dots, r\}$.
\item
\label{item:sfayers}
There exist a compatible multicharge $\mcharge'$ and 
integers $\sfayers_1,\dots,\sfayers_r \in \Z$ such that
\[
\gamma_{i,jk}^{\mcharge'}(\blam)\leq\sfayers_j - \sfayers_k + 1,
\]
for all $i \in \Ze$ and $j, k \in \{1, \dots, r\}$.
\item
For any compatible multicharge $\mcharge'$, there exist
integers $\sfayers_1,\dots,\sfayers_r \in \Z$ such that
\[
\gamma_{i,jk}^{\mcharge'}(\blam)\leq\sfayers_j - \sfayers_k + 1,
\]
for all $i \in \Ze$ and $j, k \in \{1, \dots, r\}$.
\end{enumerate}
\end{proposition}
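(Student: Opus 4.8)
The plan is to first dispatch the three combinatorial conditions $(ii)$, $(iii)$, $(iv)$, which are mere reformulations of one another, and then establish the substantive equivalence with $(i)$. For $(ii)\Leftrightarrow(iii)$ I would translate everything through the dictionary $\bfayers^{\mcharge}_{ij}(\blam)=e\bigl(\xrostam^{\mcharge}_{ij}(\blam)-1\bigr)+i$, so that $\gamma^{\mcharge}_{i,jk}(\blam)=\xrostam^{\mcharge}_{ij}(\blam)-\xrostam^{\mcharge}_{ik}(\blam)$. Condition $(ii)$ says that on each runner $i$ the tops $\bfayers^{\mcharge'}_{ij}$ occupy at most two adjacent slots $\{\bfayers_i,\bfayers_i+e\}$, equivalently that the integers $\xrostam^{\mcharge'}_{ij}$ span a range at most $1$ in $j$; replacing $\mcharge'$ by the compatible multicharge $\charge''_j\coloneqq\charge'_j-e\sfayers_j$ shifts $\xrostam^{\mcharge'}_{ij}$ by $-\sfayers_j$, which is exactly how one passes between the uniform window of $(ii)$ and the $\sfayers$-tilted bound $\gamma^{\mcharge''}_{i,jk}(\blam)\le\sfayers_j-\sfayers_k+1$ of $(iii)$. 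The same shift computation (changing $\charge'_j$ by $e n_j$ translates $\gamma^{\mcharge'}_{i,jk}$ by $n_j-n_k$, hence $\sfayers_j$ by $n_j$) gives $(iii)\Leftrightarrow(iv)$ immediately.

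The heart is $(i)\Leftrightarrow(ii)$. Here I would first reformulate $(i)$: since $\blam$ is an $e$-multicore we have $\alpha^{\mcharge}(\blam)\in\Qmcharge$, and by the rim-hook description of core blocks (Lemmas~\ref{lemma:block_add_rim_hook_partition} and~\ref{lemma:remove_hook_abacus}) the multicore $\blam$ is $\mcharge$-reduced if and only if $\alpha^{\mcharge}(\blam)-\constQ\notin\Qmcharge$. I would then exploit the decomposition $\Qmcharge=\sum_{j=1}^r\Qcharge[\charge_j]$ together with Proposition~\ref{proposition:partitions_wgeq0}, which identifies each $\Qcharge[\charge_j]$ with the half-space $\{\beta\in\Q:\wc[\charge_j](\beta)\ge 0\}$. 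Thus $\blam$ fails to be reduced precisely when $\alpha^{\mcharge}(\blam)-\constQ$ admits a decomposition $\sum_j\beta^{(j)}$ with $\wc[\charge_j](\beta^{(j)})\ge 0$ for all $j$, so the whole problem becomes a feasibility question for these weight inequalities, to be read off the top-bead heights $\xrostam^{\mcharge}_{ij}(\blam)$.

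For $(ii)\Rightarrow(i)$ I would assume, after the normalisation above, a compatible multicharge for which $\xrostam^{\mcharge}_{ij}(\blam)$ lies in a width-one window on every runner, and show that no decomposition $\alpha^{\mcharge}(\blam)-\constQ=\sum_j\beta^{(j)}$ can keep every $\wc[\charge_j](\beta^{(j)})$ non-negative. Concretely, I would rewrite the $\wc[\charge_j]$ in terms of the difference vectors $z$ used in the proof of Proposition~\ref{proposition:weight_positive} and bound the total deficit from below by the quadratic estimate of Lemma~\ref{lemma:quadratic_positive}; the window condition is exactly what forces the sum of the individual weights to fall below the $r$ that subtracting $\constQ$ demands. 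For $r=2$ this is transparent: $(ii)$ is then the hypothesis $\gamma_{il,12}(\blam)\le 2$ of Proposition~\ref{proposition:weight_min_two_sets}, and the weight formula there pins down the block tightly enough to forbid a non-bicore, as in Example~\ref{example:block_bicore_notbicore}.

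For the converse $(i)\Rightarrow(ii)$ I would argue contrapositively and constructively. If $(ii)$ fails then, by the first paragraph, the difference system $\sfayers_j-\sfayers_k\ge\max_i\gamma^{\mcharge}_{i,jk}(\blam)-1$ has no integral solution, so by the negative-cycle criterion there is a cyclic sequence $j_1,\dots,j_m,j_1$ of components and residues $i_1,\dots,i_m$ with $\sum_l\bigl(\xrostam^{\mcharge}_{i_l,j_l}(\blam)-\xrostam^{\mcharge}_{i_l,j_{l+1}}(\blam)\bigr)\ge m+1$. Sliding, around this cycle, the relevant top beads from component $j_{l+1}$ to component $j_l$ on runner $i_l$ should produce an $r$-partition $\bmu$ with $\alpha^{\mcharge}(\bmu)=\alpha^{\mcharge}(\blam)$ that acquires a gap on some runner, hence is not an $e$-multicore, witnessing $\alpha^{\mcharge}(\blam)-\constQ\in\Qmcharge$. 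The \textbf{main obstacle} is precisely this last step: checking that the cyclic bead-slide stays inside the block and genuinely creates a removable $e$-rim hook, especially for cycles of length $m\ge 3$, where Lemma~\ref{lemma:reduced_multicore_restrict_two_comp} (pairwise reducedness) does not suffice and the residue bookkeeping across components of different charges must be controlled carefully.
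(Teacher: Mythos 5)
The paper does not prove this proposition at all: it is imported verbatim from Fayers \cite[Theorem 3.1]{fayers:core}, so there is no internal argument to compare against and you are in effect reproving an external result. Your reduction of $(ii)\Leftrightarrow(iii)\Leftrightarrow(iv)$ to charge shifts is correct and routine: replacing $\charge'_j$ by $\charge'_j+en_j$ translates $\xrostam^{\mcharge'}_{ij}$ by $n_j$ and hence $\gamma^{\mcharge'}_{i,jk}$ by $n_j-n_k$, which absorbs the $\sfayers_j$ exactly as you say.

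The genuine content, $(i)\Leftrightarrow(ii)$, is not established by your sketch, and one of the two directions rests on a strategy that cannot work as described. For $(ii)\Rightarrow(i)$ you propose to show that every decomposition $\alpha^{\mcharge}(\blam)-\constQ=\sum_j\beta^{(j)}$ forces some $\wc[\charge_j](\beta^{(j)})<0$ by ``bounding the total deficit'' with Lemma~\ref{lemma:quadratic_positive}. But expanding as in the proof of Proposition~\ref{proposition:weight_positive} gives $\wmc(\alpha)-r=\sum_j\wc[\charge_j](\beta^{(j)})+\sum_{j<k}(\text{cross terms})$ with each cross term non-negative and decomposition-dependent (possibly zero), so a lower bound on the deficit would have to come from $\wmc(\alpha)<r$ --- and reduced $e$-multicores have weight as large as $\bestboundcoreblock\sim er^2/8$, which is the entire point of Section~\ref{section:implicit_description_blocks}. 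Weight inequalities alone cannot distinguish core blocks from non-core blocks in the range $[r,\bestboundcoreblock]$; the obstruction to the decomposition is combinatorial (which specific $\beta^{(j)}\in\Qcharge[\charge_j]$ can sum to the given element), not metric. For $(i)\Rightarrow(ii)$ you correctly identify the negative-cycle criterion and the cyclic bead-slide, but you explicitly leave unverified that the slide stays in the block and produces a non-multicore for cycles of length $m\geq 3$; that verification is precisely the inductive core of Fayers's proof and cannot be waved through, since Lemma~\ref{lemma:reduced_multicore_restrict_two_comp} only controls pairs of components. As it stands the proposal proves the easy equivalences and leaves both substantive implications open, one of them behind an approach that would need to be replaced rather than completed.
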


\begin{corollary}
\label{corollary:gamma_leq_1}
Assume that $e > 0$. An $e$-multicore  $\blam$ is  $\mcharge$-reduced if and only if there exists a compatible multicharge $\mcharge'$ such that
\[
\gamma^{\mcharge'}_{i,jk}(\blam) \leq 1,
\]
for all $i \in \Ze$ and $j, k \in \{1, \dots, r\}$.
\end{corollary}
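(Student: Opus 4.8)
The plan is to deduce this directly from the equivalence $(i)\Leftrightarrow(iii)$ of Proposition~\ref{proposition:multicore_equivalent_statements}, the only extra ingredient being an understanding of how the quantities $\gamma^{\mcharge'}_{i,jk}(\blam)$ transform when one passes from one compatible multicharge to another. The sufficiency direction is immediate: if a compatible multicharge $\mcharge'$ with $\gamma^{\mcharge'}_{i,jk}(\blam)\leq 1$ for all $i,j,k$ is given, then taking $\sfayers_1=\dots=\sfayers_r=0$ we have $\gamma^{\mcharge'}_{i,jk}(\blam)\leq 1 = \sfayers_j-\sfayers_k+1$, so condition $(iii)$ of Proposition~\ref{proposition:multicore_equivalent_statements} holds and hence $\blam$ is $\mcharge$-reduced.

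For the necessity direction, if $\blam$ is $\mcharge$-reduced then condition $(iii)$ provides a compatible multicharge $\mcharge'$ and integers $\sfayers_1,\dots,\sfayers_r\in\Z$ with $\gamma^{\mcharge'}_{i,jk}(\blam)\leq \sfayers_j-\sfayers_k+1$ for all $i,j,k$. First I would correct $\mcharge'$ by a componentwise multiple of $e$ in order to absorb the $\sfayers_j$. The computation at the heart of the argument is the following transformation law: replacing the $j$-th entry $\charge_j$ of a multicharge by $\charge_j + em_j$ (with $m_j\in\Z$) shifts the whole beta-number uniformly, since $\bnumber{\charge_j+em_j}(\lambda^{(j)})_a=\bnumber{\charge_j}(\lambda^{(j)})_a+em_j$; consequently $\bfayers^{\charge_j+em_j}_i(\lambda^{(j)})=\bfayers^{\charge_j}_i(\lambda^{(j)})+em_j$, and therefore $\xrostam^{\charge_j+em_j}_i(\lambda^{(j)})=\xrostam^{\charge_j}_i(\lambda^{(j)})+m_j$ for every $i$. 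Writing $\mcharge''\coloneqq\mcharge'+e(m_1,\dots,m_r)$ and using $\gamma^{\mcharge}_{i,jk}(\blam)=\xrostam^\mcharge_{ij}(\blam)-\xrostam^\mcharge_{ik}(\blam)$, this yields $\gamma^{\mcharge''}_{i,jk}(\blam)=\gamma^{\mcharge'}_{i,jk}(\blam)+(m_j-m_k)$ for all $i,j,k$. Choosing $m_j\coloneqq-\sfayers_j$ then gives $\gamma^{\mcharge''}_{i,jk}(\blam)\leq(\sfayers_j-\sfayers_k+1)-\sfayers_j+\sfayers_k=1$, as desired.

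To conclude I would check that $\mcharge''$ is compatible with $\mcharge$, which holds because compatibility is transitive: $\mcharge''-\mcharge=(\mcharge''-\mcharge')+(\mcharge'-\mcharge)\in(e\Z)^r$. I do not expect any genuine obstacle here beyond the transformation law for $\gamma^{\mcharge'}_{i,jk}(\blam)$, which is a one-line consequence of the definitions; the entire substance of the statement is already packaged in Proposition~\ref{proposition:multicore_equivalent_statements}, and this corollary merely records the special case where the $\sfayers_j$ can be normalised to zero after an appropriate compatible shift of the multicharge.
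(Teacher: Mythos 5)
Your proof is correct, but it takes a slightly different route from the paper's for the ``only if'' direction. The paper deduces that direction from item~$(ii)$ of Proposition~\ref{proposition:multicore_equivalent_statements}: writing $\bfayers^{\mcharge'}_{ij}(\blam) = \bfayers_i + \epsilon_{ij}e$ with $\epsilon_{ij}\in\{0,1\}$ gives $\gamma^{\mcharge'}_{i,jk}(\blam) = \epsilon_{ij}-\epsilon_{ik} \leq 1$ in one line, with no need to adjust the multicharge. You instead start from item~$(iii)$ and normalise the integers $\sfayers_j$ to zero by replacing $\mcharge'$ with $\mcharge'' = \mcharge' + e(-\sfayers_1,\dots,-\sfayers_r)$; your transformation law $\gamma^{\mcharge''}_{i,jk}(\blam) = \gamma^{\mcharge'}_{i,jk}(\blam) + m_j - m_k$ is correct (the uniform shift of the beta-numbers by $em_j$ does shift each $\xrostam^{\charge_j+em_j}_i$ by $m_j$), and the compatibility of $\mcharge''$ with $\mcharge$ follows by transitivity as you say. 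The ``if'' direction is identical in both arguments: take $\sfayers_1=\dots=\sfayers_r$ in item~$(iii)$. What your approach buys is that it makes explicit how the $\sfayers_j$ in $(iii)$ can always be absorbed into a compatible change of multicharge, which is a useful observation in its own right; what the paper's approach buys is brevity, since the binary form in $(ii)$ already encodes the bound $\gamma\leq 1$. Either way the corollary is a direct repackaging of Proposition~\ref{proposition:multicore_equivalent_statements}, as you note.
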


\begin{proof}
If the $e$-multicore $\blam$ is  $\mcharge$-reduced  then by Proposition~\ref{proposition:multicore_equivalent_statements}\ref{item:bfayers} we can write $\bfayers^{\mcharge'}_{ij}(\blam) = \bfayers_i + \epsilon_{ij}e$ for all $i \in \Ze$ and $j \in \{1, \dots, r\}$, where $\epsilon_{ij}\in\{0,1\}$ and $\mcharge'$ is a compatible multicharge. Thus, we obtain
\[
\gamma_{i,jk}^{\mcharge'}(\blam) = \frac{\bfayers^{\mcharge'}_{ij}(\blam) - \bfayers^{\mcharge'}_{ik}(\blam)}{e} = \epsilon_{ij} - \epsilon_{ik} \leq 1,
\]
for all $i \in \Ze$ and $j,k\in\{1,\dots,r\}$.
Conversely, if there exists a compatible multicharge $\mcharge'$ such that $\gamma^{\mcharge'}_{i,jk}(\blam) \leq 1$ for all $i \in \Ze$ and $j, k \in \{1, \dots, r\}$ then item~\ref{item:sfayers} of Proposition~\ref{proposition:multicore_equivalent_statements} is satisfied with $\sfayers_1 = \dots = \sfayers_r$ thus the $e$-multicore  $\blam$ is $\mcharge$-reduced.
\end{proof}

\section{Implicit description of the blocks}
\label{section:implicit_description_blocks}

We will give in this section the central results of the paper. We will first give a formula to compute the weight as a map from binary matrices. Let $r \geq 2$ and $e \geq 1$.

\subsection{Weight of a tuple of subsets}
\label{subsection:weight_binary}

We first begin with the following standard fact.
To an $r$-tuple of subsets $(E_1,\dots,E_r)$ of $\{1,\dots,e\}$  we can associate the $e\times r$ binary matrix $\mate$ such that the characteristic vector $\mathbf{1}_{E_j}$ of $E_j$ is the $j$-th column of $\mate$ for any $j \in \{1,\dots,r\}$.  Conversely, to an $e \times r$ binary matrix $\mate$ we can associate the $r$-tuple $(E_1,\dots,E_r)$ of subsets of $\{1,\dots,e\}$ such that for any $j \in \{1,\dots,r\}$ the $j$-th column of $\mate$ is exactly $\mathbf{1}_{E_j}$.

Now recall from Proposition~\ref{proposition:multicore_equivalent_statements}\ref{item:bfayers} that to any $\mcharge$-reduced $e$-multicore we can associate an $e\times r$ binary matrix. Conversely, given an $e\times r$ binary matrix $\mate = (\epsilon_{ij})$ and integers $\bfayers_1,\dots,\bfayers_e \in \Z$ such that $\bfayers_i = i \pmod{e}$ for each $i \in \{1, \dots, e\}$, we can construct an $\mcharge$-reduced $e$-multicore $\blam = \blam(\mate, (\bfayers_i))$, where the multicharge  $\mcharge$ is determined by~\eqref{equation:sumb_charge}. Note that the integers
\begin{equation}
\label{equation:gamma_diff_epsilon}
\gamma_{i,jk}^{\mcharge}(\blam) = \epsilon_{ij} - \epsilon_{ik},
\end{equation}
do not depend on the choice of $(\bfayers_i)$ so that, recalling Proposition~\ref{proposition:weight_min_two_sets}, the weight $\wmc(\blam)$ only depends on $\mate$. Now if $(E_1,\dots,E_r)$ is the $r$-tuple of  subsets of $\{1,\dots,e\}$ corresponding to $\mate$, we can thus write:
\[
\w(E_1,\dots,E_r) \coloneqq \wmc(\blam).
\]

\begin{example}
Take $e = 3$ and $r = 2$. We consider the following subsets of $\{1,2,3\}$:
\[
E_1 = \{1,2,3\}, \quad E_2 = \{3\},
\]
which correspond to the $3\times 2$ binary matrix $\mate = (\epsilon_{ij}) = \begin{pmatrix}
1&0\\1&0\\1&1
\end{pmatrix}$. We also consider the following integers:
\[
\bfayers_1 = 1, \quad \bfayers_2 = 5, \quad \bfayers_3 = -3,
\]
which satisfy the condition $\bfayers_i = i \pmod{e}$.
For $i \in \{1,2,3\}$ and $j \in \{1,2\}$ we define $\bfayers_{ij} = \bfayers_i + \epsilon_{ij}e$. We obtain:
\begin{align*}
\bfayers_{11} &= 1+3 = 4,& \bfayers_{12} &= 1+0=1,
\\
\bfayers_{21} &= 5+3 = 8, & \bfayers_{22} &= 5+0=5,
\\
\bfayers_{31} &=-3+3 = 0, & \bfayers_{32} &= -3 + 3 = 0.
\end{align*}
Recall that $\bfayers_{ij}$ is the largest element of $\bnumber{\charge_j}(\lambda^{(j)})$ congruent to $i$ modulo $e$. We obtain that the charged beta-number associated with $\lambda^{(1)}$ is:
\begin{align*}
\bnumber{\charge_1}(\lambda^{(1)}) &= \{4,1,-2,-5,-8,\dots\} \sqcup \{8,5,2,-1,-4,-7,\dots\} \sqcup \{0,-3,-6,-9,\dots,\}
\\
&= \{8,5,4,2,1,0,-1,-2,-3,-4,\dots\},
\end{align*}
thus the associated charged abacus is:
\cabacus{6}{3,1,1,0,0,0,0}{,}
thus $\lambda^{(1)} = (3,1,1)$. Similarly, we have:
\begin{align*}
\bnumber{\charge_2}(\lambda^{(2)}) &= \{1,-2,-5,-8,\dots\} \sqcup \{5,2,-1,-4,-7,\dots,\} \sqcup \{0,-3,-6,-9,\dots,\}
\\
&= \{5,2,1,0,-1,-2,-3,-4,\dots\},
\end{align*}
thus the associated charged abacus is:
\cabacus{4}{2,0,0,0,0,0}{,}
thus $\lambda^{(2)} = (2)$.
We can deduce the associated multicharge $\mcharge=(\charge_1,\charge_2)$, but we can also  use the formula
\[
\charge_j = \frac{3+1}{2} + \frac{1}{3}\sum_{i = 1}^3 \bfayers_{ij},
\]
for $j \in \{1,2\}$, thus obtaining $\mcharge = (6,4)$. Finally, to the pair $(E_1,E_2) = \bigl(\{1,2,3\},\{3\}\bigr)$ of subsets of $\{1,2,3\}$ we have associated the $(6,4)$-reduced $3$-multicore $\blam = \bigl((3,1,1),(2)\bigr)$, and the calculation of $\w(E_1,E_2)$ follows.
\end{example}

\begin{lemma}
\label{lemma:weight_subsets}
Assume that $r = 2$ and let $E_1, E_2 \subseteq \{1,\dots,e\}$. We have:
\[
\w(E_1,E_2) = \min(\lvert E_1\rvert,\lvert E_2\rvert) - \lvert E_1 \cap E_2\rvert.
\]
\end{lemma}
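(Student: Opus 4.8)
The plan is to reduce the statement to Fayers's formula recorded in Proposition~\ref{proposition:weight_min_two_sets}, which already expresses the weight of a bicore as a minimum of two cardinalities. First I would observe that $\blam = \blam(\mate, (\bfayers_i))$ is by construction an $\mcharge$-reduced $e$-multicore, hence in particular a bicore since $r = 2$, and that by~\eqref{equation:gamma_diff_epsilon} we have $\gamma^{\mcharge}_{i,12}(\blam) = \epsilon_{i1} - \epsilon_{i2} \in \{-1, 0, 1\}$, because each $\epsilon_{ij} \in \{0,1\}$. Combined with~\eqref{equation:gamma_iljk}, this gives $\gamma_{il,12}(\blam) = (\epsilon_{i1} - \epsilon_{i2}) - (\epsilon_{l1} - \epsilon_{l2}) \in \{-2, \dots, 2\}$; in particular the hypothesis $\gamma_{il,12}(\blam) \leq 2$ of Proposition~\ref{proposition:weight_min_two_sets} is automatically satisfied, so the formula there applies.

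Next I would translate the two sets appearing in Proposition~\ref{proposition:weight_min_two_sets} into set-theoretic terms. Writing $P \coloneqq \{i : \epsilon_{i1} - \epsilon_{i2} = 1\} = E_1 \setminus E_2$ and $N \coloneqq \{i : \epsilon_{i1} - \epsilon_{i2} = -1\} = E_2 \setminus E_1$, the condition $\gamma_{il,12}(\blam) = 2$ forces $\gamma^{\mcharge}_{i,12}(\blam) = 1$ and $\gamma^{\mcharge}_{l,12}(\blam) = -1$, that is $i \in P$ and $l \in N$. Hence the first counted set equals $P$ if $N \neq \emptyset$ and is empty otherwise, and symmetrically the second equals $N$ if $P \neq \emptyset$ and is empty otherwise. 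Taking the smaller of the two cardinalities, one checks in every case that it equals $\min(\lvert P\rvert, \lvert N\rvert)$: when $P$ or $N$ is empty both counted sets are empty, and $\min(\lvert P\rvert, \lvert N\rvert) = 0$ as well.

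Finally I would perform the short bookkeeping. Since $\lvert E_1\rvert = \lvert P\rvert + \lvert E_1 \cap E_2\rvert$ and $\lvert E_2\rvert = \lvert N\rvert + \lvert E_1 \cap E_2\rvert$, I get $\min(\lvert E_1\rvert, \lvert E_2\rvert) = \min(\lvert P\rvert, \lvert N\rvert) + \lvert E_1 \cap E_2\rvert$, whence $\w(\mate) = \min(\lvert P\rvert, \lvert N\rvert) = \min(\lvert E_1\rvert, \lvert E_2\rvert) - \lvert E_1 \cap E_2\rvert$, as claimed.

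I do not expect a genuine obstacle: all the substance is contained in Proposition~\ref{proposition:weight_min_two_sets} together with the identity~\eqref{equation:gamma_diff_epsilon}. The only point demanding a little care is the edge cases $E_1 \subseteq E_2$ or $E_2 \subseteq E_1$ (equivalently $P = \emptyset$ or $N = \emptyset$), where one of the two counted sets is empty and Proposition~\ref{proposition:weight_min_two_sets} must be seen to return the value $0$; I would verify that the stated right-hand side $\min(\lvert E_1\rvert, \lvert E_2\rvert) - \lvert E_1 \cap E_2\rvert$ correctly vanishes there, which it does.
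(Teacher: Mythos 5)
Your proposal is correct and follows essentially the same route as the paper's own proof: both reduce to Proposition~\ref{proposition:weight_min_two_sets} via~\eqref{equation:gamma_diff_epsilon}, identify the two counted sets with $E_1\setminus E_2$ and $E_2\setminus E_1$, and handle the containment edge case separately. Your explicit verification of the hypothesis $\gamma_{il,12}(\blam)\leq 2$ is a small touch of extra care that the paper leaves implicit, but the argument is otherwise the same.
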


\begin{proof}
Let $\bfayers_1,\dots,\bfayers_e \in \Z$ with $\bfayers_i = i \pmod{e}$, let $\mcharge$ be the multicharge associated with $(\bfayers_i)$  and let $\blam$ be the $\mcharge$-reduced $e$-multicore associated with $(\bfayers_i)$ and pair  $(E_1,E_2)$ of subsets of $\{1,\dots,e\}$. By~\eqref{equation:gamma_diff_epsilon}, for any $i \in \Ze$ we have
\[
\gamma_{i,12}^\mcharge(\blam) = \mathbf{1}_1(i) - \mathbf{1}_2(i),
\]
so that $\gamma_{i,12}^\mcharge(\blam) = 1$ if and only if $i \in E_1 \setminus E_2$. Hence, recalling~\eqref{equation:gamma_iljk},
\begin{align*}
\gamma_{il,12}(\blam) = 2
&\iff
\gamma_{i,12}^\mcharge(\blam) = 1 \text{ and } \gamma_{l,12}^\mcharge(\blam) = -1
\\
&\iff i \in E_1 \setminus E_2 \text{ and } l \in E_2 \setminus E_1,
\end{align*}
so that
\[
A \coloneqq\#\left\{i \in \Ze : \gamma_{il,12}(\blam) = 2 \text{ for some } l \in \Ze \right\}
=
\begin{cases}
\lvert E_1 \setminus E_2\rvert, &\text{if } \lvert E_2 \setminus E_1\rvert \neq 0,
\\
0,&\text{otherwise},
\end{cases}
\]
and similarly
\[
B\coloneqq\#\left\{l \in \Ze : \gamma_{il,12}(\blam) = 2 \text{ for some } i \in \Ze \right\}
=
\begin{cases}
\lvert E_2 \setminus E_1\rvert, &\text{if } \lvert E_1 \setminus E_2\rvert \neq 0,
\\
0,&\text{otherwise}.
\end{cases}
\]
Looking whether $0$ is in $\bigl\{|E_1 \setminus E_2|, |E_2 \setminus E_1|\bigr\}$ or not, we obtain:
\[
\min(A,B) = \min\bigl(|E_1 \setminus E_2|, |E_2 \setminus E_1|\bigr).
\]
Recalling from Proposition~\ref{proposition:weight_min_two_sets} that $\w(E_1,E_2) = \min(A,B)$, we thus obtain:
\begin{align*}
\w(E_1,E_2) 
&= \min\Bigl(\lvert E_1\rvert - \lvert E_1 \cap E_2\rvert, \:\lvert E_2\rvert - \lvert E_1 \cap E_2\rvert \Bigr) 
\\
&=
\min(\lvert E_1\rvert, \lvert E_2\rvert) - \lvert E_1 \cap E_2 \rvert,
\end{align*}
as announced.
\end{proof}

Note that if $E_1,\dots,E_r \subseteq \{1,\dots,e\}$ then by Lemma~\ref{lemma:weight_sum_r=2} we have:
\begin{equation}
\label{equation:weight_binary_matrix}
\w(E_1,\dots,E_r) = \sum_{1 \leq j < k \leq r} \Bigl[ \min(\lvert E_j\rvert, \lvert E_k\rvert) - \lvert E_j \cap E_k\rvert\Bigr].
\end{equation}
We conclude this subsection by the following particular case of Lemma~\ref{lemma:weight_subsets}.
 
\begin{lemma}
\label{lemma:weight_set_same_empty}
For any $E \subseteq \{1,\dots,e\}$ we have:
\[
\w(E,E) = \w(E,\emptyset) = \w(E,\{1,\dots,e\}) = 0.
\]
\end{lemma}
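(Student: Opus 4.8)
The plan is to recognise each of the three quantities as a direct instance of the formula in Lemma~\ref{lemma:weight_subsets}, so that the statement reduces to three elementary computations with cardinalities of finite sets. Writing $\w(E_1,E_2)$ for $\w(\mate)$ when the two columns of $\mate$ are the characteristic vectors of $E_1, E_2 \subseteq \{1,\dots,e\}$, Lemma~\ref{lemma:weight_subsets} gives
\[
\w(E_1,E_2) = \min\bigl(\lvert E_1\rvert, \lvert E_2\rvert\bigr) - \lvert E_1 \cap E_2\rvert,
\]
and the task is simply to substitute the three relevant pairs into this expression.

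First I would treat the case $(E_1,E_2) = (E,E)$: here $\min(\lvert E\rvert,\lvert E\rvert) = \lvert E\rvert$ and $E \cap E = E$, so the right-hand side equals $\lvert E\rvert - \lvert E\rvert = 0$. Next, for $(E_1,E_2) = (E,\emptyset)$ I would use $\min(\lvert E\rvert, 0) = 0$ together with $E \cap \emptyset = \emptyset$, giving $0 - 0 = 0$. Finally, for $(E_1,E_2) = (E,\{1,\dots,e\})$ I would invoke the containment $E \subseteq \{1,\dots,e\}$, which yields $\min(\lvert E\rvert, e) = \lvert E\rvert$ and $E \cap \{1,\dots,e\} = E$, hence $\lvert E\rvert - \lvert E\rvert = 0$.

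There is no genuine obstacle here: the computation is routine once Lemma~\ref{lemma:weight_subsets} is in hand. The only points requiring a moment's care are checking that $\emptyset$ and $\{1,\dots,e\}$ are legitimate choices of subset (so that the lemma applies) and reading off the intersection correctly in each case from the containment $E \subseteq \{1,\dots,e\}$. I would therefore present the proof as a one-line invocation of Lemma~\ref{lemma:weight_subsets} followed by the three trivial evaluations above.
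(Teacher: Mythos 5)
Your proof is correct and matches the paper's approach: the paper presents this lemma as an immediate particular case of Lemma~\ref{lemma:weight_subsets}, and your three substitutions are exactly the intended verification.
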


\begin{remark}
\label{remark:weight_mate_e=1}
Recalling Remark~\ref{remark:coreblock_e=1}, if $e = 1$ then we recover the fact that $\w(E) = 0$ for any subset $E \subseteq \{1,\dots,e\}$.
\end{remark}

\subsection{Bound for the weight of core blocks}
\label{subsection:bound_weight_core_blocks}

 Recall from Remark~\ref{remark:infinite_number_core_block} that there is an infinite number of reduced $e$-multicores.

%

\begin{theorem}
\label{theorem:bound_core_blocks}
Let $r \geq 2$ and $e > 0$. There exists a constant $C \geq 0$ such that for all multicharge $\mcharge$ and each $r$-partition $\blam$ that is an $\mcharge$-reduced $e$-multicore we have
\[
\wmc(\blam) \leq C.
\]
\end{theorem}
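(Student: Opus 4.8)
The plan is to translate the quantity $\wmc(\blam)$ into the language of binary matrices set up in \textsection\ref{subsection:weight_binary} and then to exploit the fact that there are only finitely many such matrices for fixed $r$ and $e$. Concretely, I will show that the weight of \emph{every} $\mcharge$-reduced $e$-multicore equals $\w(\mate)$ for some $e \times r$ binary matrix $\mate$, so that the desired constant is nothing but $C \coloneqq \max_{\mate} \w(\mate)$.

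First I fix a multicharge $\mcharge$ and an $\mcharge$-reduced $e$-multicore $\blam$. By Proposition~\ref{proposition:multicore_equivalent_statements}\ref{item:bfayers} there exist a compatible multicharge $\mcharge'$ and integers $\bfayers_1, \dots, \bfayers_e \in \Z$ with $\bfayers^{\mcharge'}_{ij}(\blam) \in \{\bfayers_i, \bfayers_i + e\}$ for all $i$ and $j$. Writing $\bfayers^{\mcharge'}_{ij}(\blam) = \bfayers_i + \epsilon_{ij} e$ with $\epsilon_{ij} \in \{0,1\}$ produces an $e \times r$ binary matrix $\mate = (\epsilon_{ij})$, and $\blam$ is exactly the reduced $e$-multicore associated with $\mate$ and $(\bfayers_i)$ in \textsection\ref{subsection:weight_binary}. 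Since $\mcharge'$ is compatible with $\mcharge$, every node has the same $\mcharge$- and $\mcharge'$-residue, so $\ci{\mcharge}(\blam) = \ci{\mcharge'}(\blam)$ for all $i$ and hence $\wmc(\blam) = \w^{\mcharge'}(\blam) = \w(\mate)$.

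It remains to bound $\w(\mate)$. The cheapest argument is that there are only $2^{er}$ binary matrices of size $e \times r$, so $\w$ takes finitely many values and $C \coloneqq \max_{\mate} \w(\mate)$ is a well-defined integer, nonnegative by Proposition~\ref{proposition:weight_positive}; the previous paragraph then gives $\wmc(\blam) \leq C$ for all $\mcharge$ and all reduced $\blam$. If an explicit constant is wanted instead, formula~\eqref{equation:weight_binary_matrix} expresses $\w(\mate)$ as a sum of $\binom{r}{2}$ terms $\min(\lvert E_j\rvert, \lvert E_k\rvert) - \lvert E_j \cap E_k\rvert$, each of which equals $\lvert E_j \setminus E_k\rvert$ (assuming $\lvert E_j\rvert \leq \lvert E_k\rvert$) and is therefore at most $\lfloor e/2 \rfloor$, since $E_j \setminus E_k$ and $E_k$ are disjoint subsets of $\{1,\dots,e\}$ with $\lvert E_k\rvert \geq \lvert E_j \setminus E_k\rvert$; this yields the admissible value $C = \binom{r}{2}\lfloor e/2\rfloor$.

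I do not expect a genuine obstacle here: the entire force of the statement---that the weight stays bounded on reduced multicores although it is unbounded on arbitrary $e$-multicores (Remark~\ref{remark:weight_multicore_not_bounded})---has already been absorbed into Proposition~\ref{proposition:multicore_equivalent_statements}, which confines the relevant data to the finite set of binary matrices. The real work is instead to pin down the \emph{optimal} value of $C$, i.e.\ to compute $\max_{\mate}\w(\mate)$ sharply; that is the business of \textsection\ref{subsection:optimality} and is not needed for the present existence result.
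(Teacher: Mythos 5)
Your proof is correct, and its backbone is the same as the paper's: decompose the weight of a reduced multicore over pairs of components and bound each pairwise contribution. The paper does this directly, combining Lemma~\ref{lemma:weight_sum_r=2} with Lemma~\ref{lemma:reduced_multicore_restrict_two_comp} and bounding each pairwise weight by $e$ via Proposition~\ref{proposition:weight_min_two_sets} and Corollary~\ref{corollary:gamma_leq_1}, which gives $C = e\binom{r}{2}$; you instead route everything through the binary-matrix formalism of \textsection\ref{subsection:weight_binary}, which is legitimate since Proposition~\ref{proposition:multicore_equivalent_statements}\ref{item:bfayers} forces the normalization $\bfayers_i \equiv i \pmod e$ and compatibility of $\mcharge'$ with $\mcharge$ preserves all residues, so $\wmc(\blam) = \w(\mate)$ indeed. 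Your two ways of concluding are both sound and each adds something: the finiteness of the set of $e\times r$ binary matrices gives the bare existence statement with no computation at all (this is implicit in the paper's definition of $\bestboundcoreblock$ as a supremum but is not spelled out as a proof), and your explicit constant $\binom{r}{2}\lfloor e/2\rfloor$ is sharper than the paper's, because you bound each pairwise term by $\lfloor e/2\rfloor$ using the set-theoretic formula of Lemma~\ref{lemma:weight_subsets} rather than by $e$ --- an alternative the paper itself mentions parenthetically and later exploits to prove $\bestboundcoreblock[2] = \lfloor e/2\rfloor$ in Proposition~\ref{proposition:bound_r=2}.
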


Note that the statement is wrong if $e = 0$ (see Remark~\ref{remark:weight_multicore_not_bounded} or the coming Remark~\ref{remark:counter_example_superlevel}). 

\begin{proof}
We first prove the case $r = 2$, which is clear since by Proposition~\ref{proposition:weight_min_two_sets} and Corollary~\ref{corollary:gamma_leq_1} we have $\wmc(\blam) \leq e$ (we could also have use Lemma~\ref{lemma:weight_subsets}). We now assume that $r \geq 3$. Since $\blam$ is a multicore, by Lemma~\ref{lemma:weight_sum_r=2} we have
\[
\wmc(\blam) = \sum_{1 \leq j < k \leq r} \wmc[(\charge_j,\charge_k)]\bigl(\lambda^{(j)},\lambda^{(k)}\bigr).
\]
By Lemma~\ref{lemma:reduced_multicore_restrict_two_comp} we know that each $\bigl(\lambda^{(j)},\lambda^{(k)}\bigr)$ is a reduced $(\charge_j,\charge_k)$-bicore, thus using the case $r = 2$ we find
\[
\wmc(\blam) \leq e \binom{r}{2},
\]
which concludes the proof.
\end{proof}

\begin{definition}
\label{definition:bestboundcoreblock}
We denote by $\bestboundcoreblock$ the smallest possible constant $C$ as in Theorem~\ref{theorem:bound_core_blocks}, that is:
\[
\bestboundcoreblock = \mathrm{sup}\bigl\{ \wmc(\blam) : \mcharge \in \Z^r\text{ a multicharge and } \blam \text{ an } \mcharge\text{-reduced } e\text{-multicore} \bigr\} < \infty.
\]
\end{definition}

\begin{corollary}
\label{corollary:Qmcharge_supset_superlevel_weight}
Assume that $e > 0$.
We have
\[
\Qmcharge \supseteq \bigl\{\alpha \in \Q : \wmc(\alpha) > \boundcoreblock - r\bigr\}.
\]
\end{corollary}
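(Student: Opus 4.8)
The plan is to combine the canonical decomposition of $\alpha$ relative to its $\mcharge$-core with the uniform bound $\boundcoreblock$ just established. Let $\alpha \in \Q$ satisfy $\wmc(\alpha) > \boundcoreblock - r$. By Lemma~\ref{lemma:core_block_associated_with_any_alpha} there is a unique integer $h \in \Z$ such that $\hat\alpha \coloneqq \alpha - h\constQ$ is a core block, and moreover $\alpha \in \Qmcharge$ if and only if $h \geq 0$. So the whole task reduces to proving $h \geq 0$.

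Next I would bound the weight of the core $\hat\alpha$. Since $\hat\alpha$ is a core block it lies in $\Qmcharge$, so $\hat\alpha = \alpha^\mcharge(\blam)$ for some $r$-partition $\blam$; because $\hat\alpha$ is a core block, any such $\blam$ is forced to be an $e$-multicore, hence an $\mcharge$-reduced $e$-multicore. By Theorem~\ref{theorem:bound_core_blocks} and the definition of $\boundcoreblock$ as the supremum of the weights of $\mcharge$-reduced $e$-multicores, we obtain $\wmc(\hat\alpha) = \wmc(\blam) \leq \boundcoreblock$.

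The final step is a weight comparison. Applying~\eqref{equation:wmcalpha+h} to $\alpha = \hat\alpha + h\constQ$ gives $\wmc(\alpha) = \wmc(\hat\alpha) + rh$, whence
\[
rh = \wmc(\alpha) - \wmc(\hat\alpha) > (\boundcoreblock - r) - \boundcoreblock = -r.
\]
Thus $h > -1$, and since $h \in \Z$ this forces $h \geq 0$, so $\alpha \in \Qmcharge$ by Lemma~\ref{lemma:core_block_associated_with_any_alpha}, as desired.

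I expect no serious obstacle here: all the real difficulty is packed into Theorem~\ref{theorem:bound_core_blocks}, the finiteness of $\boundcoreblock$, which is already available. The only points requiring care are that a core block is indeed represented by a reduced $e$-multicore, so that the bound $\boundcoreblock$ genuinely applies to $\wmc(\hat\alpha)$, and the integrality argument at the end: it is precisely the strict inequality together with $h \in \Z$ that explains why the superlevel threshold must be $\boundcoreblock - r$ rather than $\boundcoreblock$.
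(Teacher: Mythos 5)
Your proof is correct and is essentially the paper's argument in contrapositive form: both rest on the decomposition $\alpha = \hat\alpha + h\constQ$ from Lemma~\ref{lemma:core_block_associated_with_any_alpha}, the bound $\wmc(\hat\alpha) \leq \boundcoreblock$ from Theorem~\ref{theorem:bound_core_blocks}, and the relation~\eqref{equation:wmcalpha+h}. The paper shows $\alpha \notin \Qmcharge$ implies $\wmc(\alpha) \leq \boundcoreblock + rh \leq \boundcoreblock - r$ using $h \leq -1$, while you run the same inequality backwards to conclude $h \geq 0$; the two are logically identical.
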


\begin{proof}
Let $\alpha \in \Q$ and assume that $\alpha \notin \Qmcharge$. We will prove that $\wmc(\alpha) \leq \boundcoreblock - r$.
By Lemma~\ref{lemma:core_block_associated_with_any_alpha}, we can find $h \in \Z$ such that $\alpha - h\constQ$ is a core block, and since $\alpha \notin \Qmcharge$ we have $h < 0$. Now by Theorem~\ref{theorem:bound_core_blocks} we have $\wmc(\alpha - h\constQ) \leq \boundcoreblock$ thus by~\eqref{equation:wmcalpha+h} we have $\wmc(\alpha) \leq \boundcoreblock + rh \leq \boundcoreblock - r$. This concludes the proof.
\end{proof}

Note that the inclusion of Corollary~\ref{corollary:Qmcharge_supset_superlevel_weight} is not trivial, that is, the set $\bigl\{\alpha \in \Q : \wmc(\alpha) > \boundcoreblock - r\bigr\}$ is not empty, and in fact it is even infinite (by~\eqref{equation:wmcalpha+h}).

\begin{remark}
\label{remark:counter_example_superlevel}
The result of Corollary~\ref{corollary:Qmcharge_supset_superlevel_weight} does not hold if $e = 0$. For instance, assume that $r = 2$ and take $\mcharge = (0,\charge)$ for $\charge \geq 0$. For any $h \geq 1$, as in Remark~\ref{remark:weight_multicore_not_bounded} we consider the partition $\lambda^{[h]} = (h,\dots,h)$ where $h$ is repeated $h$ times and we define
\[
\alpha^{[h]} \coloneqq \alpha^0(\lambda^{[h]}) + \alpha_{h + \charge + 1}
= \sum_{\lvert i\rvert < h} (h - \lvert i \rvert) \alpha_i + \alpha_{h+\charge+1}.
\]
Let $(\cialpha)_{i \in \Z} \in \Z^{\Z}$ so that $\alpha^{[h]} = \sum_{i \in \Z}\cialpha\alpha_i$. Since $\cialpha[h+\charge] = 0 < \cialpha[h+\charge+1]$ and $\mcharge = (0,\charge)$, we have $\alpha^{[h]} \notin \Qmcharge$ (by~\eqref{equation:yi_ci_ci+1} and~\eqref{equation:einfinite_x01}), however
\begin{align*}
\wmc(\alpha^{[h]})
&=
2\cialpha[0] - \frac{1}{2}\sum_{i \in \Z} (\cialpha - \cialpha[i+1])^2
\\
&=
2\cialpha[0] - \frac{1}{2}\left(\sum_{i = -h}^{h-1} (\cialpha - \cialpha[i+1])^2 + (\cialpha[h+\charge] - \cialpha[h+\charge+1])^2 + (\cialpha[h+\charge+1] - \cialpha[h+\charge+2])^2\right)
\\
&=
2h - \frac{1}{2}\left(\sum_{i = -h}^{h-1} 1^2 + 1^2 + 1^2\right)
\\
&=
2h - \frac{1}{2}(2h + 2)
\\
&=
h - 1.
\end{align*}
 Note that this calculation  remains valid for $e \geq 2(h+1) + \charge$. This proves that we can find elements in $\Q\setminus\Qmcharge$  of arbitrarily large weight.
\end{remark}

We will give in~\textsection\ref{subsection:optimality} more information about the constant $\bestboundcoreblock$. In particular, in the following particular cases the situation is the same as in level one (cf. Proposition~\ref{proposition:partitions_wgeq0}).

\begin{proposition}
\label{proposition:Qmcharge_semialg_particular_case}
Assume that $(r, e) \in \bigl\{(2,2), (2,3), (3,2)\bigr\}$. Then $\Qmcharge = \bigl\{\alpha \in \Q : \wmc(\alpha) \geq 0\bigr\}$.
\end{proposition}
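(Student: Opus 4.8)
The plan is to obtain the equality from its two one-sided inclusions: the easy one from nonnegativity of the weight, and the harder one from Corollary~\ref{corollary:Qmcharge_supset_superlevel_weight}, after showing that in each of the three listed cases the optimal constant satisfies $\bestboundcoreblock \leq r-1$.

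The inclusion $\Qmcharge \subseteq \{\alpha \in \Q : \wmc(\alpha) \geq 0\}$ is immediate, since any $\alpha \in \Qmcharge$ has the form $\alpha^\mcharge(\blam)$ and then $\wmc(\alpha) = \wmc(\blam) \geq 0$ by Proposition~\ref{proposition:weight_positive}. For the reverse inclusion, Corollary~\ref{corollary:Qmcharge_supset_superlevel_weight} gives $\Qmcharge \supseteq \{\alpha \in \Q : \wmc(\alpha) > \bestboundcoreblock - r\}$. As $\wmc$ takes integer values on $\Q$, it therefore suffices to prove $\bestboundcoreblock \leq r-1$: this forces $\bestboundcoreblock - r \leq -1$, whence $\{\alpha : \wmc(\alpha) \geq 0\} = \{\alpha : \wmc(\alpha) > -1\} \subseteq \{\alpha : \wmc(\alpha) > \bestboundcoreblock - r\} \subseteq \Qmcharge$, and combining with the first inclusion yields the claimed equality.

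Thus the whole content reduces to the estimate $\bestboundcoreblock \leq r-1$ in the three cases. By the binary-matrix reformulation of~\textsection\ref{subsection:weight_binary}, $\bestboundcoreblock$ equals the maximum of $\w(\mate)$ over all $e \times r$ binary matrices $\mate$, and by~\eqref{equation:weight_binary_matrix},
\[
\w(\mate) = \sum_{1 \leq j < k \leq r}\bigl[\min(\lvert E_j\rvert, \lvert E_k\rvert) - \lvert E_j \cap E_k\rvert\bigr] = \sum_{1 \leq j < k \leq r} \min\bigl(\lvert E_j \setminus E_k\rvert, \lvert E_k \setminus E_j\rvert\bigr),
\]
where $E_1, \dots, E_r \subseteq \{1,\dots,e\}$ are the column supports. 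For $r = 2$ (cases $(2,2)$ and $(2,3)$) there is a single summand; by Lemma~\ref{lemma:weight_subsets} it is maximised by taking $E_1, E_2$ disjoint of size $\lfloor e/2\rfloor$, so $\bestboundcoreblock = \lfloor e/2 \rfloor = 1 = r-1$ for $e \in \{2,3\}$. For $(r,e) = (3,2)$ I would argue combinatorially: since $e = 2$, two subsets of $\{1,2\}$ are incomparable only when they are the two distinct singletons, so each summand equals $1$ if $\{E_j, E_k\} = \{\{1\},\{2\}\}$ and $0$ otherwise; among the three columns at most two of the three pairs can be of this form (if all three columns are singletons, two coincide by pigeonhole and contribute $0$), whence $\w(\mate) \leq 2 = r-1$, attained at $(E_1,E_2,E_3) = (\{1\},\{2\},\{1\})$.

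The main obstacle is the case $(3,2)$: the crude bound $\binom{r}{2}\bestboundcoreblock[2]$ used in the proof of Theorem~\ref{theorem:bound_core_blocks} only gives $\w(\mate) \leq 3 > r-1$, so one genuinely needs the pigeonhole refinement above (equivalently, one may invoke the exact evaluation of $\bestboundcoreblock[3]$ for $e=2$ established in~\textsection\ref{subsection:optimality}). The two cases $r = 2$ are routine once Lemma~\ref{lemma:weight_subsets} is in hand.
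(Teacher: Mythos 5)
Your proposal is correct and follows essentially the same route as the paper: the easy inclusion comes from Proposition~\ref{proposition:weight_positive}, and the reverse inclusion from Corollary~\ref{corollary:Qmcharge_supset_superlevel_weight} once one knows $\bestboundcoreblock \leq r-1$ in the three cases. The only cosmetic difference is that for $(r,e)=(3,2)$ you give a self-contained pigeonhole computation of the bound, whereas the paper cites Proposition~\ref{proposition:bound_r=3} (equivalently Proposition~\ref{proposition:bound_e=2}, whose proof is the same kind of elementary reduction to singletons); both yield $\bestboundcoreblock = 2$.
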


\begin{proof}
We will see in Proposition~\ref{proposition:bound_r=2} (respectively, Proposition~\ref{proposition:bound_r=3}) that  $\boundcoreblock = 1$ (resp. $\boundcoreblock = 2$) if $r = 2$ and $e \in \{2,3\}$ (resp. $r = 3$ and $e = 2$). In these three cases we have $\boundcoreblock \leq r - 1$, thus
\[
\left\{\alpha \in \Q : \wmc(\alpha) \geq  0\right\} \subseteq
\left\{\alpha \in \Q : \wmc(\alpha) \geq \bestboundcoreblock - r +1 \right\}.
\]
We conclude the proof by Proposition~\ref{proposition:weight_positive} and Corollary~\ref{corollary:Qmcharge_supset_superlevel_weight} since we then find
\[
\Qmcharge  \subseteq \{\alpha \in \Q : \wmc(\alpha) \geq 0\} \subseteq \left\{\alpha \in \Q : \wmc(\alpha) > \bestboundcoreblock - r  \right\} \subseteq \Qmcharge.
\]
\end{proof}

\subsection{Optimality}
\label{subsection:optimality}

We now assume that $e \geq 2$. The aim of this subsection is to give sharp bounds for $\bestboundcoreblock$. Note that we have seen during the proof of Theorem~\ref{theorem:bound_core_blocks} that $\bestboundcoreblock \leq \frac{er(r-1)}{2}$. In fact, we will see that  this first bound is roughly the value of $\bestboundcoreblock$ multiplied by a factor of $4$ (see Corollary~\ref{corollary:encadrement_final}). We will also compute $\bestboundcoreblock$ for small values of $e$ or $r$ (see~\textsection\ref{subsubsection:values_small_parameters} and~\textsection\ref{subsubsection:more_values}).  We first begin by gathering some elementary results.

\begin{lemma}
\label{lemma:binomial_coefficients}
Let $k \in \{1,\dots,e-1\}$.
\begin{enumerate}[$(i)$]
\item
\label{item:binomial}
We have:
\[
k\binom{e-1}{k} = (e-1)\binom{e-2}{k-1},
\]
and:
\[ (e-k)\binom{e}{k} = e\binom{e-1}{k}.\]
\item 
\label{item:k(e-k)}
The quantity $k(e-k)$  is maximal for $k = \lfloor\frac{e}{2}\rfloor$, the corresponding maximal value being $\lfloor\frac{e^2}{4}\rfloor$.
\end{enumerate}
\end{lemma}

\begin{proof}
\begin{enumerate}[$(i)$]
\item The first identity is standard. For the second one, using the identity $\binom{n}{m} = \binom{n}{n-m}$ twice and the first one we have:
\[
(e-k)\binom{e}{k} = (e-k)\binom{e}{e-k} = e\binom{e-1}{e-k-1} = e\binom{e-1}{k}.
\]
\item The first assertion is clear. For the second one, the result is clear if $e$ is even, and if $e$ is odd then $\lfloor\frac{e}{2}\rfloor = \frac{e-1}{2}$ and:
\[
\left\lfloor\frac{e}{2}\right\rfloor\left(e-\left\lfloor\frac{e}{2}\right\rfloor\right) = \frac{e-1}{2}\frac{e+1}{2} = \frac{e^2-1}{4} = \left\lfloor\frac{e^2}{4}\right\rfloor.
\]
\end{enumerate}
\end{proof}

\subsubsection{Values for small parameters}
\label{subsubsection:values_small_parameters}

We will here compute $\bestboundcoreblock$ when $r = 2$ or $e = 2$.

\begin{proposition}[Case $r = 2$]
\label{proposition:bound_r=2}
We have $\bestboundcoreblock[2] = \lfloor \frac{e}{2}\rfloor$.
\end{proposition}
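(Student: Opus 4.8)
The plan is to reduce the statement to the combinatorial weight formula of Lemma~\ref{lemma:weight_subsets}. Since $r = 2$, the construction of~\textsection\ref{subsection:weight_binary} (via Proposition~\ref{proposition:multicore_equivalent_statements}\ref{item:bfayers}) sets up a correspondence under which every $\mcharge$-reduced $e$-multicore $\blam$ is encoded by an $e\times 2$ binary matrix $\mate$, equivalently by a pair of subsets $E_1, E_2 \subseteq \{1,\dots,e\}$, with $\wmc(\blam) = \w(\mate)$; conversely every such pair arises from some reduced $e$-multicore (for a suitable compatible multicharge). Together with Lemma~\ref{lemma:weight_subsets}, this identifies the supremum defining $\bestboundcoreblock[2]$ with a purely set-theoretic optimisation:
\[
\bestboundcoreblock[2] = \max_{E_1, E_2 \subseteq \{1,\dots,e\}} \Bigl(\min(\lvert E_1\rvert, \lvert E_2\rvert) - \lvert E_1 \cap E_2\rvert\Bigr).
\]
It then remains to evaluate this maximum.

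For the upper bound, I would argue directly. Assuming without loss of generality that $\lvert E_1\rvert \leq \lvert E_2\rvert$, the quantity to maximise equals $\lvert E_1\rvert - \lvert E_1 \cap E_2\rvert = \lvert E_1 \setminus E_2\rvert$. Since $E_1 \setminus E_2$ and $E_2$ are disjoint subsets of $\{1,\dots,e\}$, we have $\lvert E_1\setminus E_2\rvert + \lvert E_2\rvert \leq e$; combining this with $\lvert E_1 \setminus E_2\rvert \leq \lvert E_1\rvert \leq \lvert E_2\rvert$ yields $2\lvert E_1\setminus E_2\rvert \leq e$, hence $\lvert E_1\setminus E_2\rvert \leq \lfloor e/2\rfloor$ (the left-hand side being an integer). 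Thus $\bestboundcoreblock[2]\leq \lfloor e/2\rfloor$.

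For tightness, I would exhibit two disjoint subsets $E_1, E_2$ each of cardinality $\lfloor e/2\rfloor$, which is possible because $2\lfloor e/2\rfloor\le e$; then $\min(\lvert E_1\rvert,\lvert E_2\rvert) - \lvert E_1\cap E_2\rvert = \lfloor e/2\rfloor - 0 = \lfloor e/2\rfloor$, so the bound is attained and equality holds.

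There is no serious obstacle once Lemma~\ref{lemma:weight_subsets} is in hand: the entire content is the elementary optimisation above. The only point requiring care is the handling of the minimum (the \emph{without loss of generality} reduction to $\lvert E_1\rvert\le\lvert E_2\rvert$) together with the trade-off between enlarging $\lvert E_1\rvert$ and shrinking $\lvert E_1 \cap E_2\rvert$, which is controlled precisely by the disjointness inequality $\lvert E_1\setminus E_2\rvert + \lvert E_2\rvert \le e$.
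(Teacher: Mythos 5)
Your proposal is correct and follows essentially the same route as the paper: reduce via Lemma~\ref{lemma:weight_subsets} to maximising $\min(\lvert E_1\rvert,\lvert E_2\rvert)-\lvert E_1\cap E_2\rvert$ over pairs of subsets, bound this by $\lfloor e/2\rfloor$ using the disjointness of $E_1\setminus E_2$ and $E_2\setminus E_1$ (the paper phrases it as $\min(\lvert E'\rvert,\lvert F'\rvert)$ for the disjoint parts $E'$, $F'$, which is the same estimate), and attain equality with a set of size $\lfloor e/2\rfloor$ and its complement. No gaps.
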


\begin{proof}
By Lemma~\ref{lemma:weight_subsets}, it suffices to prove that for any $E, F \subseteq \{1,\dots,e\}$ we have $\min(\lvert E\rvert,\lvert F \rvert) - \lvert E \cap F \rvert \leq \lfloor \frac{e}{2}\rfloor$ and that equality happen for some $E, F$.  Writing $E = (E \cap F) \sqcup E'$ and $F = (E \cap F) \sqcup F'$, we have
\[
\min(\lvert E \rvert, \lvert F\rvert) - \lvert E \cap F\rvert = \min(\lvert E'\rvert,\lvert F'\rvert) \leq \left\lfloor \frac{e}{2}\right\rfloor,
\]
since $E' \cap F' = \emptyset$. We conclude the proof since equality holds when $\lvert E \rvert = \lfloor \frac{e}{2}\rfloor$ and $F = E^c$.
\end{proof}

\begin{proposition}[Case $e=2$]
\label{proposition:bound_e=2}
We have $\bestboundcoreblocke[2] = \left\lfloor\frac{r^2}{4}\right\rfloor$.
\end{proposition}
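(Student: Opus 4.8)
The plan is to reduce the computation, exactly as in~\textsection\ref{subsection:weight_binary}, to a maximisation of $\w(\mate)$ over all $2\times r$ binary matrices $\mate$: indeed $\bestboundcoreblocke[2]$ equals this maximum, since every such matrix arises from an $\mcharge$-reduced $e$-multicore and conversely. Because $e = 2$, each of the $r$ columns of $\mate$ is the characteristic vector of one of the four subsets $\emptyset$, $\{1\}$, $\{2\}$, $\{1,2\}$ of $\{1,2\}$, and by the symmetry of formula~\eqref{equation:weight_binary_matrix} the value $\w(\mate)$ depends only on how many columns are of each type.

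First I would use Lemma~\ref{lemma:weight_set_same_empty} to discard the irrelevant columns: any pair $\bigl(E_j, E_k\bigr)$ in which one of the two sets equals $\emptyset$ or $\{1,2\}$ contributes $0$ to the sum~\eqref{equation:weight_binary_matrix}. Hence, writing $a$ (respectively $b$) for the number of columns equal to $\{1\}$ (resp. $\{2\}$), only the pairs formed from these columns contribute. By Lemma~\ref{lemma:weight_subsets}, a pair of columns of the same type contributes $\min(1,1) - 1 = 0$, whereas a pair of opposite types contributes $\min(1,1) - \lvert\{1\}\cap\{2\}\rvert = 1$. Counting the $ab$ mixed pairs then gives
\[
\w(\mate) = ab.
\]

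It remains to maximise $ab$ subject to $a + b \leq r$ with $a, b \in \N$ (the remaining $r - a - b$ columns being filled arbitrarily with $\emptyset$ or $\{1,2\}$). This is the elementary optimisation whose maximum is reached at $a + b = r$, $a = \lfloor r/2\rfloor$ and $b = \lceil r/2\rceil$, so that
\[
\bestboundcoreblocke[2] = \left\lfloor \frac r2\right\rfloor \left\lceil \frac r2\right\rceil = \left\lfloor \frac{r^2}{4}\right\rfloor.
\]
There is no genuine obstacle here; the only point requiring a little care is the bookkeeping that singles out the $\{1\}$- and $\{2\}$-columns as the sole contributors to the weight, after which the statement follows from balancing $a$ and $b$.
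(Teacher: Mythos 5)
Your proof is correct and follows essentially the same route as the paper: reduce to $2\times r$ binary matrices, use Lemma~\ref{lemma:weight_set_same_empty} to see that only the $\{1\}$- and $\{2\}$-columns matter, and then balance their counts to get $\lfloor r/2\rfloor\lceil r/2\rceil = \lfloor r^2/4\rfloor$. The only cosmetic difference is that the paper replaces the $\emptyset$- and $\{1,2\}$-columns by $\{1\}$ (noting the weight does not decrease) whereas you keep them and observe they contribute nothing; both are fine.
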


\begin{proof}
Let $E_1,\dots,E_r \subseteq\{1,2\}$. By Lemma~\ref{lemma:weight_set_same_empty}, if $E_j = \emptyset$ or $E_j = \{1,2\}$ for some $j \in \{1,\dots,r\}$ then $\w(E_1,\dots,E_r)$ does not decrease if we replace $E_j$ by $\{1\}$. Hence, we can assume that for all $j \in \{1,\dots,r\}$ we have $E_j = \{1\}$ or $E_j = \{2\}$. Let $s \in \{0,\dots,r\}$ so that, after reordering, we have $E_1 = \dots = E_s = \{1\}$ and $E_{s+1} = \dots = E_r = \{2\}$. Then by Lemmas~\ref{lemma:weight_sum_r=2} and~\ref{lemma:weight_set_same_empty} we have
\[
\w(E_1,\dots,E_r) = s(r-s)\,\w(\{1\},\{2\}) = s(r-s),
\]
and this concludes the proof by Lemma~\ref{lemma:binomial_coefficients}\ref{item:k(e-k)}.
\end{proof}

\subsubsection{Bounds}
\label{subsubsection:bounding_bounds}

 Recall that we always assume that $e \geq 2$ (recall from Remark~\ref{remark:weight_mate_e=1} that  $\bestboundcoreblocke[1] = 0$), with the exception of Lemma~\ref{lemma:superadditive}. We will first give a lower bound for $\bestboundcoreblock$.

\begin{lemma}
\label{lemma:superadditive}
Let $r \geq 2$. The sequence $(\bestboundcoreblock)_{e \geq 1}$ is superadditive, in other words:
\[
\bestboundcoreblocke[e+e'] \geq \bestboundcoreblock + \bestboundcoreblocke[e'],
\]
for any $e, e' \geq 1$.
\end{lemma}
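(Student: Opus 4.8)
The plan is to reduce the statement to a purely combinatorial inequality for the functional $\w$ on binary matrices, and then to glue optimal configurations of sizes $e$ and $e'$ into a configuration of size $e+e'$ whose weight is at least the sum. First I would recall that, by the correspondence of Proposition~\ref{proposition:multicore_equivalent_statements}\ref{item:bfayers} between $\mcharge$-reduced $e$-multicores and $e\times r$ binary matrices (together with the reverse construction $\blam(\mate,(\bfayers_i))$ recalled at the start of~\textsection\ref{subsection:weight_binary}) and formula~\eqref{equation:weight_binary_matrix}, the constant $\bestboundcoreblock$ is the maximum of $\w(\mate)$ over all $e\times r$ binary matrices; this maximum exists since there are only finitely many such matrices. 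Writing the columns as characteristic vectors of subsets $E_1,\dots,E_r\subseteq\{1,\dots,e\}$, this reads
\[
\bestboundcoreblock = \max_{E_1,\dots,E_r \subseteq \{1,\dots,e\}} \sum_{1 \leq j < k \leq r} \Bigl[\min(\lvert E_j\rvert, \lvert E_k\rvert) - \lvert E_j \cap E_k\rvert\Bigr].
\]

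Next I would fix subsets $E_1,\dots,E_r\subseteq\{1,\dots,e\}$ achieving $\bestboundcoreblock$ and subsets $F_1,\dots,F_r\subseteq\{1,\dots,e'\}$ achieving $\bestboundcoreblocke[e']$, and place the two families on disjoint blocks of the ground set $\{1,\dots,e+e'\}$: set $G_j \coloneqq E_j \cup (e+F_j)$ where $e+F_j \coloneqq \{e+i : i \in F_j\} \subseteq \{e+1,\dots,e+e'\}$. Since the two blocks are disjoint, for every pair $j<k$ one has $\lvert G_j \cap G_k\rvert = \lvert E_j \cap E_k\rvert + \lvert F_j \cap F_k\rvert$, $\lvert G_j\rvert = \lvert E_j\rvert + \lvert F_j\rvert$, and $\lvert G_k\rvert = \lvert E_k\rvert + \lvert F_k\rvert$. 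The only inequality I need is the elementary superadditivity $\min(a+c,b+d) \geq \min(a,b) + \min(c,d)$ for integers $a,b,c,d$ (immediate since $a+c$ and $b+d$ each dominate the right-hand side), applied with $a=\lvert E_j\rvert$, $b=\lvert E_k\rvert$, $c=\lvert F_j\rvert$, $d=\lvert F_k\rvert$. This yields, termwise,
\[
\min(\lvert G_j\rvert,\lvert G_k\rvert) - \lvert G_j \cap G_k\rvert \geq \Bigl[\min(\lvert E_j\rvert,\lvert E_k\rvert) - \lvert E_j \cap E_k\rvert\Bigr] + \Bigl[\min(\lvert F_j\rvert,\lvert F_k\rvert) - \lvert F_j \cap F_k\rvert\Bigr].
\]
Summing over all pairs $j<k$ and using the displayed formula twice gives a configuration $G_1,\dots,G_r\subseteq\{1,\dots,e+e'\}$ of weight at least $\bestboundcoreblock + \bestboundcoreblocke[e']$, whence $\bestboundcoreblocke[e+e'] \geq \bestboundcoreblock + \bestboundcoreblocke[e']$.

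I do not expect a serious obstacle: the argument rests entirely on the superadditivity of $\min(\cdot,\cdot)$ under coordinatewise addition and the additivity of cardinality over a disjoint union, and it is uniform in $e,e'\geq 1$ (covering the borderline case $e=1$, where $\bestboundcoreblocke[1]=0$ by Remark~\ref{remark:weight_mate_e=1}). The one point deserving care is the legitimacy of the reduction to binary matrices, namely that the glued matrix $G_1,\dots,G_r$ genuinely arises from a reduced $(e+e')$-multicore for a suitable multicharge, so that its weight is indeed bounded above by $\bestboundcoreblocke[e+e']$; but this is exactly guaranteed by the forward construction $\blam(\mate,(\bfayers_i))$ for any choice of integers $\bfayers_i \equiv i \pmod{e+e'}$.
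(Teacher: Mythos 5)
Your proposal is correct and follows essentially the same route as the paper: place the two optimal families on disjoint blocks of $\{1,\dots,e+e'\}$, use the superadditivity of $\min$ together with the additivity of $\lvert\,\cdot\cap\cdot\,\rvert$ over the disjoint union to get the termwise inequality $\w(G_j,G_k)\geq\w(E_j,E_k)+\w(F_j,F_k)$, and sum over pairs via Lemma~\ref{lemma:weight_sum_r=2}. Your closing remark on the legitimacy of passing between binary matrices and reduced multicores is exactly the correspondence set up at the start of~\textsection\ref{subsection:weight_binary}, so nothing is missing.
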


\begin{proof}
Let $E_1,\dots, E_r \subseteq \{1,\dots,e\}$ and $F_1,\dots,F_r \subseteq \{1,\dots,e'\}$. 
For any $j \in \{1,\dots,r\}$ we define $F'_j \coloneqq \{ i + e : i \in F_j\} \subseteq \{e+1,\dots,e+e'\}$. Note that $E_j \cap F'_k = \emptyset$ for all $1 \leq j < k \leq r$. For any $j \in \{1,\dots,r\}$, we define $G_j \coloneqq E_j \sqcup F'_j \subseteq \{1,\dots,e+e'\}$. 
For any $1 \leq j < k \leq r$ we have
\begin{align*}
\min(\lvert G_j\rvert, \lvert G_k\rvert)
&=
\min(\lvert E_j\rvert + \lvert F_j\rvert, \lvert E_k\rvert + \lvert F_k\rvert)
\\
&\geq
\min(\lvert E_j\rvert, \lvert E_k\rvert) + \min(\lvert F_j\rvert, \lvert F_k\rvert),
\end{align*}
and, recalling that $E_j \cap F'_k = \emptyset$,
\begin{align*}
G_j \cap G_k
&=
(E_j \sqcup F'_j) \cap (E_k \sqcup F'_k)
\\
&=
(E_j \cap E_k) \sqcup (F'_j \cap F'_k),
\end{align*}
so that $\w(G_j,G_k) \geq \w(E_j,E_k) + \w(F_j,F_k)$. By Lemma~\ref{lemma:weight_sum_r=2} we obtain $\w(G_1,\dots,G_r) \geq \w(E_1,\dots,E_r) + \w(F_1,\dots,F_r)$ and this conclude the proof since $\bestboundcoreblocke[e+e'] \geq \w(G_1,\dots,G_r)$ and by taking the maximum on $E_1,\dots,E_r$ and~$F_1,\dots,F_r$.
\end{proof}

\begin{corollary}
\label{corollary:lower_bound_bestboundcoreblock}
For any $e \geq 2$ we have $\bestboundcoreblock \geq \bigl\lfloor\frac{e}{2}\bigr\rfloor \bigl\lfloor \frac{r^2}{4}\bigr\rfloor$.
\end{corollary}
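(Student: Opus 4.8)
The plan is to bootstrap the single computation $\bestboundcoreblocke[2] = \lfloor r^2/4\rfloor$ of Proposition~\ref{proposition:bound_e=2} to all values of $e$ by means of the superadditivity of the sequence $(\bestboundcoreblock)_{e \geq 1}$ proved in Lemma~\ref{lemma:superadditive}. Writing $m \coloneqq \lfloor e/2\rfloor$, the goal is the lower bound $\bestboundcoreblock \geq m \lfloor r^2/4\rfloor$, and the natural way to reach it is to decompose $e$ into $m$ copies of $2$ (plus possibly a leftover $1$) and add up the contributions.

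First I would establish, by an immediate induction on $m$ using Lemma~\ref{lemma:superadditive}, that $\bestboundcoreblocke[2m] \geq m\,\bestboundcoreblocke[2]$ for all $m \geq 1$: the base case $m = 1$ is trivial, and the inductive step reads $\bestboundcoreblocke[2m+2] \geq \bestboundcoreblocke[2m] + \bestboundcoreblocke[2] \geq (m+1)\,\bestboundcoreblocke[2]$. Substituting the value $\bestboundcoreblocke[2] = \lfloor r^2/4\rfloor$ from Proposition~\ref{proposition:bound_e=2} then gives the claim whenever $e = 2m$ is even, since in that case $m = \lfloor e/2\rfloor$ and $\bestboundcoreblock = \bestboundcoreblocke[2m]$.

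For $e = 2m+1$ odd, I would apply superadditivity once more to split off a runner of size $1$, namely $\bestboundcoreblock = \bestboundcoreblocke[2m+1] \geq \bestboundcoreblocke[2m] + \bestboundcoreblocke[1]$. The only point to check here is that $\bestboundcoreblocke[1] = 0$, which is exactly Remark~\ref{remark:weight_mate_e=1}; combined with the even case this yields $\bestboundcoreblock \geq m \lfloor r^2/4\rfloor + 0 = \lfloor e/2\rfloor \lfloor r^2/4\rfloor$, as required.

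There is no genuine obstacle here: the content is entirely carried by Lemma~\ref{lemma:superadditive} and Proposition~\ref{proposition:bound_e=2}, and the corollary is just their combination. The only mild subtlety is the parity split, where one must remember that the leftover runner in the odd case contributes nothing because $\bestboundcoreblocke[1] = 0$. In fact both parities can be handled uniformly by iterating superadditivity to obtain $\bestboundcoreblock \geq \lfloor e/2\rfloor\,\bestboundcoreblocke[2]$ directly, and then substituting Proposition~\ref{proposition:bound_e=2}.
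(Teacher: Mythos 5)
Your argument is correct and is essentially the paper's own proof: both iterate the superadditivity of Lemma~\ref{lemma:superadditive} to get $\bestboundcoreblock \geq \bestboundcoreblocke[2\lfloor e/2\rfloor] \geq \lfloor e/2\rfloor\,\bestboundcoreblocke[2]$ (using $\bestboundcoreblocke[1]=0$ to absorb the leftover runner when $e$ is odd) and then substitute the value from Proposition~\ref{proposition:bound_e=2}. Nothing is missing.
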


\begin{proof}
Since $\bestboundcoreblocke[1] = 0$  we have
$\bestboundcoreblock \geq \bestboundcoreblocke[2\lfloor \frac{e}{2}\rfloor] \geq \lfloor\frac{e}{2}\rfloor\bestboundcoreblocke[2]$ by Lemma~\ref{lemma:superadditive} and we conclude by Proposition~\ref{proposition:bound_e=2}.
\end{proof}


We will now give an upper bound for $\bestboundcoreblock$.   The next proposition is the key of the next results.
 We write $\partse \coloneqq \mathcal{P}(\{1,\dots,e\})$ for the powerset of $\{1,\dots,e\}$, the set of subsets of $\{1,\dots,e\}$. We denote by $\lVert \cdot \rVert_1$ the $1$-norm on $\mathbb{R}^{\partse}$, given by $\lVert x \rVert_1 = \sum_{E \in \partse}\lvert x_E \rvert$ for all $x = (x_E)_{E \in \partse} \in \mathbb{R}^{\partse}$. For any $x, y \in \mathbb{R}^{\partse}$ we write $\langle x, y\rangle \coloneqq x^\transpose y$ for the canonical scalar product. We have $\langle x, x\rangle = \lVert x \rVert^2$, where $\lVert \cdot\rVert$ is the Euclidean norm as in Section~\ref{section:bacgroud_core_blocks}.
 Let $\matq = (a_{E, F})_{E, F \in \partse}$ be the matrix given by $a_{E,F} \coloneqq \w(E,F)$ for all $E, F \in \partse$. The matrix $\matq$ is symmetric of size $2^e$ with all its diagonal entries being $0$ and only non-negative entries. We denote by $\qe$ the quadratic form given by $\qe(x) \coloneqq \frac{1}{2} \langle x,\matq x\rangle$ for all $x \in \mathbb{R}^{\partse}$. 

\begin{proposition}
\label{proposition:boundcoreblock_maxqr}
\phantom{.}
\begin{enumerate}[$(i)$]
\item
\label{item:qe_pos}
If $x \in \N^{\partse}$ then $\qe(x) \in \N$.
\item
We have:
\[
\bestboundcoreblock =\max_{\substack{x \in \N^{\partse} \\ \lVert x\rVert_1 = r}} \qe(x).
\]
\end{enumerate}
\end{proposition}

\begin{proof}
We have a one-to-one correspondence between unordered $r$-tuples $(E_1,\dots,E_r)$ of subsets of $\{1,\dots,e\}$ and elements $x = (x_E)_{E \in \partse} \in \N^{\partse}$ with $\lVert x\rVert_1 = r$ given as follows:
\[
x_E = \#\bigl\{j \in \{1,\dots,r\} : E_j = E\}.
\]
For such elements, by Lemmas~\ref{lemma:weight_sum_r=2} and~\ref{lemma:weight_set_same_empty} we have
\begin{align*}
\w(E_1,\dots,E_r)
&=
\sum_{1\leq j < k \leq r} \w(E_j,E_k)
\\
&=
\sum_{1 \leq j < k \leq r} a_{E_j,E_k}
\\
&=
\frac{1}{2}\sum_{1 \leq j, k \leq r} a_{E_j,E_k}
\\
&=
\frac{1}{2}\sum_{E,F \in \partse} a_{E,F} x_E x_F
\\
&=
\qe(x),
\end{align*}
which concludes the proof.
\end{proof}

We will now study the quadratic form $\qe$. Our first aim is to prove that it suffices to study the restrictions of $\qe$ to the subspaces of $\mathbb{R}^{\partse}$ corresponding to subsets of $\{1,\dots,e\}$ of same cardinalities.

\begin{lemma}
\label{lemma:weight_complement}
Let $E, F \subseteq \{1,\dots,e\}$. We have $\w(E,F) = \w(E^c,F^c)$.
\end{lemma}

\begin{proof}
A simple calculation gives
\begin{align*}
\w(E^c,F^c)
&=
\min(\lvert E^c \rvert, \lvert F^c \rvert)
- \lvert E^c \cap F^c\rvert
\\
&=
\min(e-  \lvert E\rvert, e- \lvert F\rvert)
- \lvert  (E \cup F)^c \rvert
\\
&=
e - \max(\lvert E\rvert,\lvert F\rvert) - e + \lvert E \cup F\rvert
\\
&=
- \max(\lvert E\rvert,\lvert F\rvert) + \lvert E\rvert + \lvert F \rvert - \lvert E\cap F\rvert
\\
&=
\min(\lvert E\rvert, \lvert F\rvert) - \lvert E \cap F\rvert
\\
&=
\w(E,F).
\end{align*}
\end{proof}

\begin{lemma}
\label{lemma:add_elt_diff_w}
Let $E_1,\dots,E_r \subseteq \{1,\dots,e\}$. Let $m\coloneqq \min_{1\leq j \leq r} \lvert E_j\rvert$ and assume that $m < e$. Let $j_0$ such that $\lvert E_{j_0}\rvert = m$ and let $x \in \{1,\dots,e\}\setminus E_{j_0}$. For $j \in \{1,\dots,r\}$, we define
\[
\widetilde{E}_j \coloneqq \begin{cases}
E_j,&\text{if } j \neq j_0,
\\
E_{j_0} \cup \{x\}, &\text{if } j = j_0.
\end{cases}
\]
Then
\begin{multline*}
\w(\widetilde{E}_1,\dots,\widetilde{E}_r) = \w(E_1,\dots,E_r) + \#\bigl\{ j \in \{1,\dots,r\} : \lvert E_j \rvert > m \text{ and } x \notin E_j \bigr\}
\\
-  \#\bigl\{ j \in \{1,\dots,r\} : \lvert E_j \rvert = m \text{ and } x \in E_j\bigr\}.
\end{multline*}
\end{lemma}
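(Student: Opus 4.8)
The plan is to exploit that replacing $E_{j_0}$ by $\widetilde{E}_{j_0}$ changes only the summands of~\eqref{equation:weight_binary_matrix} that involve the index $j_0$. Since $\w(E,F) = \min(\lvert E\rvert,\lvert F\rvert) - \lvert E\cap F\rvert$ is symmetric in its two arguments by Lemma~\ref{lemma:weight_subsets}, I would first isolate the change as
\[
\w(\widetilde{\mate}) - \w(\mate) = \sum_{k \neq j_0}\Bigl[\w(\widetilde{E}_{j_0}, E_k) - \w(E_{j_0}, E_k)\Bigr],
\]
which reduces the problem to analysing a single term for each $k \neq j_0$.

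Next, for a fixed $k$ I would set $f \coloneqq \lvert E_k\rvert$ and note $f \geq m$ by minimality of $m$. Since $x \notin E_{j_0}$ we have $\lvert\widetilde{E}_{j_0}\rvert = m+1$ and $\lvert\widetilde{E}_{j_0} \cap E_k\rvert = \lvert E_{j_0} \cap E_k\rvert + \mathbf{1}_{E_k}(x)$, where $\mathbf{1}_{E_k}(x)$ denotes the characteristic value of $E_k$ at $x$. Plugging these into the formula for $\w$ and using $\min(m,f) = m$, the difference simplifies to
\[
\w(\widetilde{E}_{j_0}, E_k) - \w(E_{j_0}, E_k) = \bigl(\min(m+1, f) - m\bigr) - \mathbf{1}_{E_k}(x).
\]
A one-line case distinction ($f = m$ versus $f > m$) shows that $\min(m+1, f) - m$ equals $1$ when $\lvert E_k\rvert > m$ and $0$ when $\lvert E_k\rvert = m$, so the contribution of each $k$ is exactly $\mathbf{1}_{\lvert E_k\rvert > m} - \mathbf{1}_{E_k}(x)$.

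Finally I would sum over $k \neq j_0$ and reorganise. Because $\lvert E_{j_0}\rvert = m$, the condition $\lvert E_k\rvert > m$ already excludes $k = j_0$, and because $x \notin E_{j_0}$, the condition $x \in E_k$ also excludes $k = j_0$; hence both counts may be taken over all $k$. Splitting $\#\{k : \lvert E_k\rvert > m\}$ and $\#\{k : x \in E_k\}$ according to the remaining conditions and cancelling the common term $\#\{k : \lvert E_k\rvert > m \text{ and } x \in E_k\}$ yields precisely the stated identity. There is no genuine obstacle here beyond careful bookkeeping; the only points deserving attention are the case analysis for $\min(m+1, f) - m$ and the remark that the index conditions automatically drop $j_0$, which is exactly what makes the final combinatorial reorganisation clean.
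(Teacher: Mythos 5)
Your proposal is correct and follows essentially the same route as the paper's proof: isolate the summands of $\w$ involving $j_0$, compute each difference $\w(\widetilde{E}_{j_0},E_k)-\w(E_{j_0},E_k)$ as $\mathbf{1}_{\lvert E_k\rvert>m}-\mathbf{1}_{E_k}(x)$ via the case split $\lvert E_k\rvert=m$ versus $\lvert E_k\rvert>m$, and then reorganise the two counts after observing that $j_0$ is automatically excluded from both. The only cosmetic difference is that the paper phrases the cancellation by writing each count as a disjoint union over the condition on $x$ (resp.\ on $\lvert E_j\rvert$), which is exactly your bookkeeping step.
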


\begin{proof}
Let $j \neq k \in \{1,\dots,e\}$. If $j, k \neq j_0$ then $\w(\widetilde{E}_j, \widetilde{E}_k) = \w(E_j, E_k)$. If $k = j_0$ then, recalling that $\mathbf{1}_j$ denotes the characteristic vector of $E_j$,
\begin{align*}
\w(\widetilde{E}_j,\widetilde{E}_{j_0})
&=
\w(E_j, E_{j_0} \sqcup\{x\})
\\
&=
\min(\lvert E_j \rvert, m + 1) - \lvert E_j \cap E_{j_0}\rvert - \mathbf{1}_j(x)
\\
&=
\begin{cases}
\w(E_j,E_{j_0}) - \mathbf{1}_j(x), &\text{if } \lvert E_j \rvert = m,
\\
\w(E_j, E_{j_0}) + 1 - \mathbf{1}_j(x), &\text{if } \lvert E_j \rvert > m.
\end{cases}
\end{align*}
Thus, by Lemma~\ref{lemma:weight_sum_r=2} we have
\begin{align*}
\w(\widetilde{E}_1,\dots,\widetilde{E}_r) - \w(E_1,\dots,E_r)
&=
\sum_{\substack{1 \leq j \leq r \\ j \neq j_0}} \bigl[ \w(\widetilde{E}_j,\widetilde{E}_{j_0}) - \w(E_j,E_{j_0})\bigr]
\\
&=
\#\bigl\{ j \in \{1,\dots, r \} : \lvert E_j \rvert > m\bigr\}
- \#\bigl\{j \in \{1,\dots,r\} : x \in E_j\bigr\}.
\end{align*}
Writing
\begin{multline*}
\bigl\{ j \in \{1,\dots, r \} : \lvert E_j \rvert > m\bigr\}
=
\bigl\{ j \in \{1,\dots,r\} : \lvert E_j \rvert > m \text{ and } x \in E_j \bigr\} 
\\
\sqcup \bigl\{j \in \{1,\dots,r\} : \lvert E_j \rvert >  m \text{ and }x \notin E_j\bigr\},
\end{multline*}
and
\begin{multline*}
\bigl\{j \in \{1,\dots,r\} : x \in E_j\bigr\}
 = \bigl\{ j \in \{1,\dots,r\} : x \in E_j \text{ and }\lvert E_j \rvert > m \bigr\}
 \\
 \sqcup \bigl\{ j \in \{1,\dots,r\}: x \in E_j \text{ and } \lvert E_j \rvert = m\bigr\},
\end{multline*}
gives
\[
\w(\widetilde{E}_1,\dots,\widetilde{E}_r) - \w(E_1,\dots,E_r)
=
\begin{multlined}[t]
\#\bigl\{ j \in \{1,\dots,r\} : \lvert E_j \rvert > m \text{ and } x \notin E_j \bigr\}
\\
-  \#\bigl\{ j \in \{1,\dots,r\} : x \in E_j \text{ and }  \lvert E_j \rvert = m\bigr\},
\end{multlined}
\]
as announced.
\end{proof}

\begin{proposition}
\label{proposition:max_w_equal_size}
There exist $E_1,\dots,E_r \subseteq \{1,\dots,e\}$  with $\lvert E_1 \rvert = \dots = \lvert E_r \rvert$ such that $\w(E_1,\dots,E_r) = \bestboundcoreblock$.
\end{proposition}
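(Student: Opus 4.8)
The plan is to start from a weight-maximal family and balance its cardinalities by a sequence of weight-preserving one-element moves, steered by an extremal (lexicographic) choice. By Proposition~\ref{proposition:boundcoreblock_maxqr} the value $\bestboundcoreblock$ is attained, so there is a family $E_1,\dots,E_r\subseteq\{1,\dots,e\}$ with $\w(\mate)=\bestboundcoreblock$; call such a family \emph{optimal}. First I would record the exact effect of a one-element move on an optimal family. Writing $m\coloneqq\min_j\lvert E_j\rvert$, $s\coloneqq\#\{j:\lvert E_j\rvert=m\}$ and $d(x)\coloneqq\#\{j:x\in E_j\}$ for the number of columns containing $x$, a short rearrangement of Lemma~\ref{lemma:add_elt_diff_w} shows that adding $x\notin E_{j_0}$ to a minimum set $E_{j_0}$ changes the weight by exactly $(r-s)-d(x)$. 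Dually, with $M\coloneqq\max_j\lvert E_j\rvert$ and $t\coloneqq\#\{j:\lvert E_j\rvert=M\}$, removing $y\in E_{j_1}$ from a maximum set $E_{j_1}$ changes the weight by $d(y)-t$; this follows from the same computation, or by applying the first formula to the complemented family, which is again optimal by Lemma~\ref{lemma:weight_complement} and~\eqref{equation:weight_binary_matrix}. Since the family is optimal both changes are $\le 0$, so $d(x)\ge r-s$ whenever $x$ lies outside some minimum set, and $d(y)\le t$ whenever $y$ lies in some maximum set.

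The heart of the argument is the existence of a weight-preserving move whenever the cardinalities are not all equal. Suppose $m<M$. A maximum set has $M>m$ elements, hence cannot be contained in $\bigcap_{\lvert E_j\rvert=m}E_j$ (a set of size at most $m$), so there is an element $z$ lying in some maximum set but outside some minimum set. For this $z$ the two bounds give $r-s\le d(z)\le t\le r-s$ (the last inequality because every maximum set is a non-minimum set), forcing $d(z)=r-s$. Consequently, adding $z$ to a minimum set that avoids it is weight-preserving: it produces another optimal family in which one minimum set has grown by a single element.

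With this move in hand I would finish by an extremal argument. Among all optimal families choose one minimizing, lexicographically, the pair $(M-m,\ s)$ (spread first, then the number of minimum sets), and suppose for contradiction that its spread is positive. Applying the weight-preserving move above raises one minimum set from size $m$ to $m+1$, yielding an optimal competitor: if that set was the unique minimum set ($s=1$) the new minimum is at least $m+1$, strictly shrinking the spread when $M>m+1$ and making all parts equal when $M=m+1$; if $s\ge 2$ the minimum $m$ and maximum $M$ are unchanged, so the spread is preserved while the number of minimum sets drops to $s-1$. In every case the lexicographic potential strictly decreases, contradicting minimality. Hence the chosen optimal family has spread $0$, that is $\lvert E_1\rvert=\dots=\lvert E_r\rvert$, which is exactly the claim.

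The step I expect to be the main obstacle is the existence of the weight-preserving move: the optimality inequalities only give $d(x)\ge r-s$ and $d(y)\le t$, and what makes them collapse to the required equality $d(z)=r-s$ is the purely combinatorial observation that a maximum set is too large to sit inside the intersection of the minimum sets, which yields a single element witnessing both inequalities at once. A secondary point needing care is arranging the potential so that the two-cardinality endgame (spread exactly $1$) does not stall; taking the number of minimum sets as the secondary coordinate resolves this, since each such move then strictly decreases it until a single cardinality survives.
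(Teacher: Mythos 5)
Your proof is correct and follows essentially the same strategy as the paper's: both rest on the one-element move of Lemma~\ref{lemma:add_elt_diff_w}, the complementation identity of Lemma~\ref{lemma:weight_complement} (via~\eqref{equation:weight_binary_matrix}), and an induction/extremal argument on the spread $M-m$ together with the number of extremal sets. Your variant—restricting to optimal families so that the two optimality inequalities $d(z)\ge r-s$ and $d(z)\le t\le r-s$ collapse to an exactly weight-preserving move—is a clean reorganization of the paper's case split, which instead balances an arbitrary family and, when the inequality~\eqref{equation:induction_nremax_same_size} fails, passes to complements to obtain a strict increase in weight.
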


\begin{proof}
Let $E_1, \dots, E_r \subseteq \{1,\dots,e\}$. It suffices to prove that we can find $\widetilde{E}_1,\dots,\widetilde{E}_r \subseteq \{1,\dots,e\}$ satisfying $\lvert \widetilde{E}_1\rvert = \dots = \lvert \widetilde{E}_r\rvert$ such that $\w(\widetilde E_1,\dots,\widetilde E_r) \geq \w(E_1,\dots,E_r)$.

Let $m \coloneqq \min_{1\leq j \leq r} \lvert E_j\rvert$ and $M \coloneqq \max_{1 \leq j \leq r} \lvert E_j\rvert$.
If $m = M$ then we are done, thus we now assume that $m < M$. Let $j_m, j_M \in \{1,\dots,e\}$ such that $\lvert E_{j_m}\rvert = m$ and $\lvert E_{j_M} \rvert = M$. We have $e - m + M > e$ by assumption, thus $E_{j_m}^c \cap E_{j_M} \neq \emptyset$, in particular we can pick $x \in E_{j_m}^c \cap E_{j_M}$. We first assume that:
\begin{equation}
\label{equation:induction_nremax_same_size}
\#\bigl\{ j \in \{1,\dots,r\} : \lvert E_j \rvert > m \text{ and } x \notin E_j \bigr\}
\geq  \#\bigl\{ j \in \{1,\dots,r\} : \lvert E_j \rvert = m \text{ and } x \in E_j\bigr\}.
\end{equation}
We define:
\begin{align*}
N_m &\coloneqq \#\bigl\{ j \in \{1,\dots,r\} : \lvert E_j \rvert = m\},
\\
N_M &\coloneqq \#\bigl\{ j \in \{1,\dots,r\} : \lvert E_j \rvert = M\}.
\end{align*}
By Lemma~\ref{lemma:add_elt_diff_w} applied with the family $E_1,\dots,E_r$ and $x \notin E_{j_m}$,  we can construct a family $\widetilde{E_1},\dots,\widetilde{E}_r \subseteq \{1,\dots,e\}$ satisfying $\w(\widetilde E_1,\dots,\widetilde E_r) \geq \w(E_1,\dots,E_r)$ such that either
\begin{align*}
\max_{1 \leq j \leq r } \lvert \widetilde{E}_j \rvert &= M,
\\
\min_{1 \leq j \leq r } \lvert \widetilde{E}_j \rvert &= m + 1,
\\
\intertext{or}
\max_{1 \leq j \leq r } \lvert \widetilde{E}_j \rvert &= M,
\\
\min_{1 \leq j \leq r } \lvert \widetilde{E}_j \rvert &= m,
\\
\#\bigl\{ j \in \{1,\dots,r\} : \lvert \widetilde{E}_j \rvert = m \bigr\} &= N_m - 1,
\\
\#\bigl\{ j \in \{1,\dots,r\} : \lvert \widetilde{E}_j \rvert =M\bigr\} &= N_M,
\end{align*}
thus in both cases we conclude by induction on $(M-m,N_m + N_M) \in \N^2$.

Thus, we now assume that~\eqref{equation:induction_nremax_same_size} fails, that is,
\begin{equation}
\label{equation:induction_nremax_same_size_converse}
\#\bigl\{ j \in \{1,\dots,r\} : \lvert E_j \rvert > m \text{ and } x \notin E_j \bigr\}
<  \#\bigl\{ j \in \{1,\dots,r\} : \lvert E_j \rvert = m \text{ and } x \in E_j\bigr\}.
\end{equation}
Defining $F_j \coloneqq E_j^c$ for all $j \in \{1,\dots,e\}$, we have
\begin{align*}
\#\bigl\{ j \in \{1,\dots,r\} : \lvert F_j \rvert > e-M \text{ and } x \notin F_j \bigr\}
&=
\#\bigl\{ j \in \{1,\dots,r\} : \lvert E_j \rvert < M \text{ and } x \in E_j \bigr\}
\\
&\geq
\#\bigl\{ j \in \{1,\dots,r\} : \lvert E_j \rvert = m \text{ and } x \in E_j \bigr\}
\\
&>
\#\bigl\{ j \in \{1,\dots,r\} : \lvert E_j \rvert > m \text{ and } x \notin E_j \bigr\} \text{ (by~\eqref{equation:induction_nremax_same_size_converse})}
\\
&\geq 
\#\bigl\{ j \in \{1,\dots,r\} : \lvert E_j \rvert = M \text{ and } x \notin E_j \bigr\}
\\
&=
\#\bigl\{ j \in \{1,\dots,r\} : \lvert F_j \rvert = e-M \text{ and } x \in F_j \bigr\}.
\end{align*}
Since we have chosen $x$ such that $x \notin F_{j_M}$ with $\lvert F_{j_M}\rvert = e - M = \min_{1 \leq j \leq r} \lvert F_j\rvert$, we can apply Lemma~\ref{lemma:add_elt_diff_w} to the family $F_1,\dots,F_r$ and $x \notin F_{j_M}$. We thus find a family $\widetilde{F}_1,\dots,\widetilde{F}_r \subseteq \{1,\dots,e\}$ satisfying $\w(\widetilde F_1,\dots,\widetilde F_r) > \w(F_1,\dots,F_r)$, and this concludes the proof since $\w(F_1,\dots,F_r) = \w(E_1,\dots,E_r)$ by Lemmas~\ref{lemma:weight_sum_r=2} and~\ref{lemma:weight_complement}.
\end{proof}

For any $k \in \{0,\dots,e\}$, let $\qek$ be the restriction of $\qe$ to the subspace $\mathbb{R}^{\partsek}$, where $\partsek$ is the subset of $\partse$ given by the subsets $E \subseteq \{1,\dots,e\}$ of size $k$. We have $\qek(x) = \frac{1}{2}\langle x, \matqk x\rangle$ for all $x \in \mathbb{R}^{\partsek}$, where $\matqk = (a_{E,F})_{E, F \in \partsek}$ is the real symmetric matrix of size $\binom{e}{k}$ given by
\[
a_{E,F} = \w(E,F) = k - \lvert E \cap F\rvert,
\]
for all $E, F \in \partsek$. In particular, as for $\matq$, the matrix $\matqk$ has only $0$ entries in the diagonal.
By  Lemmas~\ref{lemma:weight_set_same_empty} and~\ref{lemma:weight_complement} and Proposition~\ref{proposition:max_w_equal_size}, we have
\begin{equation}
\label{equation:bestboundcoreblock_maxmax}
\bestboundcoreblock = \max_{1 \leq k \leq \lfloor\frac{e}{2}\rfloor} \max_{\substack{x \in \N^{\partsek} \\ \lVert x \rVert_1 = r}} \qek(x).
\end{equation}

The matrix $\matqk$ appears at some places in the literature (see, for instance,  \cite{ryser1,ryser2}). 
The result of Lemma~\ref{lemma:spectra_Aeq} is maybe well-known; we provide a proof for convenience.

\begin{lemma}
\label{lemma:spectra_Aeq}
Recall that $e \geq 2$ and let $k \in \{1,\dots,e-1\}$. The eigenvalues of $\matqk$ are the following:
\begin{align*}
k\binom{e-1}{k}&,&&\text{ with multiplicity } 1,
\\
0&,&&\text{ with multiplicity } \binom{e}{k}-e,
\\
-\binom{e-2}{k-1}&,&&\text{ with multiplicity } e-1.
\end{align*}
Moreover, the constant vector $\mathbf{1} \in \mathbb{R}^{\partsek}$ is an eigenvector for the eigenvalue $k \binom{e-1}{k}$.
\end{lemma}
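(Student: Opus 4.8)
The plan is to diagonalise $\matqk$ by writing it as a combination of an all-ones matrix and an intersection matrix, and then to compute the spectrum of the latter by the usual transpose trick. First I would record that, since $a_{E,F} = k - \lvert E\cap F\rvert$, we have $\matqk = kJ - M$, where $J$ is the all-ones matrix of size $\binom{e}{k}$ (indexed by $\partsek$) and $M \coloneqq (\lvert E \cap F\rvert)_{E,F \in \partsek}$. Introducing the incidence matrix $N$ of size $e \times \binom{e}{k}$, with rows indexed by $\{1,\dots,e\}$ and columns by $\partsek$, and with $(i,E)$-entry equal to $1$ if $i \in E$ and $0$ otherwise, one checks immediately that $(N^\transpose N)_{E,F} = \lvert E \cap F\rvert$, that is, $M = N^\transpose N$.

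The key step is to compute the spectrum of $M$. Since the nonzero eigenvalues of $N^\transpose N$ and $N N^\transpose$ coincide with multiplicities, I would instead diagonalise the much smaller $e \times e$ matrix $N N^\transpose$. Counting the $k$-subsets containing one, respectively two, fixed elements gives diagonal entries $\binom{e-1}{k-1}$ and off-diagonal entries $\binom{e-2}{k-2}$, so that $N N^\transpose = \binom{e-2}{k-1} I_e + \binom{e-2}{k-2} J_e$ by Pascal's rule, where $I_e$ and $J_e$ are the identity and all-ones matrices of size $e$. The spectrum of $J_e$ (namely $e$ on the constant vector and $0$ with multiplicity $e-1$) then yields the eigenvalues of $N N^\transpose$: the value $\binom{e-2}{k-1} + e\binom{e-2}{k-2} = k\binom{e-1}{k-1}$ with multiplicity $1$ (eigenvector $\mathbf{1}$), and $\binom{e-2}{k-1}$ with multiplicity $e-1$. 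Transferring back to $M = N^\transpose N$ of size $\binom{e}{k}$, these are the nonzero eigenvalues, supplemented by $0$ with multiplicity $\binom{e}{k}-e$. The constant vector $\mathbf{1}$ is the eigenvector of $M$ for its top eigenvalue $k\binom{e-1}{k-1}$, as is also visible from the constant row sums of $M$.

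To finish I would combine $kJ$ and $-M$. Since $M$ is symmetric, $\mathbb{R}^{\partsek}$ splits as $\mathbb{R}\mathbf{1} \oplus \mathbf{1}^\perp$ with both summands $M$-invariant, and $J = \mathbf{1}\mathbf{1}^\transpose$ acts as $0$ on $\mathbf{1}^\perp$. On $\mathbf{1}^\perp$ we therefore have $\matqk = -M$, whose eigenvalues are $-\binom{e-2}{k-1}$ with multiplicity $e-1$ and $0$ with multiplicity $\binom{e}{k}-e$. On the line $\mathbb{R}\mathbf{1}$ we compute $\matqk \mathbf{1} = \bigl(k\binom{e}{k} - k\binom{e-1}{k-1}\bigr)\mathbf{1} = k\binom{e-1}{k}\mathbf{1}$, again by Pascal's rule, which gives both the remaining eigenvalue and the asserted statement about $\mathbf{1}$. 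Assembling the three contributions produces exactly the claimed list.

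The computations are all elementary, so I do not expect a serious obstacle; the only points requiring care are the multiplicity bookkeeping when passing between $N N^\transpose$ and $N^\transpose N$ (one must check that the $e$ eigenvalues of the former are all nonzero, which holds precisely because $1 \leq k \leq e-1$, so that $\binom{e-2}{k-1} > 0$), and the two applications of Pascal's rule that recast the top eigenvalue into the stated form $k\binom{e-1}{k}$.
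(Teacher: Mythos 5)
Your proof is correct, and it shares the paper's starting point — the incidence matrix $B$ (your $N$) and the decomposition $\matqk = kJ - B^\transpose B$, together with the computation $BB^\transpose = \binom{e-2}{k-2}J_e + \binom{e-2}{k-1}I_e$ — but the way you extract the spectrum is genuinely different. The paper follows the strongly-regular-graph recipe: it expands $\matqk^2$ and shows it equals $\gamma J - \binom{e-2}{k-1}\matqk$ for some scalar $\gamma$, deduces that any eigenvalue on $\mathbf{1}^\perp$ satisfies $\lambda^2 = -\binom{e-2}{k-1}\lambda$, and then pins down the two multiplicities from $\mathrm{tr}(\matqk)=0$ and the dimension count. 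You instead transfer the full spectrum of the small matrix $BB^\transpose$ to $B^\transpose B$ via the standard fact that $AA^\transpose$ and $A^\transpose A$ have the same nonzero eigenvalues with multiplicity, which hands you the multiplicities directly with no trace argument; the price is the (correctly noted) check that all $e$ eigenvalues of $BB^\transpose$ are nonzero, i.e.\ $\binom{e-2}{k-1}>0$ for $1\leq k\leq e-1$. Both routes are elementary and of comparable length; yours is arguably slightly more transparent about where the multiplicity $e-1$ comes from (it is the rank-$e$ image of $B^\transpose$ minus the line $\mathbb{R}\mathbf{1}$), while the paper's avoids any discussion of rank transfer. One cosmetic remark: for $k=1$ the two eigenvalues $k\binom{e-1}{k-1}$ and $\binom{e-2}{k-1}$ of $BB^\transpose$ coincide (both equal $1$, as $BB^\transpose = I_e$), so your phrase ``with multiplicity $1$'' should be read as indexing invariant subspaces rather than distinct values; the final assembly on $\mathbb{R}\mathbf{1}\oplus\mathbf{1}^\perp$ is unaffected.
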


\begin{proof}
We follow the computation of the eigenvalues for adjacency matrices of strongly regular graphs (see, for instance, \cite{godsil-royle}). For any $m, n \geq 1$, we denote by $J_{m,n}$ the $m \times n$ matrix filled with ones, and we define $J_n \coloneqq J_{n,n}$. We also write $I_n$ for the $n \times n$ identity matrix.

We first note that for any $E \in \partsek$ we have, 
\begin{align*}
\sum_{F \in \partsek} a_{E,F}
&= 
\sum_{F \in \partsek} (k-  \lvert E \cap F  \rvert)
\\
&=
\sum_{\ell = 0}^{k-1} (k - \ell)\binom{k}{\ell}\binom{e-k}{k-\ell}
\\
&=
\sum_{\ell = 1}^k \ell \binom{k}{\ell}\binom{e-k}{\ell}
\\
&=
\sum_{\ell = 1}^k k \binom{k-1}{\ell - 1}\binom{e - k}{e-k-\ell},
\end{align*}
thus, using Vandermonde's identity we obtain,
\begin{align*}
\sum_{F \in \partsek} a_{E,F}
&=
k \binom{e-1}{e-k-1}
\\
&= k \binom{e-1}{k},
\end{align*}
so that the constant vector $\mathbf{1}$ is an eigenvector of $\matqk$ with eigenvalue $k\binom{e-1}{k}$.

Now let $B = (b_{iE})_{\substack{1 \leq i \leq e \\ E \in \partsek}}$ be the $e\times \binom{e}{k}$ incidence matrix associated with the elements of $\{1,\dots,e\}$ and $\partsek$. In other words, for any $i \in \{1,\dots,e\}$ and $E \in \partsek$ we have $b_{iE} = \mathbf{1}_E(i)$, where $\mathbf{1}_E$ is the characteristic vector of $E \subseteq \{1,\dots,e\}$. The matrix $B^\transpose B$ is square with rows and columns indexed by $\partsek$ and for any $E, F \in \partsek$ we have
\[
(B^\transpose B)_{EF} = \sum_{i = 1}^e \mathbf{1}_E(i) \mathbf{1}_F(i) = \lvert E \cap F\rvert,
\]
so that $\matqk = k J_{\binom{e}{k}} - B^\transpose B$. We will now compute $\matqk^2$. First, note that the matrix $BB^\transpose$ is square of size $e$ and for any $i, j \in \{1,\dots,e\}$ we have
\[
(BB^\transpose)_{ij} = \sum_{E \in \partsek} \mathbf{1}_E(i) \mathbf{1}_E(j) = \begin{dcases}
\binom{e-1}{k-1}, &\text{if } i = j,
\\
\binom{e-2}{k-2}, &\text{if } i \neq j
\end{dcases}
\]
(with the convention $\binom{e-2}{k-2} = 0$ if $k = 1$),
so that
\[
BB^\transpose = \binom{e-2}{k-2} J_e + \left(\binom{e-1}{k-1} - \binom{e-2}{k-2}\right) I_e = \binom{e-2}{k-2} J_e +  \binom{e-2}{k-1} I_e.
\]
It is clear that $B^\transpose J_e$ is a scalar multiple of $J_{\binom{e}{k},e}$, similarly
\[
J_{\binom{e}{k}}^2,
\quad
J_{\binom{e}{k}} B^\transpose B \text{ and }
J_{\binom{e}{k},e} B,
\]
are scalar multiples of  $J_{\binom{e}{k}}$.
We deduce that
\begin{align}
\matqk^2
&=
\left(k J_{\binom{e}{k}} - B^\transpose B\right)^2,
\notag
\\
&=
\alpha J_{\binom{e}{k}} + B^\transpose (B B^\transpose) B
\notag
\\
&=
\beta J_{\binom{e}{k}} + \binom{e-2}{k-1} B^\transpose B
\notag
\\
&=
\gamma J_{\binom{e}{k}} - \binom{e-2}{k-1}\matqk,
\label{equation:matek2}
\end{align}
for some scalars $\alpha,\beta,\gamma$. Now let $x$ be any eigenvector of $\matqk$ orthogonal to $\mathbf{1}$ and let $\lambda$ be its associated eigenvalue. 
Since $\langle x, \mathbf{1}\rangle = 0$ we have $J_{\binom{e}{k}} x = 0$ and thus~\eqref{equation:matek2} gives
\[
\matqk^2 x = -\binom{e-2}{k-1} \matqk x,
\]
thus $\lambda^2 = - \binom{e-2}{k-1} \lambda$ thus $\lambda = 0$ or $\lambda = - \binom{e-2}{k-1}$. Now if $m$ (respectively $m_0$) denotes the multiplicity of the eigenvalue $-\binom{e-2}{k-1}$ (resp. $0$), what precedes proves that $k\binom{e-1}{k}$ is of multiplicity $1$ and we have
\begin{align*}
1 + m_0 + m &= \binom{e}{k},
\\
k\binom{e-1}{k} - m \binom{e-2}{k-1} &= \mathrm{tr}(\matqk) = 0,
\end{align*}
recalling that the diagonal entries of $\matqk$ are $0$. By Lemma~\ref{lemma:binomial_coefficients} we find that $m = k \binom{e-1}{k} / \binom{e-2}{k-1} = e-1$ and $m_0 = \binom{e}{k} - e$, which concludes the proof.
\end{proof}

\begin{proposition}
\label{proposition:upper_bonud_qek}
Let $k \in \{1,\dots,e-1\}$.
For any $x \in \mathbb{R}^{\partsek}$ we have
\[
\qek(x) \leq \frac{\lVert x \rVert_1^2}{2e}k(e-k).
\]
\end{proposition}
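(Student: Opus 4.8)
The plan is to exploit the complete spectral description of the symmetric matrix $\matqk$ given by Lemma~\ref{lemma:spectra_Aeq}, and to bound the quadratic form $\qek(x) = \frac{1}{2}\langle x, \matqk x\rangle$ by its component along the top eigenspace. The decisive observation is that, apart from the single positive eigenvalue $k\binom{e-1}{k}$, every eigenvalue of $\matqk$ is non-positive (namely $0$ and $-\binom{e-2}{k-1} \leq 0$), and that the top eigenspace is spanned by the constant vector $\mathbf{1} \in \mathbb{R}^{\partsek}$. This reduces the desired upper bound to controlling the $\mathbf{1}$-component of $x$.

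First I would decompose $x = t\mathbf{1} + x^\perp$ with $\langle x^\perp, \mathbf{1}\rangle = 0$ and $t = \langle x, \mathbf{1}\rangle / \lVert\mathbf{1}\rVert^2$. Since $\mathbf{1}$ is an eigenvector of $\matqk$ orthogonal to $x^\perp$, the spectral theorem gives
\[
\langle x, \matqk x\rangle = k\binom{e-1}{k}\, t^2\lVert\mathbf{1}\rVert^2 + \langle x^\perp, \matqk x^\perp\rangle .
\]
As $x^\perp$ lies in the span of the eigenvectors for the eigenvalues $0$ and $-\binom{e-2}{k-1}$, all non-positive, the second summand is $\leq 0$, whence
\[
\langle x, \matqk x\rangle \leq k\binom{e-1}{k}\,\frac{\langle x, \mathbf{1}\rangle^2}{\lVert\mathbf{1}\rVert^2}.
\]

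Next I would estimate the two remaining quantities. The norm $\lVert\mathbf{1}\rVert^2$ is just the dimension $\binom{e}{k}$ of $\mathbb{R}^{\partsek}$, and the crucial step is to control $\langle x, \mathbf{1}\rangle = \sum_{E \in \partsek} x_E$ by the $1$-norm rather than the Euclidean norm: the triangle inequality gives $\lvert\langle x, \mathbf{1}\rangle\rvert \leq \sum_{E}\lvert x_E\rvert = \lVert x\rVert_1$, so $\langle x, \mathbf{1}\rangle^2 \leq \lVert x\rVert_1^2$. Substituting and simplifying the binomial ratio via $\binom{e-1}{k}/\binom{e}{k} = (e-k)/e$ yields
\[
\qek(x) = \tfrac12\langle x, \matqk x\rangle \leq \tfrac12 k\binom{e-1}{k}\frac{\lVert x\rVert_1^2}{\binom{e}{k}} = \frac{\lVert x\rVert_1^2}{2e}k(e-k),
\]
which is the claimed inequality.

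The argument is essentially routine once Lemma~\ref{lemma:spectra_Aeq} is available, and I expect no genuine obstacle. The only point requiring care is that the optimisation governing $\bestboundcoreblock$ through~\eqref{equation:bestboundcoreblock_maxmax} is constrained by the $1$-norm, not the Euclidean norm; one therefore has to pass from the spectral (Euclidean) estimate to a $1$-norm estimate, which is precisely what the elementary bound $\lvert\langle x, \mathbf{1}\rangle\rvert \leq \lVert x\rVert_1$ accomplishes.
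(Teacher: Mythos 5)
Your proof is correct and follows exactly the paper's argument: decompose $x$ into its component along the top eigenvector $\mathbf{1}$ plus an orthogonal part, use Lemma~\ref{lemma:spectra_Aeq} to discard the non-positive contribution of the orthogonal part, bound $\lvert\langle x,\mathbf{1}\rangle\rvert$ by $\lVert x\rVert_1$, and simplify via $\binom{e}{k}=\frac{e}{e-k}\binom{e-1}{k}$. No differences worth noting.
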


\begin{proof}
For any $x \in \mathbb{R}^{\partsek}$ we have
\[
x = \frac{\langle x, \mathbf{1}\rangle}{\lVert \mathbf 1 \rVert^2} \mathbf{1} + x_\perp,
\]
where $x_\perp \in \mathbb{R}^{\partsek}$ is orthogonal to $\mathbf{1}$.  By Lemma~\ref{lemma:spectra_Aeq}, the vector $x_\perp$ is a sum of eigenvectors for $\matqk$ with non-positive eigenvalues, thus $\qek(x_\perp) \leq 0$ and:
\[
\qek(x) = \frac{\langle x, \mathbf{1}\rangle^2}{\lVert \mathbf{1} \rVert^4} \qek(\mathbf{1}) + \qek(x_\perp) \leq \frac{\langle x, \mathbf{1}\rangle^2}{\lVert \mathbf{1} \rVert^4} \qek(\mathbf{1}) .
\]
Now $\lvert \langle x, \mathbf{1}\rangle\rvert \leq \lVert x \rVert_1$ by the triangle inequality and, again by Lemma~\ref{lemma:spectra_Aeq},
\[
\qek(\mathbf{1}) = \frac{1}{2}\langle \mathbf{1},\matqk\mathbf{1}\rangle = \frac{k}{2}\binom{e-1}{k} \lVert \mathbf 1 \rVert^2.
\]
Hence, we have
\[
\qek(x) \leq \frac{\lVert x \rVert_1^2}{\lVert \mathbf{1}\rVert^2} \frac{k}{2}\binom{e-1}{k}
=\frac{\lVert x \rVert_1^2}{\binom{e}{k}} \frac{k}{2}\binom{e-1}{k}.
\]
We obtain the announced result since $\frac{\binom{e-1}{k}}{\binom{e}{k}} = \frac{e-k}{e}$ by Lemma~\ref{lemma:binomial_coefficients}.
\end{proof}

\begin{corollary}
\label{corollary:sharpest_upper_bound_bestboundcoreblock}
We have
\[
\bestboundcoreblock \leq \frac{r^2}{2e}\left\lfloor\frac{e^2}{4}\right\rfloor.
\]
\end{corollary}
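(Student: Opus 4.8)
The plan is to insert the pointwise bound of Proposition~\ref{proposition:upper_bonud_qek} directly into the reduced expression~\eqref{equation:bestboundcoreblock_maxmax} for $\bestboundcoreblock$. Indeed, for any $k \in \{1,\dots,e-1\}$ and any $x \in \N^{\partsek}$ with $\lVert x\rVert_1 = r$, Proposition~\ref{proposition:upper_bonud_qek} gives $\qek(x) \leq \frac{r^2}{2e}k(e-k)$, a bound that no longer depends on $x$. Taking the maximum over such $x$ and then over $k \in \{1,\dots,\lfloor\frac{e}{2}\rfloor\}$, equation~\eqref{equation:bestboundcoreblock_maxmax} yields
\[
\bestboundcoreblock \leq \frac{r^2}{2e}\max_{1 \leq k \leq \lfloor\frac{e}{2}\rfloor} k(e-k).
\]

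So the only remaining task is to compute the integer maximum of $k \mapsto k(e-k)$ on the admissible range. First I would note that this parabola in $k$ is increasing on $\{1,\dots,\lfloor\frac{e}{2}\rfloor\}$, so its maximum there is attained at $k = \lfloor\frac{e}{2}\rfloor$, where it equals $\lfloor\frac{e}{2}\rfloor(e - \lfloor\frac{e}{2}\rfloor) = \lfloor\frac{e}{2}\rfloor\lceil\frac{e}{2}\rceil$. It then remains to check the elementary identity $\lfloor\frac{e}{2}\rfloor\lceil\frac{e}{2}\rceil = \lfloor\frac{e^2}{4}\rfloor$, which one verifies by splitting into the two cases $e$ even and $e$ odd. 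Substituting gives exactly the claimed bound $\bestboundcoreblock \leq \frac{r^2}{2e}\lfloor\frac{e^2}{4}\rfloor$.

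There is essentially no obstacle here: all the genuine work has already been carried out in Proposition~\ref{proposition:upper_bonud_qek} (via the spectrum of $\matqk$ computed in Lemma~\ref{lemma:spectra_Aeq}), and the present statement is a clean corollary obtained by optimising the resulting elementary bound over the cardinality parameter $k$. The only minor point to watch is that the maximising index $k = \lfloor\frac{e}{2}\rfloor$ indeed lies in the admissible range $\{1,\dots,\lfloor\frac{e}{2}\rfloor\}$, which holds precisely because $e \geq 2$.
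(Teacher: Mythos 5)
Your proof is correct and follows the paper's argument essentially verbatim: both insert the bound $\qek(x) \leq \frac{r^2}{2e}k(e-k)$ from Proposition~\ref{proposition:upper_bonud_qek} into~\eqref{equation:bestboundcoreblock_maxmax} and then use $k(e-k) \leq \bigl\lfloor\frac{e^2}{4}\bigr\rfloor$. The only cosmetic difference is that you locate the maximiser $k = \lfloor\frac{e}{2}\rfloor$ explicitly, while the paper simply cites the inequality $k(e-k)\leq\lfloor\frac{e^2}{4}\rfloor$ from the proof of Proposition~\ref{proposition:bound_e=2}.
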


\begin{proof}
For any $k \in \{1,\dots,e-1\}$, by Proposition~\ref{proposition:upper_bonud_qek} we know that $\qek(x) \leq \frac{r^2}{2e}k(e-k)$ if $\lVert x\rVert_1 = r$. By~\eqref{equation:bestboundcoreblock_maxmax} this concludes the proof since $k(e-k) \leq \bigl\lfloor\frac{e^2}{4}\bigr\rfloor$ by Lemma~\ref{lemma:binomial_coefficients}\ref{item:k(e-k)}.
\end{proof}

\begin{definition}
\label{definition:idealbound}
For any $r \geq 0$ and $e \geq 1$ we define $\idealbound{r}{e} \coloneqq \Bigl\lfloor\frac{r^2}{2e}\bigl\lfloor\frac{e^2}{4}\bigr\rfloor\Bigr\rfloor$.
\end{definition}

Combining Corollaries~\ref{corollary:lower_bound_bestboundcoreblock} and~\ref{corollary:sharpest_upper_bound_bestboundcoreblock} yields the following final bounds.

\begin{corollary}
\label{corollary:encadrement_final}
For any $e, r \geq 2$ we have
\[
\left\lfloor\frac{e}{2}\right\rfloor \left\lfloor\frac{r^2}{4}\right\rfloor
\leq
\bestboundcoreblock
\leq 
\idealbound r e \leq \frac{er^2}{8}
 \]
 \end{corollary}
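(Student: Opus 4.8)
The plan is to read off all three inequalities from the two preceding corollaries together with elementary estimates on the floor function, so that the argument is essentially bookkeeping rather than new mathematics. First, the left-hand inequality $\left\lfloor\frac{e}{2}\right\rfloor\left\lfloor\frac{r^2}{4}\right\rfloor \leq \bestboundcoreblock$ is nothing but the statement of Corollary~\ref{corollary:lower_bound_bestboundcoreblock}, so there is nothing further to do there.

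For the middle inequality $\bestboundcoreblock \leq \idealbound r e$ I would first observe that $\bestboundcoreblock$ is a non-negative integer: it is defined as the supremum of the integer-valued weights $\wmc(\blam)$ taken over all $\mcharge$-reduced $e$-multicores, and by Theorem~\ref{theorem:bound_core_blocks} this supremum is finite, hence attained, and therefore an integer. Consequently the upper bound $\bestboundcoreblock \leq \frac{r^2}{2e}\lfloor\frac{e^2}{4}\rfloor$ of Corollary~\ref{corollary:sharpest_upper_bound_bestboundcoreblock} may be improved by rounding down the right-hand side, giving $\bestboundcoreblock \leq \lfloor\frac{r^2}{2e}\lfloor\frac{e^2}{4}\rfloor\rfloor = \idealbound r e$ by Definition~\ref{definition:idealbound}.

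For the right-hand inequality $\idealbound r e \leq \frac{er^2}{8}$ I would use the two trivial estimates $\lfloor t\rfloor \leq t$ applied to the outer floor and $\lfloor\frac{e^2}{4}\rfloor \leq \frac{e^2}{4}$ applied to the inner floor, which together yield $\idealbound r e \leq \frac{r^2}{2e}\cdot\frac{e^2}{4} = \frac{er^2}{8}$.

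The only point requiring a moment's care, and hence the single place that is more than a direct citation, is the integrality of $\bestboundcoreblock$ used to pass to the outer floor in the middle step; everything else is either a verbatim quotation of Corollary~\ref{corollary:lower_bound_bestboundcoreblock} and Corollary~\ref{corollary:sharpest_upper_bound_bestboundcoreblock} or a monotonicity property of the floor function. I therefore expect no genuine obstacle in carrying out the argument.
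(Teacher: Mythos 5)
Your proposal is correct and is essentially the paper's own (one-line) proof, which simply combines Corollaries~\ref{corollary:lower_bound_bestboundcoreblock} and~\ref{corollary:sharpest_upper_bound_bestboundcoreblock}. The one detail you spell out that the paper leaves implicit — that $\bestboundcoreblock$ is an integer (a finite supremum of a non-empty set of non-negative integers, hence attained), so the bound of Corollary~\ref{corollary:sharpest_upper_bound_bestboundcoreblock} can be rounded down to $\idealbound r e$ — is exactly the right justification.
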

 
We have the following particular case.

\begin{proposition}
\label{proposition:bbcb_both_even}
Assume that $e, r \geq 2$ are both even. Then
\[
\bestboundcoreblock = \idealbound r e = \frac{er^2}{8}.
\]
\end{proposition}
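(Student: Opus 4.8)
The plan is to invoke Corollary~\ref{corollary:encadrement_final} and observe that the parity hypotheses force its lower and upper bounds to coincide, so the squeeze is exact. First I would record the effect of the evenness assumptions on the two floor functions appearing in the lower bound. Since $e$ is even we have $\lfloor\frac{e}{2}\rfloor = \frac{e}{2}$, and since $r$ is even we have $r^2 \equiv 0 \pmod 4$, whence $\lfloor\frac{r^2}{4}\rfloor = \frac{r^2}{4}$. Consequently the lower bound of Corollary~\ref{corollary:encadrement_final} becomes
\[
\left\lfloor\frac{e}{2}\right\rfloor\left\lfloor\frac{r^2}{4}\right\rfloor = \frac{e}{2}\cdot\frac{r^2}{4} = \frac{er^2}{8}.
\]

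Next I would feed this into the full chain of inequalities supplied by that same corollary, namely $\lfloor\frac{e}{2}\rfloor\lfloor\frac{r^2}{4}\rfloor \leq \bestboundcoreblock \leq \idealbound r e \leq \frac{er^2}{8}$. With the left-hand term now equal to $\frac{er^2}{8}$, the chain reads
\[
\frac{er^2}{8} \leq \bestboundcoreblock \leq \idealbound r e \leq \frac{er^2}{8},
\]
which forces every term to equal $\frac{er^2}{8}$. In particular $\bestboundcoreblock = \idealbound r e = \frac{er^2}{8}$, as claimed.

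I expect no genuine obstacle here: the statement is an immediate consequence of the sharp two-sided estimate already established in Corollary~\ref{corollary:encadrement_final}, and the only point requiring verification is the elementary observation that both floors act trivially under the evenness hypotheses. If desired, one could instead confirm the value of $\idealbound r e$ directly from Definition~\ref{definition:idealbound}: writing $e = 2\ell$ and $r = 2m$ gives $\lfloor\frac{e^2}{4}\rfloor = \ell^2$ and $\frac{r^2}{2e}\ell^2 = m^2\ell \in \Z$, so the outer floor is also trivial and $\idealbound r e = m^2\ell = \frac{er^2}{8}$; but this computation is subsumed by the squeeze above and need not be carried out separately.
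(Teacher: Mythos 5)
Your proof is correct and is exactly the argument the paper intends: Proposition~\ref{proposition:bbcb_both_even} is stated immediately after Corollary~\ref{corollary:encadrement_final} as a "particular case," with the proof left implicit precisely because the evenness of $e$ and $r$ makes the lower bound $\lfloor\frac{e}{2}\rfloor\lfloor\frac{r^2}{4}\rfloor$ coincide with the upper bound $\frac{er^2}{8}$, forcing the squeeze. Nothing is missing.
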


We end this part by the following result, which will be useful later.

\begin{lemma}
\label{lemma:idealbound_lambda}
For any $r \geq 0$ and $e \geq 1$ we have:
\[\idealbound r e =  \left\lfloor \frac{r^2}{2\binom{e}{k}}\lambda\right\rfloor,\]
where $k \coloneqq \lfloor\frac{e}{2}\rfloor$ and $\lambda \coloneqq k\binom{e-1}{k}$ (the largest eigenvalue of $\matqk$).
\end{lemma}

\begin{proof}
By Lemma~\ref{lemma:binomial_coefficients}\ref{item:binomial} we have $\frac{\binom{e-1}{k}}{\binom{e}{k}} = \frac{e-k}{e}$. Thus, we have:
\begin{align*}
\frac{r^2}{2\binom{e}{k}}\lambda
&=
\frac{r^2}{2\binom{e}{k}}k\binom{e-1}{k}.
\\
&=
\frac{r^2 }{2e}k(e-k)
\\
&=
\frac{r^2}{2e}\left\lfloor\frac{e^2}{4}\right\rfloor,
\end{align*}
where the last equality follows from Lemma~\ref{lemma:binomial_coefficients}\ref{item:k(e-k)}, and this concludes the proof.
\end{proof}

\subsubsection{More values for small parameters}
\label{subsubsection:more_values}

Let $r, e \geq 2$.
We now aim to compute the value of $\bestboundcoreblock$ for $r \in \{3,4\}$ or $e \in \{3,\dots,6\}$. For these values, we will see that, except for $r = 3$, we have $\bestboundcoreblock = \idealbound r e$ (see Proposition~\ref{proposition:upper_bound_exact_small_cases}).

\paragraph{With small \texorpdfstring{$r$}{r}}

We begin by the following result, which will be in fact interesting only in the case $r = 3$.

\begin{lemma}
\label{lemma:borne_r_geq_3}
If $r \geq 3$ then $\bestboundcoreblock \leq \lfloor \frac{(r-1)re}{6}\rfloor$.
\end{lemma}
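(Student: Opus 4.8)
The plan is to reduce the statement to an elementary optimisation over a degree sequence, exploiting the combinatorial description of the weight. By Proposition~\ref{proposition:boundcoreblock_maxqr} (and its proof), $\bestboundcoreblock$ is the maximum of $\w(\mate)$ as $\mate$ ranges over $e\times r$ binary matrices, equivalently over $r$-tuples of subsets $E_1,\dots,E_r \subseteq \{1,\dots,e\}$, where by~\eqref{equation:weight_binary_matrix}
\[
\w(\mate) = \sum_{1\le j<k\le r}\bigl[\min(\lvert E_j\rvert,\lvert E_k\rvert) - \lvert E_j\cap E_k\rvert\bigr].
\]
So it suffices to bound this right-hand side for an arbitrary tuple. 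To that end I would introduce the degree sequence $d_i \coloneqq \#\{j : i \in E_j\} \in \{0,\dots,r\}$ for $i \in \{1,\dots,e\}$, recording how many of the subsets contain the element $i$.

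The key step is to establish the bound $\w(\mate) \le \frac12\sum_{i=1}^{e} d_i(r-d_i)$. Bounding each $\min(\lvert E_j\rvert,\lvert E_k\rvert)$ by the average $\frac12(\lvert E_j\rvert+\lvert E_k\rvert)$ gives
\[
\min(\lvert E_j\rvert,\lvert E_k\rvert) - \lvert E_j\cap E_k\rvert \le \tfrac12\bigl(\lvert E_j\rvert+\lvert E_k\rvert\bigr) - \lvert E_j\cap E_k\rvert = \tfrac12\lvert E_j \triangle E_k\rvert,
\]
and summing over $j<k$ turns the right-hand side into $\frac12\sum_i d_i(r-d_i)$, since each element $i$ lies in exactly one of $E_j,E_k$ for precisely $d_i(r-d_i)$ pairs $\{j,k\}$. (Alternatively one may first invoke Proposition~\ref{proposition:max_w_equal_size} to assume all $\lvert E_j\rvert$ equal, in which case the $\min$ becomes an equality and the same identity drops out of Lemma~\ref{lemma:weight_sum_r=2}.)

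It then remains to bound each summand termwise. For $d \in \{0,\dots,r\}$ the quantity $d(r-d)$ is at most its integer maximum $\bigl\lfloor\frac{r^2}{4}\bigr\rfloor$, and an elementary check (splitting into $r=2m$ and $r=2m+1$) shows $\bigl\lfloor\frac{r^2}{4}\bigr\rfloor \le \frac{(r-1)r}{3}$ exactly when $r \ge 3$, with equality at $r = 3,4$. Hence
\[
\w(\mate) \le \frac12\sum_{i=1}^{e} d_i(r-d_i) \le \frac{e}{2}\cdot\frac{(r-1)r}{3} = \frac{(r-1)re}{6}.
\]
Since $\w(\mate)$ is an integer, this forces $\w(\mate) \le \bigl\lfloor\frac{(r-1)re}{6}\bigr\rfloor$, and taking the maximum over all tuples yields $\bestboundcoreblock \le \bigl\lfloor\frac{(r-1)re}{6}\bigr\rfloor$.

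The only genuine content is the scalar inequality $\bigl\lfloor\frac{r^2}{4}\bigr\rfloor \le \frac{(r-1)r}{3}$, so I expect the main (modest) obstacle to be handling its parity cases cleanly: writing $r=2m$ or $r=2m+1$ reduces it to $m \ge 2$ and $m \ge 1$ respectively, which is precisely where the hypothesis $r \ge 3$ enters (the inequality fails at $r=2$). I would also note that, because the termwise maximum $\bigl\lfloor\frac{r^2}{4}\bigr\rfloor$ is attained, this bound genuinely sharpens Corollary~\ref{corollary:sharpest_upper_bound_bestboundcoreblock} only when it is strictly smaller than $\idealbound r e$, which happens essentially for $r = 3$, in agreement with the remark preceding the lemma that the result is interesting only in that case.
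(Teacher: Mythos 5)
Your proof is correct, but it takes a genuinely different route from the paper's. The paper works with triples: for each $1 \leq j < k < l \leq r$ it shows $a_{jk} + a_{kl} + a_{jl} \leq e$ (bounding each $\min$ by the corresponding set sizes and then using inclusion--exclusion on three sets, $\lvert E_j\rvert + \lvert E_k\rvert + \lvert E_l\rvert - \lvert E_j\cap E_k\rvert - \lvert E_k\cap E_l\rvert - \lvert E_j\cap E_l\rvert = \lvert E_j\cup E_k\cup E_l\rvert - \lvert E_j\cap E_k\cap E_l\rvert \leq e$), and then double-counts over triples, each pair lying in $r-2$ of them, to get $(r-2)\sum_{j<k}a_{jk}\leq e\binom{r}{3}$. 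You instead stay at the level of pairs, bounding $\w(E_j,E_k)\leq\frac12\lvert E_j\triangle E_k\rvert$ and double-counting over \emph{elements} via the degrees $d_i$, which yields the intermediate bound $\w(\mate)\leq\frac12\sum_i d_i(r-d_i)\leq\frac{e}{2}\lfloor\frac{r^2}{4}\rfloor$, after which the hypothesis $r\geq 3$ enters only through the scalar inequality $\lfloor\frac{r^2}{4}\rfloor\leq\frac{r(r-1)}{3}$; in the paper it enters through the mere existence of triples. Both arguments are complete and elementary. It is worth noting that your intermediate bound $\frac{e}{2}\lfloor\frac{r^2}{4}\rfloor$ is actually \emph{sharper} than $\frac{(r-1)re}{6}$ for $r\geq 5$ (and coincides with it at $r\in\{3,4\}$): for instance at $r=e=5$ it gives $15=\idealbound{5}{5}$ directly, matching the value established in Proposition~\ref{proposition:bound_e=5}, whereas the lemma's bound only gives $16$. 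Since the lemma is only exploited at $r=3$ this makes no difference here, but your degree-sequence estimate is a genuinely stronger statement that the paper does not record.
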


\begin{proof}
Let $E_1,\dots,E_r \subseteq \{1,\dots,e\}$ and define $a_{jk} \coloneqq \w(E_j,E_k) = \min(\lvert E_j \rvert, \lvert E_k\rvert) - \lvert E_j \cap E_k\rvert$ for all $1 \leq j < k \leq r$. By~\eqref{equation:weight_binary_matrix} and since $a_{jk} \in \N$, it suffices to show that
\[
\sum_{1 \leq j < k \leq r} a_{jk} \leq \frac{(r-1)re}{6}.
\]
Let $1 \leq j < k < l \leq r$; note that this is possible since $r \geq 3$. We have
\begin{align*}
a_{jk} + a_{kl} + a_{jl}
&=
\begin{multlined}[t]
\min(\lvert E_j \rvert, \lvert E_k\rvert)
+
\min(\lvert E_k \rvert, \lvert E_l\rvert) 
+
\min(\lvert E_j \rvert, \lvert E_l\rvert) 
\\
 - \lvert E_j \cap E_k\rvert
- \lvert E_k \cap E_l\rvert - \lvert E_j \cap E_l\rvert
\end{multlined}
\\
&\leq
\lvert E_j \rvert + \lvert E_k \rvert + \lvert E_l \rvert - \lvert E_j \cap E_k\rvert
- \lvert E_k \cap E_l\rvert - \lvert E_j \cap E_l\rvert
\\
&\leq
\lvert E_j \cup E_k \cup E_l \rvert - \lvert E_j \cap E_k \cap E_l \rvert
\\
&\leq
\lvert E_j \cup E_k \cup E_l \rvert
\\
&\leq e.
\end{align*}
Now, in the sum $\sum_{1 \leq j < k < l \leq r} (a_{jk}+a_{kl} + a_{jl})$ each couple $(j,k)$ for $1 \leq j < k \leq r$ appears exactly $r-2$ times, thus
\[
\sum_{1 \leq j < k < l \leq r} (a_{jk}+a_{kl} + a_{jl}) = (r-2) \sum_{1 \leq j < k \leq r} a_{jk}.
\]
Thus, we find that
\[
(r-2)\sum_{1 \leq j < k \leq r} a_{jk} \leq e\binom{r}{3},
\]
and thus
\[
\sum_{1 \leq j < k \leq r} a_{jk} \leq e\frac{r(r-1)}{6},
\]
whence the result.
\end{proof}

\begin{proposition}[Case $r = 3$]
\label{proposition:bound_r=3}
We have $\bestboundcoreblock[3] = e$.
\end{proposition}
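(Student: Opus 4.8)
The plan is to establish $\bestboundcoreblock[3] = e$ by squeezing it between matching lower and upper bounds. The upper bound is immediate from Lemma~\ref{lemma:borne_r_geq_3}: taking $r = 3$ there gives $\bestboundcoreblock[3] \leq \lfloor\frac{2\cdot 3\cdot e}{6}\rfloor = e$. So the whole content is the reverse inequality $\bestboundcoreblock[3] \geq e$, that is, exhibiting for every $e \geq 2$ three subsets of $\{1,\dots,e\}$ whose associated weight equals $e$.

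First I would record two base cases. For $e = 2$, Proposition~\ref{proposition:bound_e=2} gives $\bestboundcoreblockgen{3}{2} = \lfloor 9/4\rfloor = 2$. For $e = 3$, I would exhibit the triple $E_1 = \{1,2\}$, $E_2 = \{2,3\}$, $E_3 = \{1,3\}$: each has size $2$ and each pairwise intersection has size $1$, so by~\eqref{equation:weight_binary_matrix} the corresponding weight $\w(\mate)$ equals $(2-1)+(2-1)+(2-1) = 3$, whence $\bestboundcoreblockgen{3}{3} \geq 3$ (and in fact $=3$ by the upper bound).

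Next I would propagate these base cases to all $e$ using the superadditivity of the sequence $(\bestboundcoreblock[3])_{e \geq 1}$ from Lemma~\ref{lemma:superadditive}. Since $2$ and $3$ generate every integer $\geq 2$, I can write any $e \geq 2$ as $e = 2a + 3b$ with $a, b \in \N$; applying Lemma~\ref{lemma:superadditive} repeatedly then yields $\bestboundcoreblock[3] \geq a\,\bestboundcoreblockgen{3}{2} + b\,\bestboundcoreblockgen{3}{3} = 2a + 3b = e$. Combined with the upper bound this gives $\bestboundcoreblock[3] = e$.

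The step that needs care is the lower bound for odd $e$. The generic lower bound of Corollary~\ref{corollary:lower_bound_bestboundcoreblock} only gives $\bestboundcoreblock[3] \geq 2\lfloor e/2\rfloor$, which equals $e-1$ when $e$ is odd, so it falls short by one; likewise the generic upper bound of Corollary~\ref{corollary:sharpest_upper_bound_bestboundcoreblock} is too weak here (it is of order $9e/8$), which is exactly why the sharper Lemma~\ref{lemma:borne_r_geq_3} is needed above. The role of the explicit $e = 3$ configuration together with superadditivity is precisely to recover the missing unit in the odd case. One could instead give a direct equal-size construction depending on the residue of $e$ modulo $3$ (for instance, partitioning $\{1,\dots,e\}$ as evenly as possible into three blocks $A,B,C$ and setting $E_1 = A\cup B$, $E_2 = B\cup C$, $E_3 = C\cup A$ when $3 \mid e$, with a small adjustment otherwise), but routing through superadditivity avoids the case analysis.
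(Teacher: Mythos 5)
Your proof is correct and follows essentially the same route as the paper: the upper bound comes from Lemma~\ref{lemma:borne_r_geq_3}, and the lower bound comes from superadditivity (Lemma~\ref{lemma:superadditive}) applied to the base cases $e=2$ (via Proposition~\ref{proposition:bound_e=2}) and $e=3$ (via an explicit triple of weight $3$). The only cosmetic differences are that the paper writes the decomposition as $e = 2e'+3$ for odd $e$ rather than $e = 2a+3b$ in general, and uses the witness $E_i = \{i\}$ for $e=3$, which is the complementary (hence equal-weight, by Lemma~\ref{lemma:weight_complement}) configuration to yours.
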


\begin{proof}
By Corollary~\ref{corollary:lower_bound_bestboundcoreblock} and Lemma~\ref{lemma:borne_r_geq_3} we have
\begin{equation}
\label{equation:bounds_r=3}
2\left\lfloor\frac{e}{2}\right\rfloor \leq \bestboundcoreblock[3] \leq e,
\end{equation}
in particular $\bestboundcoreblock[3] = e$ if $e$ is even. (Note that if we use Corollary~\ref{corollary:lower_bound_bestboundcoreblock} instead of Lemma~\ref{lemma:borne_r_geq_3} 
we only obtain $\bestboundcoreblock \leq \frac{9e}{8}$.)  Thus, we can now assume that $e$ is odd and let us write $e = 2e' + 3$ with $e' \geq 0$. We first assume that $e' \geq 1$. By Lemma~\ref{lemma:superadditive} we have
\[
\bestboundcoreblockgen{3}{2e'} + \bestboundcoreblockgen{3}{3}  \leq \bestboundcoreblockgen{3}{e}.
\]
Since $2e'$ is even we have $\bestboundcoreblockgen{3}{2e'} = 2e'$, thus  we obtain, using~\eqref{equation:bounds_r=3} for the right inequality,
\[
e-3 + \bestboundcoreblockgen{3}{3} \leq \bestboundcoreblockgen{3}{e} \leq e.
\]
Thus, to conclude it suffices to prove that $\bestboundcoreblockgen{3}{3} = 3$. We already know by~\eqref{equation:bounds_r=3} that $\bestboundcoreblockgen{3}{3} \leq 3$, thus it suffices to find $E_1,E_2,E_3 \subseteq \{1,2,3\}$ such that $\w(E_1,E_2,E_3) = 3$. Using~\eqref{equation:weight_binary_matrix}, this equality is satisfied with $E_i \coloneqq \{i\}$ for any $i \in \{1,2,3\}$ and this concludes the proof.
\end{proof}

\begin{proposition}[Case $r = 4$]
\label{proposition:bound_r=4}
We have:
\[
\bestboundcoreblock[4] = \begin{cases}
2e,&\text{if } e \text{ is even},
\\
2e-1, &\text{if } e \text{ is odd}.
\end{cases}
\]
\end{proposition}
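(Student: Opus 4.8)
The plan is to treat the two parities separately and to reduce almost everything to bounds already established. For $e$ even the result is immediate: since $r=4$ is also even, Proposition~\ref{proposition:bbcb_both_even} applies and gives $\bestboundcoreblock[4]=\idealbound 4 e=\frac{16e}{8}=2e$. So the entire content of the proposition lies in the odd case, where the strategy is to squeeze $\bestboundcoreblock[4]$ between matching bounds.

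For the upper bound when $e$ is odd I would invoke Corollary~\ref{corollary:sharpest_upper_bound_bestboundcoreblock}, which gives $\bestboundcoreblock[4]\le\idealbound 4 e$. A direct computation with $\lfloor e^2/4\rfloor=(e^2-1)/4$ shows
\[
\idealbound 4 e=\left\lfloor\frac{16}{2e}\cdot\frac{e^2-1}{4}\right\rfloor=\left\lfloor 2e-\frac{2}{e}\right\rfloor,
\]
which equals $2e-1$ as soon as $0<\tfrac{2}{e}<1$, i.e. for every odd $e\ge 3$ (the only odd $e\ge 2$). Since $\bestboundcoreblock[4]$ is an integer, this yields $\bestboundcoreblock[4]\le 2e-1$.

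The main obstacle is the lower bound in the odd case: the generic estimate of Corollary~\ref{corollary:lower_bound_bestboundcoreblock} only produces $\lfloor e/2\rfloor\lfloor r^2/4\rfloor=4\cdot\frac{e-1}{2}=2e-2$, which is off by one, so I must exhibit an explicit family $E_1,E_2,E_3,E_4$ with $\w(\mate)=2e-1$. Guided by Proposition~\ref{proposition:max_w_equal_size}, I would search among families of common size $s\coloneqq\frac{e-1}{2}$, where double-counting turns~\eqref{equation:weight_binary_matrix} into
\[
\w(\mate)=\sum_{1\le j<l\le 4}\bigl(s-\lvert E_j\cap E_l\rvert\bigr)=6s-\sum_{i=1}^e\binom{d_i}{2},
\]
with $d_i\coloneqq\#\{j:i\in E_j\}$ the multiplicity of $i$. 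Minimising $\sum_i\binom{d_i}{2}$ under the constraint $\sum_i d_i=4s=2e-2$ suggests a covering in which exactly two elements have $d_i=1$ and the remaining $e-2$ have $d_i=2$, giving $\sum_i\binom{d_i}{2}=e-2$ and hence $\w(\mate)=6s-(e-2)=2e-1$.

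It then remains to realise such a degree sequence, which is the only genuinely constructive step. I would take
\[
E_1=\{1,\dots,s\},\quad E_2=\{s+1,\dots,2s\},\quad E_3=\{1,\dots,s-1\}\cup\{e\},\quad E_4=\{s,\dots,2s-2\}\cup\{e\},
\]
each of size $s$, and check the multiplicities directly: the elements $1,\dots,2s-2$ and $2s+1=e$ each lie in exactly two of the four sets, while $2s-1$ and $2s$ lie in exactly one. Thus $\sum_i\binom{d_i}{2}=e-2$ and $\w(\mate)=2e-1$ by the identity above together with Lemma~\ref{lemma:weight_sum_r=2}. The verification is a short finite check, and the only point needing a word of care is the degenerate case $s=1$ (that is $e=3$), where $\{s,\dots,2s-2\}=\emptyset$ and $E_3=E_4=\{3\}$; one checks that the same count of two degree-$1$ and $e-2$ degree-$2$ elements persists, so the construction remains valid. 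Combining the two bounds gives $\bestboundcoreblock[4]=2e-1$ for odd $e$, completing the proof.
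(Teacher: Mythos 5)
Your proof is correct. The even case and the upper bound for odd $e$ (via Proposition~\ref{proposition:bbcb_both_even} and Corollary~\ref{corollary:sharpest_upper_bound_bestboundcoreblock}) coincide with the paper's argument, but your lower bound takes a genuinely different route. The paper handles odd $e$ by superadditivity (Lemma~\ref{lemma:superadditive}): it writes $\bestboundcoreblock[4] \geq \bestboundcoreblockgen{4}{e-3} + \bestboundcoreblockgen{4}{3} = 2e-6+\bestboundcoreblockgen{4}{3}$ using the even case for $e-3$, and then only needs the single explicit example $E_1=\{1\}$, $E_2=\{2\}$, $E_3=E_4=\{3\}$ showing $\bestboundcoreblockgen{4}{3}=5$. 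You instead exhibit a uniform extremal family of common size $s=\frac{e-1}{2}$ for every odd $e$, using the double-counting identity $\w(\mate)=6s-\sum_i\binom{d_i}{2}$, which is a correct consequence of~\eqref{equation:weight_binary_matrix} when all four sets have equal cardinality; I checked your degree sequence (exactly $e-2$ elements of multiplicity $2$ and two of multiplicity $1$, including the degenerate case $e=3$ where $E_3=E_4=\{3\}$) and it does give $\w(\mate)=2e-1$. The paper's route is shorter given the machinery already in place; yours is more self-contained, avoids Lemma~\ref{lemma:superadditive} entirely, and has the side benefit of producing an explicit witness for every odd $e$ with pairwise distinct sets when $e\geq 5$ (your $e=5$ instance is exactly the family the paper later has to construct by hand in the proof of Proposition~\ref{proposition:bound_e=5} to verify the $\{0,1\}$-entry hypothesis of Lemma~\ref{lemma:idealbound_antishift}).
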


\begin{proof}
If $e$ is even then we know by Proposition~\ref{proposition:bbcb_both_even} that $\bestboundcoreblock[4] = \frac{e4^2}{8} = 2e$. We thus now assume that $e$ is odd. By Corollary~\ref{corollary:sharpest_upper_bound_bestboundcoreblock} we obtain
\[
\bestboundcoreblock[4] \leq \frac{8}{e}\left\lfloor\frac{e^2}{4}\right\rfloor < \frac{8}{e}\frac{e^2}{4} = 2e,
\]
thus $\bestboundcoreblock[4] \leq 2e-1$. If $e > 3$, as in the proof of Proposition~\ref{proposition:bound_r=3}, using Lemma~\ref{lemma:superadditive} we have
\[
\bestboundcoreblockgen{4}{e-3} + \bestboundcoreblockgen{4}{3} \leq \bestboundcoreblock[4],
\]
thus, since $e-3$ is even,
\[
2e-6 + \bestboundcoreblockgen{4}{3} \leq \bestboundcoreblock[4].
\]
Hence, to conclude it suffices to prove that $\bestboundcoreblockgen{4}{3} = 5$. We have seen that $\bestboundcoreblockgen{4}{3} \leq 2\cdot 3 - 1 = 5$, thus it suffices to find $E_1,\dots,E_4 \subseteq \{1,\dots,e\}$ with $\w(E_1,\dots,E_4) = 5$. Setting $E_i \coloneqq \{i\}$ for all $i \in \{1,2,3\}$ and $E_4 \coloneqq E_3$, we have
\[
\w(E_j,E_k) = \begin{cases}
1,&\text{if } j \in \{1,2\},
\\
0, &\text{otherwise},
\end{cases}
\]
for any $1 \leq j < k \leq 4$, thus $\w(E_1,\dots,E_4) = \sum_{1 \leq j < k \leq 4} \w(E_j,E_k) = 3 + 2 = 5$. This concludes the proof.
\end{proof}

\paragraph{With small \texorpdfstring{$e$}{e}}

The next results will mainly consist in using the fact that $\mathbf{1}$ is an eigenvector of $\matqk$.

\begin{proposition}[Case $e = 3$]
\label{proposition:bound_e=3}
We have $\bestboundcoreblocke[3] = \bigl\lfloor\frac{r^2}{3}\bigr\rfloor$.
\end{proposition}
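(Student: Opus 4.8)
The plan is to use the reduction \eqref{equation:bestboundcoreblock_maxmax} to a single restricted quadratic form. Since $e = 3$ we have $\lfloor e/2\rfloor = 1$, so \eqref{equation:bestboundcoreblock_maxmax} collapses to
\[
\bestboundcoreblocke[3] = \max_{\substack{x \in \N^{\partsek[1]}\\ \lVert x\rVert_1 = r}} \qek[1](x),
\]
and I only have to analyse $\qek[1]$. The index set $\partsek[1]$ consists of the three singletons $\{1\},\{2\},\{3\}$, and for singletons $E,F$ Lemma~\ref{lemma:weight_subsets} gives $\w(E,F) = 1 - \lvert E \cap F\rvert = 1 - \delta_{E,F}$; hence $\matqk[1] = J_3 - I_3$, where $J_3$ is the all-ones $3\times 3$ matrix and $I_3$ the identity (the adjacency matrix of the complete graph $K_3$).

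First I would record the upper bound, which is in fact already free: by Corollary~\ref{corollary:sharpest_upper_bound_bestboundcoreblock} and Definition~\ref{definition:idealbound} we have $\bestboundcoreblocke[3] \le \idealbound r 3 = \bigl\lfloor \frac{r^2}{6}\lfloor \frac{9}{4}\rfloor\bigr\rfloor = \lfloor r^2/3\rfloor$, using $\lfloor 9/4\rfloor = 2$. So the entire content of the statement is the matching lower bound.

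For the lower bound I would note that, for $x = (x_1,x_2,x_3) \in \N^3$ with $x_1 + x_2 + x_3 = r$,
\[
\qek[1](x) = \tfrac12 \langle x, (J_3 - I_3)x\rangle = \tfrac12\bigl( r^2 - \lVert x\rVert^2\bigr),
\]
since $\langle x, J_3 x\rangle = (\sum_i x_i)^2 = r^2$ (recall $\lVert x\rVert_1 = \sum_i x_i$ as the entries are nonnegative). Thus maximising $\qek[1]$ amounts to minimising $x_1^2 + x_2^2 + x_3^2$ over nonnegative integers of fixed sum $r$, which is attained at the most balanced vector. Writing $r = 3q + s$ with $s \in \{0,1,2\}$, I take $s$ coordinates equal to $q+1$ and the remaining $3-s$ equal to $q$; equivalently I distribute the $r$ columns $E_1,\dots,E_r$ among the three singletons as evenly as possible. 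Then $\lVert x\rVert^2 = 3q^2 + s(2q+1)$ and $\qek[1](x) = \tfrac12\bigl(r^2 - 3q^2 - s(2q+1)\bigr)$, which exhibits a configuration realising the upper bound.

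The only delicate point (the ``main obstacle'', such as it is) will be the elementary verification that this balanced value equals exactly $\lfloor r^2/3\rfloor$ in each residue class $s \in \{0,1,2\}$; all three cases reduce to comparing $\tfrac12\bigl(r^2 - 3q^2 - s(2q+1)\bigr)$ with $\lfloor r^2/3\rfloor$ and agree. Alternatively one can bypass the quadratic form and argue directly: taking $a,b,c$ columns equal to $\{1\},\{2\},\{3\}$ with $a+b+c = r$, formula \eqref{equation:weight_binary_matrix} gives $\w(\mate) = ab + bc + ca = \tfrac12(r^2 - a^2 - b^2 - c^2)$, and one maximises over the balanced partition exactly as above to obtain $\lfloor r^2/3\rfloor$.
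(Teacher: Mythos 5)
Your proposal is correct and follows essentially the same route as the paper: the upper bound comes from Corollary~\ref{corollary:sharpest_upper_bound_bestboundcoreblock}, and the lower bound is realised by distributing the $r$ columns as evenly as possible among the three singletons in $\partsek[1]$. The only (cosmetic) difference is that the paper evaluates $\qek[1]$ at $a\mathbf{1}+h\mathbf{1}_E$ (with $r=3a+h$, $h\in\{-1,0,1\}$) via the eigenvector relation $\matqk[1]\mathbf{1}=2\,\mathbf{1}$ from Lemma~\ref{lemma:spectra_Aeq}, whereas you use the identity $\qek[1](x)=\tfrac12\bigl(r^2-\lVert x\rVert^2\bigr)$ coming from $\matqk[1]=J_3-I_3$; both computations check out.
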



\begin{proof}
By Corollary~\ref{corollary:sharpest_upper_bound_bestboundcoreblock} we have $\bestboundcoreblocke[3] \leq \frac{r^2}{6}\bigl\lfloor\frac{9}{4}\bigr\rfloor = \frac{r^2}{3}$, hence $\bestboundcoreblocke[3] \leq \bigl\lfloor\frac{r^2}{3}\bigr\rfloor$. Thus, by~\eqref{equation:bestboundcoreblock_maxmax} it suffices to find $x \in \N^{\partsek[1]}$ with $\lVert x \rVert_1 = r$ such that $\qek[1](x) = \bigl\lfloor\frac{r^2}{3}\bigr\rfloor$. To that extent, write $r = 3a + h$ with $a \geq 1$ and $h \in \{-1,0,1\}$ and consider $x \coloneqq a\mathbf{1} + h\mathbf{1}_E \in \Z^{\partsek[1]}$, where $\mathbf{1}_E$ is any characteristic vector of an element of $\partsek[1]$. Note that $x \in \N^{\partsek[1]}$ and $\lVert x \rVert_1 = a \binom{3}{1} + h = 3a+h=r$ indeed. We have, recalling that $a_{E,E} = 0$ and from Lemma~\ref{lemma:spectra_Aeq} that $\matqk[1]\mathbf{1} = \binom{2}{1}\mathbf{1} =  2\,\mathbf{1}$,
\begin{align*}
\qek[1](x)
&=
\frac{1}{2}\langle x,\matqk[1]x\rangle
\\
&=
\frac{1}{2}\left(a^2 \langle\mathbf{1},\matqk[1]\mathbf{1}\rangle + h^2 \langle \mathbf{1}_E,\matqk[1]\mathbf{1}_E\rangle + 2 ah \langle \mathbf{1},\matqk[1]\mathbf{1}_E\rangle\right)
\\
&=
\frac{1}{2}\left(2a^2 \lVert \mathbf{1}\rVert^2 + 4ah\right)
\\
&=
3a^2 + 2ah
\\
&=
\frac{r^2-h^2}{3}
\\
&=
\begin{dcases}
\frac{r^2}{3},&\text{if } r = 0\pmod{3},
\\
\frac{r^2-1}{3},&\text{if } r = \pm 1 \pmod{3},
\end{dcases}
\\
&=
\left\lfloor\frac{r^2}{3}\right\rfloor.
\end{align*}
This concludes the proof.
\end{proof}

\begin{proposition}[Case $e \in \{4, 6\}$]
\label{proposition:bound_e=4_6}
We have $\bestboundcoreblocke[4] = \bigl\lfloor\frac{r^2}{2}\bigr\rfloor$ and $\bestboundcoreblocke[6] = \bigl\lfloor\frac{3r^2}{4}\bigr\rfloor=  3\bigl\lfloor\frac{r^2}{4}\bigr\rfloor$.
\end{proposition}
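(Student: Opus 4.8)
The plan is to pin down $\bestboundcoreblocke[4]$ and $\bestboundcoreblocke[6]$ by squeezing them between the general lower bound of Corollary~\ref{corollary:lower_bound_bestboundcoreblock} and the general upper bound of Corollary~\ref{corollary:sharpest_upper_bound_bestboundcoreblock}, and then observing that for $e\in\{4,6\}$ these two bounds coincide. On the upper side, since $\bestboundcoreblock\in\Z$, Corollary~\ref{corollary:sharpest_upper_bound_bestboundcoreblock} gives $\bestboundcoreblock\leq\idealbound r e$; evaluating Definition~\ref{definition:idealbound} at $e=4$ and $e=6$ yields $\idealbound r 4=\lfloor\frac{r^2}{2}\rfloor$ and $\idealbound r 6=\lfloor\frac{3r^2}{4}\rfloor$. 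On the lower side, Corollary~\ref{corollary:lower_bound_bestboundcoreblock} gives $\bestboundcoreblock\geq\lfloor\frac e2\rfloor\lfloor\frac{r^2}{4}\rfloor$, that is $\bestboundcoreblocke[4]\geq 2\lfloor\frac{r^2}{4}\rfloor$ and $\bestboundcoreblocke[6]\geq 3\lfloor\frac{r^2}{4}\rfloor$.

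First I would dispatch the two elementary floor identities $2\lfloor\frac{r^2}{4}\rfloor=\lfloor\frac{r^2}{2}\rfloor$ and $3\lfloor\frac{r^2}{4}\rfloor=\lfloor\frac{3r^2}{4}\rfloor$. Both are immediate from the fact that $r^2\equiv 0$ or $1\pmod 4$: writing $r^2=4q+s$ with $s\in\{0,1\}$, each side of the first identity equals $2q$ and each side of the second equals $3q$. With these identities in hand the lower and upper bounds agree, giving $\bestboundcoreblocke[4]=\lfloor\frac{r^2}{2}\rfloor$ and $\bestboundcoreblocke[6]=3\lfloor\frac{r^2}{4}\rfloor=\lfloor\frac{3r^2}{4}\rfloor$, as claimed.

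To make the lower bound transparent (and self-contained, independent of the superadditivity behind Corollary~\ref{corollary:lower_bound_bestboundcoreblock}), I would also exhibit the extremal configuration directly: since $e$ is even, fix $E\subseteq\{1,\dots,e\}$ with $\lvert E\rvert=\frac e2$ and take $\lceil\frac r2\rceil$ of the columns equal to $E$ and the remaining $\lfloor\frac r2\rfloor$ equal to $E^c$. By Lemmas~\ref{lemma:weight_sum_r=2} and~\ref{lemma:weight_set_same_empty} the like-pairs contribute nothing, while each of the $\lceil\frac r2\rceil\lfloor\frac r2\rfloor$ cross-pairs contributes $\w(E,E^c)=\frac e2-\lvert E\cap E^c\rvert=\frac e2$ by Lemma~\ref{lemma:weight_subsets}; hence $\w(\mate)=\lceil\frac r2\rceil\lfloor\frac r2\rfloor\cdot\frac e2=\lfloor\frac{r^2}{4}\rfloor\cdot\frac e2$, which is exactly the target value in both cases.

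The only genuine subtlety — and the reason the statement is restricted to $e\in\{4,6\}$ — is that the general lower and upper bounds coincide precisely for these $e$. For even $e$ they already agree when $r$ is even (this is Proposition~\ref{proposition:bbcb_both_even}), but for odd $r$ the gap $\idealbound r e-\lfloor\frac e2\rfloor\lfloor\frac{r^2}{4}\rfloor$ vanishes only for $e\in\{2,4,6\}$ and already equals $1$ at $e=8$. So there is no deep obstacle here; the whole argument hinges on the parity bookkeeping of the floor identities in the second step, which is what pins the values down exactly for $e=4$ and $e=6$.
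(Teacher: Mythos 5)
Your proof is correct and follows essentially the same route as the paper: both squeeze $\bestboundcoreblock$ between the lower bound $\bigl\lfloor\frac{e}{2}\bigr\rfloor\bigl\lfloor\frac{r^2}{4}\bigr\rfloor$ and the upper bound $\idealbound r e$ of Corollary~\ref{corollary:encadrement_final} and check that these coincide for $e\in\{4,6\}$, the only cosmetic difference being that the paper treats even $r$ via Proposition~\ref{proposition:bbcb_both_even} and odd $r$ separately, whereas you handle both parities at once through $r^2\bmod 4$. Your added explicit configuration ($\lceil r/2\rceil$ copies of $E$ and $\lfloor r/2\rfloor$ of $E^c$ with $\lvert E\rvert=e/2$) is a correct, self-contained replacement for the superadditivity argument behind Corollary~\ref{corollary:lower_bound_bestboundcoreblock}, but it is not needed and does not change the structure of the argument.
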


\begin{proof}
If $r$ is even then the result is true by Proposition~\ref{proposition:bbcb_both_even}. Now if $r$ is odd, by Corollary~\ref{corollary:encadrement_final} we have
\begin{gather*}
2\left\lfloor\frac{r^2}{4}\right\rfloor \leq \bestboundcoreblocke[4] \leq \left\lfloor\frac{r^2}{2}\right\rfloor,
\\
3\left\lfloor\frac{r^2}{4}\right\rfloor  \leq \bestboundcoreblocke[6] \leq \left\lfloor\frac{3r^2}{4}\right\rfloor.
\end{gather*}
Now we have
\begin{align*}
\left\lfloor\frac{r^2}{2}\right\rfloor &= \frac{r^2-1}{2},
&
\left\lfloor\frac{r^2}{4}\right\rfloor &= \frac{r^2-1}{4},
\end{align*}
which concludes the proof for $e = 4$. For $e = 6$ we note that
\[
3\left\lfloor\frac{r^2}{4}\right\rfloor = 3 \frac{r^2-1}{4} = \frac{3r^2}{4} - \frac{3}{4} = \left\lfloor\frac{3r^2}{4}\right\rfloor,
\]
which concludes the proof.
\end{proof}

We will now deal with the case $e = 5$.

\begin{definition}
For any $r \geq 0$ and $e \geq 2$, we define
\[
\qbound r e \coloneqq \max_{\substack{x \in \N^{\partsek} \\ \lVert x \rVert_1 = r}} \qek(x),
\]
where $k \coloneqq \bigl\lfloor\frac{e}{2}\bigr\rfloor$.
\end{definition}

Recalling~\eqref{equation:bestboundcoreblock_maxmax} and Corollary~\ref{corollary:encadrement_final}, for any $r \geq 2$ and $e \geq 2$ we have
\begin{equation}
\label{equation:QleqNleqN'}
\qbound r e \leq \bestboundcoreblock \leq \idealbound r e.
\end{equation}

\begin{remark}
\label{remark:qbound=bestboundcoreblock}
We will see  that when $e = 5$ then $\qbound r e = \bestboundcoreblock$ (Lemma~\ref{lemma:Nre=maxq_e=5}). It is unclear whether this equality holds in full generality, in particular we state the next results with the quantity $\qbound r e$.
\end{remark}

Recall from Definition~\ref{definition:idealbound} that $\idealbound r e = \bigl\lfloor \frac{r^2}{2e}\bigl\lfloor\frac{e^2}{4}\bigr\rfloor\bigr\rfloor$.

\begin{lemma}
\label{lemma:idealbound_shift}
Let $r \geq 0$ and $e \geq 2$. Assume that $\qbound r e = \idealbound r e$. Then $\qbound{r+\binom e k} e = \idealbound{r+\binom{e}{k}}{e}$,  where $k \coloneqq \bigl\lfloor\frac{e}{2}\bigr\rfloor$.
\end{lemma}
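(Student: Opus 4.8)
The plan is to leverage the upper bound already recorded in~\eqref{equation:QleqNleqN'}, namely $\qbound{r+\binom{e}{k}}{e} \leq \idealbound{r+\binom{e}{k}}{e}$; it then suffices to prove the matching lower bound $\qbound{r+\binom{e}{k}}{e} \geq \idealbound{r+\binom{e}{k}}{e}$. I would obtain it by exhibiting an explicit optimal vector: starting from a maximiser $x^\ast$ of $\qbound r e$, I claim $x^\ast + \mathbf{1}$ is a maximiser for the parameter $r + \binom{e}{k}$, where $\mathbf{1} \in \N^{\partsek}$ is the all-ones vector (note $\lVert \mathbf{1}\rVert_1 = \binom{e}{k}$, since adding $\mathbf{1}$ amounts to appending every $k$-subset once).

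First I would reuse the orthogonal splitting from the proof of Proposition~\ref{proposition:upper_bonud_qek}. Set $f(t) \coloneqq \frac{t^2}{2e}\lfloor\frac{e^2}{4}\rfloor$ and decompose any $x$ as $\frac{\langle x,\mathbf{1}\rangle}{\lVert\mathbf{1}\rVert^2}\mathbf{1} + x_\perp$ with $x_\perp \perp \mathbf{1}$. Using $\lVert\mathbf{1}\rVert^2 = \binom{e}{k}$, the eigenvalue $k\binom{e-1}{k}$ of $\matqk$ on $\mathbf{1}$ (Lemma~\ref{lemma:spectra_Aeq}), the identity $\binom{e-1}{k} = \frac{e-k}{e}\binom{e}{k}$, and $k(e-k) = \lfloor\frac{e^2}{4}\rfloor$, the same computation yields
\[
\qek(x) = f(\langle x,\mathbf{1}\rangle) + \qek(x_\perp).
\]
For $x \in \N^{\partsek}$ one has $\langle x,\mathbf{1}\rangle = \lVert x\rVert_1$, so $\qek(x) = f(\lVert x\rVert_1) + \qek(x_\perp)$. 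The structural point I would exploit is that the map $x \mapsto x + \mathbf{1}$ leaves the orthogonal component $x_\perp$ unchanged (since $\mathbf{1}_\perp = 0$) while increasing $\lVert x\rVert_1$ by exactly $\binom{e}{k}$.

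Now take $x^\ast \in \N^{\partsek}$ with $\lVert x^\ast\rVert_1 = r$ and $\qek(x^\ast) = \qbound r e = \idealbound r e = \lfloor f(r)\rfloor$; the identity forces $\qek(x^\ast_\perp) = \lfloor f(r)\rfloor - f(r)$. Applying it to $x^\ast + \mathbf{1}$ (which has $1$-norm $r + \binom{e}{k}$ and the same orthogonal part) gives
\[
\qek(x^\ast + \mathbf{1}) = \lfloor f(r)\rfloor + \bigl(f(r+\tbinom{e}{k}) - f(r)\bigr).
\]
The hard part — really the only nontrivial step — is to check that $f(r+\binom{e}{k}) - f(r)$ is an \emph{integer}. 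Since $f$ is a homogeneous quadratic, this difference equals $\frac{\lfloor e^2/4\rfloor}{e}\,r\binom{e}{k} + f(\binom{e}{k})$; the first summand is $(e-1)\binom{e-2}{k-1}\,r \in \Z$ because $\frac{k(e-k)}{e}\binom{e}{k} = (e-1)\binom{e-2}{k-1}$, while the second is $f(\binom{e}{k}) = \qek(\mathbf{1})$, which is the weight $\w(\mate)$ of the binary matrix whose columns are all $k$-subsets and hence a nonnegative integer by Proposition~\ref{proposition:boundcoreblock_maxqr}. Granting this integrality, $\lfloor f(r)\rfloor + \bigl(f(r+\binom{e}{k}) - f(r)\bigr) = \lfloor f(r+\binom{e}{k})\rfloor = \idealbound{r+\binom{e}{k}}{e}$, so $\qbound{r+\binom{e}{k}}{e} \geq \qek(x^\ast + \mathbf{1}) = \idealbound{r+\binom{e}{k}}{e}$; combined with~\eqref{equation:QleqNleqN'} this yields the claimed equality. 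Everything apart from the integrality computation is bookkeeping with the $\mathbf{1}$/$x_\perp$ splitting.
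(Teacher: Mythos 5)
Your proof is correct and follows essentially the same route as the paper: both use the witness vector $x^\ast + \mathbf{1}$, the fact that $\mathbf{1}$ is a $\matqk$-eigenvector (Lemma~\ref{lemma:spectra_Aeq}), the integrality of the increment $\lambda r + \qek(\mathbf{1})$ so it can be pulled out of the floor, and the upper bound from~\eqref{equation:QleqNleqN'}. The only cosmetic difference is that you route the computation of $\qek(x^\ast+\mathbf{1}) - \qek(x^\ast)$ through the orthogonal decomposition of Proposition~\ref{proposition:upper_bonud_qek}, whereas the paper expands it directly by bilinearity.
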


\begin{proof}
By assumption, we can find $x \in \N^{\partsek} $ with $ \lVert x \rVert_1 = r$ such that $\qek(x)=   \idealbound r e  $ (with $k =\bigl\lfloor\frac{e}{2}\bigr\rfloor$).   We have,  where $\lambda \coloneqq k \binom{e-1}{k}$ is the highest eigenvalue of $\matqk$ (see Lemma~\ref{lemma:spectra_Aeq}),
\begin{align*}
\qek(x + \mathbf{1}) 
&= 
\qek(x) + \qek(\mathbf{1}) + \langle x, \matqk \mathbf{1}\rangle
\\
&=
\qbound r e + \frac{\lambda}{2}\binom{e}{k} + \lambda r,
\end{align*}
thus $\qbound{r+\binom{e}{k}} e \geq \qbound r e + \frac{\lambda}{2}\binom{e}{k} + \lambda r$. Note that $\frac{\lambda}{2}\binom{e}{k} = \qek(\mathbf{1})$ is an integer by Proposition~\ref{proposition:boundcoreblock_maxqr}\ref{item:qe_pos}.  Now, recalling the equality $\idealbound r e =  \left\lfloor \frac{r^2}{2\binom{e}{k}}\lambda\right\rfloor$ from Lemma~\ref{lemma:idealbound_lambda} we find:
\begin{align*}
\idealbound {r + \binom{e}{k}}{e}
&=
\left\lfloor \frac{r^2 +  \binom{e}{k}^2 + 2\binom{e}{k}r}{2\binom{e}{k}}\lambda\right\rfloor
\\
&=
\left\lfloor \frac{r^2}{2\binom{e}{k}}\lambda +  \frac{\lambda}{2}\binom{e}{k} + \lambda r\right\rfloor
\\
&=
\left\lfloor \frac{r^2}{2\binom{e}{k}}\lambda\right\rfloor +\frac{\lambda}{2}\binom{e}{k} + \lambda r.
\end{align*}

Finally, we have
\begin{align*}
\idealbound{r+\binom{e}{k}}{e}
&
\geq \qbound{r+\binom{e}{k}}e
\\
&= \qbound r e + \left(\qbound{r+\binom{e}{k}}e- \qbound r e\right)
\\
&\geq \idealbound r e + \left(\idealbound{r+\binom{e}{k}}{e} - \idealbound r e\right)
\\
&=  \idealbound{r+\binom{e}{k}}{e},
\end{align*}
which concludes the proof.
\end{proof}

\begin{lemma}
\label{lemma:idealbound_antishift}
Let $r \geq 0$ and $e \geq 2$. Let  $k \coloneqq \bigl\lfloor\frac{e}{2}\bigr\rfloor$ and   assume that there exists $x \in \{0,1\}^{\partsek}$ with $\lVert x \rVert_1 = r$ such that $\qek(x) = \idealbound r e$  (in particular, we have $\qbound r e = \idealbound r e$). Then $\qbound{\binom{e}{k}-r}e= \idealbound{\binom{e}{k}-r}{e}$.
\end{lemma}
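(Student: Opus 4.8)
The plan is to exploit the complementation symmetry of $\partsek$, which is precisely what the hypothesis $x \in \{0,1\}^{\partsek}$ is designed to allow. Set $B \coloneqq \binom{e}{k}$ and let $\lambda \coloneqq k\binom{e-1}{k}$ be the top eigenvalue of $\matqk$ (Lemma~\ref{lemma:spectra_Aeq}), so that $\matqk\mathbf{1} = \lambda\mathbf{1}$. Given the vector $x$ provided by the hypothesis, I would form its complement $y \coloneqq \mathbf{1} - x$. Since $x$ is $\{0,1\}$-valued, $y$ again lies in $\{0,1\}^{\partsek} \subseteq \N^{\partsek}$ and satisfies $\lVert y \rVert_1 = B - r$. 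The whole argument reduces to showing $\qek(y) = \idealbound{B-r}{e}$: the lower bound $\qbound{B-r}{e} \geq \qek(y)$ is then immediate from the definition of $\qbound{B-r}{e}$ as a maximum, while the reverse inequality $\qbound{B-r}{e} \leq \idealbound{B-r}{e}$ is already recorded in~\eqref{equation:QleqNleqN'} (ultimately Proposition~\ref{proposition:upper_bonud_qek}).

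The first computation is the expansion of $\qek(y)$ by bilinearity. Using $\matqk\mathbf{1} = \lambda\mathbf{1}$ and the identity $\langle x,\mathbf{1}\rangle = \lVert x \rVert_1 = r$ (valid because $x \in \{0,1\}^{\partsek}$), I expect to obtain
\[
\qek(\mathbf{1}-x) = \qek(\mathbf{1}) - \lambda\langle x,\mathbf{1}\rangle + \qek(x) = \tfrac{\lambda}{2}\binom{e}{k} - \lambda r + \idealbound r e,
\]
where I have substituted $\qek(\mathbf{1}) = \tfrac12\langle\mathbf{1},\matqk\mathbf{1}\rangle = \tfrac{\lambda}{2}\binom{e}{k}$ and the hypothesis $\qek(x) = \idealbound r e$. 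This is the exact analogue of the cross-term computation carried out in the proof of Lemma~\ref{lemma:idealbound_shift}, only run with $\mathbf{1}-x$ in place of $x+\mathbf{1}$.

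The second computation matches this against $\idealbound{B-r}{e}$. Recalling from the proof of Lemma~\ref{lemma:idealbound_shift} that $\idealbound n e = \bigl\lfloor \tfrac{n^2}{2\binom{e}{k}}\lambda\bigr\rfloor$, and that both $\tfrac{\lambda}{2}\binom{e}{k} = \qek(\mathbf{1})$ and $\lambda r$ are integers (so the floor splits off these terms cleanly), the same algebra used there with $n = B-r$ gives
\[
\idealbound{B-r}{e} = \Bigl\lfloor\tfrac{(B-r)^2}{2B}\lambda\Bigr\rfloor = \tfrac{\lambda}{2}\binom{e}{k} - \lambda r + \Bigl\lfloor\tfrac{r^2}{2B}\lambda\Bigr\rfloor = \tfrac{\lambda}{2}\binom{e}{k} - \lambda r + \idealbound r e.
\]
The two displays coincide, whence $\qek(\mathbf{1}-x) = \idealbound{B-r}{e}$, and combining with the upper bound from~\eqref{equation:QleqNleqN'} closes the argument.

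There is no serious obstacle here, since the result is genuinely the mirror image of Lemma~\ref{lemma:idealbound_shift}; the only point demanding care is that the construction hinges on $\mathbf{1}-x$ being a \emph{nonnegative} integer vector, which is exactly why the hypothesis must strengthen $x \in \N^{\partsek}$ to $x \in \{0,1\}^{\partsek}$ (and why it is this stronger hypothesis, rather than merely $\qbound r e = \idealbound r e$, that is assumed). I would flag the integrality of $\tfrac{\lambda}{2}\binom{e}{k}$ and $\lambda r$ as the small but essential input that lets the floor in $\idealbound{B-r}{e}$ separate in the same way as in Lemma~\ref{lemma:idealbound_shift}.
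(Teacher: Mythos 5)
Your proposal is correct and follows essentially the same route as the paper: form $\mathbf{1}-x$ (which is nonnegative precisely because $x$ is $\{0,1\}$-valued), expand $\qek(\mathbf{1}-x)$ bilinearly using $\matqk\mathbf{1}=\lambda\mathbf{1}$, match the result against $\idealbound{\binom{e}{k}-r}{e}$ via the same floor-splitting identity as in Lemma~\ref{lemma:idealbound_shift}, and close with~\eqref{equation:QleqNleqN'}. The points you flag — the integrality of $\tfrac{\lambda}{2}\binom{e}{k}$ and $\lambda r$, and the necessity of the $\{0,1\}$-hypothesis — are exactly the ones the paper relies on.
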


\begin{proof}
Note that $\mathbf{1}-x \in \N^{\partsek}$ since the entries of $x$ are at most $1$. 
With similar calculations as in Lemma~\ref{lemma:idealbound_shift}, we thus have:
\begin{align*}
\qbound{\binom{e}{k} - r}{e} 
&\geq
\qek(\mathbf{1}-x)
\\
&=
\qek(x) + \qek(\mathbf{1}) - \langle x, \matqk\mathbf{1}\rangle
\\
&=
\qbound r e + \frac{\lambda}{2}\binom{e}{k} - \lambda r
\\
&=
\idealbound r e + \frac{\lambda}{2}\binom{e}{k} - \lambda r
\\
&=
\idealbound{\binom{e}{k} - r} e,
\end{align*}
thus we conclude by~\eqref{equation:QleqNleqN'}.
\end{proof}

\begin{remark}
\label{remark:e_leq_7}
Assume that $r = 1$. We have $\idealbound 1 e \leq \frac{e}{8}$ by Corollary~\ref{corollary:encadrement_final}, thus if $e \leq 7$ we have $\idealbound 1 e = 0$. In particular, the assumption $\qbound r e = \idealbound r e$ of Lemma~\ref{lemma:idealbound_shift} is satisfied. Moreover, since any  $x \in \N^{\partsek}$ with $\lVert x \rVert_1 = 1$ has entries in $\{0,1\}$, the assumption of Lemma~\ref{lemma:idealbound_antishift} is also satisfied (using~\eqref{equation:QleqNleqN'}).
\end{remark}

%

We give a last preliminary result, which is  a refinement of~\eqref{equation:bestboundcoreblock_maxmax} (see Remark~\ref{remark:qbound=bestboundcoreblock}).

\begin{lemma}
\label{lemma:Nre=maxq_e=5}
Assume that $e = 5$. For any $r \geq 2$ we have $\qbound r e = \bestboundcoreblock$.
\end{lemma}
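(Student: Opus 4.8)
The plan is to read off both quantities directly from the maximisation formula~\eqref{equation:bestboundcoreblock_maxmax}. For $e = 5$ we have $\lfloor e/2\rfloor = 2$, so that formula becomes
\[
\bestboundcoreblocke[5] = \max\bigl(M_1, M_2\bigr), \qquad M_k \coloneqq \max_{\substack{x \in \N^{\partsek} \\ \lVert x \rVert_1 = r}} \qek(x),
\]
while by definition $\qbound r 5 = M_2$ (since $k = \lfloor 5/2\rfloor = 2$). As $\qbound r 5 = M_2 \leq \max(M_1,M_2) = \bestboundcoreblocke[5]$ is automatic (it is also the left inequality of~\eqref{equation:QleqNleqN'}), the whole statement reduces to the reverse inequality, i.e.\ to showing that the cardinality $k = 1$ never beats $k = 2$:
\[
M_1 \leq M_2.
\]
This is exactly where the value $e = 5$ is special: it is the only value for which $\lfloor e/2\rfloor = 2$, so that $k = 1$ is the \emph{single} lower cardinality that has to be dominated.

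To prove $M_1 \leq M_2$ I would transport an optimal family of singletons to a family of pairs without decreasing the weight, exploiting that singletons carry the smallest possible positive weight. Indeed, two distinct singletons $S, T$ satisfy $\w(S,T) = \min(1,1) - \lvert S \cap T\rvert = 1$, whereas two distinct $2$-subsets $E, F$ satisfy $\w(E,F) = 2 - \lvert E \cap F\rvert \geq 1$, since they meet in at most one point. Fix any injection $\iota \colon \partsek[1] \hookrightarrow \partsek[2]$, which exists because $\lvert \partsek[1]\rvert = 5 \leq 10 = \lvert \partsek[2]\rvert$ (for instance one may send $\{i\}$ to the $i$-th edge of a $5$-cycle on $\{1,\dots,5\}$). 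Given $x \in \N^{\partsek[1]}$ with $\lVert x \rVert_1 = r$, define $y \in \N^{\partsek[2]}$ by $y_{\iota(S)} \coloneqq x_S$ and $y_E \coloneqq 0$ for $E \notin \iota(\partsek[1])$, so that $\lVert y \rVert_1 = r$. Expanding both quadratic forms over the two-element subsets $\{S,T\}$ of $\partsek[1]$ (the diagonal contributes nothing, as $\w(E,E) = 0$) gives
\[
\qek[2](y) = \sum_{\{S,T\}} \w\bigl(\iota(S),\iota(T)\bigr)\, x_S x_T \;\geq\; \sum_{\{S,T\}} \w(S,T)\, x_S x_T = \qek[1](x),
\]
the inequality being termwise since $\w(\iota(S),\iota(T)) \geq 1 = \w(S,T)$ for $S \neq T$ and $x_S x_T \geq 0$.

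Taking the supremum over $x$ then yields $M_2 \geq M_1$, whence $\bestboundcoreblocke[5] = \max(M_1,M_2) = M_2 = \qbound r 5$, as required. I do not anticipate a genuine obstacle: the construction is elementary and, crucially, requires \emph{no} optimisation over the target $y$, precisely because $k = 1$ realises the minimal positive weight, so any injection of the singletons into distinct pairs already works. The only delicate point is the purely formal bookkeeping with the factor $\tfrac12$ and the diagonal when passing between the bilinear form $\qek$ and the sum over unordered pairs, which is harmless since $\w(E,E) = 0$. Note also that this argument establishes the lemma without identifying the exact value of $\qbound r 5$, which is left to the subsequent results.
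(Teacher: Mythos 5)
Your proof is correct, but it takes a genuinely different route from the paper. The paper does not compare $M_1$ and $M_2$ directly: it shows that $M_1$ is strictly smaller than $\bestboundcoreblocke[5]$ itself, by sandwiching $M_1 \leq \bigl\lfloor\frac{2r^2}{5}\bigr\rfloor$ (the spectral upper bound of Proposition~\ref{proposition:upper_bonud_qek} with $k'=1$) against $\bestboundcoreblocke[5] \geq 2\bigl\lfloor\frac{r^2}{4}\bigr\rfloor$ (the superadditivity lower bound of Corollary~\ref{corollary:lower_bound_bestboundcoreblock}), and then checking the purely numerical inequality $\bigl\lfloor\frac{2r^2}{5}\bigr\rfloor < 2\bigl\lfloor\frac{r^2}{4}\bigr\rfloor$ for $r\geq 2$ by parity of $r$. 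Your injection $\iota\colon\partsekgen{5}{1}\hookrightarrow\partsekgen{5}{2}$ instead proves the stronger and cleaner statement $M_1\leq M_2$ termwise, using only that distinct singletons have weight $1$ while distinct $2$-subsets have weight at least $1$; this is more elementary (no eigenvalue bound, no superadditivity) and the bookkeeping with the factor $\tfrac12$ and the zero diagonal is handled correctly. The trade-off is that the paper's route recycles machinery it has already built and needs for the surrounding results, whereas your argument is self-contained but specific to dominating $k=1$ (for larger $e$ one would have to dominate $k=2,\dots,\lfloor e/2\rfloor-1$ as well, where a weight-nondecreasing injection is no longer automatic). One trivial slip in your motivation: $e=5$ is not the only value with $\lfloor e/2\rfloor=2$ (so is $e=4$, where your argument would work just as well); this does not affect the proof.
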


\begin{proof}
By~\eqref{equation:bestboundcoreblock_maxmax}, Proposition~\ref{proposition:upper_bonud_qek} and Corollary~\ref{corollary:lower_bound_bestboundcoreblock}, it suffices to prove that
\[
\left\lfloor\frac{r^2}{2e}k'(e-k')\right\rfloor <
\left\lfloor\frac{e}{2}\right\rfloor\left\lfloor\frac{r^2}{4}\right\rfloor,
\]
for all $k' \in \bigl\{1,\dots,\bigl\lfloor \frac{e}{2}\bigr\rfloor-1\bigr\}$. In our setting, this reduces to proving
\begin{equation}
\label{equation:ineg_floors}
\left\lfloor\frac{2r^2}{5}\right\rfloor < 2\left\lfloor\frac{r^2}{4}\right\rfloor.
\end{equation}
If $r$ is even, we have $\bigl\lfloor\frac{2r^2}{5}\bigr\rfloor\leq \frac{2r^2}{5} < \frac{r^2}{2} = 2 \bigl\lfloor\frac{r^2}{4}\bigr\rfloor$ thus~\eqref{equation:ineg_floors} holds. If $r$ is odd we have $\bigl\lfloor\frac{r^2}{4}\bigr\rfloor = \frac{r^2-1}{4}$ and
\[
\frac{2r^2}{5} < \frac{r^2-1}{2}
\iff
4r^2 < 5r^2 - 5,
\]
thus~\eqref{equation:ineg_floors} holds since $r \geq 2$ is odd.
\end{proof}

\begin{proposition}[Case $e = 5$]
\label{proposition:bound_e=5}
For any $r \geq 2$ we have $\bestboundcoreblocke[5]
= \idealbound{r}{5} 
=  \bigl\lfloor\frac{3r^2}{5}\bigr\rfloor$.
\end{proposition}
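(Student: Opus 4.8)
The plan is to reduce the statement to a lower bound on the combinatorial quantity $\qbound r 5$ and then produce that bound by explicit constructions together with the shift/anti-shift machinery. First I would record that for $e=5$ we have $k=\lfloor\frac{e}{2}\rfloor=2$ and $\binom{5}{2}=10$, and that Definition~\ref{definition:idealbound} gives $\idealbound r 5=\lfloor\frac{r^2}{10}\lfloor\frac{25}{4}\rfloor\rfloor=\lfloor\frac{3r^2}{5}\rfloor$, which settles the second equality in the statement. For the first equality, Lemma~\ref{lemma:Nre=maxq_e=5} yields $\bestboundcoreblocke[5]=\qbound r 5$ for every $r\geq 2$, while~\eqref{equation:QleqNleqN'} gives $\qbound r 5\leq\idealbound r 5$. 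Thus it suffices to prove the reverse inequality $\qbound r 5\geq\idealbound r 5$, that is, to exhibit for each $r$ a vector $x\in\N^{\partsek[2]}$ with $\lVert x\rVert_1=r$ and $\qek[2](x)=\idealbound r 5$. I will in fact prove $\qbound r 5=\idealbound r 5$ for all $r\geq 0$.

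For the base cases I would interpret $0/1$-vectors through $K_5$. A vector $x\in\{0,1\}^{\partsek[2]}$ is the characteristic vector of a set $\mathcal{S}$ of $2$-subsets of $\{1,\dots,5\}$, i.e.\ of a set of edges of the complete graph $K_5$ on $\{1,\dots,5\}$; write $r=\lvert\mathcal{S}\rvert$ and let $d_v$ be the degree of the vertex $v$ in $\mathcal{S}$. Since $\w(E,F)=2-\lvert E\cap F\rvert$ for distinct $2$-subsets $E,F$, counting pairs of edges according to whether they share a vertex gives
\[
\qek[2](x)=\sum_{\{E,F\}\subseteq\mathcal{S}}\bigl(2-\lvert E\cap F\rvert\bigr)=r(r-1)-\sum_{v=1}^{5}\binom{d_v}{2}=r^2-\frac12\sum_{v=1}^{5}d_v^2 .
\]
Choosing $\mathcal{S}$ with degree sequence as balanced as possible (two disjoint edges for $r=2$; the graph $\{1,2\},\{1,3\},\{4,5\}$ for $r=3$; a path on all five vertices for $r=4$; a $5$-cycle for $r=5$; and the empty/single-edge graphs for $r=0,1$), a direct check shows $\qek[2](x)$ equals $0,0,2,5,9,15$ respectively, matching $\idealbound r 5$ for $r\in\{0,\dots,5\}$. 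As these maximizers are $0/1$-vectors, Lemma~\ref{lemma:idealbound_antishift} (with $\binom{5}{2}=10$) immediately upgrades each of them to $\qbound{10-r}5=\idealbound{10-r}5$, covering $r\in\{5,\dots,10\}$; together this gives $\qbound r 5=\idealbound r 5$ for all $r\in\{0,\dots,10\}$.

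Finally I would close by induction using the shift lemma. For $r\geq 11$ I write $r=(r-10)+10$ with $r-10\geq 1$, and apply Lemma~\ref{lemma:idealbound_shift} (again $k=2$, $\binom{5}{2}=10$) to the induction hypothesis $\qbound{r-10}5=\idealbound{r-10}5$ to conclude $\qbound r 5=\idealbound r 5$. Combining the base cases and the induction yields $\qbound r 5=\idealbound r 5$ for all $r\geq 0$, hence $\bestboundcoreblocke[5]=\idealbound r 5=\lfloor\frac{3r^2}{5}\rfloor$ for $r\geq 2$. The only genuinely delicate point is the base-case bookkeeping: one must check that the chosen degree sequences are realizable as simple subgraphs of $K_5$ and that the resulting values land exactly on $\lfloor\frac{3r^2}{5}\rfloor$; everything else is supplied by Lemmas~\ref{lemma:Nre=maxq_e=5},~\ref{lemma:idealbound_shift},~\ref{lemma:idealbound_antishift} and the inequality~\eqref{equation:QleqNleqN'}, so no further optimization over $\N^{\partsek[2]}$ is needed beyond these explicit witnesses.
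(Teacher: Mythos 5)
Your proof is correct, and its skeleton is the same as the paper's: both reduce via Lemma~\ref{lemma:Nre=maxq_e=5} and~\eqref{equation:QleqNleqN'} to producing lower-bound witnesses for $\qbound r 5$, both use Lemma~\ref{lemma:idealbound_shift} to reduce to $r<10$, and both use Lemma~\ref{lemma:idealbound_antishift} to halve the remaining base cases. Where you differ is in how the base cases $r\in\{0,\dots,5\}$ are verified. The paper assembles them piecemeal: $r\in\{2,3,4\}$ are imported from Propositions~\ref{proposition:bound_r=2}, \ref{proposition:bound_r=3} and~\ref{proposition:bound_r=4} (with an extra check, for $r=4$, that the optimum is attained by a $0/1$-vector so that Lemma~\ref{lemma:idealbound_antishift} applies), and $r=5$ gets an ad hoc pentagon-type construction. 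You instead prove the clean identity $\qek[2](x)=r^2-\tfrac12\sum_{v}d_v^2$ for edge sets of $K_5$ (which checks out: $\sum_{\{E,F\}}\lvert E\cap F\rvert=\sum_v\binom{d_v}{2}$ and $\sum_v d_v=2r$) and then just reads off the values $0,0,2,5,9,15$ from graphs with balanced degree sequences. This is more self-contained and makes the $0/1$ hypothesis of Lemma~\ref{lemma:idealbound_antishift} automatic for every base case, at the cost of redoing work the paper already did for small $r$; it also makes transparent \emph{why} these witnesses are optimal (balancing the degree sequence minimizes $\sum_v d_v^2$). All the numerical checks in your argument are correct, including $\idealbound r 5=\bigl\lfloor\frac{3r^2}{5}\bigr\rfloor$, so there is no gap.
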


\begin{proof}
By Lemma~\ref{lemma:idealbound_shift} and Lemma~\ref{lemma:Nre=maxq_e=5}, it suffices to prove the equality for $2 \leq r < \binom{5}{2}=10$, together with $\qbound r 5 = \idealbound r 5$ for $r \in \{0,1\}$. For $r = 0$ we have $\qbound r 5 = \idealbound r 5 = 0$,  and the equality also holds for $r= 1$ by Remark~\ref{remark:e_leq_7}. By Lemma~\ref{lemma:idealbound_antishift} and again Remark~\ref{remark:e_leq_7}, we thus have $\bestboundcoreblocke[5]   = \idealbound r 5$ for $r = 9$.

It remains to prove  $\bestboundcoreblocke[5] = \idealbound r 5$ for $r \in \{2,\dots,8\}$.
 It follows from Propositions~\ref{proposition:bound_r=2} and~\ref{proposition:bound_r=3} that the equality $\bestboundcoreblocke[5]=\idealbound r 5$ holds for $r \in \{2,3\}$. Moreover, the proofs of these propositions guarantee that Lemma~\ref{lemma:idealbound_antishift} can be applied for $r \in \{2,3\}$, so that the equality $\bestboundcoreblocke[5]=\idealbound r 5$ also holds for $r \in \{7,8\}$ (using Lemma~\ref{lemma:Nre=maxq_e=5}). The same can be said for $r = 4$ (Proposition~\ref{proposition:bound_r=4}) and thus $r = 6$, however we need to carefully check that the conditions of Lemma~\ref{lemma:idealbound_antishift} can be satisfied when $r = 4$. Following the proofs of Propositions~\ref{proposition:bound_e=2} and~\ref{proposition:bound_r=4}, taking
\begin{gather*}
E_1 \coloneqq \{1,3\}, \quad E_2 \coloneqq \{2,4\},
\\
E_3 \coloneqq \{1,5\}, \quad E_4 \coloneqq \{2,5\},
\end{gather*} 
by~\eqref{equation:weight_binary_matrix} we have $\w(E_1,\dots,E_4) = \sum_{1 \leq j < k \leq 4} \bigl(2 - \lvert E_j \cap E_k\rvert\bigr) = 9 =  \bestboundcoreblockgen 4 5$ indeed and the corresponding $x \in \N^{\partsekgen 5 2}$ has entries in $\{0,1\}$. We could have also noticed that $\idealbound 6 5 = \lfloor\frac{3\cdot 36}{5}\rfloor = \lfloor\frac{3\cdot 35}{5} + \frac{3}{5}\rfloor = 21$, and by Lemma~\ref{lemma:superadditive} we have
\[
\bestboundcoreblockgen 6 5 \geq \bestboundcoreblockgen 6 3 + \bestboundcoreblockgen 6 2,
\]
thus by Propositions~\ref{proposition:bound_e=2} and~\ref{proposition:bound_e=3} we have
\[
\bestboundcoreblockgen 6 5 \geq \left\lfloor\frac{6^2}{3}\right\rfloor + \left\lfloor\frac{6^2}{4}\right\rfloor = 2\cdot 6 + 3^2 = 21,
\]
thus $\bestboundcoreblockgen 6 5 = \idealbound 6 5$ by Corollary~\ref{corollary:encadrement_final} (note that this alternative proof does not work for $r \in \{7,8\}$).

It thus remains to treat the case $r = 5$. We define:
\begin{gather*}
E_1 \coloneqq \{1,2\}, \quad E_2 \coloneqq \{1,3\},
\\
\\
E_5 \coloneqq \{2,4\}, \quad\quad\quad\quad E_3 \coloneqq \{3,5\},
\\
\\
E_4 \coloneqq \{4,5\},
\end{gather*}
so that $\lvert E_j \cap E_{j+1}\rvert = 1$ for all $j \in \{1,\dots,5\}$ (with $E_6 \coloneqq E_1$), every other pairwise intersection being empty. By~\eqref{equation:weight_binary_matrix}, we deduce that
\[
\w(E_1,\dots,E_5) = 5 \cdot 1 + 5 \cdot 2  = 15,
\]
thus $\bestboundcoreblockgen{5}{5} \geq 15$. Since $\idealbound 5 5 = \bigl\lfloor\frac{3\cdot 5^2}{5}\bigr\rfloor = 15$, by Corollary~\ref{corollary:encadrement_final} we obtain that  $\bestboundcoreblockgen{5}{5} = \idealbound 5 5$ and this concludes the proof.
\end{proof}

\paragraph{All in one}

We now gather all the results we have proven in these small cases. Recall that $\bestboundcoreblock$ (resp. $\idealbound r e$) is defined in Definition~\ref{definition:bestboundcoreblock} (resp. Definition~\ref{definition:idealbound}).

\begin{proposition}
\label{proposition:upper_bound_exact_small_cases}
Let $e, r \geq 2$. We have
\[
\bestboundcoreblock
=
\idealbound r e,
 \]
  in (at least) the following  cases:
\[
r \in \{2,4\}
\quad \text{or}
\quad e \in \{2,\dots,6\}.
\]
\end{proposition}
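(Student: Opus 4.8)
The plan is to read this off as a synthesis of the explicit computations already performed in this subsection: each pair $(r,e)$ covered by the statement has had $\bestboundcoreblock$ computed in closed form, so it only remains to check that this closed form agrees with $\idealbound r e = \bigl\lfloor\frac{r^2}{2e}\lfloor\frac{e^2}{4}\rfloor\bigr\rfloor$. Concretely, for the rows $r \in \{2,4\}$ I would invoke Propositions~\ref{proposition:bound_r=2} and~\ref{proposition:bound_r=4}, which give $\bestboundcoreblock[2] = \lfloor\frac e2\rfloor$ and $\bestboundcoreblock[4] = 2e$ or $2e-1$ according to the parity of $e$; for the columns $e \in \{2,\dots,6\}$ I would invoke Propositions~\ref{proposition:bound_e=2}, \ref{proposition:bound_e=3}, \ref{proposition:bound_e=4_6} (taking $e \in \{4,6\}$) and~\ref{proposition:bound_e=5}, which give $\bestboundcoreblocke[2] = \lfloor\frac{r^2}4\rfloor$, $\bestboundcoreblocke[3] = \lfloor\frac{r^2}3\rfloor$, $\bestboundcoreblocke[4] = \lfloor\frac{r^2}2\rfloor$, $\bestboundcoreblocke[6] = \lfloor\frac{3r^2}4\rfloor$ and $\bestboundcoreblocke[5] = \idealbound r 5$.

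The second step is the elementary arithmetic of floors. For the columns this is immediate once one records $\lfloor\frac{e^2}4\rfloor = 1,2,4,9$ for $e = 2,3,4,6$: these values turn $\frac{r^2}{2e}\lfloor\frac{e^2}4\rfloor$ into $\frac{r^2}4,\frac{r^2}3,\frac{r^2}2,\frac{3r^2}4$ respectively, so that $\idealbound r e$ literally equals the quoted value of $\bestboundcoreblock$, while the case $e = 5$ needs nothing since Proposition~\ref{proposition:bound_e=5} already asserts $\bestboundcoreblocke[5] = \idealbound r 5$. For the rows $r \in \{2,4\}$ I would split on the parity of $e$, using $\lfloor\frac{e^2}4\rfloor = \frac{e^2}4$ when $e$ is even and $\frac{e^2-1}4$ when $e$ is odd. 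For $r = 2$ this gives $\idealbound 2 e = \lfloor\frac{e}2 - \frac1{2e}\rfloor = \lfloor\frac e2\rfloor$ in both parities; for $r = 4$ it gives $\idealbound 4 e = 2e$ when $e$ is even and $\idealbound 4 e = \lfloor 2e - \frac2e\rfloor = 2e - 1$ when $e$ is odd (using $e \geq 3$, so that $0 < \frac2e < 1$), matching Proposition~\ref{proposition:bound_r=4}.

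The work here is entirely bookkeeping, and the genuine difficulty sits upstream in the results being quoted — in particular the eigenvalue computation of Lemma~\ref{lemma:spectra_Aeq} and the shift/anti-shift machinery of Lemmas~\ref{lemma:idealbound_shift} and~\ref{lemma:idealbound_antishift} feeding Proposition~\ref{proposition:bound_e=5}. The only points deserving care in the present proof are that the overlapping cases (such as $(r,e) = (2,2)$, covered both by a row and a column) are consistent, which they are since each proposition computes the same quantity $\bestboundcoreblock$, and that the parity splits for $r \in \{2,4\}$ be carried out as above rather than collapsed, since the clean identity $\idealbound r e = \frac{er^2}8$ holds only when $e$ and $r$ are both even (Proposition~\ref{proposition:bbcb_both_even}). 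Note finally that $r = 3$ is deliberately absent: by Proposition~\ref{proposition:bound_r=3} one has $\bestboundcoreblock[3] = e$, whereas $\idealbound 3 e$ exceeds $e$ for large odd $e$, so no analogous identity holds there.
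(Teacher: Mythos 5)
Your proposal is correct and follows essentially the same route as the paper: both proofs simply assemble Propositions~\ref{proposition:bound_r=2}, \ref{proposition:bound_r=4}, \ref{proposition:bound_e=2}, \ref{proposition:bound_e=3}, \ref{proposition:bound_e=4_6} and~\ref{proposition:bound_e=5} and verify by elementary floor arithmetic that each closed form equals $\idealbound r e$ (the paper merely dispatches the both-even cases via Proposition~\ref{proposition:bbcb_both_even} first, so that only odd $e$ needs the hand computation for $r\in\{2,4\}$). The one phrase to tighten is ``$\idealbound 2 e = \lfloor\frac{e}{2}-\frac{1}{2e}\rfloor$ in both parities'': that intermediate expression is only valid for odd $e$ (for even $e$ it would give $\frac{e}{2}-1$), though your stated parity split does yield the correct value $\lfloor\frac{e}{2}\rfloor$ in each case.
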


Note that if $r = 3$ then $\bestboundcoreblock[3] = e$ by Proposition~\ref{proposition:bound_r=3}, however if for instance $e$ is even we have $\idealbound 3 e = \bigl\lfloor\frac{9e}{8}\bigr\rfloor = e + \bigl\lfloor\frac{e}{8}\bigr\rfloor > \bestboundcoreblock[3]$ as soon as $e \geq 8$.

\begin{proof}
First, note that the result holds if both $r$ and $e$ are even, by Proposition~\ref{proposition:bbcb_both_even}. We have
\[
\idealbound r  e =\begin{dcases}
\left\lfloor\frac{r^2e}{8}\right\rfloor, &\text{if } e \text{ is even},
\\
\left\lfloor\frac{r^2(e^2-1)}{8e}\right\rfloor, &\text{if } e \text{ is odd}.
\end{dcases}
\]
For $r = 2$, we deduce that (with $e \geq 2$ odd)
\[
\idealbound 2 e
=
\left\lfloor\frac{e^2-1}{2e}\right\rfloor
=
\left\lfloor\frac{e}{2} - \frac{1}{2e}\right\rfloor
=
\left\lfloor\frac{e}{2}\right\rfloor,
\]
thus $\idealbound 2 e = \bestboundcoreblock[2]$ by Proposition~\ref{proposition:bound_r=2}.
For $r = 4$, we have (with $e \geq 2$ odd)
\[
\idealbound 4 e = \left\lfloor\frac{2(e^2-1)}{e}\right\rfloor
=
2e - \left\lfloor\frac{2}{e}\right\rfloor = 2e-1,
\]
thus $\idealbound 4 e = \bestboundcoreblock[4]$ by Proposition~\ref{proposition:bound_r=4}. The equalities $\idealbound r e = \bestboundcoreblock$ for $e \in \{2,\dots,6\}$ are straightforward from Propositions~\ref{proposition:bound_e=2}, \ref{proposition:bound_e=3}, \ref{proposition:bound_e=4_6} and \ref{proposition:bound_e=5}.
\end{proof}
 
\subsubsection{Asymptotics}

In this section, we will describe the asymptotic behaviour  of the two sequences $(\bestboundcoreblock)_{e \geq 2}$ and $(\bestboundcoreblock)_{r \geq 2}$ for $r, e \geq 2$.
We first recall the following basic fact, which will be used without any further reference.

\begin{lemma}
Let $(u_n)_{n\geq 1}$ be a sequence of real numbers that converges to $+\infty$ and let $x \in \mathbb{R}$. Then the sequence $\bigl(\frac{\lfloor x u_n\rfloor}{u_n}\bigr)_{n\geq 1}$ converges to $x$.
\end{lemma}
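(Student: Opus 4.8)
The plan is to reduce everything to the elementary bracketing inequality for the floor function and then invoke the squeeze theorem. The key observation is that $\lfloor t\rfloor$ differs from $t$ by less than one, an error that becomes negligible after dividing by $u_n$ and letting $u_n$ grow without bound.

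First I would recall that for every real number $t$ one has $t-1 < \lfloor t\rfloor \leq t$. Applying this with $t = xu_n$ yields, for each $n$,
\[
xu_n - 1 < \lfloor xu_n\rfloor \leq xu_n.
\]
Next, since $u_n \to +\infty$, there exists an index $N$ such that $u_n > 0$ for all $n \geq N$; for such $n$ I would divide the above chain by the positive quantity $u_n$, which preserves the inequalities, to obtain
\[
x - \frac{1}{u_n} < \frac{\lfloor xu_n\rfloor}{u_n} \leq x.
\]

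Finally, because $u_n \to +\infty$ we have $\frac{1}{u_n} \to 0$, so the lower bound $x - \frac{1}{u_n}$ and the (constant) upper bound $x$ both converge to $x$. The squeeze theorem then forces $\frac{\lfloor xu_n\rfloor}{u_n} \to x$, as claimed. There is no genuine obstacle in this argument; the only point requiring a moment of care is ensuring $u_n > 0$ before dividing, which is why I restrict attention to $n \geq N$. This restriction does not affect the limit, and the same inequalities cover all values of $x$ (including $x = 0$ and $x < 0$) uniformly, so no case distinction is needed.
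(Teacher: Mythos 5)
Your proof is correct and is exactly the standard squeeze argument; the paper states this as a basic fact without proof, and your argument via $xu_n - 1 < \lfloor xu_n\rfloor \leq xu_n$ divided by $u_n > 0$ for large $n$ is the intended elementary justification.
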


We also recall the following standard result.

\begin{lemma}[Fekete's superadditive lemma]
\label{lemma:superadditive_lemma}
Let $(u_n)_{n\geq 1}$ be a superadditive sequence. Then $\bigl(\frac{u_n}{n}\bigr)_{n\geq 1}$ has a limit in $[-\infty,+\infty]$ as $n \to +\infty$.
\end{lemma}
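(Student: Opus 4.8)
The plan is to prove the stronger, standard fact that the limit equals $L \coloneqq \sup_{n \geq 1} \frac{u_n}{n} \in (-\infty, +\infty]$; note that this supremum is never $-\infty$ since $\frac{u_1}{1} \in \mathbb{R}$. Once this identification is made, convergence follows from a two-sided squeeze.

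First I would record the easy direction: since $\frac{u_n}{n} \leq L$ for every $n$ by definition of the supremum, we immediately get $\limsup_{n \to \infty} \frac{u_n}{n} \leq L$. The whole content of the lemma is therefore the matching lower bound $\liminf_{n \to \infty} \frac{u_n}{n} \geq L$.

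For this reverse inequality the key step is a Euclidean-division argument. Fix any real $M < L$; by definition of $L$ there exists an index $m \geq 1$ with $\frac{u_m}{m} > M$. For $n \geq 1$ write $n = qm + s$ with $q = \lfloor n/m \rfloor$ and $0 \leq s < m$. Iterating superadditivity gives $u_{qm} \geq q\, u_m$, and one further application yields $u_n \geq q\, u_m + u_s$ (interpreting $u_0 = 0$ when $s = 0$). Setting $c \coloneqq \min\{0, u_1, \dots, u_{m-1}\}$, a finite constant independent of $n$, we obtain $u_n \geq q\, u_m + c$, hence $\frac{u_n}{n} \geq \frac{q}{n} u_m + \frac{c}{n}$. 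Since $\frac{q}{n} = \frac{\lfloor n/m\rfloor}{n} \to \frac{1}{m}$ and $\frac{c}{n} \to 0$ as $n \to \infty$, passing to the $\liminf$ gives $\liminf_{n} \frac{u_n}{n} \geq \frac{u_m}{m} > M$. Letting $M \uparrow L$ yields $\liminf_{n} \frac{u_n}{n} \geq L$, which together with the previous paragraph proves $\lim_{n} \frac{u_n}{n} = L$.

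The argument is uniform in whether $L$ is finite or equal to $+\infty$: in the latter case $M$ ranges over all of $\mathbb{R}$ and the same estimate forces the limit to be $+\infty$. There is no genuine obstacle here; the only point needing a little care is the bookkeeping of the remainder term $u_s$, handled cleanly by bounding it below by the constant $c$ over the finitely many residues $s \in \{0, \dots, m-1\}$. (Alternatively, one could simply apply the subadditive form of Fekete's lemma to the sequence $(-u_n)_{n \geq 1}$.)
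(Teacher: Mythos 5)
Your proof is correct. Note that the paper offers no proof of this lemma at all --- it simply recalls it as a standard fact --- so there is nothing to compare against; your argument is the classical Euclidean-division proof of Fekete's lemma, and it even establishes the sharper conclusion that the limit equals $\sup_{n\geq 1} \frac{u_n}{n} \in (-\infty,+\infty]$, which is slightly stronger than the statement as given.
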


By Lemmas~\ref{lemma:superadditive} and~\ref{lemma:superadditive_lemma}, we know that the sequence $\left(\frac{\bestboundcoreblock}{e}\right)_{e \geq 1}$ has a limit $\bestboundcoreblocke[\infty] \in [0,+\infty]$. By Corollary~\ref{corollary:encadrement_final}  we have
\begin{equation}
\label{equation:e_infinite}
\frac{1}{2}\left\lfloor\frac{r^2}{4}\right\rfloor\leq \bestboundcoreblocke[\infty] \leq \frac{r^2}{8},
\end{equation}
in particular if $r$ is even then $\bestboundcoreblocke[\infty] = \frac{r^2}{8}$.

\begin{remark}
By Propositions~\ref{proposition:bound_r=2}, \ref{proposition:bound_r=3} and \ref{proposition:bound_r=4},  we have $\bestboundcoreblockgen 2 \infty = \frac{1}{2}$, $\bestboundcoreblockgen 3 \infty = 1$ and $\bestboundcoreblockgen 4 \infty = 2$.
\end{remark}

Our aim is now to give a similar result when $r$ grows to infinity.

\begin{proposition}
\label{proposition:convergence_Nre_maxq}
The sequence $\left(\frac{\bestboundcoreblock}{r^2}\right)_{r \geq 2}$ converges to $\bestboundcoreblock[\infty] \coloneqq \max_{\substack{y \in \mathbb{R}^{\partse} \\ \lVert y \rVert_1 = 1}} \qe(y)$.
\end{proposition}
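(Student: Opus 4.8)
The plan is to exploit that $\qe$ is a quadratic form, hence homogeneous of degree $2$, so that $\bestboundcoreblock$ and the claimed limit $\bestboundcoreblock[\infty]$ are essentially the same optimisation problem viewed at two scales. By Proposition~\ref{proposition:boundcoreblock_maxqr} we have $\bestboundcoreblock = \max\{\qe(x) : x \in \N^{\partse},\ \lVert x\rVert_1 = r\}$, while the target is $\bestboundcoreblock[\infty] = \max\{\qe(y) : y \in \mathbb{R}^{\partse},\ \lVert y \rVert_1 = 1\}$, this latter maximum being attained since $\qe$ is continuous and the unit $\ell^1$-sphere in $\mathbb{R}^{\partse}$ is compact. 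Throughout, $e \geq 2$ is fixed, so $2^e = \lvert\partse\rvert$ is a constant.

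For the upper bound I would simply observe that any feasible $x$ for the integer problem yields a feasible $y \coloneqq x/r$ for the continuous one; by homogeneity $\qe(x) = r^2\qe(x/r) \leq r^2\,\bestboundcoreblock[\infty]$, whence $\frac{\bestboundcoreblock}{r^2} \leq \bestboundcoreblock[\infty]$ for every $r \geq 2$.

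The real content is the matching lower bound $\liminf_r \frac{\bestboundcoreblock}{r^2} \geq \bestboundcoreblock[\infty]$, which I would get by rational approximation. First I note that a maximiser $y^*$ of the continuous problem may be taken with nonnegative entries: since all entries $a_{E,F} = \w(E,F)$ of $\matq$ are nonnegative, replacing $y$ by its entrywise absolute value preserves $\lVert\cdot\rVert_1$ and can only increase $\qe$. This nonnegativity is essential, as $\N^{\partse}$ contains no points approximating a vector with a negative coordinate. Given such $y^*$ with $\sum_E y^*_E = 1$, I set $x_E \coloneqq \lfloor r y^*_E\rfloor$, so that $d \coloneqq r - \lVert x\rVert_1$ is an integer with $0 \leq d < 2^e$ (each floor lowers the total by less than $1$, over $2^e$ coordinates), and then add $1$ to any $d$ of the $2^e$ coordinates to produce $x^{(r)} \in \N^{\partse}$ with $\lVert x^{(r)}\rVert_1 = r$ exactly. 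Each coordinate of $x^{(r)}$ differs from $r y^*_E$ by at most $1$, so $x^{(r)}/r \to y^*$ and continuity of $\qe$ gives $\qe(x^{(r)}/r) \to \qe(y^*) = \bestboundcoreblock[\infty]$. Since $x^{(r)}$ is feasible, homogeneity gives $\frac{\bestboundcoreblock}{r^2} \geq \qe(x^{(r)}/r)$, and letting $r \to \infty$ yields the lower bound.

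The two inequalities together force $\frac{\bestboundcoreblock}{r^2} \to \bestboundcoreblock[\infty]$. The only delicate point is the interplay between the integrality and nonnegativity constraints of the discrete problem and its continuous relaxation; I expect the main obstacle to be the reduction to a \emph{nonnegative} maximiser (the absolute-value trick, relying on $\matq$ having nonnegative entries), together with the rounding needed to land on an integer vector of exact $\ell^1$-norm $r$.
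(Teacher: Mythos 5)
Your proof is correct, and it follows the same skeleton as the paper's: the upper bound $\bestboundcoreblock \leq r^2\,\bestboundcoreblock[\infty]$ from homogeneity of $\qe$ together with Proposition~\ref{proposition:boundcoreblock_maxqr}, and the reduction to a nonnegative maximiser via the entrywise absolute value, which is legitimate precisely because all entries of $\matq$ are nonnegative. Where you diverge is in how you approximate the continuous maximiser by admissible integer vectors. The paper picks a sequence of nonnegative rational vectors $x_n \to y^*$ with $\lVert x_n\rVert_1 = 1$ and common denominators $b_n \to \infty$, so that $b_n x_n \in \N^{\partse}$ is feasible for $r = b_n$; this only produces the inequality $\bestboundcoreblock[b_n] \geq b_n^2 \qe(x_n)$ along the particular subsequence $(b_n)$, and the paper therefore frames the whole argument around boundedness (Corollary~\ref{corollary:encadrement_final}) and convergent subsequences. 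Your rounding $x_E = \lfloor r y^*_E\rfloor$ followed by redistributing the deficit $d = r - \lVert x\rVert_1 \in \{0,\dots,2^e-1\}$ produces, for \emph{every} $r$, a feasible vector within $\ell^\infty$-distance $1$ of $r y^*$, so you get $\liminf_r \bestboundcoreblock/r^2 \geq \bestboundcoreblock[\infty]$ for the full sequence directly and the convergence follows from the two-sided bound with no subsequence extraction. This is a slightly more economical argument, and it sidesteps the delicate point in the paper's write-up of having to relate the limit of an arbitrary convergent subsequence to a bound established only along $(b_n)$. One small thing worth stating explicitly in your version: the coordinates you increment to absorb $d$ should be chosen among all of $\partse$ (nonnegativity is automatic since you are adding $1$), and the existence of the maximiser $y^*$ uses compactness of the $\ell^1$-sphere, which you do mention.
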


\begin{proof}
By Corollary~\ref{corollary:encadrement_final}, we know that the sequence $\bigl(\bestboundcoreblock/r^2\bigr)_{r \geq 2}$ is bounded. Hence, to prove that it converges to $\bestboundcoreblock[\infty]$ it suffices to prove that any converging subsequence converges to $\bestboundcoreblock[\infty]$. We thus consider a converging subsequence of $\bigl(\bestboundcoreblock/r^2\bigr)_{r \geq 2}$ and let $\ell \in \mathbb{R}$ be its limit. To avoid the notation being overloaded,  we still denote this subsequence by $\bigl(\bestboundcoreblock/r^2\bigr)_{r \geq 2}$. By Proposition~\ref{proposition:boundcoreblock_maxqr}, we have $\bestboundcoreblock / r^2 \leq \bestboundcoreblock[\infty]$ for any $r \geq 2$ thus $\ell \leq \bestboundcoreblock[\infty]$. We will now prove the reverse inequality.

Let $x = (x_E) \in \mathbb{R}^{\partse}$. Defining $\lvert x \rvert \coloneqq (\lvert x_E \rvert)_{E \in \partse}$ we have $\lVert \lvert x \rvert \rVert_1=  \lVert x \rVert_1$. Since the entries of $\matq$ are non-negative we have $\qe(x) \leq \qe(\lvert x \rvert)$. Hence, if $x \in \mathbb{R}^{\partse}$ is such that $\qe(x) = \max_{\lVert y \rVert_1 = 1} \qe(y)$ then we can assume that $x$ has only non-negative coordinates. In particular, we can take a sequence of rational vectors  with non-negative entries $(x_n)_{n \geq 1}$ with limit $x$ and satisfying $\lVert x_n\rVert_1 = 1$ for all $n \geq 1$, where the entries of $x_n$ have the same denominator~$b_n$. Noting  that $b_n$ is not necessarily coprime with its associated numerator, we can always assume that $b_n \to \infty$. Then by Proposition~\ref{proposition:boundcoreblock_maxqr} we have $\bestboundcoreblock[b_n] \geq \qe(b_n x_n) = b_n^2 \qe(x_n)$, thus since $\qe$ is continuous we obtain $\ell \geq \qe(x) = \bestboundcoreblock[\infty]$.
This concludes the proof.
\end{proof}

By Corollary~\ref{corollary:encadrement_final} we have
\[
\frac{1}{4}\left\lfloor\frac{e}{2}\right\rfloor
\leq
\bestboundcoreblock[\infty]
\leq
\frac{1}{2e}\left\lfloor\frac{e^2}{4}\right\rfloor.
\]
We will be able here to determine the exact value of the limit $\bestboundcoreblock[\infty]$.
%

\begin{corollary}
\label{corollary:r_infinite}
For any $e \geq 2$ we have
\[
\bestboundcoreblock[\infty] = \frac{1}{2e}\left\lfloor\frac{e^2}{4}\right\rfloor.
\]
\end{corollary}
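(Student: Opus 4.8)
The plan is to prove the matching lower bound $\bestboundcoreblock[\infty] \geq \frac{1}{2e}\lfloor\frac{e^2}{4}\rfloor$, since the reverse inequality has already been recorded just above (it follows from Corollary~\ref{corollary:encadrement_final}, itself a consequence of Corollary~\ref{corollary:sharpest_upper_bound_bestboundcoreblock}). By Proposition~\ref{proposition:convergence_Nre_maxq} we have $\bestboundcoreblock[\infty] = \max_{\lVert y\rVert_1 = 1}\qe(y)$, so it suffices to exhibit a single vector $y \in \mathbb{R}^{\partse}$ with $\lVert y\rVert_1 = 1$ realising $\qe(y) = \frac{1}{2e}\lfloor\frac{e^2}{4}\rfloor$.

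First I would set $k \coloneqq \lfloor\frac{e}{2}\rfloor$ and take $y$ to be the uniform vector supported on the $k$-subsets, namely $y \coloneqq \frac{1}{\binom{e}{k}}\mathbf{1}$ where $\mathbf{1} \in \mathbb{R}^{\partsek} \subseteq \mathbb{R}^{\partse}$ is the constant vector. Then $\lVert y\rVert_1 = 1$ and, since $y$ is supported on $\partsek$, we have $\qe(y) = \qek(y)$. Using Lemma~\ref{lemma:spectra_Aeq}, which gives $\matqk\mathbf{1} = k\binom{e-1}{k}\mathbf{1}$ and $\lVert\mathbf{1}\rVert^2 = \binom{e}{k}$, I would compute
\[
\qek(\mathbf{1}) = \frac{1}{2}\langle\mathbf{1},\matqk\mathbf{1}\rangle = \frac{k}{2}\binom{e-1}{k}\binom{e}{k},
\]
whence, using the identity $\binom{e}{k} = \frac{e}{e-k}\binom{e-1}{k}$ (as in the proof of Proposition~\ref{proposition:upper_bonud_qek}),
\[
\qe(y) = \frac{1}{\binom{e}{k}^2}\qek(\mathbf{1}) = \frac{k}{2}\cdot\frac{\binom{e-1}{k}}{\binom{e}{k}} = \frac{k(e-k)}{2e}.
\]

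To conclude, I would note that with $k = \lfloor\frac{e}{2}\rfloor$ one has $k(e-k) = \lfloor\frac{e^2}{4}\rfloor$ (checking the two parities as in the proof of Proposition~\ref{proposition:bound_e=2}), so that $\qe(y) = \frac{1}{2e}\lfloor\frac{e^2}{4}\rfloor$ and hence $\bestboundcoreblock[\infty] \geq \frac{1}{2e}\lfloor\frac{e^2}{4}\rfloor$, which combined with the upper bound yields equality. There is no real obstacle here: the only insight required is that the extremiser is the uniform distribution on $k$-subsets with $k$ maximising $k(e-k)$ --- precisely the equality case of the Rayleigh-quotient estimate of Proposition~\ref{proposition:upper_bonud_qek}, where the orthogonal component vanishes and all coordinates are non-negative --- and the remainder is the routine arithmetic above.
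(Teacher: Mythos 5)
Your proof is correct. The computation $\qe(y)=\qek(y)=\tfrac{k(e-k)}{2e}$ for the normalised constant vector $y=\tfrac{1}{\binom{e}{k}}\mathbf{1}$ on $\partsek$ with $k=\lfloor\tfrac{e}{2}\rfloor$ is right, this $y$ has $\lVert y\rVert_1=1$, and together with Proposition~\ref{proposition:convergence_Nre_maxq} and the upper bound already recorded from Corollary~\ref{corollary:encadrement_final} it does give the equality.

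The route is close to the paper's in substance but differs in mechanics, and the difference is worth noting. The paper does not evaluate the limit functional directly: it iterates Lemma~\ref{lemma:idealbound_shift} starting from $\qbound 0 e=\idealbound 0 e=0$ to show that the \emph{exact} discrete equality $\bestboundcoreblock=\idealbound r e$ holds whenever $r$ is a multiple of $\binom{e}{\lfloor e/2\rfloor}$, and then uses the convergence of the whole sequence (Proposition~\ref{proposition:convergence_Nre_maxq}) to read off the limit along that subsequence. Unwinding Lemma~\ref{lemma:idealbound_shift}, the witness there is $x=\alpha\mathbf{1}$ on $\partsek$, i.e.\ an integer multiple of your $y$ --- so both arguments ultimately rest on the same extremiser, the uniform vector on $\lfloor e/2\rfloor$-subsets, which is the equality case of Proposition~\ref{proposition:upper_bonud_qek} as you observe. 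What your version buys is brevity: it bypasses the shift lemma and all floor-function bookkeeping, using only the spectral data of Lemma~\ref{lemma:spectra_Aeq}. What the paper's version buys is a stronger discrete by-product, namely the exact values $\bestboundcoreblock=\idealbound r e$ along an arithmetic progression of levels $r$, not just the asymptotic statement.
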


\begin{proof}
Since $\qbound 0 e=\idealbound 0 e = 0$, we deduce from Lemma~\ref{lemma:idealbound_shift} and~\eqref{equation:QleqNleqN'} that  $\qbound r e = \bestboundcoreblock = \idealbound r e = \bigl\lfloor\frac{r^2}{2e}\lfloor\frac{e^2}{4}\rfloor\bigr\rfloor$ as soon as $r$ is of the form $r = \alpha\binom{e}{\lfloor\frac{e}{2}\rfloor}$ for some $\alpha \in \Z_{\geq 1}$. This proves that $(\bestboundcoreblock/r^2)_r$ has a subsequence that converges to  $\frac{1}{2e}\bigl\lfloor\frac{e^2}{4}\bigr\rfloor$ and this concludes the proof. 
\end{proof}

\section{Application in  level one}
\label{section:application}

Let $e \geq 2$. We will here apply the  implicit description of the set $\Qcharge$ given at Proposition~\ref{proposition:partitions_wgeq0} to study a shift operation on partitions, and relate it to a  shift operation on blocks that is naturally defined in higher levels.

In this whole section we assume that we are in level one, that is, we have $r = 1$. In particular, without loss of generality we can assume that the charge is zero and therefore we will often omit to write the corresponding superscripts. For instance, the weight of $\alpha = \sum_{i \in \Ze} \cialpha\alpha_i \in \Q$ is given by
\[
\w(\alpha) = \cialpha[0] - \frac{1}{2}\sum_{i \in \Ze} (\cialpha-\cialpha[i+1])^2,
\]
and, recalling Lemma~\ref{lemma:x_vide}, if $\lambda$ is an $e$-core and $i \in \{0,\dots,e-1\}$ then $\xrostamod_i(\lambda) = \xrostam_i(\lambda)$.
 The only case where we keep the superscript is for $\Qcharge[\chargez] = \{ \alpha(\lambda) : \lambda$ is a partition$\}$, to avoid confusion  with the ambient abelian group $\Q \supseteq \Qcharge[\chargez]$. Finally, we define
\begin{align*}
\Qc &\coloneqq \{ \alpha(\lambda) : \lambda   \text{ is an } e\text{-core}\}.
\end{align*}
We recall from Lemma~\ref{lemma:cores_w=0} that $\Qc = \bigl\{\alpha \in \Q : \w(\alpha) = 0\bigr\}$. Let $\ep \in\{1,\dots,e-1\}$ and let $\ordere \in \{2,\dots,e\}$ be the order of $\ep$ in $\Ze$.

\subsection{Shifting blocks}
\label{subsection:shifting_blocks}

\begin{definition}
\label{definition:sblock}
We define the $\mathbb{Z}$-linear map $\sblockprim : \Q \to \Q$ by $\sblock\alpha_i \coloneqq \alpha_{i - \ep}$ for all $i \in \mathbb{Z}/e\mathbb{Z}$.
\end{definition}

The map $\sblockprim$ is an automorphism of $\Q$ of order $\ordere$. As we saw in the introduction, the automorphism $\sblockprim$ appears together with a shift operation $\blam \mapsto \prescript{\sigma}{}\blam$ on $r$-partitions when $r \geq 2$. This shift operation on $r$-partitions is  defined by a (power of a) cyclic permutation of the components of $\blam$, and we have (under suitable conditions on the multicharge $\mcharge$):
\[
\alpha^{\mcharge}\bigl(\prescript{\sigma}{}{\blam}) = \sblock \alpha^\mcharge(\blam)
\] (see~\eqref{equation:intro_shift_block_parts}). In this Section~\ref{section:application} we are interested in the case $r = 1$. As we saw in Definition~\ref{definition:sblock}, the operation $\alpha \mapsto \sigma\cdot\alpha$ on $\Q$ is still defined, but it is unclear what the operation $\lambda \mapsto \prescript{\sigma}{}{\lambda}$ should be. Our aim is to study some properties of such an operation.

\medskip
 Note that :
\begin{equation}
\label{equation:sigma1}
\sblock\constQ=\constQ,
\end{equation}
recalling that $\constQ = \sum_{i \in \Ze} \alpha_i \in \Q$.

\begin{proposition}
\label{proposition:weight_shift}
Let $\alpha = \sum_{i\in\mathbb{Z}/e\mathbb{Z}} \cialpha\alpha_i \in \Q$. We have $\w(\sblock \alpha)=  \w(\alpha) + \cialpha[\ep] - \cialpha[0]$.
\end{proposition}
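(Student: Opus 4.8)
The plan is to prove the identity by a direct computation, tracking how the coordinate sequence of $\alpha$ and the two pieces of the weight formula transform under $\sblockprim$. The whole argument rests on one structural fact: the linear part of $\w$ singles out a single coordinate, whereas the quadratic part depends only on the cyclic sequence of consecutive differences, and the latter is insensitive to a cyclic shift of the indices.

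First I would record the coordinates of $\sblock\alpha$. Writing $\alpha = \sum_{i \in \Ze}\cialpha\alpha_i$ and using the $\Z$-linearity of $\sblockprim$ together with Definition~\ref{definition:sblock}, we get $\sblock\alpha = \sum_{i \in \Ze}\cialpha\alpha_{i-\ep}$. Reading off the coefficient of $\alpha_k$ (take $i = k + \ep$), the $k$-th coordinate of $\sblock\alpha$ equals $\cialpha[k+\ep]$. In other words, $\sblockprim$ acts on the coordinate sequence by the cyclic shift sending $(\cialpha)_{i}$ to $(\cialpha[i+\ep])_{i}$; note the direction, since the convention $\alpha_i \mapsto \alpha_{i-\ep}$ shifts coordinates by $+\ep$.

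The key observation is that the quadratic part of the weight is invariant under this shift. Applying the weight formula to $\sblock\alpha$, its quadratic part is $\frac12\sum_{i \in \Ze}\bigl(\cialpha[i+\ep] - \cialpha[i+1+\ep]\bigr)^2$, and since $i \mapsto i + \ep$ is a bijection of $\Ze$, the change of index $j = i + \ep$ turns this into $\frac12\sum_{j \in \Ze}\bigl(\cialpha[j] - \cialpha[j+1]\bigr)^2$, which is exactly the quadratic part of $\w(\alpha)$. The linear part, by contrast, changes: for $\w(\alpha)$ it is $\cialpha[0]$, while for $\w(\sblock\alpha)$ it is the $0$-th coordinate of $\sblock\alpha$, namely $\cialpha[\ep]$.

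Subtracting the two expressions, the quadratic parts cancel and we are left with $\w(\sblock\alpha) - \w(\alpha) = \cialpha[\ep] - \cialpha[0]$, which is the claim. There is no genuine obstacle here; the only points requiring care are the sign/direction convention of $\sblock\alpha_i = \alpha_{i-\ep}$ (which makes $\sblockprim$ shift coordinates by $+\ep$), and the verification that the reindexing $j = i + \ep$ is legitimate precisely because it permutes $\Ze$.
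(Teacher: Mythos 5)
Your proof is correct and follows essentially the same route as the paper: compute the coordinates of $\sblock\alpha$ as $(\cialpha[i+\ep])_i$, observe that the quadratic part of the weight is invariant under the reindexing $j = i+\ep$, and read off the change $\cialpha[\ep] - \cialpha[0]$ in the linear part. Your extra care about the direction of the coordinate shift is exactly the point where one could slip, and you handle it correctly.
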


\begin{proof}
By definition, we have
\[
\sblock\alpha = \sum_{i \in \mathbb{Z}/e\mathbb{Z}} \cialpha[i+\ep] \alpha_i,
\]
thus
\begin{align*}
\w(\sblock\alpha)
&=
\cialpha[\ep] - \frac{1}{2}\sum_{i \in \mathbb{Z}/e\mathbb{Z}} \left(\cialpha[i+\ep] - \cialpha[i+\ep+1]\right)^2 
\\
&=
\cialpha[\ep] - \frac{1}{2}\sum_{i \in \mathbb{Z}/e\mathbb{Z}} \left(\cialpha[i] - \cialpha[i+1]\right)^2 
\\
&=
\w(\alpha) - \cialpha[0] + \cialpha[\ep].
\end{align*}
\end{proof}

Using Lemma~\ref{lemma:cores_w=0} and Proposition~\ref{proposition:partitions_wgeq0}, we deduce the following corollary.

\begin{corollary}
\label{corollary:shifting_blocks}
Let $\alpha = \sum_{i \in \mathbb{Z}/e\mathbb{Z}} \cialpha\alpha_i \in \Qp$. Then
\[\sblock\alpha \in \Qp \iff \cialpha[0]\leq   \cialpha[\ep] + \w(\alpha),\]
the right-hand side being an equality if and only if  $\sblock \alpha\in \Qc$. In particular, if $\alpha \in \Qc$ then
\[
\sblock \alpha\in \Qp \iff \cialpha[0] \leq \cialpha[\ep],
\]
 the right-hand side being an equality if and only if $\sblock\alpha \in \Qc$.
\end{corollary}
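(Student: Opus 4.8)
The plan is to derive everything from three results already in hand: the weight formula for the shift (Proposition~\ref{proposition:weight_shift}), the characterisation $\Qp = \{\alpha \in \Q : \w(\alpha) \geq 0\}$ (Proposition~\ref{proposition:partitions_wgeq0}), and the characterisation $\Qc = \{\alpha \in \Q : \w(\alpha) = 0\}$ (Lemma~\ref{lemma:cores_w=0}). No new combinatorics is required; the corollary is a direct translation of membership conditions into inequalities on weights.

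First I would apply Proposition~\ref{proposition:partitions_wgeq0} to the element $\sblock\alpha$ in order to rewrite the membership $\sblock\alpha \in \Qp$ as the inequality $\w(\sblock\alpha) \geq 0$. Then I would substitute the value $\w(\sblock\alpha) = \w(\alpha) + \cialpha[\ep] - \cialpha[0]$ supplied by Proposition~\ref{proposition:weight_shift}, so that $\sblock\alpha \in \Qp$ becomes $\w(\alpha) + \cialpha[\ep] - \cialpha[0] \geq 0$, that is, $\cialpha[0] \leq \cialpha[\ep] + \w(\alpha)$, which is the first claimed equivalence. I would point out that this argument does not in fact use the hypothesis $\alpha \in \Qp$: the equivalence holds for every $\alpha \in \Q$, and the restriction to $\Qp$ merely records the case relevant to blocks arising from partitions.

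For the equality clause I would repeat the same substitution, but with Lemma~\ref{lemma:cores_w=0} in place of Proposition~\ref{proposition:partitions_wgeq0}: since $\sblock\alpha \in \Qc \iff \w(\sblock\alpha) = 0$, the identical computation gives $\sblock\alpha \in \Qc \iff \cialpha[0] = \cialpha[\ep] + \w(\alpha)$, which is precisely the case of equality in the inequality just obtained. Finally, the ``in particular'' statement follows by specialising to $\alpha \in \Qc$, where $\w(\alpha) = 0$ by Lemma~\ref{lemma:cores_w=0}; the two conditions then collapse to $\cialpha[0] \leq \cialpha[\ep]$ and $\cialpha[0] = \cialpha[\ep]$ respectively.

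Since every step is a one-line invocation of an earlier result, there is no genuine obstacle here. The only point demanding a little care is bookkeeping: the statement bundles the main equivalence together with its ``equality iff core'' refinement, and both must be read off consistently from the single identity of Proposition~\ref{proposition:weight_shift}, so that the inequality, its equality case, and their specialisation to $\Qc$ line up correctly.
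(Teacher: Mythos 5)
Your proof is correct and follows exactly the route the paper intends: the paper derives this corollary directly from Proposition~\ref{proposition:weight_shift} combined with Lemma~\ref{lemma:cores_w=0} and Proposition~\ref{proposition:partitions_wgeq0}, which is precisely your argument. Your side remark that the hypothesis $\alpha \in \Qp$ is not actually needed for the equivalence is also accurate.
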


\subsection{Shifting partitions}
\label{subsection:shifting_partitions}

In the continuation of Corollary~\ref{corollary:shifting_blocks}, given a partition $\lambda$ such that $\sblock\alpha(\lambda) \in \Qp$ we show how to obtain the partitions that lie in $\sblock\alpha(\lambda)$. The constructions and results that we present in this subsection can be obtained using \emph{Scopes isometries} (see~\cite{scopes} and for instance~\cite{fayers:weights}), a Scopes isometry consisting in swapping two (adjacent) runners in the $e$-abacus.

\begin{definition}
\label{definition:shift_core}
Let $\lambda$ be an $e$-core. We denote by $\tspart\lambda$ the unique $e$-core such that
\[
\x(\tspart\lambda) = \bigl(\x[\ep](\lambda), \dots, \x[e-1](\lambda),\x[0](\lambda),\dots,\x[\ep-1](\lambda)\bigr).
\]
\end{definition}

Note that $\tspart\lambda$ is well-defined by Proposition~\ref{proposition:1-1_correspondance_cores_abaci_y} since $\x[\ep](\lambda) +  \dots + \x[e-1](\lambda) +\x[0](\lambda) +\dots+\x[\ep-1](\lambda) = 0$.  

\begin{proposition}
\label{proposition:alpha_tspartlambda}
Let $\lambda$ be an $e$-core and let  $\diffcont \coloneqq \cialpha[0](\lambda) - \cialpha[\ep](\lambda)$. We have
\[
\alpha(\tspart\lambda) = \sblock\alpha(\lambda) + \diffcont\constQ.
\]
\end{proposition}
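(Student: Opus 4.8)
The plan is to establish the claimed identity coefficientwise in the basis $\{\alpha_i\}_{i\in\Ze}$. First I would unwind the left-hand translate: by Definition~\ref{definition:sblock} one has $\sblock\alpha(\lambda) = \sum_{i\in\Ze}\cialpha[i+\ep](\lambda)\alpha_i$, so the coefficient of $\alpha_i$ in $\sblock\alpha(\lambda)+\delta\constQ$ is $\cialpha[i+\ep](\lambda)+\delta$. Thus the proposition reduces to showing
\[
\cialpha[i](\tspart\lambda) = \cialpha[i+\ep](\lambda) + \delta \qquad \text{for all } i \in \Ze.
\]

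Next I would translate Definition~\ref{definition:shift_core} into the single relation $\x[i](\tspart\lambda) = \x[i+\ep](\lambda)$ for all $i\in\Ze$ (indices read modulo $e$), which is exactly the cyclic shift prescribed there. Combining this with \eqref{equation:yi_ci_ci+1}, which gives $\x[i](\mu) = \cialpha[i](\mu) - \cialpha[i+1](\mu)$ for any $e$-core $\mu$, I obtain that the two sequences $\bigl(\cialpha[i](\tspart\lambda)\bigr)_{i}$ and $\bigl(\cialpha[i+\ep](\lambda)\bigr)_{i}$ have the same consecutive differences. Hence their difference $\cialpha[i](\tspart\lambda) - \cialpha[i+\ep](\lambda)$ is independent of $i$, say equal to some integer $\delta'$; equivalently $\alpha(\tspart\lambda) = \sblock\alpha(\lambda) + \delta'\constQ$.

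The remaining and genuinely essential step is to identify the constant $\delta'$ with $\delta = \cialpha[0](\lambda) - \cialpha[\ep](\lambda)$. The difficulty is that the $\x$-data only pins down a block up to a multiple of $\constQ$, so an extra scalar invariant is needed to fix the constant. I would apply the weight function $\w$ to the identity $\alpha(\tspart\lambda) = \sblock\alpha(\lambda) + \delta'\constQ$. Since $\lambda$ and $\tspart\lambda$ are $e$-cores, Lemma~\ref{lemma:cores_w=0} gives $\w(\alpha(\lambda)) = \w(\alpha(\tspart\lambda)) = 0$. Then \eqref{equation:wcalpha+h} yields $0 = \w(\sblock\alpha(\lambda)) + \delta'$, while Proposition~\ref{proposition:weight_shift} gives $\w(\sblock\alpha(\lambda)) = \w(\alpha(\lambda)) + \cialpha[\ep](\lambda) - \cialpha[0](\lambda) = -\delta$. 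Combining these, $\delta' = \delta$, which completes the proof.

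I expect this last step to be the only delicate one, precisely because the cyclic-shift definition of $\tspart\lambda$ determines $\alpha(\tspart\lambda)$ only modulo $\constQ$, and one must feed in both the vanishing of the weight on cores and the explicit weight change under $\sigma$ from Proposition~\ref{proposition:weight_shift} to fix the offset. An alternative to this step would be to compute $\cialpha[0](\tspart\lambda)$ directly from Corollary~\ref{corollary:expression_ci_in_terms_of_yi}, using that the cyclic shift preserves $\lVert\x(\cdot)\rVert^2$, so that $\cialpha[0](\tspart\lambda) = \tfrac12\lVert\x(\tspart\lambda)\rVert^2 = \tfrac12\lVert\x(\lambda)\rVert^2 = \cialpha[0](\lambda)$; evaluating the constant sequence at $i=0$ then gives $\delta' = \cialpha[0](\tspart\lambda) - \cialpha[\ep](\lambda) = \delta$.
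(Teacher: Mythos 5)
Your proof is correct. The first two steps (reading off the coefficients of $\sblock\alpha(\lambda)+\delta\constQ$, and using $\x[i](\tspart\lambda)=\x[i+\ep](\lambda)$ together with~\eqref{equation:yi_ci_ci+1} to see that $\cialpha[i](\tspart\lambda)-\cialpha[i+\ep](\lambda)$ is a constant $\delta'$) are just a repackaging of the paper's telescoping computation, which writes $\cialpha[i](\tspart\lambda)=\cialpha[0](\tspart\lambda)-\x[0](\tspart\lambda)-\dots-\x[i-1](\tspart\lambda)$ and substitutes. Where you genuinely diverge is in pinning down the constant: the paper evaluates at $i=0$ via Corollary~\ref{corollary:expression_ci_in_terms_of_yi}, using that $\cialpha[0](\mu)=\tfrac12\lVert\x(\mu)\rVert^2$ for an $e$-core and that the cyclic shift preserves the norm — exactly the ``alternative'' you sketch at the end — whereas your main route applies the weight function to both sides, combining Lemma~\ref{lemma:cores_w=0}, \eqref{equation:wcalpha+h} and Proposition~\ref{proposition:weight_shift}. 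Both are valid and there is no circularity, since Proposition~\ref{proposition:weight_shift} is proved independently earlier in~\textsection\ref{subsection:shifting_blocks}. The weight-function route is arguably more conceptual (it isolates the one scalar invariant that the $\x$-data cannot see), at the cost of invoking three external facts where the paper needs only one identity from Corollary~\ref{corollary:expression_ci_in_terms_of_yi}; your closing observation that the ambiguity modulo $\constQ$ is the only delicate point is exactly right.
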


\begin{proof}
By~\eqref{equation:yi_ci_ci+1}  we have $\x[0](\lambda) + \dots + \x[\ep-1](\lambda) = \diffcont$, moreover by Corollary~\ref{corollary:expression_ci_in_terms_of_yi} we have $\cialpha[0](\tspart\lambda) = \cialpha[0](\lambda)$. Hence, for any $i \in \mathbb{Z}/e\mathbb{Z}$ we have by~\eqref{equation:yi_ci_ci+1}
\begin{align*}
\cialpha(\tspart\lambda)
&=
\cialpha[0](\tspart\lambda) - \x[0](\tspart\lambda) - \dots - \x[i-1](\tspart\lambda)
\\
&=
\cialpha[0](\lambda) - \x[\ep](\lambda) - \dots - \x[\ep+i-1](\lambda)
\\
&=
\cialpha[0](\lambda) + \diffcont - \x[0](\lambda) - \dots - \x[\ep+i-1](\lambda)
\\
&=
\cialpha[\ep+i](\lambda) + \diffcont.
\end{align*}
We conclude since
\[
\alpha(\tspart\lambda) = \sum_{i\in\mathbb{Z}/e\mathbb{Z}} \cialpha(\tspart\lambda)\alpha_i = \sum_{i \in\mathbb{Z}/e\mathbb{Z}} \bigl(\cialpha[\ep+i](\lambda)+\diffcont)\alpha_i = \sum_{i \in\mathbb{Z}/e\mathbb{Z}} \cialpha[i](\lambda)\alpha_{i-\ep}  + \diffcont\constQ = \sblock \alpha(\lambda) + \diffcont\constQ.
\]
\end{proof}



The next result shows that Definition~\ref{definition:shift_core} fits with what we can expect between $\sblock\alpha(\lambda)$ and $\alpha(\spart\lambda)$ when $\lambda$ is an $e$-core.
Recall from Definition~\ref{definition:s-core} that any $\alpha \in \Q$ has an associated core.

\begin{corollary}
\label{corollary:core_shift}
Let $\lambda$ be a partition and let $\overline\lambda$ be its $e$-core. Then $\alpha(\spart\overline\lambda)$ is the core of $\sblock\alpha(\lambda)$. 
\end{corollary}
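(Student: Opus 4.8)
The plan is to reduce the statement to Proposition~\ref{proposition:alpha_tspartlambda}, which already describes the effect of the shift on $e$-cores, by exploiting that passing from $\overline\lambda$ to $\lambda$ only changes $\alpha(\lambda)$ by a multiple of $\constQ$, and that $\sblockprim$ fixes $\constQ$. First I would recall the meaning of ``core'' in level one: by Remark~\ref{remark:coreblock_r=1} an element $\alpha(\mu) \in \Q$ is a core block exactly when $\mu$ is an $e$-core, and by Lemma~\ref{lemma:cores_w=0} the set of such elements is $\Qc = \{\alpha \in \Q : \w(\alpha) = 0\}$. Hence, by Lemma~\ref{lemma:core_block_associated_with_any_alpha} and Definition~\ref{definition:s-core}, the core of any $\beta \in \Q$ is the unique element of $\Qc$ of the form $\beta - h\constQ$ with $h \in \Z$. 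Therefore it suffices to exhibit an integer $h$ such that $\alpha(\spart\overline\lambda) = \sblock\alpha(\lambda) - h\constQ$ and to check that $\alpha(\spart\overline\lambda) \in \Qc$.

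For the displayed identity I would argue as follows. Since $\lambda$ is obtained from $\overline\lambda$ by successively adding $\we(\lambda)$ many $e$-rim hooks, Lemma~\ref{lemma:block_add_rim_hook_partition} gives $\alpha(\lambda) = \alpha(\overline\lambda) + \we(\lambda)\constQ$. Applying the $\Z$-linear map $\sblockprim$ and using~\eqref{equation:sigma1}, namely $\sblock\constQ = \constQ$, yields
\[
\sblock\alpha(\lambda) = \sblock\alpha(\overline\lambda) + \we(\lambda)\constQ.
\]
Now $\overline\lambda$ is an $e$-core, so Proposition~\ref{proposition:alpha_tspartlambda} applies and gives $\sblock\alpha(\overline\lambda) = \alpha(\spart\overline\lambda) - \delta\constQ$, where $\delta \coloneqq \cialpha[0](\overline\lambda) - \cialpha[\ep](\overline\lambda)$. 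Substituting, I obtain $\alpha(\spart\overline\lambda) = \sblock\alpha(\lambda) - \bigl(\we(\lambda) - \delta\bigr)\constQ$, so that $h \coloneqq \we(\lambda) - \delta \in \Z$ witnesses the required form.

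It then remains to note that $\spart\overline\lambda$ is an $e$-core (it is defined as such in Definition~\ref{definition:shift_core}), so $\alpha(\spart\overline\lambda) \in \Qc$ has weight zero and is therefore a core block. By the uniqueness part of Lemma~\ref{lemma:core_block_associated_with_any_alpha}, $\alpha(\spart\overline\lambda)$ is indeed the core of $\sblock\alpha(\lambda)$. There is essentially no serious obstacle here beyond the bookkeeping; the one point that genuinely does the work is the level-one identification of core blocks with weight-zero elements (via Remark~\ref{remark:coreblock_r=1} and Lemma~\ref{lemma:cores_w=0}), which is exactly what lets one conclude that the weight-zero element $\alpha(\spart\overline\lambda)$ must be the core. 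The computation itself is routine once Proposition~\ref{proposition:alpha_tspartlambda} and the $\sigma$-invariance of $\constQ$ are invoked.
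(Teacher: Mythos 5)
Your proof is correct and follows essentially the same route as the paper: the paper likewise observes that $\spart\overline\lambda$ is an $e$-core, hence $\alpha(\spart\overline\lambda)$ is a core block, and then concludes by combining Lemma~\ref{lemma:core_block_associated_with_any_alpha} with Proposition~\ref{proposition:alpha_tspartlambda}. You have merely spelled out the bookkeeping (the identity $\alpha(\lambda)=\alpha(\overline\lambda)+\we(\lambda)\constQ$ and the $\sigma$-invariance of $\constQ$) that the paper leaves implicit.
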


\begin{proof}
By definition, the partition $\spart\overline\lambda$ is an $e$-core thus the block $\alpha(\spart\overline\lambda)$ is a core block (recalling Remark~\ref{remark:coreblock_r=1}). The  assertion then follows from Lemma~\ref{lemma:core_block_associated_with_any_alpha} and  Proposition~\ref{proposition:alpha_tspartlambda}. 
\end{proof}

In particular, it follows from Remark~\ref{remark:score_ecore} that if $\lambda$ is a partition such that $\sblock\alpha(\lambda) \in \Qp$ then $\spart\overline{\lambda}$ is the common $e$-core of the partitions that lie in $\sblock\alpha(\lambda)$.
We now propose a generalisation of Definition~\ref{definition:shift_core} when $\lambda$ is not necessarily an $e$-core.

\begin{definition}
Let $\lambda$ be a partition and for any $i \in \{0,\dots,e-1\}$, let $R_i$ be the $i$-th runner of the $e$-abacus of $\lambda$. We define $\spart\lambda$ to be the partition whose $e$-abacus is obtained as follows: for any $i \in \{0,\dots,e-1\}$, the $i$-th runner is $R_{i+\ep}$ (where addition is modulo $e$).
\end{definition}

\begin{remark}
\label{remark:quotient_sigmalambda}
The partition $\spart\lambda$ is the (unique) partition with $e$-core $\spart\overline\lambda$ and $e$-quotient $\bigl(\lambda^{[\ep]},\dots,\lambda^{[e-1]},\lambda^{[0]},\dots,\lambda^{[\ep-1]}\bigr)$. In particular by~\eqref{equation:eweight_sum_length_equotient} we have $\we(\lambda) = \we\bigl(\spart\lambda\bigr)$. 
\end{remark}

\begin{corollary}
\label{corollary:case_spart_lambda_good}
Let $\lambda$ be a partition. We have
\[
\alpha\left(\spart\lambda\right) = \sblock\alpha(\lambda) \iff \lvert\spart\lambda \rvert = \lvert \lambda \rvert \iff \cialpha[0](\lambda) = \cialpha[\ep](\lambda).
\]
\end{corollary}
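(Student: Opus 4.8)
The plan is to establish a single master identity valid for an arbitrary partition $\lambda$, namely
\[
\alpha(\spart\lambda) = \sblock\alpha(\lambda) + \delta\constQ, \qquad \delta \coloneqq \cialpha[0](\lambda) - \cialpha[\ep](\lambda),
\]
and then read off all three equivalences from the fact that, since $e \geq 2$, both $\constQ \neq 0$ and $\lvert\constQ\rvert = e \neq 0$. Granting this identity, $\alpha(\spart\lambda) = \sblock\alpha(\lambda)$ is equivalent to $\delta\constQ = 0$, hence to $\delta = 0$, hence to $\cialpha[0](\lambda) = \cialpha[\ep](\lambda)$. Comparing sums of coordinates on both sides — using $\lvert\alpha(\spart\lambda)\rvert = \lvert\spart\lambda\rvert$ and $\lvert\sblock\alpha(\lambda)\rvert = \lvert\alpha(\lambda)\rvert = \lvert\lambda\rvert$, the latter because $\sblockprim$ merely permutes the basis elements — yields $\lvert\spart\lambda\rvert = \lvert\lambda\rvert + \delta e$, so $\lvert\spart\lambda\rvert = \lvert\lambda\rvert$ is again equivalent to $\delta = 0$. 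Thus once the master identity is in hand, the corollary is immediate.

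To prove the master identity I would first reduce to the $e$-core case. Write $w \coloneqq \we(\lambda)$ and let $\overline\lambda$ be the $e$-core of $\lambda$. By Lemma~\ref{lemma:block_add_rim_hook_partition} we have $\alpha(\lambda) = \alpha(\overline\lambda) + w\constQ$, and since $\constQ$ has all coordinates equal to $1$ this gives $\cialpha[i](\lambda) = \cialpha[i](\overline\lambda) + w$ for every $i$; in particular $\delta = \cialpha[0](\overline\lambda) - \cialpha[\ep](\overline\lambda)$ is unchanged on passing to the $e$-core. By Remark~\ref{remark:quotient_sigmalambda} the $e$-core of $\spart\lambda$ is $\spart\overline\lambda$ and $\we(\spart\lambda) = w$, so likewise $\alpha(\spart\lambda) = \alpha(\spart\overline\lambda) + w\constQ$.

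The core case is then handled by Proposition~\ref{proposition:alpha_tspartlambda}, applied to the $e$-core $\overline\lambda$ (for which the runner-shift of the general definition agrees with the shift of Definition~\ref{definition:shift_core}): it gives $\alpha(\spart\overline\lambda) = \sblock\alpha(\overline\lambda) + \delta\constQ$ with the same $\delta$. Substituting into the expression for $\alpha(\spart\lambda)$ and using $\sblock\constQ = \constQ$ from~\eqref{equation:sigma1} to carry the weight term through $\sblockprim$, that is $\sblock\alpha(\overline\lambda) + w\constQ = \sblock{\bigl(\alpha(\overline\lambda) + w\constQ\bigr)} = \sblock\alpha(\lambda)$, yields the master identity. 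The only delicate point — the \emph{hard part}, such as it is — is this bookkeeping of the weight term: one must verify that $\spart$ preserves the $e$-weight and that the shift commutes with adding rim hooks, both of which reduce to $\we(\spart\lambda) = \we(\lambda)$ and $\sblock\constQ = \constQ$ and are already available. Everything else is routine, and the three equivalences follow at once from $\constQ \neq 0$ and $\lvert\constQ\rvert = e \neq 0$.
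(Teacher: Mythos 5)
Your proposal is correct and follows essentially the same route as the paper: reduce to the $e$-core via Remark~\ref{remark:quotient_sigmalambda} and the invariance of $\cialpha[0]-\cialpha[\ep]$ under removing $e$-rim hooks, apply Proposition~\ref{proposition:alpha_tspartlambda}, and use $\sblock\constQ=\constQ$ to get the identity $\alpha(\spart\lambda)=\sblock\alpha(\lambda)+\bigl(\cialpha[0](\lambda)-\cialpha[\ep](\lambda)\bigr)\constQ$, from which all three equivalences follow. Your bookkeeping is in fact slightly more careful than the paper's (you correctly record $\lvert\spart\lambda\rvert=\lvert\lambda\rvert+\delta e$, where the paper omits the harmless factor $e$), and this changes nothing in the conclusion.
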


\begin{proof}
By Lemma~\ref{lemma:add_rim_hook_residues} we have $\cialpha[0](\lambda) - \cialpha[\ep](\lambda) = \cialpha[0](\overline\lambda) - \cialpha[\ep](\overline\lambda)$, thus from~\eqref{equation:sigma1}, Proposition~\ref{proposition:alpha_tspartlambda} and Remark~\ref{remark:quotient_sigmalambda} we obtain
\[
\alpha(\spart\lambda) = \sblock\alpha(\lambda) + \left(\cialpha[0](\lambda) - \cialpha[\ep](\lambda)\right)\mathbf{1}.
\]
Since $\lvert \sblock\alpha(\lambda)\rvert = \lvert \alpha(\lambda)\rvert = \lvert\lambda\rvert$, we deduce that $\lvert \spart\lambda\rvert = \lvert \lambda \rvert + \cialpha[0](\lambda) - \cialpha[\ep](\lambda)$ and this concludes the proof.
\end{proof}

\begin{remark}
Is is easy to construct an $e$-core (and thus, a partition)  that satisfies the equivalent conditions of Corollary~\ref{corollary:case_spart_lambda_good}. Indeed, if $\lambda$ is an $e$-core then by~\eqref{equation:yi_ci_ci+1} we have $\cialpha[0](\lambda) = \cialpha[\ep](\lambda)$ if and only if $\xrostamod_0(\lambda) + \dots + \xrostamod_{\ep-1}(\lambda) = 0$.
\end{remark}

\subsection{Some properties}
\label{subsection:some_properties}

In this subsection, we always assume that $\ep$ divides $e$. In particular, the order $\ordere$ of $\ep$ in $\Ze$ is $\ordere = \frac{e}{\ep}$. We will count both modulo $e$ and $\ep$, thus to avoid ambiguities we will add a prime when we compute modulo $\ep$. For instance, for any partition $\lambda$ and for any $\ip \in \{0,\dots,\ep-1\}$ we will denote by $\cialpha[\ip]'(\lambda)$ the number of nodes $\gamma$ of $\lambda$ such that $\res{}(\gamma) = \ip\pmod{\ep}$.
 The next lemma is immediate.

\begin{lemma}
\label{lemma:ci_ehat}
Let  $\ip \in \{0,\dots,\ep - 1\}$. For any partition $\lambda$ we have
\[
\cialpha[\ip]'(\lambda) = \sum_{k = 0}^{\ordere - 1} \cialpha[\ip + k\ep](\lambda).
\]
\end{lemma}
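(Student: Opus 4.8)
The plan is to count the nodes of $\lambda$ by grouping the $e$-residues according to their reduction modulo $\ep$. Since we work in level one with charge zero, a node $\gamma = (a,b)$ has $e$-residue $\res{}(\gamma) = b - a \pmod{e}$ and $\ep$-residue $b - a \pmod{\ep}$; thus $\cialpha[\ip]'(\lambda)$ counts the nodes $\gamma$ of $\lambda$ with $b - a \equiv \ip \pmod{\ep}$, while each $\cialpha[\ip + k\ep](\lambda)$ counts those with $b - a \equiv \ip + k\ep \pmod{e}$.

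First I would describe the fiber over $\ip$ of the canonical projection $\Ze \to \mathbb{Z}/\ep\mathbb{Z}$. Because $\ep \mid e$ and $\ordere = e/\ep$, this fiber is exactly $\{\ip + k\ep \bmod e : 0 \le k \le \ordere - 1\}$, and these $\ordere$ residues are pairwise distinct modulo $e$: indeed, $\ip + k_1\ep \equiv \ip + k_2\ep \pmod{e}$ forces $(k_1 - k_2)\ep \equiv 0 \pmod{e}$, hence $k_1 \equiv k_2 \pmod{\ordere}$, so that $k_1 = k_2$ whenever $k_1, k_2 \in \{0,\dots,\ordere - 1\}$.

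Then I would conclude: a node $\gamma$ of $\lambda$ has $\ep$-residue $\ip$ if and only if its $e$-residue belongs to this fiber. Hence the set of nodes counted by $\cialpha[\ip]'(\lambda)$ is the disjoint union, over $k \in \{0,\dots,\ordere - 1\}$, of the sets of $(\ip + k\ep)$-nodes of $\lambda$, and taking cardinalities yields the claimed identity. The argument presents no genuine obstacle — in keeping with the paper calling it immediate — the only point requiring care being the fiber description above, which is elementary modular arithmetic relying on the divisibility $\ep \mid e$.
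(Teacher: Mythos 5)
Your proof is correct and is precisely the elementary argument the paper has in mind when it declares the lemma immediate: the $e$-residues reducing to $\ip$ modulo $\ep$ form the fiber $\{\ip + k\ep : 0 \leq k \leq \ordere-1\}$, pairwise distinct because $\ep \mid e$, so the count decomposes as a disjoint union. Nothing is missing.
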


Let $\Q' = \oplus_{j \in \mathbb{Z}/\ep\mathbb{Z}} \mathbb{Z}\alphap_j$ be a free abelian group with basis $\{\alphap_j\}_{j \in\mathbb{Z}/\ep\mathbb{Z}}$. We consider the $\mathbb{Z}$-linear map $\pi : \Q \to \Q'$ determined by
\[
\pi(\alpha_i)\coloneqq \alphap_{i \bmod {\ep}},
\]
for all $i \in \mathbb{Z}/e\mathbb{Z}$. Note that $i \bmod \ep$ is well-defined since we have assumed that $\ep$ divides $e$. It follows from the definition of the automorphism $\sigma$ that
\[
\pi(\sblock\alpha) = \pi(\alpha) \in \Q',
\]
for any $\alpha \in \Q$.  We then deduce from Lemma~\ref{lemma:block_common_core} the following result.

\begin{proposition}
\label{proposition:shift_share_same_core}
Let $\alpha \in \Qp$ so that $\sblock\alpha \in \Qp$. If a partition $\lambda$ (respectively, $\mu$) lies in~$\alpha$ (resp. $\sblock\alpha$) then $\lambda$ and $\mu$ share the same $\ep$-core.
\end{proposition}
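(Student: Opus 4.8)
The plan is to transport the statement from $\Q$ to the smaller group $\Q'$ via the projection $\pi$, exploiting the already-noted relation $\pi(\sblock\alpha)=\pi(\alpha)$ to make the shift invisible, and then to recognise $\pi\bigl(\alpha(\nu)\bigr)$ as the level-$\ep$ block of a partition $\nu$. Once this identification is in place, Lemma~\ref{lemma:block_common_core}, read with $\ep$ in place of $e$ inside the ambient group $\Q'$, finishes the argument. So there are really only two ingredients: the computation of $\pi$ on blocks, and the transfer of the level-one ``Nakayama'' lemma to parameter $\ep$.

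The first and main step I would carry out is the identity $\pi\bigl(\alpha(\nu)\bigr)=\sum_{j\in\Z/\ep\Z}\cialpha[j]'(\nu)\,\alphap_j$ for every partition $\nu$. Since $\ep$ divides $e$ (the standing assumption of this subsection), the $\ep$-residue of a node is simply the reduction modulo $\ep$ of its $e$-residue; grouping the basis vectors $\alpha_i$ of $\Q$ according to $i\bmod\ep$ and applying Lemma~\ref{lemma:ci_ehat} gives
\[
\pi\bigl(\alpha(\nu)\bigr)=\sum_{i\in\Ze}\cialpha[i](\nu)\,\alphap_{i\bmod\ep}=\sum_{j\in\Z/\ep\Z}\Bigl(\sum_{k=0}^{\ordere-1}\cialpha[j+k\ep](\nu)\Bigr)\alphap_j=\sum_{j\in\Z/\ep\Z}\cialpha[j]'(\nu)\,\alphap_j.
\]
Thus $\pi\bigl(\alpha(\nu)\bigr)$ is exactly the element of $\Q'$ encoding the $\ep$-residues of $\nu$, i.e.\ the block of $\nu$ for the charge-zero level-one combinatorics at parameter $\ep$.

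For the conclusion, I would use that by hypothesis $\alpha(\lambda)=\alpha$ and $\alpha(\mu)=\sblock\alpha$; applying $\pi$ and invoking $\pi(\sblock\alpha)=\pi(\alpha)$ yields $\pi\bigl(\alpha(\lambda)\bigr)=\pi(\alpha)=\pi(\sblock\alpha)=\pi\bigl(\alpha(\mu)\bigr)$, so by the identity above $\lambda$ and $\mu$ have the same $\ep$-block in $\Q'$. Lemma~\ref{lemma:block_common_core}, being a purely combinatorial statement about partitions and residues for a fixed modulus, may be reread verbatim with $\ep$ in place of $e$, and then tells us that two partitions lying in the same $\ep$-block share their $\ep$-core, which is the assertion. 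The only points requiring care — and hence the ``hard part'', though it is soft — are the verification that $\pi$ genuinely implements the reduction of residues from $e$ to $\ep$ (this is precisely where $\ep\mid e$ enters) and the observation that the cited lemma applies at the parameter $\ep$; the real content, the relation $\pi\circ\sigma=\pi$, is already granted in the text.
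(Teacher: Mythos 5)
Your proposal is correct and follows exactly the paper's route: the paper introduces the same projection $\pi:\Q\to\Q'$, records the relation $\pi(\sblock\alpha)=\pi(\alpha)$, and deduces the statement from Lemma~\ref{lemma:block_common_core} applied at the parameter $\ep$. The only difference is that you spell out the intermediate identity $\pi\bigl(\alpha(\nu)\bigr)=\sum_{j}\cialpha[j]'(\nu)\,\alphap_j$ via Lemma~\ref{lemma:ci_ehat}, which the paper leaves implicit.
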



 Recall the classical result that any $\ep$-core is an $e$-core (cf. Lemma~\ref{lemma:removal_hook_beta_number}).

\begin{proposition}
\label{proposition:e_ehat_core}
Let $\lambda$ be an $e$-core and assume that $\sblock\alpha(\lambda) = \alpha(\lambda)$. Then $\lambda$ is  an $\ep$-core.
\end{proposition}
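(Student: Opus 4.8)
The plan is to translate the stuttering condition $\sblock\alpha(\lambda) = \alpha(\lambda)$ into a periodicity statement on the residue counts, and then read off the $\ep$-weight of $\lambda$ directly. First I would unwind Definition~\ref{definition:sblock}: writing $\alpha(\lambda) = \sum_{i \in \Ze}\cialpha(\lambda)\alpha_i$, the coefficient of $\alpha_i$ in $\sblock\alpha(\lambda)$ is $\cialpha[i+\ep](\lambda)$, so the identity $\sblock\alpha(\lambda) = \alpha(\lambda)$ is \emph{equivalent} to $\cialpha[i+\ep](\lambda) = \cialpha(\lambda)$ for all $i \in \Ze$. In other words, $i \mapsto \cialpha(\lambda)$ is invariant under the shift by $\ep$, hence constant on each coset of $\langle\ep\rangle$ in $\Ze$ (recall $\ep \mid e$, so there are $\ep$ such cosets, each of size $\ordere = \frac{e}{\ep}$).

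Next I would pass to the $\ep$-residues. For $\ip \in \{0,\dots,\ep-1\}$, Lemma~\ref{lemma:ci_ehat} gives $\cialpha[\ip]'(\lambda) = \sum_{k=0}^{\ordere-1}\cialpha[\ip+k\ep](\lambda)$, and by the periodicity just established every summand equals $\cialpha[\ip](\lambda)$, so $\cialpha[\ip]'(\lambda) = \ordere\,\cialpha[\ip](\lambda)$. The same periodicity shows that the consecutive differences of $\ep$-residue counts scale by the same factor, namely $\cialpha[\ip]'(\lambda) - \cialpha[\ip+1]'(\lambda) = \ordere\bigl(\cialpha[\ip](\lambda) - \cialpha[\ip+1](\lambda)\bigr)$ for all $\ip \in \mathbb{Z}/\ep\mathbb{Z}$, where on the right the indices are read modulo $e$. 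The only point needing care is the wrap-around term $\ip = \ep-1$, where I use $\cialpha[0](\lambda) = \cialpha[\ep](\lambda)$ (again periodicity) to match the boundary $\cialpha[\ep-1]'(\lambda) - \cialpha[0]'(\lambda) = \ordere\bigl(\cialpha[\ep-1](\lambda) - \cialpha[\ep](\lambda)\bigr)$.

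Then I would compute the $\ep$-weight. By Proposition~\ref{proposition:we=wc} applied with modulus $\ep$, the partition $\lambda$ is an $\ep$-core exactly when $\we[\ep](\lambda) = \cialpha[0]'(\lambda) - \frac12\sum_{\ip \in \mathbb{Z}/\ep\mathbb{Z}}\bigl(\cialpha[\ip]'(\lambda) - \cialpha[\ip+1]'(\lambda)\bigr)^2 = 0$. Substituting the two identities above rewrites this as $\ordere\,\cialpha[0](\lambda) - \frac{\ordere^2}{2}\sum_{\ip=0}^{\ep-1}\bigl(\cialpha[\ip](\lambda)-\cialpha[\ip+1](\lambda)\bigr)^2$. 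On the other hand $\lambda$ is an $e$-core, so by Lemma~\ref{lemma:cores_w=0} (and Proposition~\ref{proposition:we=wc}) one has $\w(\alpha(\lambda)) = 0$, that is $\cialpha[0](\lambda) = \frac12\sum_{i=0}^{e-1}\bigl(\cialpha(\lambda)-\cialpha[i+1](\lambda)\bigr)^2$. Since the differences $\cialpha(\lambda)-\cialpha[i+1](\lambda)$ are $\ep$-periodic, the $e$-fold sum collapses to $\ordere$ times the $\ep$-fold sum, giving $\cialpha[0](\lambda) = \frac{\ordere}{2}\sum_{\ip=0}^{\ep-1}\bigl(\cialpha[\ip](\lambda)-\cialpha[\ip+1](\lambda)\bigr)^2$. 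Plugging this into the expression for $\we[\ep](\lambda)$ makes the two terms cancel, so $\we[\ep](\lambda) = 0$ and $\lambda$ is an $\ep$-core.

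The argument is essentially bookkeeping; the one genuine subtlety is keeping the two moduli $e$ and $\ep$ straight and verifying the boundary term of the difference identity, which is precisely where the hypothesis $\sblock\alpha(\lambda) = \alpha(\lambda)$ is used (it is exactly what makes the shift by $\ep$ invisible at the level of residue counts). An alternative, more geometric route would use the $e$-abacus: since $\lambda$ is an $e$-core its runners have no gaps, and one could instead argue that the stuttering condition forces the $\ep$-abacus runners to have no gaps either. However, given the tools already assembled in this subsection, the residue-count computation above seems the most direct.
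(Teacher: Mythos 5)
Your proof is correct and follows essentially the same route as the paper's: translate $\sblock\alpha(\lambda)=\alpha(\lambda)$ into $\ep$-periodicity of the residue counts, apply Lemma~\ref{lemma:ci_ehat} and Proposition~\ref{proposition:we=wc}, and collapse the $e$-fold sum of squared differences to $\ordere$ times the $\ep$-fold sum to conclude $\we[\ep](\lambda)=\ordere\,\we(\lambda)=0$. The only cosmetic difference is that you substitute the $e$-core identity for $\cialpha[0](\lambda)$ and cancel, where the paper factors the whole expression as $\ordere\,\we(\lambda)$ directly.
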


\begin{proof}
Since $\sblock\alpha(\lambda) = \alpha(\lambda)$ we have $\cialpha(\lambda) = \cialpha[i + \ep](\lambda)$ for any $i \in \{0,\dots,e-1\}$. We deduce that
\begin{align*}
\sum_{i = 0}^{e-1}\left(\cialpha(\lambda)-\cialpha[i+1](\lambda)\right)^2 &= \ordere \sum_{\ip=0}^{\ep-1} \left(\cialpha[\ip](\lambda)-\cialpha[\ip+1](\lambda)\right)^2,
\\
\intertext{and, by Lemma~\ref{lemma:ci_ehat},}
\cialpha[\ip]'(\lambda) &= \ordere \cialpha[\ip](\lambda),
\end{align*}
for any $\ip \in \{0,\dots,\ep-1\}$.
By Proposition~\ref{proposition:we=wc}, we obtain
\begin{align*}
\we[\ep](\lambda)
&=
\cialpha[0]'(\lambda) - \frac{1}{2}\sum_{\ip=0}^{\ep-1} \left(\cialpha[\ip]'(\lambda) - \cialpha[\ip+1]'(\lambda)\right)^2
\\
&=
\ordere \cialpha[0](\lambda) - \frac{\ordere^2}{2} \sum_{\ip =0}^{\ep-1} \left(\cialpha[\ip](\lambda) - \cialpha[\ip+1](\lambda)\right)^2
\\
&=
\ordere \cialpha[0](\lambda) - \frac{\ordere}{2}\sum_{i =0}^{e-1} \left(\cialpha(\lambda) - \cialpha[i+1](\lambda)\right)^2
\\
&=
\ordere \we(\lambda)
\\
&=
0,
\end{align*}
thus $\lambda$ is an $\ep$-core.
\end{proof}

The previous proposition involves a block $\alpha \in \Q$ satisfying $\alpha = \sblock\alpha$. We say that such a block is \emph{stuttering}. In a similar spirit as in~\cite{rostam:stuttering}, we will now study the relationship between stuttering blocks and stuttering partitions, that is, partitions $\lambda$ satisfying $\spart\lambda=\lambda$.

\begin{lemma}
\label{lemma:stuttering_partition}
Let $\lambda$ be a partition that satisfies $\spart\lambda =\lambda$. Then $\sblock\alpha(\lambda) = \alpha(\lambda)$ and $\ordere \mid \we(\lambda)$.
\end{lemma}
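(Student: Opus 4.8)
The plan is to treat the two assertions separately, since they draw on different earlier results. The equality $\sblock\alpha(\lambda) = \alpha(\lambda)$ will follow almost immediately from Corollary~\ref{corollary:case_spart_lambda_good}, whereas the divisibility $\ordere \mid \we(\lambda)$ will come from reading the hypothesis $\spart\lambda = \lambda$ through the $e$-quotient description of $\spart\lambda$ supplied by Remark~\ref{remark:quotient_sigmalambda}.

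For the first claim, I would begin with the tautology that $\spart\lambda = \lambda$ forces $\lvert\spart\lambda\rvert = \lvert\lambda\rvert$. Corollary~\ref{corollary:case_spart_lambda_good} states that this size equality is equivalent to $\alpha(\spart\lambda) = \sblock\alpha(\lambda)$ (and also to $\cialpha[0](\lambda) = \cialpha[\ep](\lambda)$). Since $\spart\lambda = \lambda$ gives $\alpha(\spart\lambda) = \alpha(\lambda)$, substituting into that equality yields $\sblock\alpha(\lambda) = \alpha(\lambda)$ directly, with no further computation.

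For the second claim, I would use that a partition is uniquely determined by its $e$-core together with its $e$-quotient. By Remark~\ref{remark:quotient_sigmalambda}, the $e$-quotient of $\spart\lambda$ is $\bigl(\lambda^{[\ep]},\dots,\lambda^{[e-1]},\lambda^{[0]},\dots,\lambda^{[\ep-1]}\bigr)$; comparing with the $e$-quotient $\bigl(\lambda^{[0]},\dots,\lambda^{[e-1]}\bigr)$ of $\lambda$, the equality $\spart\lambda = \lambda$ forces $\lambda^{[i+\ep]} = \lambda^{[i]}$ for every $i \in \Ze$ (indices taken modulo $e$). In particular $\lvert\lambda^{[i]}\rvert$ is invariant under $i \mapsto i+\ep$, hence constant along each orbit of this cyclic shift. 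Since in this subsection $\ep$ divides $e$, every such orbit has exactly $\ordere = \frac{e}{\ep}$ elements, so grouping the sum in~\eqref{equation:eweight_sum_length_equotient} by orbits exhibits $\we(\lambda) = \sum_{i=0}^{e-1}\lvert\lambda^{[i]}\rvert$ as $\ordere$ times a sum over orbit representatives, giving $\ordere \mid \we(\lambda)$. The steps are all routine given the earlier results; the only point requiring care is the bookkeeping of the shift on the $e$-quotient — verifying that the direction of the index shift matches Remark~\ref{remark:quotient_sigmalambda} and that each orbit of $i \mapsto i+\ep$ on $\Ze$ genuinely has size $\ordere$, so that the orbit-grouping is clean. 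No genuine obstacle is expected beyond this indexing verification.
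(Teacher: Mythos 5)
Your proposal is correct and follows essentially the same route as the paper: the first assertion via Corollary~\ref{corollary:case_spart_lambda_good} applied to $\lvert\spart\lambda\rvert=\lvert\lambda\rvert$, and the second via the $\ep$-periodicity of the $e$-quotient from Remark~\ref{remark:quotient_sigmalambda} combined with~\eqref{equation:eweight_sum_length_equotient}. Your orbit-counting phrasing is just a slightly more explicit version of the paper's observation that the quotient is the block $\lambda^{[0]},\dots,\lambda^{[\ep-1]}$ repeated $\ordere$ times.
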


\begin{proof}
The first assertion is clear by Corollary~\ref{corollary:case_spart_lambda_good} since $\lvert \spart\lambda\rvert = \lvert\lambda\rvert$. For the second assertion, since $\spart\lambda=\lambda$ then we know by  Remark~\ref{remark:quotient_sigmalambda} that  the $e$-quotient of $\lambda$ is
\[
\left(\lambda^{[0]},\dots,\lambda^{[\ep-1]},\lambda^{[0]},\dots,\lambda^{[\ep-1]},\dots,\lambda^{[0]},\dots,\lambda^{[\ep-1]}\right),
\]
where the sequence $\lambda^{[0]},\dots,\lambda^{[\ep-1]}$ is repeated $\ordere$ times. Thus by~\eqref{equation:eweight_sum_length_equotient} we have
\[
\we(\lambda) = \ordere \sum_{i = 0}^{\ep-1} \bigl\lvert\lambda^{[i]}\bigr\rvert,
\]
which concludes the proof.
\end{proof}

The aim is now to give a converse statement for Lemma~\ref{lemma:stuttering_partition}, proving Theorem~\ref{theoremintro:stuttering} from the introduction.

\begin{proposition}
\label{proposition:stuttering_core_block}
Let $\lambda$ be an $e$-core. If $\sblock\alpha(\lambda) = \alpha(\lambda)$ then $\spart\lambda=\lambda$.
\end{proposition}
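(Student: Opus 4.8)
The plan is to reduce everything to the sequence $\bigl(\x[i](\lambda)\bigr)_{i\in\Ze}$ of first-gap positions, which completely determines an $e$-core. Since $\lambda$ is an $e$-core and the charge is zero, Definition~\ref{definition:shift_core} presents $\spart\lambda$ as the unique $e$-core obtained by cyclically shifting this sequence by $\ep$, that is, $\x[i](\spart\lambda) = \x[i+\ep](\lambda)$ for all $i \in \Ze$ (indices read modulo $e$). So it will suffice to show that the sequence $\bigl(\x[i](\lambda)\bigr)_i$ is itself invariant under this shift, and then invoke the uniqueness part of Proposition~\ref{proposition:1-1_correspondance_cores_abaci_y} to conclude $\spart\lambda = \lambda$.

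First I would translate the hypothesis into a periodicity statement. Writing $\alpha(\lambda) = \sum_{i \in \Ze}\cialpha[i](\lambda)\alpha_i$, the coefficient of $\alpha_i$ in $\sblock\alpha(\lambda)$ is $\cialpha[i+\ep](\lambda)$ (reindexing the image of each $\alpha_i$ under $\sigma$). Hence the stuttering condition $\sblock\alpha(\lambda) = \alpha(\lambda)$ is exactly the $\ep$-periodicity $\cialpha[i](\lambda) = \cialpha[i+\ep](\lambda)$ for all $i \in \Ze$. Next I would pass from the $\cialpha[i]$ to the $\x[i]$: by~\eqref{equation:yi_ci_ci+1} we have $\x[i](\lambda) = \cialpha[i](\lambda) - \cialpha[i+1](\lambda)$ for the $e$-core $\lambda$, so combining with the periodicity just obtained gives $\x[i+\ep](\lambda) = \cialpha[i+\ep](\lambda) - \cialpha[i+1+\ep](\lambda) = \cialpha[i](\lambda) - \cialpha[i+1](\lambda) = \x[i](\lambda)$ for every $i$. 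Therefore $\x[i](\spart\lambda) = \x[i+\ep](\lambda) = \x[i](\lambda)$ for all $i$, so the two $e$-cores $\spart\lambda$ and $\lambda$ share the same first-gap data and must coincide.

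There is essentially no hard step here: the whole argument is a short chain of equalities, once both the block identity and the core-shift have been rewritten in terms of the $\x[i]$. The only points requiring a little care are the bookkeeping of the index shifts modulo $e$ (identifying the coefficient of $\alpha_i$ in $\sblock\alpha(\lambda)$, and matching the exact indexing convention of Definition~\ref{definition:shift_core}), together with the observation that for an $e$-core the partition $\spart\lambda$ defined via runners agrees with $\tspart\lambda$ of Definition~\ref{definition:shift_core}; this follows from Remark~\ref{remark:quotient_sigmalambda}, since the $e$-quotient of an $e$-core is trivial and shifting empty components leaves them empty. I would also remark that, although this subsection standingly assumes $\ep \mid e$, that hypothesis is not actually used in this particular implication.
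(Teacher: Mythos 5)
Your proof is correct, but it follows a genuinely different route from the paper's. The paper extracts from the hypothesis only the single equality $\cialpha[0](\lambda) = \cialpha[\ep](\lambda)$, feeds it into Corollary~\ref{corollary:case_spart_lambda_good} to get $\alpha(\spart\lambda) = \sblock\alpha(\lambda) = \alpha(\lambda)$, and then concludes from Lemma~\ref{lemma:block_common_core} that the two $e$-cores $\lambda$ and $\spart\lambda$, lying in the same block, must coincide. You instead use the full strength of the hypothesis --- the $\ep$-periodicity $\cialpha[i](\lambda) = \cialpha[i+\ep](\lambda)$ for all $i$ --- translate it via~\eqref{equation:yi_ci_ci+1} into $\x[i+\ep](\lambda) = \x[i](\lambda)$, and conclude directly from the uniqueness in Definition~\ref{definition:shift_core} (equivalently Proposition~\ref{proposition:1-1_correspondance_cores_abaci_y}). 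Your argument is more self-contained: it bypasses both Corollary~\ref{corollary:case_spart_lambda_good} and the block-theoretic uniqueness of $e$-cores, at the price of being slightly more computational; the paper's version reuses machinery it has already built. Your side observations are accurate: the identification of $\spart\lambda$ with $\tspart\lambda$ for an $e$-core does follow from Remark~\ref{remark:quotient_sigmalambda} since the $e$-quotient is trivial, and indeed neither argument uses the standing assumption $\ep \mid e$ of~\textsection\ref{subsection:some_properties}.
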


\begin{proof}
The assumption implies that  $\cialpha[0](\lambda) = \cialpha[\ep](\lambda)$. Hence, by Corollary~\ref{corollary:case_spart_lambda_good} we have $\alpha(\spart\lambda) =  \sblock\alpha(\lambda) = \alpha(\lambda)$, thus $\lambda$ and $\spart\lambda$ lie in the same block. This concludes since $\lambda$ is an $e$-core (recalling Lemma~\ref{lemma:block_common_core}).
\end{proof}

\begin{corollary}
\label{corollary:stuttering_blocks}
Let $\alpha \in \Qp$. If $\alpha = \sblock\alpha$  and $\ordere \mid \w(\alpha)$ then there exists $\lambda$ with $\alpha(\lambda) = \alpha$ satisfying $\spart\lambda = \lambda$.
\end{corollary}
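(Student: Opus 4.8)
The plan is to reduce the assertion to the $e$-core case already settled in Proposition~\ref{proposition:stuttering_core_block}, and then to recover a partition lying in $\alpha$ by equipping the stuttering $e$-core with a periodic $e$-quotient of the correct size. The only real inputs are the stability of the stuttering condition under subtracting a multiple of $\constQ$ and the divisibility hypothesis $\ordere \mid \w(\alpha)$, which lets the weight be spread evenly across the $\ordere$ identical copies of the quotient.

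First I would pass to the core of $\alpha$. Since $\alpha \in \Qp$, Proposition~\ref{proposition:partitions_wgeq0} gives $\w(\alpha) \geq 0$, so I set $\const \coloneqq \alpha - \w(\alpha)\constQ$. By~\eqref{equation:wcalpha+h} we have $\w(\const) = 0$, hence $\const \in \Qc$ by Lemma~\ref{lemma:cores_w=0}, and I write $\const = \alpha(\overline\lambda)$ for the associated $e$-core $\overline\lambda$. The key point is that $\const$ inherits the stuttering property: using $\sblock\alpha = \alpha$ together with $\sblock\constQ = \constQ$ from~\eqref{equation:sigma1}, I obtain $\sblock\const = \sblock\alpha - \w(\alpha)\,\sblock\constQ = \alpha - \w(\alpha)\constQ = \const$. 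In other words $\sblock\alpha(\overline\lambda) = \alpha(\overline\lambda)$, so Proposition~\ref{proposition:stuttering_core_block} applies and gives $\spart\overline\lambda = \overline\lambda$.

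Next I would build the desired partition by prescribing its $e$-core and $e$-quotient. Set $m \coloneqq \w(\alpha)/\ordere$, which is a non-negative integer thanks to the divisibility hypothesis and $\w(\alpha)\geq 0$. Choose any partitions $\mu^{(0)},\dots,\mu^{(\ep-1)}$ with $\sum_{i=0}^{\ep-1}\lvert\mu^{(i)}\rvert = m$ (for instance $\mu^{(0)} = (m)$ and the remaining ones empty), and let $\lambda$ be the partition whose $e$-core is $\overline\lambda$ and whose $e$-quotient is the $\ordere$-fold repetition of the block $(\mu^{(0)},\dots,\mu^{(\ep-1)})$, i.e. $\lambda^{[i+k\ep]} = \mu^{(i)}$ for all $0 \leq i \leq \ep-1$ and $0 \leq k \leq \ordere-1$. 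This $e$-quotient is by construction invariant under the cyclic shift by $\ep$, and since $\spart\overline\lambda = \overline\lambda$, Remark~\ref{remark:quotient_sigmalambda} yields $\spart\lambda = \lambda$. Finally, by~\eqref{equation:eweight_sum_length_equotient} we have $\we(\lambda) = \ordere\, m = \w(\alpha)$, so iterating Lemma~\ref{lemma:block_add_rim_hook_partition} gives $\alpha(\lambda) = \alpha(\overline\lambda) + \we(\lambda)\constQ = \const + \w(\alpha)\constQ = \alpha$, showing that $\lambda$ lies in $\alpha$.

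The argument is essentially bookkeeping once Proposition~\ref{proposition:stuttering_core_block} is in hand, so I do not expect a genuine difficulty; the one place to be careful is matching conventions. Specifically, the step that deserves attention is checking that the shift appearing in Remark~\ref{remark:quotient_sigmalambda} is exactly the cyclic shift by $\ep$, so that ``$e$-quotient with period $\ep$'' together with ``$e$-core fixed by $\sigma$'' is precisely the condition $\spart\lambda=\lambda$ when $\overline\lambda$ is itself stuttering. Once this identification is made, the divisibility $\ordere \mid \w(\alpha)$ is exactly what is needed to realize the total weight $\w(\alpha)$ as $\ordere$ equal copies, and the construction goes through.
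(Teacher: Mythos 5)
Your proof is correct and follows essentially the same route as the paper: reduce to the stuttering $e$-core (the paper takes the $e$-core of a partition lying in $\alpha$, you take $\alpha-\w(\alpha)\constQ$ via Lemma~\ref{lemma:cores_w=0}, which is the same thing), apply Proposition~\ref{proposition:stuttering_core_block}, and then distribute the weight $\w(\alpha)=\ordere m$ symmetrically across the runners — the paper's bead-sliding construction is exactly your periodic $e$-quotient with $\mu^{(0)}=(m)$.
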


\begin{proof}
Let $\mu$ be a partition with $\alpha(\mu) = \alpha$ and let $\overline\mu$ be the $e$-core of $\mu$. By Lemma~\ref{lemma:block_add_rim_hook_partition} and~\eqref{equation:sigma1}, since $\sblock\alpha(\mu) = \alpha(\mu)$  we have $\sblock\alpha(\overline\mu) = \alpha(\overline\mu)$. Thus, by Proposition~\ref{proposition:stuttering_core_block} we have $\spart\overline\mu = \overline\mu$. By assumption and~\eqref{equation:wclambda=wcalphalambda}, we can write $\we(\mu) = w \ordere$ with $w \in \N$.  Now let $\lambda$ be the partition whose $e$-abacus is obtained as follows: for each $i \in \{0,\dots,\ordere-1\}$, we slide $w$ times the rightmost bead on runner $i\ep$ of the $e$-abacus corresponding to $\overline\mu$. By Lemmas~\ref{lemma:block_add_rim_hook_partition} and~\ref{lemma:remove_hook_abacus}, the  partition~$\lambda$ satisfies $\alpha(\lambda) = \alpha(\overline\mu) + \ordere w \mathbf{1} = \alpha(\overline\mu) + \we(\mu)\mathbf{1} = \alpha(\mu)$, and by construction $\spart\lambda = \lambda$.
\end{proof}

Note that for any $h \in \N$, by~\eqref{equation:sigma1}  the block $\alpha = h\mathbf{1}\in\Qp$ satisfies $\alpha = \sblock\alpha$, however, we have $\ordere \mid \w(\alpha)$ if and only if $\ordere \mid h$.

\paragraph*{Acknowledgements} The author is thankful to Jérémy Le Borgne for many discussions about~\textsection\ref{subsubsection:bounding_bounds}, to Cédric Lecouvey for a discussion about Kac--Moody algebras and to Maria Chlouveraki and Nicolas Jacon for some suggestions. The author also thank the Centre Henri Lebesgue ANR-11-LABX-0020-0 and the referees for useful comments.

\paragraph*{Declarations}
\begin{itemize}
\item Funding:  the author did not receive support from any organization for the submitted work.
\item  Conflicts of interests/Competing interests: the author and has no relevant financial or non-financial interests to disclose.
\item Data availability statement: data sharing not applicable to this article as no datasets were generated or analysed during the current study.
\end{itemize}


\begin{thebibliography}{111111}




\bibitem[Ar01]{ariki:classification} S. \textsc{Ariki}, \textit{On the classification of simple modules for cyclotomic Hecke algebras of type $G(m,1,n)$ and Kleshchev multipartitions}. Osaka J. Math. \textbf{38}(4)  827--837 (2001).

\bibitem[Ar95]{ariki:representation} S. \textsc{Ariki}, \textit{Representation theory of a Hecke algebra of $G(r,p,n)$}. J. Algebra \textbf{177}(1)  164--185 (1995).

\bibitem[ArKo]{ariki-koike} S. \textsc{Ariki} and K. \textsc{Koike}, \textit{A Hecke algebra of $\Z/n\Z\wr\mathfrak{S}_n$ and construction of its irreducible representations}. Adv. Math. \textbf{106} 216--243 (1994).

\bibitem[ArMa]{ariki-mathas} S. \textsc{Ariki} and A. \textsc{Mathas}, \textit{The number of simple modules of the Hecke algebras of type $G(r,1,n)$}. Math. Z. \textbf{233}(3)  601--623 (2000).

\bibitem[BrMa]{broue-malle} M. \textsc{Broué} and G. \textsc{Malle}, \textit{Zyklotomische Heckealgebren}. Astérisque \textbf{212}, Représentations unipotentes génériques et blocs des groupes réductifs finis (1993).

\bibitem[BMR]{bmr} M. \textsc{Broué}, G. \textsc{Malle} and R. \textsc{Rouquier}, \textit{Complex reflection groups, braid groups, Hecke algebras}. J. Reine Angew. Math. \textbf{500}  127--190 (1998).

\bibitem[Ca]{carter} R. \textsc{Carter}, \textit{Lie Algebras of Finite and Affine Type}. Cambridge studies in advanced mathematics \textbf{96}, Cambridge University Press (2005).

\bibitem[ChJa]{chlouveraki-jacon} M. \textsc{Chlouveraki} and N. \textsc{Jacon}, \textit{Schur elements for the Ariki--Koike algebra and applications}. J. Algebr. Comb. \textbf{35}(2)  291--311 (2012).

\bibitem[Fa07a]{fayers:core} M. \textsc{Fayers}, \textit{Core blocks of Ariki--Koike algebras}. J. Algebr. Comb. \textbf{26} 47--81 (2007).

\bibitem[Fa07b]{fayers:coreII} M. \textsc{Fayers}, \textit{Core blocks of Ariki-Koike algebras II: the weight of a core block}. \texttt{arXiv:0710.5654}.

\bibitem[Fa19]{fayers:simultaneous} M. \textsc{Fayers}, \textit{Simultaneous core multipartitions}. European J. Combin. \textbf{79} 138--158 (2019).

\bibitem[Fa06]{fayers:weights} M. \textsc{Fayers}, \textit{Weights of multipartitions and representations of Ariki–-Koike algebras}. Adv. Math. \textbf{206} 112--144 (2006). (Corrected version: \url{www.maths.qmul.ac.uk/~mf/papers/weight.pdf}.)

\bibitem[GKS]{gks} F. \textsc{Garvan}, D. \textsc{Kim} and D. \textsc{Stanton}, \textit{Cranks and $t$-cores}. Invent. Math. \textbf{101}(1) 1--17 (1990).

\bibitem[GeJa]{genet-jacon} G. \textsc{Genet} and N. \textsc{Jacon}, \textit{Modular representations of cyclotomic Hecke algebras of type $G(r,p,n)$}. Int. Math. Res. Not. \textbf{26}, article ID 93049 (2006).

\bibitem[GoRo]{godsil-royle} C. \textsc{Godsil} and G. F. \textsc{Royle}, \textit{Algebraic Graph Theory}. Graduate Texts in Mathematics \textbf{207} Springer-Verlag New York (2001).

\bibitem[HuMa]{hu-mathas:decomposition} J. \textsc{Hu} and A. \textsc{Mathas}, \textit{Decomposition numbers for Hecke algebras of type $G(r,p,n)$: the $(\epsilon, q)$-separated case}. Proc. Lond. Math. Soc. (3) \textbf{104}(5)  865--926 (2012).

\bibitem[JaLe]{jacon-lecouvey} N. \textsc{Jacon} and C. \textsc{Lecouvey}, \textit{Cores of Ariki--Koike algebras}. \texttt{arXiv:1912.08461}, Doc. Math. \textbf{26} 103--124 (2021).

\bibitem[Ja]{james:some} G. \textsc{James}, \textit{Some combinatorial results involving Young diagrams}. Math. Proc. Camb. Philos. Soc. \textbf{83}(1)  1--10 (1978).

\bibitem[JaKe]{james-kerber} G. \textsc{James} and A. \textsc{Kerber}, \textit{The Representation Theory of the Symmetric Group}. Encyclopedia of Mathematics and its Applications \textbf{16}, Addison-Wesley (1981).



\bibitem[LyMa]{lyle-mathas:blocks}  S. \textsc{Lyle} and A. \textsc{Mathas}, \textit{Blocks of affine and cyclotomic Hecke algebras}.  Adv. Math. \textbf{216}(2) 854--878 (2006).

\bibitem[Ma]{mathas:iwahori} A. \textsc{Mathas}, \textit{Iwahori--Hecke algebras and Schur algebra of the symmetric group}. Amer. Math. Soc. (1999).


\bibitem[Ol]{olsson} J. B. \textsc{Olsson}, \textit{Combinatorics and representations of finite groups}. Vorlesungen aus dem Fachenbereich Mathematick der Universität Essen \textbf{20} (1993).

\bibitem[RoTh]{robinson-thrall} G. de B. \textsc{Robinson} and R. M. \textsc{Thrall}, \textit{The content of a Young diagram}, Michigan Math. J. \textbf{2}(2) 157--167 (1953).

\bibitem[Ro19a]{rostam:cyclotomic} S. \textsc{Rostam}, \textit{Cyclotomic quiver Hecke algebras and Hecke algebra of $G(r,p,n)$}. Trans. Amer. Math. Soc. \textbf{371}(6) 3877--3916 (2019).

\bibitem[Ro19b]{rostam:stuttering} S. \textsc{Rostam}, \textit{Stuttering blocks of Ariki--Koike algebras}. Alg. Comb. \textbf{2}(1)  75--118 (2019).

\bibitem[Ry81]{ryser1} H. J. \textsc{Ryser}, \textit{Matrices and Set Intersections}. Linear Algebra Appl. \textbf{37}  267--275 (1981).

\bibitem[Ry82]{ryser2} H. J. \textsc{Ryser}, \textit{Set Intersections Matrices}. J. Comb. Theory Ser. A \textbf{32}  162--177 (1982).

\bibitem[Sc]{scopes} J. \textsc{Scopes}, \textit{Cartan Matrices and Morita Equivalence for Blocks of the Symmetric Groups}. J. Alg. \textbf{142} 441--455 (1991).

\bibitem[ShTo]{shephard-todd} G. C. \textsc{Shephard} and J. A. \textsc{Todd}, \textit{Finite unitary reflection groups}. Canadian J. Math. \textbf{6} 274--304 (1954).
\end{thebibliography}
\end{document}